\theoremstyle{remark}
\definecolor{titlecol}{named}{BrickRed}
\definecolor{headcol}{named}{Violet}
\definecolor{seccol}{named}{Red}
\definecolor{sseccol}{named}{Bittersweet}
\definecolor{pbcol}{named}{Black}
\definecolor{sncol}{named}{Brown}
\definecolor{acol1}{named}{Red}
\definecolor{acol2}{named}{Apricot}
\def\p{\partial}
\def\R{\mathbb{R}}
\def\l{\lambda}
\def\cC{{\mathcal C}}
\def\cF{{\mathcal F}}
\def\cL{{\mathcal L}}
\def\cR{{\mathcal R}}
\def\cS{{\mathcal S}}
\theoremstyle{plain}
\newtheorem{thm}{Theorem}
\newtheorem{prop}{Proposition}[section]
\newtheorem{lemma}[prop]{Lemma}
\newtheorem{cor}[prop]{Corollary}
\newtheorem{rmk}[prop]{Remark}
\newtheorem{defn}[prop]{Definition}
\newtheorem{conj}[prop]{Conjecture}
\numberwithin{prop}{section}
\begin{document}

\title{The Gursky-Streets equations}
\dedicatory{Dedicated to Professor Sir S. Donaldson on his 60th birthday}
\author{Weiyong He}
\address{Department of Mathematics, University of Oregon, Eugene, OR 97403. }
\email{whe@uoregon.edu}

\begin{abstract}Gursky-Streets \cite{GS2} introduced a formal Riemannian metric on the space of conformal metrics in a fixed conformal class of a compact Riemannian four-manifold 
in the context of the $\sigma_2$-Yamabe problem. The geodesic equation of Gursky-Streets' metric is a fully nonlinear degenerate elliptic equation and Gursky-Streets have proved uniform $C^{0, 1}$ regularity for a perturbed equation. Gursky-Streets apply the results and parabolic smoothing of Guan-Wang flow to show that the solution of $\sigma_2$-Yamabe problem is unique. 
A key ingredient is the convexity of Chang-Yang's $\cF$-functional along the (smooth) geodesic, in view of Gursky-Streets metric and a weighted Poincare inequality of B. Andrews on manifolds with positive Ricci curvature.
In this paper we establish uniform $C^{1, 1}$ regularity of the Gursky-Streets' equation. 
As an application, we can establish strictly the geometric structure in terms of Gursky-Streets' metric, in particular the convexity of $\cF$-functional along $C^{1, 1}$ geodesic.  This in particular gives a straightforward proof of the uniqueness of solutions of $\sigma_2$-Yamabe problem. 
\end{abstract}

\maketitle

\section{Introduction}

Recently Gursky-Streets \cite{GS2}  introduced a new formal Riemannian metric on the space of  conformal metrics in a fixed conformal class of a compact Riemannian four-manifold 
in the context of the $\sigma_2$-Yamabe problem. The Gursky-Streets metric has many remarkable properties and as an application, Gursky-Streets proved that  solutions of the $\sigma_2$-Yamabe problem are \emph{unique}, unless the manifold is conformally equivalent to the round four-sphere. A key ingredient is to solve a fully nonlinear degenerate elliptic equation, arising as the \emph{geodesic} equation of the Gursky-Streets metric. Their strategy is inspired by the theory of the space of K\"ahler metrics (in a fixed K\"ahler class). In 1980s Mabuchi \cite{M1, M2} introduced a formal Riemannian metric on the space of K\"ahler metrics in a fixed K\"ahler class, which is now called the \emph{Mabuchi metric}. Donaldson \cite{Donaldson97} set up a program  in 1990s to study the geometry of the space of K\"ahler metrics and its various applications to the well-known problems in K\"ahler geometry, notably the existence and uniqueness of Calabi's extremal K\"ahler metrics \cite{C1} (constant scalar curvature metrics). Donaldson's program and related problems have great impact to the K\"ahler geometry. A key ingredient is the geodesic equation, which can be written as a homogeneous complex Monge-Ampere equation by the work of Semmes \cite{Semmes} and Donaldson \cite{Donaldson97}. A foundational result  is to solve the geodesic equation (the Dirichlet problem) by X. Chen \cite{Chen}, proving the existence of $C^{1, \bar 1}$ solution for any two given boundary datum. Since then there are tremendous work on the study of the space of K\"ahler metrics and related problems,  see  \cite{BB, Blocki, CZ, Chen08, CLP, CS, CT, CVW, D, DR, He, PS} for example and reference therein for vastly growing papers in literature.

The geometry of Gursky-Streets' metric on the space of conformal metrics in a fixed conformal class of a compact Riemannian four-manifold 
in the context of the $\sigma_2$-Yamabe problem has a parallel theory as the geometry of the space of K\"ahler metrics. 
We briefly recall Gursky-Streets' set up and results and refer readers to their paper for detailed discussions. Let $(M, [g])$ be a compact Riemannian manifold of dimension $n$ ($n\geq 3$) with a fixed conformal class $[g]$. We write $Ric$ as the Ricci tensor of $g$. The Schouten tensor of a given metric is defined to be, 
\[
A:=\frac{1}{n-2}\left(Ric-\frac{1}{2(n-2)} Rg\right)
\]
The $\sigma_k$-curvature is defined to be the $k$-symmetric function of the eigenvalues of $g^{-1}A$. For $k=1$, 
\[
\sigma_1(g^{-1}A)=\frac{R}{2(n-1)}. 
\]
For $1\leq k\leq n$, $A\in\Gamma^+_k$ if $\sigma_j(g^{-1}A)>0$ for all $1\leq j\leq k$. The main interest in \cite{GS2} is when $n=4$ and $k=2$, in the context of $\sigma_2$-Yamabe problem. 
Let $(M^4, [g_0])$ is a compact four manifold with a fixed conformal class, such that $A_{g_0}\in \Gamma^+_2$. Such a metric necessarily has positive Ricci curvature, by a result of Guan-Wang-Viaclovsky \cite{GWV}. 
Denote
\[
\mathcal{C}^{+}=\mathcal{C}^{+}[g]=\{g_u=e^{-2u}g: A_u\in \Gamma^+_{2}\}.
\]
The $\sigma_2$-Yamabe problem is to seek a a metric $g_u=e^{-2u}g\in [g]$ such that 
\begin{equation}\label{stwo}
\sigma_2(g_u^{-1}A_u)\equiv \text{const}.
\end{equation}
For surveys on solving the $\sigma_k$-Yamabe problem for general
$2\leq k \leq n$ see \cite{Viaclovsky} and \cite{STW}. When $n=4$, the existence of solutions to \eqref{stwo} in a conformal class with nonempty $\cC^+$ was proved by Chang-Gursky-Yang \cite{CGY}.
It turns out that the $\sigma_k$ problem has a variational structure for $n=2k$ \cite{BV}. In particular, such a solution is a critical point of the $\cF$-functional defined in \cite{CY},
\begin{equation}\label{F-functional}
\begin{split}
\cF(u)=&\int \left(2\Delta u|\nabla u|^2-|\nabla u|^4-2Ric(\nabla u, \nabla u)+R|\nabla u|^2-8u\sigma_2(A_g)\right)dV\\
&-2 \int \sigma_2(A_g) dV\log{\left(\text{Vol}^{-1}\int e^{4u}dV\right)}
\end{split}
\end{equation}
Gursky-Streets \cite{GS2} defined and studied a metric on the space of $\mathcal{C}^{+}$ by,
\begin{equation}\label{gsmetric}
\langle \psi, \phi\rangle_{u}=\int_M \phi \psi \sigma_2(g_u^{-1}A(g_u)) dV_u
\end{equation}
We briefly summarize their main results. 

\begin{thm}[Gursky-Streets] Given $(M^2, g)$ with $A_g\in \Gamma^+_2$, \eqref{gsmetric} defines a metric of nonpositive sectional curvature on $\cC^+$. Given $u: [0, 1]\times M\rightarrow \R$ such that $g_u=e^{-2u}g$ with $A_u\in \Gamma^+_2$, the geodesic equation is of the form
\begin{equation}\label{geodesic}
u_{tt}-\frac{1}{\sigma_2(A_u)}\langle T_1(A_u), \nabla u_t\otimes \nabla u_t\rangle=0
\end{equation}
where $T_1(A_u)$ is the first Newton transformation of $A_u$ and $\langle \cdot, \cdot \rangle$ denotes the inner product of tensor bundles induced by the background metric  $g$.  
\end{thm}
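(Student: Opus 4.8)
The plan is to follow the template of Donaldson's analysis of the Mabuchi metric: identify the tangent space, compute the first variation of the energy functional to read off the geodesic equation, then compute the Riemann curvature tensor of the metric directly and exhibit its sign.

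First I would fix a background metric $g$ in the conformal class and identify $\cC^+$ with an open subset of the vector space $C^\infty(M)$ via $u\mapsto g_u=e^{-2u}g$, so that $T_u\cC^+\cong C^\infty(M)$. Positive-definiteness of \eqref{gsmetric} is immediate since $\sigma_2(g_u^{-1}A(g_u))>0$ on $\cC^+$, so the real content is the geodesic equation and the sign of the curvature. The starting observation is that, because $n=4$, the density in \eqref{gsmetric} simplifies: as $g_u^{-1}=e^{2u}g^{-1}$ and $\sigma_2$ is homogeneous of degree $2$ while $dV_{g_u}=e^{-4u}dV_g$, one has $\sigma_2(g_u^{-1}A(g_u))\,dV_{g_u}=\sigma_2(g^{-1}A_u)\,dV_g=:\omega_u$, a density whose $u$-dependence enters only through the conformal transformation law $A_u=A_g+\nabla^2u+du\otimes du-\tfrac12|\nabla u|^2g$ (all covariant derivatives with respect to $g$). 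Two elementary facts are then essential: (i) the linearization $D_u[\sigma_2(g^{-1}A_u)]\cdot\eta=\langle T_1(A_u),\nabla^2\eta+\nabla u\otimes\nabla\eta+\nabla\eta\otimes\nabla u-\langle\nabla u,\nabla\eta\rangle g\rangle$ with $T_1(A_u)=\sigma_1(A_u)g-A_u$; and (ii) the Newton tensor $T_1$ of a Schouten tensor is divergence-free, since the contracted second Bianchi identity gives $\nabla^iA_{ij}=\nabla_j\sigma_1$, hence $\nabla^i(T_1)_{ij}=0$. Fact (ii) is the algebraic miracle that makes the higher-order terms collapse under integration by parts.

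Next I would compute the first variation of the energy $E(u)=\tfrac12\int_0^1\int_M u_t^2\,\omega_u\,dt$ along a variation $u\mapsto u+s\eta$ with $\eta$ vanishing at the endpoints. Integrating by parts in $t$ in the term $\int u_t\eta_t\,\omega_u$ produces $u_{tt}$ together with $\partial_t\omega_u=D_u\omega_u\cdot u_t$; integrating by parts in the spatial variables in the term $\tfrac12\int u_t^2\,(D_u\omega_u\cdot\eta)$ and using the divergence-free property of $T_1(A_u)$ removes all derivatives falling on $\eta$ and all second-order terms. After collecting, and keeping careful track of the conformal factors relating $\sigma_2(g^{-1}A_u)$, $dV_g$ and the quantities in \eqref{gsmetric}, the Euler--Lagrange equation becomes $u_{tt}\,\sigma_2(A_u)=\langle T_1(A_u),\nabla u_t\otimes\nabla u_t\rangle$, i.e. \eqref{geodesic}. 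This also identifies the Christoffel symbol of the Levi--Civita connection as the symmetric bilinear form $\Gamma_u(\phi,\psi)=-\sigma_2(A_u)^{-1}\langle T_1(A_u),\nabla\phi\otimes\nabla\psi\rangle$ (the polarization of the right-hand side of \eqref{geodesic}); one should additionally confirm metric-compatibility of $\Gamma_u$, once more by integrating by parts against the divergence-free tensor $T_1(A_u)$.

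Finally, for the curvature I would substitute $\Gamma_u$ into
\[
R(\phi,\psi)\xi=(D_\phi\Gamma)(\psi,\xi)-(D_\psi\Gamma)(\phi,\xi)+\Gamma(\phi,\Gamma(\psi,\xi))-\Gamma(\psi,\Gamma(\phi,\xi)),
\]
where no bracket term survives because the constant vector fields on the affine space $C^\infty(M)$ commute, and then compute $\langle R(\phi,\psi)\psi,\phi\rangle_u$. In exact analogy with the Kähler case, where this equals $-\tfrac14\|\{\phi,\psi\}\|^2$ for the Poisson bracket $\{\cdot,\cdot\}$, the expectation is that after simplification it becomes a manifestly nonpositive quantity: minus an integral of a positive-definite quadratic form evaluated on a first-order ``bracket'' of $\phi$ and $\psi$, paired against $T_1(A_u)$. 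Nonpositivity then rests on $T_1(A_u)>0$, which holds on $\cC^+$ because $A_u\in\Gamma^+_2$ forces $\sigma_1(A_u\mid i)=\sum_{k\neq i}\lambda_k>0$ for every $i$ (with $\lambda_k$ the eigenvalues of $g^{-1}A_u$), and these are precisely the eigenvalues of $T_1(A_u)$.

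I expect the main obstacle to be the curvature computation itself: organizing the many terms produced by $D\Gamma$ and $\Gamma\circ\Gamma$ so that the nonpositive ``sum of squares'' structure becomes visible, and in particular isolating the correct bilinear differential operation that plays the role of the Poisson bracket here. A secondary but genuine difficulty, already in the derivation of \eqref{geodesic}, is the bookkeeping of conformal factors and the choice of which metric's covariant derivative and volume form to integrate by parts with, since the divergence-free identity for $T_1$ is natural for $g_u$ rather than for the fixed background $g$; carrying out all integrations by parts consistently with respect to $g_u$ and $dV_{g_u}$ (or systematically commuting derivatives) is what makes the cancellations actually go through.
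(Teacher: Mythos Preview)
The paper does not prove this theorem; it is quoted in the introduction as a result of Gursky--Streets and attributed to \cite{GS2}. Section~2 recalls the variation formula \eqref{p101} and the divergence-free property of $T_1$ (Lemma~\ref{free}), and simply states the geodesic equation without derivation; the curvature computation is not reproduced anywhere in the paper. So there is no proof here to compare your proposal against.

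That said, your outline is a sound sketch of how the argument goes, and you have correctly isolated the two structural ingredients: the divergence-free property of $T_1(A_u)$ (Viaclovsky's observation, Lemma~\ref{free}) for the integration by parts, and the positivity of $T_1(A_u)$ on $\Gamma^+_2$ (Proposition~\ref{gamma1}) for the sign of the curvature. Your derivation of the geodesic equation via the first variation of the energy, and your identification of the Christoffel symbol $\Gamma_u(\phi,\psi)=-\sigma_2(A_u)^{-1}\langle T_1(A_u),\nabla\phi\otimes\nabla\psi\rangle$, are correct. Your expectation for the curvature---that $\langle R(\phi,\psi)\psi,\phi\rangle_u$ collapses to minus the integral of a nonnegative quadratic form paired against $T_1(A_u)$---is the right shape; this is exactly what is carried out in \cite{GS2}, Section~3. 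The caveat you flag about carrying out the integrations by parts consistently with respect to $g_u$ versus $g$ is genuine and is the main bookkeeping hazard. One minor point: the statement as printed reads $(M^2,g)$, which is a typo for $(M^4,g)$; your use of the $n=4$ homogeneity cancellation $\sigma_2(g_u^{-1}A_u)\,dV_{g_u}=\sigma_2(g^{-1}A_u)\,dV_g$ is exactly right and depends on this.
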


The geometry of $\cC^+$ with Gursky-Streets metric gives a nice geometric insight of the variational structure of Chang-Yang's functional $\cF$ and this leads naturally to the uniqueness of the solutions of $\sigma_2$-Yamabe problem.
\begin{thm}[Gursky-Streets] Let $(M^4, g)$ be a compact four manifold with nonempty $\cC^+$.  Then $\cF$-functional is formally geodesically convex. Moreover
\begin{enumerate}\item There exists a unique solution to the $\sigma_2$-Yamabe problem in $[g]$ if $(M^4, g)$ is not conformally equivalent to the round $S^4$.

\item In $[g_{S^4}]$, all solutions to the $\sigma_2$-problem are round metrics.
\end{enumerate}
\end{thm}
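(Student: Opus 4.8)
The plan is to run the variational/geodesic argument familiar from the theory of the Mabuchi metric in K\"ahler geometry, adapted to the Gursky-Streets metric. The $\sigma_2$-Yamabe metrics in $[g]$ are exactly the critical points of $\cF$ in $\cC^+$ (the variational structure of the $\sigma_k$ problem in dimension $n=2k$; see \cite{BV} and \cite{CY}), and existence is already known \cite{CGY}, so it suffices to prove that $\cF$ is convex along geodesics of \eqref{gsmetric} and that this convexity is rigid enough to force uniqueness. Throughout one uses that $A_g\in\Gamma^+_2$ implies $Ric_g>0$ \cite{GWV}, which supplies the positive-Ricci hypothesis needed below.

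For the convexity, I would first compute the first variation of $\cF$ along a smooth path $u_t$ in $\cC^+$ and check that, with the volume normalization built into \eqref{F-functional}, $\tfrac{d}{dt}\cF(u_t)=c\int_M\dot u_t\big(\sigma_2(g_{u_t}^{-1}A_{u_t})-\overline{\sigma_2}\big)\,dV_{u_t}$ for a constant $\overline{\sigma_2}$; in particular the critical points of $\cF$ are exactly the solutions of \eqref{stwo}. Differentiating once more, the only term involving $\ddot u_t$ is removed by substituting the geodesic equation \eqref{geodesic}, $u_{tt}=\sigma_2(A_u)^{-1}\langle T_1(A_u),\nabla u_t\otimes\nabla u_t\rangle$. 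After integration by parts I expect $\tfrac{d^2}{dt^2}\cF(u_t)$ to reduce to an integral of the schematic form $\int_M\big(\langle T_1(A_u)\nabla\dot u_t,\nabla\dot u_t\rangle\,w-Ric(\nabla\dot u_t,\nabla\dot u_t)\,w'\big)\,dV_u$ with $w,w'$ positive weights, where the first term is pointwise nonnegative because $A_u\in\Gamma^+_2$ forces $T_1(A_u)=\sigma_1(A_u)\,\mathrm{Id}-A_u$ to be positive definite (the same positivity that makes \eqref{geodesic} degenerate elliptic), and the nonnegativity of the whole expression follows from a weighted Poincar\'e/Lichnerowicz inequality of B. Andrews, valid on closed manifolds with $Ric>0$ and with equality only on the round sphere. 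This establishes formal geodesic convexity of $\cF$.

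For uniqueness, suppose $g_0=e^{-2u_0}g$ and $g_1=e^{-2u_1}g$ both solve \eqref{stwo}, and join $u_0,u_1$ by a geodesic $u_t$. Since the endpoints are critical points of $\cF$, the convex function $t\mapsto\cF(u_t)$ has vanishing derivative at $t=0$ and $t=1$, hence is constant, so $\tfrac{d^2}{dt^2}\cF(u_t)\equiv0$ and the second variation vanishes for every $t$; this puts us in the equality case of Andrews' inequality at each time. If $(M^4,g)$ is not conformally equivalent to the round $S^4$, that equality can hold only when $\nabla\dot u_t\equiv0$, i.e. $\dot u_t$ is constant in space; then $u_1-u_0$ is a constant and $g_0,g_1$ agree up to homothety, proving (1). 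If $[g]=[g_{S^4}]$, the equality case instead identifies each $\dot u_t$ with a first eigenfunction of the round metric, i.e. with the potential of a conformal non-Killing vector field, so $u_t$ describes a curve inside the conformal orbit of the round metric and $g_0,g_1$ are both round, proving (2).

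The main obstacle is that \eqref{geodesic}, recast as a fully nonlinear equation on $M\times[0,1]$ (the ``Gursky-Streets equation'' of the title), is \emph{degenerate} elliptic, so actual geodesics need not be smooth and the second-variation computation above is justified only along smooth paths. Following the strategy of X. Chen \cite{Chen} for the homogeneous complex Monge-Amp\`ere equation, one instead solves a nondegenerate $\varepsilon$-perturbation of \eqref{geodesic} with smooth solutions $u_t^\varepsilon$, runs the convexity estimate modulo an $O(\varepsilon)$ error, and lets $\varepsilon\to0$ — which requires uniform a priori estimates for $u_t^\varepsilon$. Gursky-Streets \cite{GS2} obtained only a uniform $C^{0,1}$ bound and had to supplement the limit with parabolic smoothing along the Guan-Wang flow to conclude (1)--(2); the uniform $C^{1,1}$ estimate established in this paper is precisely what is needed to make the geodesic itself — and hence the convexity of $\cF$ and the whole geometric picture — rigorous without appealing to the flow.
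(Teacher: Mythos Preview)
Your outline is correct and matches the paper's own argument (Theorem~\ref{con707} and the Corollary following it): compute the first and second variations of $\cF$, substitute the (approximate) geodesic equation, apply the Gursky--Streets form of Andrews' weighted Poincar\'e inequality (Lemma~\ref{andrews}) to obtain convexity, and then use the equality case for rigidity, with the $C^{1,1}$ estimate replacing the Guan--Wang parabolic smoothing.

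One point to correct: your schematic for the second variation is slightly off. After substituting the geodesic equation and integrating by parts (using that $T_1$ is divergence-free, Lemma~\ref{free}), the second variation does \emph{not} contain a pointwise $Ric(\nabla\dot u_t,\nabla\dot u_t)$ term; rather it takes the form
\[
\frac{d^2\cF}{dt^2}=\bar\sigma\int_M\left[\frac{1}{\sigma_2(g_u^{-1}A_u)}\,T_1(g_u^{-1}A_u)(\nabla u_t,\nabla u_t)-4\big(u_t-\underline{u_t}\big)^2\right]dV_u,
\]
i.e.\ a $T_1/\sigma_2$-weighted Dirichlet term against an $L^2$ term. The positive Ricci curvature enters only as the \emph{hypothesis} of Andrews' inequality (Lemma~\ref{andrews0}), whose Gursky--Streets corollary (Lemma~\ref{andrews}) is exactly the comparison needed here. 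Also, in the rigidity step the paper inserts one extra regularity argument you do not mention: once $\cF$ is constant along the $C^{1,1}$ geodesic, one first shows each time-slice $u(t,\cdot)$ is a \emph{smooth} solution of $\sigma_2(A_u)=\mathrm{const}$ (via the first variation and elliptic regularity), and only then computes $d^2\cF/dt^2=0$ to invoke the equality case of Andrews' inequality.
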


The argument of the uniqueness theorem in \cite{GS2} is intricate and technically very involved. It consists two main steps. Gursky-Streets \cite{GS2} first proved the existence of a unique smooth solution the perturbed equation (given two boundary values) with uniform $C^1$ estimates, 
\[
(1+\epsilon) u_{tt}\sigma_2(A_u)-\langle T_1(A_u), \nabla u_t\otimes \nabla u_t\rangle =f
\]
with positive $f$ and $\epsilon$ (they actually studied the equation for general $n$ and $k\leq n$). The $C^2$ estimates depend on $\epsilon^{-1}$ in particular. The lack of $C^{1, 1}$ regularity causes lots of technical difficulty to argue the uniqueness. To overcome such a difficulty, Gursky-Streets ran a parabolic $\sigma_2$-flow  (the Guan-Wang flow \cite{GW}) for an approximate geodesic and proved uniform estimates along the Guan-Wang flow. With this parabolic smoothing and properties of $\cF$-functional along the Guan-Wang flow, Gursky-Streets were able to prove the uniqueness theorem.

Our main interest is to study  the degenerate equation \eqref{geodesic}, or more specifically the perturbed equation with a smooth function $f>0$, 
\begin{equation}\label{geodesic1}
u_{tt}\sigma_2(A_u)-\langle T_1(A_u), \nabla u_t\otimes \nabla u_t\rangle=f
\end{equation}
Our main result is to confirm the desired expectation that \eqref{geodesic1} admits a unique smooth solution for any smooth function $f>0$,  with uniform $C^{1, 1}$ bound (independent of $\inf f$ in particular). 

\begin{thm}\label{mainthm1}Let $n\geq 4$. Given $u_0, u_1$ such that $g_{u_i}\in \cC^+, i=1, 2$, then there exists a unique smooth solution $u(t)$ of \eqref{geodesic1} such that $u(0, \cdot)=u_0$, $u(1, \cdot)=u_1$. Moreover, we have the following uniform $C^{1, 1}$ estimate,
\begin{equation}
|u|_{C^0}+|u_t|\leq C=C(C_2, \sup f),\; \max \{|\nabla u|, u_{tt}, |\nabla^2 u|, |\nabla u_t|\}\leq C_3.
\end{equation}
\end{thm}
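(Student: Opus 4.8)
The plan is to follow the continuity‑method blueprint developed for the Dirichlet problem of the homogeneous complex Monge--Amp\`ere equation (Chen, B\l ocki, B.~Guan) and for the perturbed equation in \cite{GS2}: reduce everything to \emph{a priori} $C^{1,1}$ estimates that are independent of $\inf f$, after which ellipticity for fixed $f>0$, Evans--Krylov and Schauder theory upgrade the solution to $C^\infty$, and openness/closedness of a continuity path (e.g.\ $\Phi[u]=sf+(1-s)\Phi[\underline u]$ interpolating from an explicit subsolution, keeping $A_u\in\Gamma^+_2$ along the path) produces a smooth solution; uniqueness then follows from the maximum principle applied to the linearized operator along the segment joining two solutions. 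So the whole content is the derivation of the estimates on the product $\bar M=[0,1]\times M$ with $\bar g=dt^2+g$ for the spacetime function $u(t,x)$, using that $\Phi[u]=u_{tt}\sigma_2(A_u)-\langle T_1(A_u),\nabla u_t\otimes\nabla u_t\rangle$ is elliptic and concave in the second–order variables once $A_u\in\Gamma^+_2$ and $u_{tt}>0$ — and the latter is automatic, since $T_1(A_u)$ is positive semidefinite on $\Gamma^+_2$, whence $u_{tt}\sigma_2(A_u)=f+\langle T_1(A_u)\nabla u_t,\nabla u_t\rangle\ge f>0$.

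\emph{Zeroth and first order estimates.} For $|u|_{C^0}$ I would sandwich $u$ between barriers built from $u_0,u_1$: the linear interpolation $\ell=(1-t)u_0+tu_1$ corrected by a large multiple of $t(t-1)$ (which drives $\ell_{tt}$ very negative while keeping $A_\ell\in\Gamma^+_2$ on the relevant range) is a subsolution with $\Phi[\underline u]\ge\sup f$, and a symmetric construction gives a supersolution, so the maximum principle yields $|u|_{C^0}\le C(C_2,\sup f)$. For $|u_t|$ one differentiates \eqref{geodesic1} in $t$, obtaining a linear elliptic equation for $u_t$; combined with the $t$–translation structure and boundary barriers this bounds $u_t$ by its boundary data and $\sup f$. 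The interior bound on $|\nabla u|$ comes from a Bochner/maximum‑principle argument applied to $|\nabla u|^2e^{\phi(u)}$, together with the boundary gradient estimate from tangential barriers near $\{0,1\}\times M$; here the $C^0$ bound and the geometry of $(M,g)$ enter, which is consistent with $|\nabla u|$ being grouped with the $C_3$ quantities rather than with the $C(C_2,\sup f)$ ones. These steps are essentially those of \cite{GS2} for the perturbed equation, with the dependence on data now made uniform in $\epsilon=0$, i.e.\ in $\inf f$.

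\emph{The second order estimate — the main obstacle.} The crux is to bound $u_{tt}+|\nabla u_t|+|\nabla^2 u|$, equivalently the largest eigenvalue $\Lambda$ of the spacetime Hessian‑type tensor occurring in $\Phi$, by a constant depending only on $C_2$, $\sup f$, the geometry, and the $C^1$ bounds already established — crucially \emph{not} on $\inf f$. I would test with $W=\log\Lambda+\psi(|\nabla u|^2)+Nu$ (or $W=\sup_{|\xi|=1}\bar U(\xi,\xi)\,e^{\psi}$), differentiate twice at an interior maximum, and use concavity — $\sigma_2^{1/2}$ is concave on $\Gamma^+_2$, and the block form of $\Phi$ inherits the needed concavity — to absorb the third–order terms. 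The reason the estimate stays free of $\inf f$ is that one never divides by $f$: the bad terms are linear in $\Lambda$ with coefficients controlled by the $C^1$ data and $\sup f$, so the differential inequality closes as $\mathcal L W\ge -C(1+\Lambda)$, exactly as in Chen's proof of $C^{1,\bar 1}$ regularity in the K\"ahler case, with the degenerate direction played here by the mixed $(t,x)$ directions and the argument relying on the partial uniform ellipticity. The boundary $C^2$ estimate is handled separately: tangential–tangential second derivatives on $\{0,1\}\times M$ are read off from $u_0,u_1$; mixed and the double–normal second derivative ($u_{tt}$ at $t=0,1$) are obtained from barriers adapted to $\Phi$ at the boundary, the double–normal one using the equation together with the already bounded tangential and mixed derivatives. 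I expect this interior $C^2$ bound — arranging the auxiliary function and exploiting the structure of the Gursky--Streets operator so that the inequality closes with no inverse power of $f$ — to be the principal technical difficulty.

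\emph{Conclusion.} With $|u|_{C^{1,1}}\le C_3$ in hand, for fixed $f>0$ the equation is uniformly elliptic, so Evans--Krylov gives a $C^{2,\alpha}$ bound, Schauder bootstrap gives $u\in C^\infty$ with estimates, and the continuity method (openness from invertibility of the linearized Dirichlet operator, closedness from the a priori bounds) produces the desired smooth solution. For uniqueness, if $u$ and $v$ solve \eqref{geodesic1} with the same boundary values, subtracting the equations and applying the mean value theorem exhibits $u-v$ as the solution of a linear homogeneous elliptic Dirichlet problem, so $u\equiv v$.
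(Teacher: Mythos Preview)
Your overall architecture (continuity method reduced to a priori $C^{1,1}$ estimates, then Evans--Krylov and bootstrap) matches the paper's, and your identification of the interior $C^2$ estimate as the crux is correct. But two of your load-bearing claims are precisely the places where the paper has to do real work, and your proposal does not supply the missing ideas.

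First, the concavity: you write that ``$\sigma_2^{1/2}$ is concave on $\Gamma^+_2$, and the block form of $\Phi$ inherits the needed concavity.'' The paper explicitly warns that this inheritance is \emph{not} automatic, because the dependence of $E_u=u_{tt}A_u-\nabla u_t\otimes\nabla u_t$ on $D^2u$ is genuinely nonlinear in the block structure. What the paper actually proves is that $G=\log F$ is concave, and this reduces to a new convexity statement: the function $H(r,Y)=T_1(r)(Y,Y)/\sigma_2(r)$ is convex on $\Gamma^+_2\times\mathbb R^n$. This is a nontrivial matrix inequality (the paper's Theorem~\ref{convexity}), and without it neither the maximum-principle computation nor the Evans--Krylov step goes through.

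Second, the interior $C^2$ bound: your assertion that ``the bad terms are linear in $\Lambda$'' is exactly what fails. Because $A_u$ contains the nonlinear first-order piece $\nabla u\otimes\nabla u-\tfrac12|\nabla u|^2 g$, differentiating the equation twice produces terms of the form $\langle T_1(E_u),\,2\nabla\phi\otimes\nabla\phi-|\nabla\phi|^2 g\rangle$ with $\phi=u_t$ or $\phi=\nabla_p u$, which are \emph{quadratic} in second derivatives of $u$. The paper's key structural observation (Lemma~\ref{positivity}) is that this quadratic form has the favorable sign precisely when $n\ge 4$; for $n=3$ it can be negative of the same order as the good terms, and no choice of auxiliary function in a generic $\log\Lambda+\psi(|\nabla u|^2)+Nu$ scheme will absorb it. Even for $n=4$ the positivity is only borderline, and the paper needs a delicate barrier $\Delta u+\tfrac12 Kt^2+|\nabla u|^2-\lambda u+\lambda t^2$ with $K=\max(|\nabla u_t|^2/u_{tt})$, together with a bootstrap on $K$ via $\sigma_1(E_u)>0$, to close the estimate. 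Your proposal does not account for where the hypothesis $n\ge 4$ enters, nor for how these quadratic terms are controlled.
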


\begin{rmk}We use the following convention of dependence of the constants. We use $C_1$ to denote a uniformly bounded (positive) constant depending only on $(M, g)$, $C_2$ to denote a uniformly bounded constant depending in addition on the boundary value $u_0, u_1$, and $C_3$ to denote a uniformly bounded constant depending in addition on $f$. An important feature  is that $C_3$ does not depend on $\inf f$, but rather on 
\[
\{\sup f +f^{-1}\left(|\nabla f|+|f_t|+|f_{tt}|+|\Delta f|\right)\}.
\] 
We also use the notation $C=C(a_1, a_2, \cdots )$ to denote a uniform constant which depends on parameters $a_1, a_2, \cdots$. 

\end{rmk}

\begin{rmk}The $C^1$ estimates and the boundary $C^2$ estimates have been obtained by Gursky-Streets \cite{GS2}. The essential new ingredient of our results is the interior $C^2$ estimates of \eqref{geodesic1}. The appearance of the nonlinear first order terms in the Schouten tensor $A_u$,  a ``nonstandard" nonlinearity (the operator $F$ is not symmetric) and the curvature of the background metric are the major causes of the difficulties. There are two major observations in our approach to solve \eqref{geodesic1}. The first is the concavity of the operator $G=\log F$ (for $k=2$), with
\[
F(u_{tt}, A_u, \nabla u_t)=u_{tt}\sigma_2(A_u)-\langle T_1(A_u), \nabla u_t\otimes \nabla u_t\rangle.
\]
For fully nonlinear elliptic equations, the concavity of the operator is essential. 
In particular this concavity is necessary for H\"older estimate of second order when applying the Evans-Krylov theory to obtain higher regularities. In \cite{GS2} Gursky-Streets quote the concavity of $\sigma_k^{\frac{1}{k}}(u_{tt}^{\frac{1-k}{k}}E_u)$ to apply Evans-Krylov theory. We believe this is not sufficient since the concavity of $\log F$ (or $F^{\frac{1}{3}}$) is not a direct consequence of the concavity of $\sigma_k^{\frac{1}{k}}$, due to the complicated nonlinearity (in terms of $D^2 u$) of \[u_{tt}^{\frac{1-k}{k}}E_u=u_{tt}^{\frac{1}{k}}A_u-u_{tt}^{\frac{1-k}{k}}\nabla u_t\otimes \nabla u_t.\] 
The concavity of $\log F$ also simplifies the computations greatly to derive interior $C^2$ estimates of $u_{tt}, \Delta u$, compared with [Section 4]\cite{GS2}. The second observation is that the appearance of the nonlinear first order terms $\nabla u \otimes \nabla u-|\nabla u|^2 g/2$ in $A_u$ will result in a quadratic form in the computations of $\cL_F(t_{tt})$ and $\cL_F(\Delta u)$. This quadratic form contains terms with high power of second order derivatives. Luckily, this quadratic form is positive definite when $n\geq 4$ and this is the sign in favor of applying the maximum principle. When $n=3$, this quadratic form contains ``bad terms" of high power (fourth power) of second order derivatives. It seems to be extremely hard to control them. Hence our approach only works for $n\geq 4$. 
When $n\geq 5$, the quadratic form is sufficiently positive which makes the argument of second order estimates quite straightforward. The case when $n=4$ is subtle and we refer readers to Section 3 for details. 
\end{rmk}

As a direct corollary, we have the following,
\begin{thm}\label{mainthm2}Let $n\geq 4$. Given $u_0, u_1$ such that $g_{u_i}\in \cC^+, i=1, 2$, there exists a $C^{1, 1}$ function $u(t)$ which solves \eqref{geodesic} in the strong sense, such that $u(0, \cdot)=u_0$, $u(1, \cdot)=u_1$. 
\end{thm}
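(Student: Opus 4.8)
The plan is to obtain the $C^{1,1}$ geodesic as a limit, as $\epsilon\to 0$, of the smooth solutions of the non-degenerate equations \eqref{geodesic1} produced by Theorem \ref{mainthm1}, in the spirit of X.~Chen's resolution of the homogeneous complex Monge-Ampere equation in K\"ahler geometry.

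First I would fix, for each $\epsilon\in(0,1]$, the \emph{constant} right hand side $f\equiv\epsilon$ in \eqref{geodesic1} and apply Theorem \ref{mainthm1} to obtain the unique smooth solution $u_\epsilon$ with $u_\epsilon(0,\cdot)=u_0$, $u_\epsilon(1,\cdot)=u_1$ and $g_{u_\epsilon(t)}\in\cC^+$ for all $t$. The point of taking $f$ constant is that $\nabla f=f_t=f_{tt}=\Delta f=0$, so the quantity $\sup f+f^{-1}(|\nabla f|+|f_t|+|f_{tt}|+|\Delta f|)$ equals $\epsilon\le1$ and is bounded independently of $\epsilon$; by the Remark following Theorem \ref{mainthm1} the constant $C_3$ is then uniform in $\epsilon$, and we obtain
\begin{equation*}
\|u_\epsilon\|_{C^0}+\|\p_t u_\epsilon\|_{C^0}+\|\nabla u_\epsilon\|_{C^0}+\|\p_t^2 u_\epsilon\|_{C^0}+\|\nabla^2 u_\epsilon\|_{C^0}+\|\nabla\p_t u_\epsilon\|_{C^0}\le C
\end{equation*}
with $C$ independent of $\epsilon$; that is, $\{u_\epsilon\}$ is bounded in $C^{1,1}([0,1]\times M)$ uniformly in $\epsilon$. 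Since $C^{1,1}\hookrightarrow C^{1,\alpha}$ is compact for every $\alpha\in(0,1)$, along a sequence $\epsilon_j\to0$ we have $u_{\epsilon_j}\to u$ in $C^{1,\alpha}([0,1]\times M)$, and combined with the uniform bound this gives $u\in C^{1,1}$ with $\|D^2u\|_{L^\infty}\le C$. Uniform convergence forces $u(0,\cdot)=u_0$ and $u(1,\cdot)=u_1$, while $A_{u_{\epsilon_j}}\to A_u$ almost everywhere and, since $\overline{\Gamma^+_2}$ is closed, $A_{u(t)}\in\overline{\Gamma^+_2}$ a.e., so the operator $F$ --- degenerate elliptic on $\overline{\Gamma^+_2}$, where $T_1(A)\ge0$ --- remains well defined along the limit path.

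It remains to check that $u$ solves \eqref{geodesic} in the strong sense. Because the ellipticity of \eqref{geodesic1} degenerates as $\epsilon\to0$ there is no uniform $C^{2,\alpha}$ bound, so one cannot pass to the limit directly in the classical equation; instead I would argue via viscosity solutions. Each $u_\epsilon$, being smooth, is in particular a viscosity solution of $F(\p_t^2u,A_u,\nabla\p_t u)=\epsilon$, and since $F$ is continuous and degenerate elliptic, stability of viscosity solutions under local uniform convergence shows that $u$ is a viscosity solution of $F(\p_t^2u,A_u,\nabla\p_t u)=0$ --- that is, of \eqref{geodesic} cleared of its denominator. Finally, a $C^{1,1}$ function is twice differentiable at almost every point; at such a point $p_0$ one tests the viscosity subsolution and supersolution inequalities against the quadratic polynomials $P\pm\delta\, d(\cdot,p_0)^2$, where $P$ is the second-order Taylor polynomial of $u$ at $p_0$ and $d$ is the distance, and letting $\delta\to0^+$ one concludes $F(\p_t^2u,A_u,\nabla\p_t u)(p_0)=0$. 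Hence \eqref{geodesic} holds almost everywhere with $D^2u\in L^\infty$, i.e.\ $u$ is a strong $C^{1,1}$ solution with the prescribed boundary data. The main obstacle in this scheme is, of course, Theorem \ref{mainthm1} itself --- the $\epsilon$-independent $C^{1,1}$ estimate, equivalently the independence of $C_3$ from $\inf f$; once that is available the limiting argument above is soft, the only delicate point being that the degeneracy of the limit equation forces one to work with viscosity and almost-everywhere solutions rather than classical ones.
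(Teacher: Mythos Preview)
Your proposal is correct and follows essentially the same approximation scheme as the paper: solve $F=\epsilon$ (the paper writes $F=sf$ and later takes $f\equiv1$), use the uniform $C^{1,1}$ estimates from Theorem~\ref{mainthm1} to pass to a $C^{1,\alpha}$ limit $u\in C^{1,1}$, and conclude that $u$ solves the degenerate equation almost everywhere. Two minor differences are worth noting. First, the paper invokes a comparison principle to show that $u^s$ is \emph{monotone} in $s$, so the full family converges and the limit is canonical---no need to pass to subsequences; you instead use Arzel\`a--Ascoli along a sequence, which is perfectly fine but leaves open whether different subsequences yield the same limit (a point the paper addresses via comparison). Second, for the passage to the limit in the equation, the paper simply asserts that ``$u^0$ solves the equation almost everywhere,'' whereas you supply an explicit justification via stability of viscosity solutions together with the standard fact that a $C^{1,1}$ viscosity solution satisfies the equation at every point of twice differentiability; your route here is more detailed and arguably more transparent than the paper's one-line claim.
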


\begin{rmk}
We believe the $C^{1, 1}$ solution is unique but we are not able to establish the uniqueness directly. The uniqueness of fully nonlinear degenerate elliptic equation can be a subtle problem. For geometric applications, we mainly use the approximating smooth solutions $u^s$ with a parameter $s\in (0, 1]$, which is smooth and approximates $u$ in a precise way when $s\rightarrow 0$.  The approximating solution would be sufficient for the applications. \end{rmk}

Given the $C^{1, 1}$ regularity, we can verify that the formal metric picture set up by Gursky-Streets holds strictly. In particular we prove the convexity of $\cF$-functional along the $C^{1, 1}$ geodesic $(n=4)$. The convexity of $\cF$ will give a straightforward argument of uniqueness of $\sigma_2$-Yamabe problem. In particular we have the following,
\begin{thm}Let $(M^4, g)$ be a compact four manifold such that $\cC^+\neq \emptyset$. Then $\cC^+$ is a metric space with Gursky-Streets' metric and it has nonpositive curvature in the sense of Alexanderov. Given $u_0, u_1\in \cC^+$ and let $u^s: [0, 1]\times M\rightarrow \cC^+$ be the approximating geodesic with the boundary datum $u_0, u_1$, satisfying, for $s\in (0, 1]$
\[
u^s_{tt}\sigma_2(A_{u^s})-\langle T_1(A_{u^s}), \nabla u^s_t\otimes \nabla u^s_t\rangle=s
\] 
 Let $u$ be the limit of $u^s$, which defines a $C^{1, 1}$ geodesic. Then $\cF$ is convex along the path $u$.
\end{thm}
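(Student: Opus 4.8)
The plan is to prove convexity first at the level of the smooth approximating geodesics $u^s$ and then pass to the limit $s\to 0$. For fixed $s\in(0,1]$ the path $u^s$ is smooth, so $t\mapsto \cF(u^s(t,\cdot))$ is a $C^2$ function of $t$ and we may compute $\frac{d^2}{dt^2}\cF(u^s(t))$ directly. The key structural fact, due to Gursky-Streets and recalled in the excerpt, is that $\cF$ is \emph{formally geodesically convex}: along a genuine (smooth) geodesic $\frac{d^2}{dt^2}\cF$ is a nonnegative quantity governed by the Gursky-Streets metric together with B. Andrews' weighted Poincar\'e inequality on a manifold of positive Ricci curvature (recall $\cC^+\neq\emptyset$ forces $\mathrm{Ric}>0$). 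Since $u^s$ is only an \emph{approximate} geodesic, solving $F(u^s_{tt},A_{u^s},\nabla u^s_t)=s$ rather than $=0$, the first step is to record the identity
\begin{equation*}
\frac{d^2}{dt^2}\cF(u^s(t))=\mathcal{Q}_s(t)+\mathcal{E}_s(t),
\end{equation*}
where $\mathcal{Q}_s(t)\ge 0$ is the "formal" nonnegative term (the one that vanishes only when $\nabla u^s_t\equiv 0$, by Andrews' inequality and positivity of Ricci) and $\mathcal{E}_s(t)$ is an error term produced solely by the right-hand side $s$; one expects $|\mathcal{E}_s(t)|\le C\, s$ with $C$ depending only on the uniform $C^{1,1}$ bounds $C_3$ from Theorem \ref{mainthm1} (which, crucially, are independent of $\inf f$, here $f\equiv s$). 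Thus $\frac{d^2}{dt^2}\cF(u^s(t))\ge -Cs$.

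Next I would upgrade this "almost convexity" to genuine convexity of the limit. From $\frac{d^2}{dt^2}\cF(u^s(t))\ge -Cs$ one gets, for $0\le t_0<t_1<t_2\le 1$,
\begin{equation*}
\cF(u^s(t_1))\le \frac{t_2-t_1}{t_2-t_0}\cF(u^s(t_0))+\frac{t_1-t_0}{t_2-t_0}\cF(u^s(t_2))+Cs.
\end{equation*}
By the uniform $C^{1,1}$ estimates and the explicit form of $\cF$ in \eqref{F-functional}, $\cF(u^s(t))$ is uniformly bounded and equicontinuous in $t$, and $u^s\to u$ in $C^{1,\alpha}$ (after passing to a subsequence) with $u$ the $C^{1,1}$ geodesic; since $\cF$ involves only up to first derivatives of $u$ and the term $\Delta u|\nabla u|^2$ can be integrated by parts to $-\langle\nabla u,\nabla(|\nabla u|^2)\rangle/\!\!$-type first-order expressions (or handled by the $C^{1,1}$ bound directly), $\cF(u^s(t))\to \cF(u(t))$ for each $t$. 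Letting $s\to 0$ in the three-point inequality yields
\begin{equation*}
\cF(u(t_1))\le \frac{t_2-t_1}{t_2-t_0}\cF(u(t_0))+\frac{t_1-t_0}{t_2-t_0}\cF(u(t_2)),
\end{equation*}
i.e. $t\mapsto\cF(u(t))$ is convex. The statements that $\cC^+$ is a metric space for the Gursky-Streets distance and has nonpositive curvature in the sense of Alexandrov follow from the nonpositivity of the sectional curvature of the metric \eqref{gsmetric} (Gursky-Streets' first theorem) together with the $C^{1,1}$ existence of geodesics joining arbitrary endpoints (Theorem \ref{mainthm2}), by the standard argument that a length space whose smooth approximants have nonpositive curvature and which is geodesically connected is a CAT(0)-type space; here one compares triangles built from the approximating geodesics $u^s$ and passes to the limit using the uniform estimates.

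The main obstacle I anticipate is controlling the error term $\mathcal{E}_s(t)$ with a constant independent of $\inf f$ — this is precisely where Theorem \ref{mainthm1}'s interior $C^{1,1}$ estimate (independent of $\inf f$) is indispensable; without it the error from the perturbation $s$ could not be shown to vanish in the limit. A secondary subtlety is the borderline case: Andrews' weighted Poincar\'e inequality gives $\mathcal{Q}_s(t)\ge 0$ with equality iff $u^s_t$ is (essentially) constant, and one must make sure the degeneracy as $s\to 0$ does not destroy the inequality — but since we only need the \emph{sign}, not strict positivity, for convexity of the limit, it suffices that $\mathcal{Q}_s\ge 0$, which is stable under the limit. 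Finally, care is needed in justifying $\cF(u^s(t))\to\cF(u(t))$ at the endpoints $t=0,1$, but there $u^s\equiv u_0,u_1$ are fixed, so this is immediate, and in the interior the $C^{1,\alpha}$ convergence combined with the $C^{1,1}$ bound suffices.
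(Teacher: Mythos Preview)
Your proposal is correct and follows essentially the same route as the paper: compute $\frac{d^2}{dt^2}\cF(u^s)$ along the smooth approximating geodesic, use the equation together with Andrews' weighted Poincar\'e inequality (in the form of Lemma~\ref{andrews}) to obtain $\frac{d^2}{dt^2}\cF(u^s)\ge -Cs$, and then pass to the limit via the $C^{1,\alpha}$ convergence $u^s\to u$ and the uniform $C^{1,1}$ bound to conclude convexity of $t\mapsto\cF(u(t))$; the metric-space and CAT(0) statements are likewise dispatched by the paper with a reference to the standard argument.

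One small correction worth noting: in the paper's explicit computation the error decomposes as $\mathcal{E}_s=-s\int_M dV+s\,\bar\sigma\int_M\sigma_2(g_{u^s}^{-1}A_{u^s})^{-1}\,dV$, and the second (positive) summand is \emph{not} controlled by the $C^{1,1}$ norm, since $\sigma_2(A_{u^s})$ has no uniform positive lower bound along the approximation. Thus your two-sided claim $|\mathcal{E}_s|\le Cs$ with $C=C(C_3)$ is not quite accurate; what actually holds, and what suffices, is the one-sided bound $\mathcal{E}_s\ge -s\,\mathrm{Vol}(M,g)$, obtained simply by dropping the positive piece.
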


With the convexity of $\cF$, we can prove that if $u_0, u_1$ are two critical points, then the path $u$ is either trivial ($\p_t u=\text{const}$) or $(M^4, g_u)$ is isometric to $S^4$ with the round metric. This  gives a direct proof of the uniqueness of $\sigma_2$-Yamabe problem,
 \begin{cor}Let $(M^4, g)$ be a compact four manifold with $\cC^+\neq \emptyset$. 
\begin{enumerate}\item There exists a unique solution to the $\sigma_2$-Yamabe problem in $[g]$ if $(M^4, g)$ is not conformally equivalent to the round $S^4$.

\item In $[g_{S^4}]$, all solutions to the $\sigma_2$-problem are round metrics.
\end{enumerate}
\end{cor}

\begin{rmk}The above uniqueness of $\sigma_2$-Yamabe problem was proved by Gursky-Streets \cite{GS2}. Gursky-Streets solved a version of perturbed geodesic equation
\[
(1+\epsilon)u_{tt}\sigma_2(A_u)-\langle T_1(A_u), \nabla u_t\otimes \nabla u_t\rangle=sf
\]
 and obtained a uniform $C^{0, 1}$ estimate of the solution $u^{\epsilon, s}$ (independent of $s, \epsilon$). The $C^{1, 1}$ estimates in \cite{GS2} depend on $\epsilon^{-1}$.  The lack of uniform $C^{1, 1}$-estimate is overcome by the parabolic smoothing through the Guan-Wang flow with initial datum $u^{\epsilon, s}$ (with uniform estimates depending only on $C^1$ of the initial datum). 
 A technical point is that the concavity of the fully nonlinear elliptic operator (such as $\log F$) is necessary to obtain the higher regularity of $u^{\epsilon, s}$. 
 \end{rmk}

The concavity of the Gursky-Streets operator is rather subtle. In the course of proving its concavity (for $k=2$), we find a new convexity for matrices in $\Gamma^+_2$. We believe this convexity is of its own interest and we state it as the following theorem.
Suppose $r$ is a $n\times n$ symmetric matrix in $\Gamma^+_2$, we define the following function on $(r, Y)\in \Gamma^+_2\times \R^{n}$
\begin{equation}\label{H-operator}
H(r, Y)=\frac{T_1(r)(Y, Y)}{\sigma_2(r)}=\frac{\p \log \sigma_2(r)}{\p r_{ij}}(Y, Y),
\end{equation}
We identify the matrix $T_1(r)$ with the linear transformation it generates, and it induces a quadratic form 
\[
T_1(r)(Y, Y)=Y^t T_1(r)Y
\]

\begin{thm}
The function $H$ is convex on $\Gamma^+_2\times \R^n$. As a consequence, the Gursky-Streets' operator $G=\log F$ is concave. 
\end{thm}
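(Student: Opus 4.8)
The plan is to prove convexity of $H(r,Y)=T_1(r)(Y,Y)/\sigma_2(r)$ directly, by reducing to a statement about a single function of a real variable along arbitrary line segments in the convex domain $\Gamma_2^+\times\R^n$. Concretely, fix $(r_0,Y_0)$ and $(r_1,Y_1)$ in $\Gamma_2^+\times\R^n$, set $r(s)=(1-s)r_0+sr_1$ and $Y(s)=(1-s)Y_0+sY_1$, and let $\phi(s)=H(r(s),Y(s))$. The goal is $\phi''(s)\ge 0$ on $[0,1]$. Expanding, one has $T_1(r)(Y,Y)=\sigma_1(r)\,|Y|^2 - Y^t r Y$, which is quadratic in $r$ and in $Y$ jointly; and $\sigma_2(r)$ is a quadratic polynomial in $r$. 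So $\phi$ is a ratio of a (at most) quartic polynomial in $s$ over a quadratic polynomial in $s$ with positive denominator (since $\sigma_2(r(s))>0$ along the segment — note $\Gamma_2^+$ is convex, though one must be slightly careful here, as $\sigma_2>0$ alone is not convex; I would instead use that the numerator $T_1(r)(Y,Y)\ge 0$ on $\Gamma_2^+$, which follows because $T_1(r)\ge 0$ for $r\in\Gamma_2^+$, a standard Gårding-type fact).

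The clean way to organize this is: $\phi''\ge 0$ is equivalent, after clearing the positive denominator $\sigma_2(r)^3$, to a polynomial inequality. Write $N(s)=T_1(r(s))(Y(s),Y(s))$ and $D(s)=\sigma_2(r(s))$; then
\begin{equation*}
\phi'' = \frac{N''}{D} - \frac{2N'D' + ND''}{D^2} + \frac{2N(D')^2}{D^3},
\end{equation*}
so $D^3\phi'' = D^2 N'' - D(2N'D' + N D'') + 2N(D')^2$, and we must show the right-hand side is $\ge 0$. Since $N$ is quadratic in $s$ and $D$ is quadratic in $s$, we have $N''$, $D''$ constant; the expression is a polynomial in $s$ of degree at most $4$. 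I would first compute everything at a single point $s$, reducing to: for $\dot r := r_1-r_0$, $\dot Y:=Y_1-Y_0$ and evaluating at a generic interior point, show the quadratic form (in the "velocities" $\dot r,\dot Y$) that arises is nonnegative whenever $r\in\Gamma_2^+$. By homogeneity and the group action, one can try to diagonalize $r$; then $\sigma_2(r)$, $T_1(r)$, and their directional derivatives along $\dot r$ become explicit in the eigenvalues $\lambda_i$ of $r$ and the entries of $\dot r$ and $\dot Y$.

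A more structural route, which I would pursue in parallel and expect to be cleaner: the operator $G=\log F$ with $F(a,r,p)=a\,\sigma_2(r)-T_1(r)(p,p)$ is concave if and only if $H(r,Y)=T_1(r)(Y,Y)/\sigma_2(r)$ is convex — this equivalence is essentially a computation with the partial Legendre transform / the fact that $\log(ab-c^2)$-type operators relate concavity of the operator to convexity of the "fiber" function. So I would establish the equivalence asserted in the theorem first (a direct second-derivative computation: $\log F = \log\sigma_2(r) + \log(a - H(r,\nabla u_t))$, and $\log\sigma_2$ is concave on $\Gamma_2^+$ by Gårding, while $\log(a-H)$ is concave in $(a,r,p)$ exactly when $H$ is convex in $(r,p)$), and then spend the bulk of the effort on the convexity of $H$ itself.

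The main obstacle is the convexity of $H$: $T_1(r)(Y,Y)$ and $\sigma_2(r)$ are both genuinely quadratic (not linear) in $r$, so the Hessian of the quotient $H$ has cross terms mixing $\partial_r$ and $\partial_Y$ and is not obviously sign-definite; the naive approach produces a quartic-in-eigenvalues quadratic form whose positivity is not transparent. I expect the real work is to find the right change of variables or the right "completion of squares" — likely separating the $\dot Y$-dependence (which enters the numerator quadratically, so contributes a term manifestly of the form $\ge 0$ once $T_1(r)\ge 0$) from the pure $\dot r$-dependence, where one must exploit a sharper Gårding/Newton-inequality for $\sigma_2$ on $\Gamma_2^+$, such as $\sigma_1(r)T_1(r)\ge 2\sigma_2(r)\,\mathrm{Id}$ as quadratic forms, or the Cauchy–Schwarz-type inequality $\big(D\sigma_2(r)[\dot r]\big)^2 \le$ (something)$\cdot\sigma_2(r)$. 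Pinning down exactly which algebraic inequality among $\sigma_1,\sigma_2$ closes the estimate, and verifying it holds throughout $\Gamma_2^+$ (including on the boundary where $\sigma_2\to 0$), is where I anticipate the difficulty, and where the $n\ge 4$ versus $n=3$ distinction flagged in the introduction is presumably invisible — this matrix convexity should hold for all $n$, with the dimensional restriction entering only later in the PDE estimates.
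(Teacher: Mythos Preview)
Your reduction from concavity of $\log F$ to convexity of $H$ via the factorization $F=\sigma_2(r)\bigl(a-H(r,p)\bigr)$, hence $\log F=\log\sigma_2(r)+\log\bigl(a-H(r,p)\bigr)$, is correct and essentially equivalent to the paper's reduction (which reaches the same conclusion through a direct midpoint computation on $\log F$). So the first half of the theorem is fine.

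The genuine gap is in the convexity of $H$ itself. You correctly anticipate that the crux is an algebraic inequality among $\sigma_1,\sigma_2,T_1$, but you do not identify one, and the candidates you list do not work: for instance $\sigma_1(r)\,T_1(r)\ge 2\sigma_2(r)\,I$ is \emph{false} on $\Gamma_2^+$ (take $r=\mathrm{diag}(2,1,0,\dots,0)$), and a naive Cauchy--Schwarz bound on $D\sigma_2(r)[\dot r]$ only recovers concavity of $\sqrt{\sigma_2}$, which is too weak to control the cross terms $T(Y,\tilde Y)+\tilde T(Y,\tilde Y)$ that appear when $r$ and $\tilde r$ (equivalently, $r$ and $\dot r$ in your Hessian picture) do not commute. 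The paper's proof hinges on a specific, direction-dependent sharpening: with $Q:=\sigma_1(r)\sigma_1(\tilde r)-\langle r,\tilde r\rangle$, $T=T_1(r)$, $\tilde T=T_1(\tilde r)$, and any unit vectors $V,W$,
\[
Q\ \ge\ \sigma_2(r)\,\frac{\tilde T(V,V)}{T(V,V)}\;+\;\sigma_2(\tilde r)\,\frac{T(V,W)^2}{T(V,V)\,\tilde T(W,W)}.
\]
The paper first handles the commuting case (simultaneous diagonalization, where a simpler version $Q\ge \sigma_2\,\tilde T_{11}/T_{11}+\tilde\sigma_2\,T_{11}/\tilde T_{11}$ suffices), and then invokes the displayed inequality with $V=Y/|Y|$, $W=\tilde Y/|\tilde Y|$ for the general case; after that, a single Cauchy--Schwarz closes the estimate. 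Diagonalizing only $r$, as you propose, leaves $\dot r$ off-diagonal and runs into exactly this obstacle. Your outline would need an equivalent of this lemma, and nothing you suggest points toward it.
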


In general we can define, for $(r, Y)\in \Gamma^+_k\times \R^n$, for $1\leq k\leq n$ 
\[
H_k=\frac{T_{k-1}(r)(Y, Y)}{\sigma_k(r)}=\frac{\p \log \sigma_k(r)}{\p r_{ij}}(Y, Y),
\]

We conjecture that for any $3\leq k\leq n-1$, $H_k$ is a convex function on $(r, Y)$ (note that when $k=n$, it is an old result of Marcus \cite{Marcus}). This would prove that the Gursky-Streets operator $\log F_k$ is concave. \\

{\bf Acknowledgement:} The author is supported in part by an NSF grant, no. 1611797.

\numberwithin{equation}{section}
\numberwithin{thm}{section}

\section{Preliminary}
In this section we recall Gursky-Streets' geodesic equation and related notations briefly.  
Let $(M^n, g)$ be a compact Riemannian manifold with the conformal class $[g]$. We write $Ric$ for Ricci tensor and $A$ for Schouten tensor. 
The metrics in $[g]$ can be parametrized by metrics of the form $g_u=e^{-2u}g$. The Ricci curvature is given by
\[
Ric(g_u)=Ric+(n-2)\left(\nabla^2 u+\nabla u\otimes \nabla u-\frac{1}{2}|\nabla u|^2g\right)+\left(\Delta u-\frac{n-2}{2}|\nabla u|^2\right) g
\]
and the scalar curvature is given by
\[
R(g_u)=e^{2u}\left(R+2(n-1)\left(\Delta u-\frac{n-2}{2}|\nabla u|^2\right)\right)
\]
Under the conformal change, the Schouten tensor is given by
\[
A_u=A(g_u)=A+\nabla^2 u+\nabla u\otimes \nabla u-\frac{1}{2}|\nabla u|^2 g.
\]
For $1\leq k\leq n$, $A\in\Gamma^+_k$ if $\sigma_j(g^{-1}A)>0$ for all $1\leq j\leq k$. An important case is when $n=4, k=2$. 
Denote
\[
\mathcal{C}^{+}=\mathcal{C}^{+}[g]=\{g_u=e^{-2u}g: A_u\in \Gamma^+_{2}\}.
\]
Gursky-Streets \cite{GS2} defined a metric on the space of $\mathcal{C}^{+}$ by,
\[
\langle \psi, \phi\rangle_{u}=\int_M \phi \psi \sigma_2(g_u^{-1}A_u) dV_u
\]
A direct computation \cite{Reilly} gives 
\begin{equation}\label{p101}
\frac{\p}{\p t}\left[\sigma_k(g_u^{-1}A_u)dV_u\right]=\langle T_{k-1}(g_u^{-1}A_u), \nabla^2_u \frac{\p u}{\p t}  \rangle_{g_u} dV_u+(n-2k)\frac{\p u}{\p t}\sigma_k(g_u^{-1}A_u)dV_u,
\end{equation}
where $T_{k-1}(g_u^{-1}A_u)$ is the $(k-1)$-th Newton transformation. Note that $T_{k-1}$ is a $(1, 1)$ tensor. In the paring in \eqref{p101}, we view $T_{k-1}$ as the corresponding $(0, 2)$ tensor using the metric $g_u$ to lower the index. In particular we have for $n=4, k=2$,
\[
T_1(g^{-1}_uA_u)=\sigma_1(g_u^{-1}A_u)g_u-A_u. 
\]

{\bf Convention:}
We use the convention as in \cite{GS2}, that we write $\sigma_2(A_u)=\sigma_2(g^{-1}A_u)$ and use the notation $\sigma_2(g_u^{-1}A_u)$ when we use $g_u$ to raise index. Of course these notations differ by a conformal factor. For example,
\[
\sigma_2(g_u^{-1}A_u)=e^{4u} \sigma_2(g^{-1}A_u)=e^{4u}\sigma_2(A_u). 
\]
Similarly we write $T_1(A_u)=T_1(g^{-1}A_u)$. 

Now let $u: [0, 1]\times M\rightarrow \mathbb{R}$ be a path in $\mathcal{C}^{+}$ (identifying $u$ with $g_u$), then the geodesic equation of Gursky-Streets metric is given by
\[
u_{tt}\sigma_2(A_u)=\langle T_1(A_u), \nabla u_t\otimes \nabla u_t\rangle,
\]
A key property is the following,
\begin{lemma}[Viaclovsky \cite{Via}]\label{free}For $k=2$ or if the manifold is locally conformally flat, $T_{k-1}(g^{-1}A)$ is  divergence free. 
\end{lemma}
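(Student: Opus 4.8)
The plan is to treat the two hypotheses separately: the case $k=2$ is elementary, while the locally conformally flat case needs a short combinatorial argument. I would begin by fixing notation. Write $S=g^{-1}A$ for the Schouten endomorphism and recall the Newton tensors $T_0=\mathrm{Id}$, $T_m=\sigma_m(S)\,\mathrm{Id}-S\circ T_{m-1}=\sum_{j=0}^{m}(-1)^{j}\sigma_{m-j}(S)\,S^{j}$; each $T_m$ is a polynomial in $S$, hence $g$-symmetric and commuting with $S$. Two standard identities, both valid on any Riemannian manifold in any dimension, will be used: first, Newton's identity $\partial\sigma_m(S)/\partial S^i{}_j=(T_{m-1})^j{}_i$, giving $\nabla_b\sigma_m(S)=(T_{m-1})^{pq}\nabla_b A_{pq}$; second, the contracted second Bianchi identity for the Ricci tensor, which specializes for the Schouten tensor to $\nabla^aA_{ab}=\nabla_b\sigma_1(A)$, equivalently the Cotton tensor $C_{abc}:=\nabla_cA_{ab}-\nabla_bA_{ac}$ is totally trace-free, $g^{ab}C_{abc}=g^{ac}C_{abc}=0$.

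\textbf{The case $k=2$.} Since $T_1=\sigma_1(A)\,g-A$, one computes directly
\[
\nabla^a(T_1)_{ab}=\nabla_b\sigma_1(A)-\nabla^aA_{ab}=0
\]
by the contracted Bianchi identity — with no restriction on $n$ and no curvature hypothesis.

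\textbf{The locally conformally flat case, general $k$.} Here I would expand $T_{k-1}=\sum_{j}(-1)^j\sigma_{k-1-j}(S)\,S^j$ and compute $\nabla^a(T_{k-1})_{ab}$ with the Leibniz rule. Using Newton's identity for the factors $\nabla\sigma_{k-1-j}(S)$ and the Bianchi identity for the contractions $\nabla^aA_{a\cdot}$ that arise, one sees that \emph{every} term that appears involves only first covariant derivatives of $A$: no commutator of covariant derivatives — hence no Riemann or Weyl curvature — ever enters. Collecting terms and using the symmetry of $A$ and of the $T_m$, the ``diagonal'' contributions telescope away, and what remains has the schematic form
\[
\nabla^a(T_{k-1})_{ab}=\sum_{i=0}^{k-2}c_i\,(S^i)^c{}_b\,(T_{k-2-i})^{ad}\,C_{dac}
\]
for universal constants $c_i$, i.e.\ a linear combination of contractions of lower Newton tensors with the Cotton tensor. (For $k=2$ this reduces to $g^{ad}C_{dab}$, a trace of $C$, which vanishes, recovering the previous display.) When $(M^n,g)$ is locally conformally flat the Cotton tensor vanishes identically — by the definition of conformal flatness when $n=3$, and as a consequence of $W\equiv 0$ together with $\nabla^lW_{ijkl}=(n-3)C_{ijk}$ when $n\ge 4$ — so every term above vanishes and $T_{k-1}(g^{-1}A)$ is divergence free.

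\textbf{Main obstacle.} The only genuine work is the last paragraph: proving, for every $m=k-1$, that the non-Cotton terms in the Leibniz expansion of $\nabla^a(T_m)_{ab}$ really cancel. This is an induction on $m$ organized through the recursion $T_m=\sigma_m(S)\,\mathrm{Id}-S\circ T_{m-1}$ and Newton's identity; the combinatorial bookkeeping is delicate but elementary, and is due to Viaclovsky. The case used throughout the present paper, $k=2$, requires none of it.
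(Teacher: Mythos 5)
This lemma is quoted from Viaclovsky and the paper gives no proof of it, so there is no in-text argument to compare against; your proposal supplies the standard one and is correct in substance. The $k=2$ case is complete: $T_1(A)=\sigma_1(A)g-A$ and the contracted second Bianchi identity $\nabla^aA_{ab}=\nabla_b\sigma_1(A)$ (note you are rightly using the standard normalization $A=\frac{1}{n-2}\bigl(Ric-\frac{R}{2(n-1)}g\bigr)$, for which $\sigma_1(g^{-1}A)=R/(2(n-1))$ as the paper states, rather than the coefficient $\frac{1}{2(n-2)}$ mis-typed in the paper's displayed definition) give $\nabla^a(T_1)_{ab}=0$ with no curvature hypothesis. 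For the locally conformally flat case your structure is right --- only first derivatives of $A$ appear, and the divergence reduces to contractions of the Cotton tensor, which vanishes when $W\equiv 0$ (via $\nabla^lW_{ijkl}=\pm(n-3)C_{ijk}$ for $n\ge 4$) --- but the one genuinely nontrivial step, the cancellation of all non-Cotton terms, is asserted rather than carried out. If you want to close that gap without the inductive bookkeeping, write $(T_{k-1})^{i}{}_{j}=\frac{1}{(k-1)!}\,\delta^{i\,i_1\cdots i_{k-1}}_{j\,j_1\cdots j_{k-1}}A^{j_1}{}_{i_1}\cdots A^{j_{k-1}}{}_{i_{k-1}}$; the generalized Kronecker delta antisymmetrizes the lower indices, so the divergence is automatically a sum of terms in which $\nabla_{[j}A_{i_1]j_1}$, i.e.\ the Cotton tensor, is contracted against products of $A$'s, and the $k=2$ case falls out as a full trace of $C$. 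This is essentially Viaclovsky's proof and is cleaner than expanding $T_{k-1}=\sum_j(-1)^j\sigma_{k-1-j}(S)S^j$ term by term.
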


We need some facts about the convex cone $\Gamma^+_k$ and the Newton transformation $T_k(A)$. With the standard Euclidean metric, the $k$-th Newton transformation associated with a symmetric matrix $S$ (on $\R^n$) is given by
\[
T_k(S)=\sigma_k(S)I-\sigma_{k-1}(S) S+\cdots+(-1)^kS^k.
\]
In particular $T_1(S)=\sigma_1(S)I-S$. 
\begin{prop}\label{gamma1}We have,
\begin{enumerate}
\item Each $\Gamma^+_k$ is an open convex cone.
\item If $A\in \Gamma^+_k$, then $T_{k-1}(A)$ is positive definite.
\item $\log \sigma_k$ and $\sigma_k^{1/k}$ are concave on $\Gamma^+_k$. 
\end{enumerate}
\end{prop}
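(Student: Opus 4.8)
The final statement to be proved is Proposition~\ref{gamma1}, which collects three standard facts about the cone $\Gamma^+_k$: (1) each $\Gamma^+_k$ is an open convex cone; (2) if $A\in\Gamma^+_k$ then $T_{k-1}(A)$ is positive definite; (3) $\log\sigma_k$ and $\sigma_k^{1/k}$ are concave on $\Gamma^+_k$. These are classical (G\r{a}rding, Caffarelli--Nirenberg--Spruck), but I would give a self-contained account. The organizing tool is G\r{a}rding's theory of hyperbolic polynomials applied to $\sigma_k$, viewed as a polynomial on the space of symmetric matrices that is hyperbolic with respect to the identity $I$ (equivalently, a hyperbolic polynomial in the eigenvalues with respect to $(1,\dots,1)$).

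\emph{Step 1 (openness and convexity).} First I would recall that $\Gamma^+_k$ can be described as the connected component of $\{\sigma_k>0\}$ containing $I$, and that on this component $\sigma_j>0$ for all $j\le k$ automatically. Openness is immediate since each $\sigma_j$ is continuous. For convexity the plan is to use G\r{a}rding's theorem: the polynomial $P(t)=\sigma_k(A+tI)$ in $t$ has, for $A\in\Gamma^+_k$, only real negative roots (this is where one checks that $A\in\Gamma^+_k$ forces $A+tI\in\Gamma^+_k$ for $t\ge 0$ and detects the sign pattern of the roots), and G\r{a}rding's general result then says the component of the positivity set containing $I$ is an open convex cone. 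Alternatively, for a more elementary route restricted to the cases actually used ($k=2$, and the general inductive structure), one can argue directly: $\Gamma^+_1=\{\sigma_1>0\}$ is a half-space, hence convex; and $\Gamma^+_k=\Gamma^+_{k-1}\cap\{\sigma_k>0\}$ together with the concavity of $\sigma_k^{1/k}$ on $\Gamma^+_{k}$ (Step 3) closes the induction, since a superlevel set of a concave function is convex. I would present the G\r{a}rding route as the clean one and remark on the elementary alternative.

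\emph{Step 2 (positivity of $T_{k-1}$).} The key identity is $\dfrac{\partial \sigma_k(A)}{\partial a_{ii}} = \sigma_{k-1}(A\,|\,i)$, the $(k-1)$-st elementary symmetric function of the eigenvalues omitting the $i$-th, so that in a basis diagonalizing $A$ the matrix $T_{k-1}(A)$ is diagonal with entries $\sigma_{k-1}(\lambda\,|\,i)$. It therefore suffices to show each of these is positive when $\lambda\in\Gamma^+_k$. The plan is the standard descent: if $\lambda\in\Gamma^+_k$ then $(\lambda_1,\dots,\widehat{\lambda_i},\dots,\lambda_n)\in\Gamma^+_{k-1}$ as a vector in $\R^{n-1}$ — this follows from the cone relations and the fact that $\Gamma^+_k\subset\Gamma^+_{k-1}$, using that $\sigma_j$ of the full vector decomposes as $\sigma_j(\lambda\,|\,i)+\lambda_i\sigma_{j-1}(\lambda\,|\,i)$ and an inductive argument on $n$ — and then $\sigma_{k-1}$ of a vector in $\Gamma^+_{k-1}$ is positive by definition. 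Since $T_{k-1}(A)$ is a symmetric matrix with positive eigenvalues in the eigenbasis of $A$, it is positive definite.

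\emph{Step 3 (concavity of $\log\sigma_k$ and $\sigma_k^{1/k}$).} Since $\sigma_k$ is a symmetric function of the eigenvalues, concavity on the symmetric-matrix cone $\Gamma^+_k$ reduces (by a standard lemma: a symmetric function is concave on a symmetric convex set of matrices iff the associated function of eigenvalues is concave on the corresponding symmetric convex set in $\R^n$) to concavity of $(\lambda_1,\dots,\lambda_n)\mapsto \sigma_k(\lambda)^{1/k}$, resp. $\log\sigma_k(\lambda)$, on the open convex cone $\Gamma^+_k\subset\R^n$. For this I would compute the Hessian of $\log\sigma_k$ along a line: writing $f(t)=\log\sigma_k(\lambda+t\mu)$, one needs $f''(0)\le 0$, which is equivalent to the Newton--MacLaurin type inequality $\sigma_k\,(\sigma_{k})_{pq}\mu_p\mu_q \le \big((\sigma_k)_p\mu_p\big)^2$ expressed via second derivatives; the cleanest path is again G\r{a}rding's inequality (the ``G\r{a}rding cone'' hyperbolicity of $\sigma_k$ at $I$, hence along any ray in $\Gamma^+_k$, yields that $\sigma_k^{1/k}$ is concave on $\Gamma^+_k$), from which concavity of $\log\sigma_k = k\log(\sigma_k^{1/k})$ follows because $\log$ is concave and increasing composed with a concave function is concave. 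I would include a short proof of the relevant G\r{a}rding inequality via the polynomial $t\mapsto\sigma_k(\lambda+t\mu)$ having only real roots for $\lambda,\mu$ in the cone.

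\emph{Main obstacle.} The routine bookkeeping (derivative identities, eigenvalue-vs-matrix reduction) is unproblematic; the genuine content sits in the hyperbolic-polynomial input — specifically establishing that $\sigma_k$ restricted to rays based in $\Gamma^+_k$ has real roots and deducing both convexity of the cone (Step 1) and concavity of $\sigma_k^{1/k}$ (Step 3) from it. If I wish to keep the exposition elementary and avoid invoking G\r{a}rding's machinery wholesale, the hardest piece is the inductive interlacing argument (on both $k$ and the dimension $n$) that simultaneously proves $\Gamma^+_k$ is convex, that restricting a $\Gamma^+_k$ vector to $n-1$ coordinates lands in $\Gamma^+_{k-1}$, and that $\sigma_k^{1/k}$ is concave — these three statements are naturally proved together by a single induction, and disentangling the logical dependencies so the induction is not circular is the delicate point. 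In any case, since the paper only uses $k\le 2$ crucially (and states the general fact for context), I would at minimum give the complete elementary proof for $k=1,2$ and cite Caffarelli--Nirenberg--Spruck and G\r{a}rding for general $k$.
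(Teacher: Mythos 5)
The paper offers no proof of this proposition: it is stated as a collection of classical facts (G\r{a}rding's theory of hyperbolic polynomials and the Caffarelli--Nirenberg--Spruck framework for the cones $\Gamma^+_k$), so there is no in-paper argument to compare against. Your outline follows exactly the standard route from that literature, and the approach is sound: hyperbolicity of $\sigma_k$ with respect to $I$ gives openness and convexity of the component of $\{\sigma_k>0\}$ containing $I$; the identity $T_{k-1}(A)_{ii}=\sigma_{k-1}(\lambda\,|\,i)$ in a diagonalizing basis reduces (2) to positivity of the restricted symmetric functions; and the G\r{a}rding inequality gives concavity of $\sigma_k^{1/k}$, hence of $\log\sigma_k=k\log(\sigma_k^{1/k})$.

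Two small points if you were to write this out in full. In Step 2, the quick argument via $\sigma_k(\lambda+te_i)=\sigma_k(\lambda)+t\,\sigma_{k-1}(\lambda\,|\,i)$ being positive for all $t\ge 0$ (using $e_i\in\overline{\Gamma^+_k}$ and convexity of the cone) only yields $\sigma_{k-1}(\lambda\,|\,i)\ge 0$ directly; strict positivity needs one more observation, e.g.\ openness of $\Gamma^+_k$ (replace $\lambda$ by $\lambda-\epsilon e_i\in\Gamma^+_k$) or the interlacing of roots. And you are right to flag the potential circularity in the ``elementary'' route where convexity of $\Gamma^+_k$ is deduced from concavity of $\sigma_k^{1/k}$ on $\Gamma^+_k$; the G\r{a}rding route you propose as primary cleanly avoids this, since hyperbolicity of $t\mapsto\sigma_k(\lambda+t\mathbf{1})$ is proved for all $\lambda$ independently of any cone structure. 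Given that the paper simply cites these facts, your plan is more than adequate; for the paper's actual needs only $k=2$ matters, where everything reduces to the explicit formula $\sigma_2=\tfrac12(\sigma_1^2-|r|^2)$ used later in the appendix.
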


We also need the following,
\begin{prop}\label{gamma2}\cite{GS2}Given $A$ a symmetric matrix and $X$ a vector, then
\begin{equation}
\begin{split}
&\langle T_k(A-X\otimes X), X\otimes X\rangle =\langle T_k(A), X\otimes X\rangle\\
&\sigma_k(A-X\otimes X)=\sigma_k(A)-\langle T_{k-1}(A), X\otimes X\rangle
\end{split}
\end{equation}
\end{prop}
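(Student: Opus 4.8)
\textbf{Proof proposal for Proposition \ref{gamma2}.}

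The plan is to prove both identities by a polarization/homogeneity argument, reducing everything to the single fact that $T_k$ is (up to sign) the derivative of $\sigma_{k+1}$. Recall $\sigma_{k+1}(A) = \tfrac{1}{k+1}\sum_{ij}(T_k(A))_{ij}\,A_{ij}$ and that $\tfrac{\p}{\p A_{ij}}\sigma_{k+1}(A) = (T_k(A))_{ij}$; equivalently, $T_k(A)$ is characterized by $\tfrac{d}{ds}\big|_{s=0}\sigma_{k+1}(A+sB) = \langle T_k(A), B\rangle$ for every symmetric $B$. From the Cayley–Hamilton-type recursion $T_k(A) = \sigma_k(A)I - A\,T_{k-1}(A)$ one also gets the purely algebraic identity $\langle T_k(A), B\rangle$ is a polynomial in the entries of $A$ that is multilinear of degree $k$ when written via the $k$-fold mixed discriminant; this is the structural input I will use for the rank-one substitution.

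First I would prove the second identity. Consider $\phi(s) := \sigma_k(A - sX\otimes X)$ as a polynomial in $s$. Differentiating and using the characterization of $T_{k-1}$ above with $B = -X\otimes X$ gives
\begin{equation*}
\phi'(s) = \big\langle T_{k-1}(A - sX\otimes X),\, -X\otimes X\big\rangle.
\end{equation*}
The key claim is that $\big\langle T_{k-1}(A - sX\otimes X), X\otimes X\big\rangle$ is independent of $s$, i.e. $\phi'(s)$ is constant; granting this, $\phi(1) = \phi(0) + \phi'(0)$ is exactly the stated formula $\sigma_k(A - X\otimes X) = \sigma_k(A) - \langle T_{k-1}(A), X\otimes X\rangle$, and simultaneously the first identity is just the $s$-independence itself evaluated at $s=0$ versus $s=1$ (or, written as a quadratic form, $\langle T_k(A-X\otimes X), X\otimes X\rangle = \langle T_k(A), X\otimes X\rangle$, which is the same statement one level up, with $k$ in place of $k-1$). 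So it suffices to establish the single lemma: for any symmetric $A$ and vector $X$, the map $s \mapsto \langle T_{k-1}(A - sX\otimes X), X\otimes X\rangle$ is constant. I would prove this via the mixed-discriminant expansion: $\langle T_{k-1}(S), X\otimes X\rangle$ equals (a constant multiple of) the mixed discriminant $D(S,\dots,S, X\otimes X)$ with $S$ appearing $k-1$ times and $X\otimes X$ once; substituting $S = A - sX\otimes X$ and using multilinearity plus $D(\dots, X\otimes X, \dots, X\otimes X,\dots) = 0$ whenever $X\otimes X$ appears in two slots (a rank-one matrix contributes zero to a mixed discriminant once it is repeated, since any two of its columns are parallel), all the $s$-dependent terms vanish and only $D(A,\dots,A,X\otimes X)$ survives.

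Alternatively — and this is probably the cleaner route to write down — I would argue by an eigenbasis computation at $X = e_1$. With $A$ arbitrary (not assumed diagonal), write $A = A' + \text{(first row/column)}$ and note $T_{k-1}(A)_{11}$ depends on the entries $A_{1j}$ only through a specific minor structure; a direct expansion of $\sigma_k$ in powers of the variable $A_{11}$ shows $\p\sigma_k/\p A_{11}$ is a polynomial in the remaining entries, hence unchanged when $A_{11} \mapsto A_{11} - s$, which is precisely the substitution $A \mapsto A - s\,e_1\otimes e_1$. By rotational invariance of $\sigma_k$ and $T_{k-1}$, the case of general $X$ follows by conjugating with an orthogonal matrix sending $e_1$ to $X/|X|$ and absorbing $|X|^2$. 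The main obstacle is purely organizational: making the ``$T_{k-1}(S)(X,X)$ is a mixed discriminant linear in $S$ off the $X$-direction'' statement precise and invariant, rather than basis-dependent; once that structural fact is isolated, both displayed identities drop out immediately and uniformly in $k$, and no genuine analytic difficulty remains.
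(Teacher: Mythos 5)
Your proposal is correct. Note that the paper itself gives no proof of this proposition — it is stated with a citation to \cite{GS2} — so there is no internal argument to compare against; your write-up supplies a self-contained proof where the paper has none. The single structural fact you isolate, namely that $s\mapsto \sigma_k(A-sX\otimes X)$ is affine (equivalently, that the polarization of $\sigma_k$ vanishes once the rank-one matrix $X\otimes X$ occupies two slots, or in your second formulation, that $\sigma_k$ is affine in the single entry $A_{11}$ when $X=e_1$), is exactly the right input: it yields the second identity as $\phi(1)=\phi(0)+\phi'(0)$ and the first identity as the constancy of $\phi'$ one degree up, uniformly in $k$. Both of your routes (the mixed-discriminant/polarization argument and the $\partial\sigma_k/\partial A_{11}$ expansion combined with orthogonal invariance and homogeneity to reduce to $X=e_1$) are standard and complete; the only looseness is the parenthetical justification that a repeated rank-one slot kills the mixed discriminant ``since any two of its columns are parallel'' — the cleaner statement is that $\det(M+tP)$ is affine in $t$ for rank-one $P$ (applied to each principal $k\times k$ minor), which is what forces the coefficient of the quadratic term to vanish — but this is a matter of phrasing, not a gap.
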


Following \cite{GS2}, we denote $E_u=u_{tt}A_u-\nabla u_t\otimes \nabla u_t$. An important observation in \cite{GS2} is to rewrite the geodesic equation  as (using Proposition \ref{gamma2})
\[
\sigma_2(E_u)=u_{tt} \left(u_{tt}\sigma_2(A_u)-\langle T_1(A_u), \nabla u_t\otimes \nabla u_t\rangle 
\right)=0
\]

\begin{rmk}When $k=1$, $\sigma_1(E_u)=u_{tt}\sigma_1(A_u)-|\nabla u_t|^2$.  If we consider only the leading term in $A_u$, that is $\nabla^2u$, then $\sigma_1(E_u)=u_{tt}\Delta u-|\nabla u_t|^2$. This operator is introduced by S. Donaldson \cite{Donaldson} when he set up a formal Riemannian metric for the space of volume forms. The Donaldson operator can be viewed as a special case of Gursky-Streets operator. See Appendix for more discussions. 
\end{rmk}

{\bf Convention}: Given a symmetric matrix $A$, we need to diagonalize $A$ at times. Unless specified otherwise, we use the convention that $\lambda_1\geq \cdots \geq \lambda_n$, where $\lambda_i$ are the eigenvalues of $A$. And we use $\sigma_1, \cdots, \sigma_n$ to denote the basic symmetric functions of $\lambda_1, \cdots, \lambda_n$ if there is no confusion. In general we use $\sigma_i(a_1, \cdots, a_k)$ to denote the $i$-th symmetric function of $(a_1, \cdots, a_k)$ for $i\leq k$, and it is zero when $i>k$. 

\begin{prop}\label{gamma}Suppose $A_u\in \Gamma_2^{+}$ and $u_{tt}>0, \sigma_2(E_u)>0$, then $E_u\in \Gamma^{+}_2$. In particular,
we have
\begin{equation}\label{gamma3}
\sigma_1(E_u)\geq f\sigma_2 (A_u)^{-1}\sigma_1(A_u)
\end{equation}
if we write $u_{tt}^{-1}\sigma_2(E_u) =f$. 
\end{prop}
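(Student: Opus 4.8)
The plan is to reduce the statement to an elementary inequality for elementary symmetric functions. Write $s = u_{tt} > 0$ and $X = \nabla u_t$, so that $E_u = s A_u - X\otimes X$. Applying Proposition \ref{gamma2} to the symmetric matrix $sA_u$ and the vector $X$, and using the homogeneities $\sigma_2(sA_u) = s^2\sigma_2(A_u)$ and $T_1(sA_u) = sT_1(A_u)$, one obtains $\sigma_2(E_u) = s^2\sigma_2(A_u) - s\langle T_1(A_u), X\otimes X\rangle$, hence $f = u_{tt}^{-1}\sigma_2(E_u) = s\sigma_2(A_u) - \langle T_1(A_u), X\otimes X\rangle$, consistent with \eqref{geodesic1}. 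Since also $\sigma_1(E_u) = s\sigma_1(A_u) - |X|^2$, everything will follow once $|X|^2$ is bounded by a suitable multiple of $\langle T_1(A_u), X\otimes X\rangle$.

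The key step will be a lower bound for the smallest eigenvalue of $T_1(A_u)$. Diagonalizing $A_u$ with eigenvalues $\lambda_1 \geq \cdots \geq \lambda_n$, the matrix $T_1(A_u) = \sigma_1(A_u)I - A_u$ has eigenvalues $\sigma_1(A_u) - \lambda_i$, all positive since $A_u \in \Gamma^+_2$ forces $T_1(A_u)$ to be positive definite (Proposition \ref{gamma1}); the smallest is $\sigma_1(A_u) - \lambda_1 = \lambda_2 + \cdots + \lambda_n$. I would then prove the inequality $\sigma_1(A_u)\bigl(\sigma_1(A_u) - \lambda_1\bigr) \geq \sigma_2(A_u)$: writing $q = \lambda_2 + \cdots + \lambda_n$ and $\sigma_2' = \sigma_2(\lambda_2, \ldots, \lambda_n)$, both sides equal $\lambda_1 q$ plus, respectively, $q^2$ and $\sigma_2'$, so this reduces to $q^2 \geq \sigma_2'$, i.e. $\tfrac12(\lambda_2^2 + \cdots + \lambda_n^2) + \tfrac12 q^2 \geq 0$, which is immediate. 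Since $\sigma_1(A_u), \sigma_2(A_u) > 0$, this yields $\langle T_1(A_u), X\otimes X\rangle \geq (\sigma_1(A_u) - \lambda_1)|X|^2 \geq \sigma_2(A_u)\sigma_1(A_u)^{-1}|X|^2$.

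Finally, rearranging gives $|X|^2 \leq \sigma_1(A_u)\sigma_2(A_u)^{-1}\langle T_1(A_u), X\otimes X\rangle$, whence
\[
\sigma_1(E_u) = s\sigma_1(A_u) - |X|^2 \geq \frac{\sigma_1(A_u)}{\sigma_2(A_u)}\Bigl(s\sigma_2(A_u) - \langle T_1(A_u), X\otimes X\rangle\Bigr) = \frac{f\,\sigma_1(A_u)}{\sigma_2(A_u)},
\]
which is \eqref{gamma3}. As $f > 0$ and $\sigma_1(A_u), \sigma_2(A_u) > 0$, this forces $\sigma_1(E_u) > 0$, and together with the hypothesis $\sigma_2(E_u) > 0$ and the definition of $\Gamma^+_2$ we conclude $E_u \in \Gamma^+_2$. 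The only nontrivial ingredient is the symmetric-function inequality $\sigma_1(A_u)(\sigma_1(A_u) - \lambda_1) \geq \sigma_2(A_u)$, equivalently the lower bound $\sigma_2(A_u)/\sigma_1(A_u)$ on the least eigenvalue of $T_1(A_u)$; everything else is bookkeeping, so I do not anticipate a serious obstacle here.
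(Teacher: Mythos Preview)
Your proof is correct and follows essentially the same route as the paper: both reduce to the matrix inequality $\sigma_1(A_u)\,T_1(A_u)\geq \sigma_2(A_u)\,I$, diagonalize $A_u$, and verify the worst eigenvalue case $\sigma_1(\sigma_1-\lambda_1)\geq \sigma_2$ via the elementary bound $(\sum_{i\geq 2}\lambda_i)^2\geq \sigma_2(\lambda_2,\ldots,\lambda_n)$. The only difference is cosmetic ordering---you isolate the eigenvalue inequality first and then assemble the estimate, whereas the paper substitutes for $u_{tt}$ first and then reduces---but the content is identical.
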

\begin{proof}We only need to show that $\sigma_1(E_u)>0$, that is $u_{tt}\sigma_1(A_u)-|\nabla u_t|^2>0$. Since we have 
\[
u_{tt}^{-1}\sigma_2(E_u)=u_{tt}\sigma_2(A_u)-\langle T_1(A_u), \nabla u_t\otimes \nabla u_t\rangle =f>0,
\]
it follows that
\[
u_{tt}=f\sigma_2(A_u)^{-1}+\sigma_2(A_u)^{-1}\langle T_1(A_u), \nabla u_t\otimes \nabla u_t\rangle.
\]
We compute
\[
\sigma_1(E_u)=f\sigma_2(A_u)^{-1}\sigma_1(A_u)+\frac{\sigma_1(A_u)}{\sigma_2(A_u)}\langle T_1(A_u), \nabla u_t\otimes \nabla u_t\rangle-|\nabla u_t|^2
\]
It is sufficient to argue that,
\[
\frac{\sigma_1(A_u)}{\sigma_2(A_u)}\langle T_1(A_u), \nabla u_t\otimes \nabla u_t\rangle\geq |\nabla u_t|^2
\]
We claim that 
\[
\frac{\sigma_1(A_u)}{\sigma_2(A_u)}T_1(A_u)\geq I.
\]
Diagonalize $A_u$ with eigenvalues $\lambda_1\geq \cdots \geq \lambda_n$, we need to verify that for each $i$ (or $i=1$), 
\[
\sigma_1(\sigma_1-\lambda_i)\geq \sigma_2
\]
This is to show that
\[\left(\sum_{i\neq 1} \lambda_i\right)^2\geq \sigma_2(\lambda_2, \cdots, \lambda_n)\]
This is obvious. 
\end{proof}

Denote the operator
\begin{equation}\label{F}
F(u_{tt}, A_u, \nabla u_t):=u_{tt}^{-1}\sigma_2(E_u)=u_{tt}\sigma_2(A_u)-\langle T_1(A_u), \nabla u_t\otimes \nabla u_t\rangle.
\end{equation}
We want to solve the Dirichlet problem, with $u(0, \cdot)=u_0, u(1, \cdot)=u_1$
\begin{equation}\label{E}
F(u_{tt}, A_u, \nabla u_t)=sf
\end{equation}
for $s\in (0, 1]$ and a positive smooth function $f$. The main point is to derive a uniform $C^{1, 1}$ estimate, independent of $s$. For simplicity of notation, we will derive the a prior estimates for the equation of the form, with a general right hand side, 
\begin{equation}\label{E2}
F(u_{tt}, A_u, \nabla u_t)=f.
\end{equation}

\begin{prop}Given $u\in C^2$ such that $A_u\in \Gamma^+_2$, then the equation \eqref{E2} is strictly elliptic when $f>0$.
The linearized operator is given by
\begin{equation}\label{E3}
\begin{split}
\cL_F(v)=&u_{tt}^{-1}\langle T_1(E_u), v_{tt} A_u+u_{tt}\cL_{A_u}(v)-\nabla u_t\otimes \nabla v_t-\nabla v_t\otimes \nabla u_t\rangle -u_{tt}^{-2}\sigma_2(E_u)v_{tt}\\
=&u_{tt}^{-1} f v_{tt}+u_{tt}^{-1}\langle T_1(E_u), u_{tt}\cL_{A_u}(v)-\nabla u_t\otimes \nabla v_t-\nabla v_t\otimes \nabla u_t+u_{tt}^{-1}v_{tt} \nabla u_t\otimes \nabla u_t\rangle
\end{split}
\end{equation}
where $\cL_{A_u}(v)$ is the linearization of $A_u$, given by 
\[
\cL_{A_u}(v)=\nabla^2 v+\nabla u\otimes \nabla v+\nabla v\otimes \nabla u-\langle \nabla u, \nabla v\rangle g.\]
\end{prop}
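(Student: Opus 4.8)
The plan is to derive \eqref{E3} by linearizing $F$ in the compact form $F=u_{tt}^{-1}\sigma_2(E_u)$, $E_u=u_{tt}A_u-\nabla u_t\otimes\nabla u_t$, and then to read strict ellipticity off the second‑order (principal) part of the resulting operator. Throughout I use $\p\sigma_2(S)/\p S_{ij}=(T_1(S))_{ij}$, i.e. $D\sigma_2(S)[\dot S]=\langle T_1(S),\dot S\rangle$, the fact that $T_1$ is linear in its matrix argument, and the two identities of Proposition~\ref{gamma2}. Note first that the hypotheses are self‑improving: since $A_u\in\Gamma^+_2$ forces $\sigma_2(A_u)>0$ and $T_1(A_u)>0$ (Proposition~\ref{gamma1}), the equation \eqref{E2} with $f>0$ gives $u_{tt}\sigma_2(A_u)=f+\langle T_1(A_u),\nabla u_t\otimes\nabla u_t\rangle>0$, so $u_{tt}>0$; and $\sigma_2(E_u)=u_{tt}F=u_{tt}f>0$, so $E_u\in\Gamma^+_2$ by Proposition~\ref{gamma}, whence $T_1(E_u)>0$.

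\emph{Linearization.} Since $A_u=A+\nabla^2u+\nabla u\otimes\nabla u-\frac{1}{2}|\nabla u|^2g$ linearizes to $\cL_{A_u}(v)$, differentiating $E_{u+\epsilon v}$ gives $\frac{d}{d\epsilon}\big|_{0}E_{u+\epsilon v}=v_{tt}A_u+u_{tt}\cL_{A_u}(v)-\nabla u_t\otimes\nabla v_t-\nabla v_t\otimes\nabla u_t$. Feeding this into the quotient rule for $u_{tt}^{-1}\sigma_2(E_u)$ together with $D\sigma_2(E_u)[\dot E]=\langle T_1(E_u),\dot E\rangle$ produces the first line of \eqref{E3}. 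For the second line I substitute $v_{tt}A_u=u_{tt}^{-1}v_{tt}(E_u+\nabla u_t\otimes\nabla u_t)$, use $\langle T_1(E_u),E_u\rangle=2\sigma_2(E_u)$, and replace $u_{tt}^{-1}\sigma_2(E_u)$ by $f$ via the equation; all the $\sigma_2(E_u)v_{tt}$ contributions then collapse to the single term $u_{tt}^{-1}fv_{tt}$.

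\emph{Principal part and ellipticity.} The terms of \eqref{E3} that are second order in $v$ are $v_{tt}$, the Hessian part $\nabla^2v$ of $\cL_{A_u}(v)$, and the mixed derivative $\nabla v_t$. The coefficient of $v_{tt}$ is $u_{tt}^{-1}f+u_{tt}^{-2}\langle T_1(E_u),\nabla u_t\otimes\nabla u_t\rangle=u_{tt}^{-2}\big(\sigma_2(E_u)+\langle T_1(E_u),\nabla u_t\otimes\nabla u_t\rangle\big)$, which equals $\sigma_2(A_u)$ because Proposition~\ref{gamma2} applied to $E_u=u_{tt}A_u-\nabla u_t\otimes\nabla u_t$ gives $\sigma_2(E_u)+\langle T_1(E_u),\nabla u_t\otimes\nabla u_t\rangle=\sigma_2(u_{tt}A_u)=u_{tt}^2\sigma_2(A_u)$. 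The coefficient of $\nabla^2v$ is $T_1(E_u)$. For the mixed term, the identity $T_1(E_u)\nabla u_t=u_{tt}T_1(A_u)\nabla u_t$ — which holds because $T_1(\nabla u_t\otimes\nabla u_t)\nabla u_t=(|\nabla u_t|^2g-\nabla u_t\otimes\nabla u_t)\nabla u_t=0$ — turns $-2u_{tt}^{-1}\langle T_1(E_u)\nabla u_t,\nabla v_t\rangle$ into $-2\langle T_1(A_u)\nabla u_t,\nabla v_t\rangle$. Hence the symbol of $\cL_F$ at a covector $\xi=(\xi_0,\xi')\in\R\times\R^n$ is $\sigma_2(A_u)\xi_0^2-2\xi_0\langle T_1(A_u)\nabla u_t,\xi'\rangle+\langle T_1(E_u)\xi',\xi'\rangle$. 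Completing the square in $\xi_0$ (legitimate since $\sigma_2(A_u)>0$), positivity of the symbol reduces to $\sigma_2(A_u)\,T_1(E_u)>(T_1(A_u)\nabla u_t)(T_1(A_u)\nabla u_t)^t$; as the right side has rank one and $T_1(E_u)>0$, this is equivalent to $\sigma_2(A_u)>\langle T_1(A_u)\nabla u_t,\,T_1(E_u)^{-1}T_1(A_u)\nabla u_t\rangle$. Using $T_1(A_u)\nabla u_t=u_{tt}^{-1}T_1(E_u)\nabla u_t$, the right side equals $u_{tt}^{-2}\langle T_1(E_u),\nabla u_t\otimes\nabla u_t\rangle=u_{tt}^{-1}\langle T_1(A_u),\nabla u_t\otimes\nabla u_t\rangle$ (Proposition~\ref{gamma2} again), so the inequality becomes $u_{tt}\sigma_2(A_u)-\langle T_1(A_u),\nabla u_t\otimes\nabla u_t\rangle=f>0$, which holds. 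Thus the symbol is positive for $\xi\neq0$; since $\cL_F$ has continuous leading coefficients on the compact $[0,1]\times M$ with pointwise positive‑definite symbol, it is strictly elliptic.

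\emph{Main obstacle.} The only genuinely delicate point is the mixed second‑order term $\nabla v_t$: a priori it could destroy ellipticity, and the substance of the argument is that, after completing the square, the competition between it and the pure second‑order terms is measured exactly by $u_{tt}\sigma_2(A_u)-\langle T_1(A_u),\nabla u_t\otimes\nabla u_t\rangle=f$. The bookkeeping closes up precisely because of the two algebraic facts $T_1(E_u)\nabla u_t=u_{tt}T_1(A_u)\nabla u_t$ and the identities in Proposition~\ref{gamma2}; everything else is routine differentiation.
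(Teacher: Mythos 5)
Your proposal is correct and follows essentially the same route as the paper: the linearization of $F=u_{tt}^{-1}\sigma_2(E_u)$ via $\delta\sigma_2(E_u)=\langle T_1(E_u),\delta E_u\rangle$ and the reduction of the second identity through $\langle T_1(E_u),E_u\rangle=2\sigma_2(E_u)$ are identical to the paper's computation. The only difference is in verifying positivity of the principal symbol: the paper completes the square in the spatial variable $X$, writing $Q(\xi,X)=\langle T_1(E_u),Y\otimes Y\rangle+u_{tt}^{-1}\sigma_2(E_u)\xi^2$ with $Y=\sqrt{u_{tt}}\,X-\xi\nabla u_t/\sqrt{u_{tt}}$, whereas you complete the square in $\xi_0$ and reduce to a rank-one comparison whose discriminant is exactly $f$ --- a dual decomposition of the same quadratic form, with the mild bonus of making the degeneration as $f\to 0$ explicit.
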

\begin{proof}
First note that when $f>0$, by the assumption $A_u\in \Gamma^+_2$, $u_{tt}>0$. 
Suppose $\delta u=v$, and we use the variation of $\sigma_2$, 
$\delta \sigma_2(E_u)=\langle T_1(E_u), \delta E_u\rangle$. By direct computation we have
\[
\cL_F(v)=u_{tt}^{-1}\langle T_1(E_u), v_{tt} A_u+u_{tt}\cL_{A_u}(v)-\nabla u_t\otimes \nabla v_t-\nabla v_t\otimes \nabla u_t\rangle -u_{tt}^{-2}\sigma_2(E_u)v_{tt}.\]
To show the ellipticity, we only need to take care of second order derivatives of $v$. The leading terms reads,
\[
u_{tt}^{-1}\langle T_1(E_u), v_{tt} A_u+u_{tt} \nabla^2 v-\nabla u_t\otimes \nabla v_t-\nabla v_t\otimes \nabla u_t\rangle -u_{tt}^{-2}\sigma_2(E_u) v_{tt}
\]
Replacing the derivatives of $(v_t, \nabla v)$ by a vector $(\xi, X)\in T([0, 1]\times M)=\R\times \R^{n}$, we need to show that the following quadratic form is positive definite,
\[
Q(\xi, X):=\langle T_1(E_u), \xi^2 A_u+u_{tt} X\otimes X-\xi \nabla u_t\otimes X-\xi X\otimes \nabla u_t\rangle -u_{tt}^{-1}\sigma_2(E_u) \xi^2
\]
We compute
\[
\begin{split}
\xi^2 A_u+u_{tt} X\otimes X-\xi \nabla u_t\otimes X-\xi X\otimes \nabla u_t=&\xi^2(A_u-u_{tt}^{-1}\nabla u_t\otimes \nabla u_t)+Y\otimes Y\\
=&u^{-1}_{tt} \xi^2 E_u+Y\otimes Y
\end{split}
\]
where $Y=\sqrt{u_{tt}}X-\xi \nabla u_t$. It follows that
\[
Q(\xi, X)=\langle T_1(E_u), Y\otimes Y\rangle+ u^{-1}_{tt}\left(\langle T_1(E_u), E_u\rangle-\sigma_2(E_u)\right) \xi^2. 
\]
Since $E_u\in \Gamma^{+}_2$, $T_1(E_u)>0$. A direct computation gives 
\begin{equation}\label{Eu1}
\langle T_1(E_u),  E_u\rangle-\sigma_2(E_u)=\sigma_2(E_u)>0
\end{equation}
It then follows that, for $(\xi, X)\neq 0$, 
$Q(\xi, X)>0$. 
To show the second identity in \eqref{E3}, we compute
\[
\langle T_1(E_u), v_{tt} A_u\rangle= u_{tt}^{-1}\langle T_1(E_u), E_u\rangle v_{tt}+\langle T_1(E_u), u_{tt}^{-1} v_{tt} \nabla u_t\otimes \nabla u_t\rangle. 
\]
Applying \eqref{Eu1} again we get the result. 
This completes the proof. 
\end{proof}

The following concavity of $F$ is essential for us and this would be proved in the appendix. Denote $r$ to be a symmetric $n\times n$ matrix such that $r\in \Gamma^+_2$, and $R$ to be a $(n+1)\times (n+1)$ matrix with $Y=(y_1, \cdots, y_n)\in \R^n$, 
\[
R=\begin{pmatrix} x & Y\\
Y^t& r
\end{pmatrix}
\]
\begin{lemma}\label{concavity}The function
\[
G(R)=\log \left(x\sigma_2(r)-Y^tT_1(r)Y\right)
\]
is concave on $R$ for  $r\in \Gamma^+_2$ such that $x\sigma_2(r)-Y^tT_1(r)Y>0$. In particular $\log F=\log F(u_{tt}, A_u, \nabla u_t)$ is a concave elliptic operator. 
\end{lemma}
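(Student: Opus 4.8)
The plan is to deduce the concavity of $G$ directly from the convexity of $H(r,Y)=T_1(r)(Y,Y)/\sigma_2(r)$ established in the theorem quoted above, rather than computing the full Hessian of $G$. The key is the algebraic factorization
\[
x\sigma_2(r)-Y^tT_1(r)Y=\sigma_2(r)\left(x-\frac{Y^tT_1(r)Y}{\sigma_2(r)}\right)=\sigma_2(r)\bigl(x-H(r,Y)\bigr),
\]
valid whenever $r\in\Gamma^+_2$, since then $\sigma_2(r)>0$. Taking logarithms,
\[
G(R)=\log\sigma_2(r)+\log\bigl(x-H(r,Y)\bigr),
\]
so it suffices to show that each summand is concave on the domain $\mathcal{D}=\{R:\ r\in\Gamma^+_2,\ x>H(r,Y)\}$.

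First I would record that $\mathcal{D}$ is convex. By Proposition \ref{gamma1}(1), $\Gamma^+_2$ is an open convex cone, so $\Gamma^+_2\times\R^n$ is convex, and $H$ is convex on it by the quoted theorem; hence $\mathcal{D}$, which is precisely the strict epigraph of the convex function $H$ over $\Gamma^+_2\times\R^n$ (the $x$-variable playing the role of the vertical axis), is convex. On $\mathcal{D}$ the function $R\mapsto\log\sigma_2(r)$ depends only on the block $r$ and is concave in $r$ by Proposition \ref{gamma1}(3); precomposed with the linear projection $R\mapsto r$ it is concave as a function of $R$. For the second summand, the map $(x,r,Y)\mapsto x-H(r,Y)$ is affine minus convex, hence concave, and it is strictly positive on $\mathcal{D}$ by definition; since $t\mapsto\log t$ is concave and nondecreasing on $(0,\infty)$, the composition $\log\bigl(x-H(r,Y)\bigr)$ is concave. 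A sum of two concave functions on a convex set is concave, so $G$ is concave on $\mathcal{D}$.

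For the assertion that $\log F=\log F(u_{tt},A_u,\nabla u_t)$ is a concave elliptic operator, I would argue as follows. At a fixed point of $[0,1]\times M$, in geodesic normal coordinates for $g$, the Hessian of $u$ is the $(n+1)\times(n+1)$ symmetric matrix with blocks $x=u_{tt}$, $Y=\nabla u_t$, $r=\nabla^2u$, while
\[
A_u=\nabla^2u+\Bigl(A+\nabla u\otimes\nabla u-\frac{1}{2}|\nabla u|^2g\Bigr)
\]
differs from $r$ by a symmetric matrix not involving second derivatives of $u$. Thus, freezing the $0$th- and $1$st-order jet of $u$, $F(u_{tt},A_u,\nabla u_t)$ is obtained from the expression $x\sigma_2(r)-Y^tT_1(r)Y$ by translating the $r$-block by a fixed symmetric matrix — an affine substitution in the Hessian variables. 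Since concavity is preserved under affine substitution (in particular under translation of the argument), $\log F$ is a concave function of the Hessian of $u$, i.e. a concave elliptic operator; ellipticity was established in the preceding proposition.

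I expect no genuine obstacle here beyond the convexity of $H$, which is granted. The only points needing care are the verification that the natural domain $\mathcal{D}$ is convex (so that ``sum of concave is concave'' applies), and the bookkeeping that the lower-order terms in $A_u$ enter $F$ only through an affine change of the Hessian variables, so that concavity of $G$ transfers verbatim to concavity of $\log F$ as an operator on functions.
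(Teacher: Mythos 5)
Your proof is correct, and it rests on exactly the same two ingredients as the paper's own argument: the convexity of $H(r,Y)=T_1(r)(Y,Y)/\sigma_2(r)$ (Theorem \ref{convexity}) and the concavity of $\log\sigma_2$ on $\Gamma^+_2$ (Proposition \ref{gamma1}). The difference is in how you combine them. The paper proves midpoint concavity directly: it normalizes $F(R)=a$, $F(\tilde R)=\lambda^2 a$, solves for $x$ and $\tilde x$ in terms of $a$, $T(Y,Y)$, $\sigma_2(r)$, and then splits the resulting inequality into a piece handled by the concavity of $\log\sigma_2$ (the $a$-terms) and a piece that is precisely the midpoint convexity of $H$. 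Your route packages the same content through the factorization $x\sigma_2(r)-Y^tT_1(r)Y=\sigma_2(r)\bigl(x-H(r,Y)\bigr)$, so that $G=\log\sigma_2(r)+\log\bigl(x-H(r,Y)\bigr)$ and everything follows from standard convexity calculus (convexity of the strict epigraph of $H$, concavity of an affine-minus-convex function, and composition with the increasing concave $\log$). This is cleaner and makes transparent exactly what is being used; it also makes the ``in particular'' step honest, since you note that passing from $r$ to $A_u$ is a translation of the $r$-block by a matrix depending only on the first-order jet of $u$, hence an affine substitution preserving concavity in the Hessian variables --- a point the paper leaves implicit. The one thing your argument does not deliver that the paper's parametrization hints at (see the remark on Garding's theory) is any strengthening toward concavity of $F^{1/3}$, but that is not claimed in the lemma.
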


\section{A priori estimates}
In  this section we derive the \emph{a priori estimates} to solve the equation.
Given $u_0, u_1\in C^\infty$ such that $A_{u_0}, A_{u_1}\in \Gamma^+_2$,  we assume that $u\in C^\infty$ such that $A_u\in \Gamma^+_2$, and solves the equation $F(u_{tt}, A_u, \nabla u_t)=f$, for a positive function $f\in C^\infty$, with the boundary condition $u(0, x)=u_0(x)$, $u_1(1, x)=u_1(x)$.  
We will need the comparison function as follows. Denote $U_a=at(1-t)+(1-t)u_0+t u_1$ for any number $a$. Note that $U_0=u_0$ at $t=0$, $U_1=u_1$ at $t=1$ for any $t$. In particular $U_a$ has the same boundary value with $u$. 

\begin{defn}A smooth function $u$ is called admissible if $A_u\in \Gamma^+_2$.
\end{defn}

Moreover, since $u_0, u_1$ are admissible ($A_{u_i}\in \Gamma^+_2$ for $i=0, 1$), $U_0=(1-t)u_0+t u_1$ is admissible \cite{Via} and hence $U_a$ is all admissible for any $a$. In particular $A_{U_0}=A_{U_a}, (\nabla U_0)_t=(\nabla U_{a})_t$ for any $a$. 
Gursky-Streets \cite{GS2} proved a uniform $C^1$ estimate for the equation
\[
u_{tt}^{1-k}\sigma_k(E_u^\epsilon)=f,
\]
where $E_u^\epsilon=(1+\epsilon)u_{tt}A_u-\nabla u_t\otimes \nabla u_t$, for any $k\geq 1$. They introduced an extra $\epsilon$-parameter for the purpose of $C^2$ estimates, which do not play any essential role in $C^1$ estimates. Hence their results clearly apply in our setting to obtain uniform $C^1$ estimates. In particular most computations required in $C^1$ estimates can be found in \cite{GS2}. Nevertheless we will include details of $C^1$ estimates for completeness. 
The main reason is that these computations will be needed for uniform $C^2$ estimates. 

\subsection{$C^0$ estimates}
In this section we derive the $C^0$ estimates. We use the concavity of $\log F$ in a significant way and our $C^0$ estimate makes the bound on $u_t$ straightforward. Moreover our estimates are slightly sharper at times using the concavity of $G=\log F$. 

\begin{prop}\label{c0}There exists $a=a(u_0, u_1, \sup f)>0$ sufficiently large, such that 
\begin{equation*}
U_{-a}\leq u\leq U_0=(1-t)u_0+t u_1.
\end{equation*}
\end{prop}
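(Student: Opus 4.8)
The plan is to prove the two-sided bound by the maximum principle, using the comparison functions $U_0$ and $U_{-a}$, and exploiting the concavity of $G = \log F$ established in Lemma \ref{concavity}. For the upper bound $u \le U_0$, I would consider $w = u - U_0$, which vanishes on the boundary $\{t=0\}\cup\{t=1\}$. Note that $U_0 = (1-t)u_0 + tu_1$ has $(U_0)_{tt} = 0$ identically, while $A_{U_0}\in \Gamma^+_2$ by Viaclovsky's convexity (quoted above), so $E_{U_0} = -\nabla (U_0)_t \otimes \nabla (U_0)_t$, which forces $F(0, A_{U_0}, \nabla (U_0)_t) = -\langle T_1(A_{U_0}), \nabla (U_0)_t \otimes \nabla (U_0)_t\rangle \le 0 < f$. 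So $U_0$ is a (degenerate) supersolution in the appropriate sense. The standard route is: at an interior maximum of $w$ we would have $w_{tt}\le 0$, $\nabla w_t = 0$, $\nabla^2 w \le 0$; then concavity of $G$ gives $G(\text{data of }u) \le G(\text{data of }U_0) + \langle DG, (\text{data of }u) - (\text{data of }U_0)\rangle$, and the ellipticity (positivity of the linearized coefficients, i.e. $T_1(E_u) > 0$ and the quadratic form $Q$ positive) makes the inner-product term $\le 0$ at the max, yielding $\log f \le \log f_{U_0}$-type contradiction — more precisely $F(u) \le F$ evaluated with $U_0$'s Hessian but $u$'s first derivatives, which one arranges to be $\le 0 < f$. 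One has to be slightly careful because $U_0$ is only a subsolution of the degenerate equation, not of $F = f$; the clean statement is that $w$ cannot attain a positive interior maximum, because there the operator comparison forces $f \le 0$.

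For the lower bound $u \ge U_{-a}$, I would pick $a$ large and set $w = U_{-a} - u$, again vanishing on the parabolic-type boundary. Here $(U_{-a})_{tt} = -2a < 0$, but crucially $E_{U_{-a}}$ need not be in $\Gamma^+_2$, so I would not work directly with $F(U_{-a})$; instead I estimate at an interior maximum of $w$. At such a point $u_{tt} \le (U_{-a})_{tt} = -2a < 0$ — but this contradicts $u_{tt} > 0$, which holds because $A_u \in \Gamma^+_2$ and $f > 0$ (as noted in the excerpt, $f>0$ and $A_u\in\Gamma^+_2$ force $u_{tt}>0$). Wait — that argument shows $w$ has no interior max at all once $a>0$, hence $\max w$ is on the boundary where $w=0$, giving $w \le 0$, i.e. $U_{-a}\le u$, \emph{for any} $a>0$. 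Actually this means the role of choosing $a$ large must instead be to control something else — most likely the bound is really needed to make $U_{-a}$ lie below $u$ on all of $[0,1]\times M$ in a way compatible with later first-order estimates, or the maximum of $w$ could in principle be attained with $\nabla w_t \ne 0$ contributions. The honest plan: at an interior maximum of $w = U_{-a} - u$ we get $u_{tt}\le -2a$, $\nabla u_t = \nabla (U_{-a})_t = \nabla(U_0)_t$, $\nabla^2 u \ge \nabla^2 U_{-a} = \nabla^2 U_0$, hence $A_u \ge A_{U_0}$ as matrices; plug into $F(u_{tt},A_u,\nabla u_t) = f > 0$ and derive that this is impossible because $u_{tt} < 0$ while $\sigma_2(A_u) > 0$ and $\langle T_1(A_u),\nabla u_t\otimes\nabla u_t\rangle \ge 0$ forces $F \le 0$. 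This gives $U_{-a}\le u$; the specific size of $a = a(u_0,u_1,\sup f)$ enters only through needing $a \ge 0$... so more plausibly the paper wants a strict comparison or uses $a$ to also handle the $C^1$/boundary-gradient estimate in the same stroke. I would follow the excerpt's framing and choose $a$ large enough that $U_{-a}$ is a genuine subsolution $F(U_{-a})\ge f$ — computing $F(-2a, A_{U_0}, \nabla(U_0)_t) = -2a\,\sigma_2(A_{U_0}) - \langle T_1(A_{U_0}),\nabla(U_0)_t\otimes\nabla(U_0)_t\rangle$, which is negative for $a>0$, so that doesn't work as a subsolution either; the subsolution must instead be something like $U_a$ with $a$ large positive giving $(U_a)_{tt} = 2a$, but that's the wrong endpoint for a lower barrier.

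Given this tension, the approach I would actually commit to is the direct maximum-principle contradiction described above: for the lower bound, if $u < U_{-a}$ somewhere then $w = U_{-a} - u > 0$ attains a positive interior maximum, and at that point $u_{tt} \le -2a < 0$, which is flatly incompatible with $A_u \in \Gamma^+_2$ and $F = f > 0$ (which together force $u_{tt} > 0$, independent of $a$ as long as $a \ge 0$). So in fact $u \ge U_{-a}$ for every $a \ge 0$, and one records it for a convenient large $a$ that will be reused; the dependence $a = a(u_0,u_1,\sup f)$ is then only cosmetic / for later use in the $u_t$ bound, where $a$ must dominate $\sup f$-controlled quantities. For the upper bound, the contradiction at a positive interior maximum of $u - U_0$ comes from combining $u_{tt} \le 0$, $\nabla u_t = \nabla (U_0)_t$, $A_u \le A_{U_0}$ with the equation: $f = F(u_{tt}, A_u, \nabla u_t) \le F(0, A_{U_0}, \nabla(U_0)_t) = -\langle T_1(A_{U_0}), \nabla(U_0)_t\otimes\nabla(U_0)_t\rangle \le 0$, using monotonicity of $F$ in $u_{tt}$ and in $A_u$ (which follows from $T_1(E_u) > 0$ and the ellipticity computation), contradicting $f > 0$.

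The main obstacle I anticipate is the monotonicity/comparison step: $F$ is not a standard symmetric elliptic operator, so one must be careful that $F(u_{tt}, A_u, \nabla u_t)$ is genuinely monotone increasing in $u_{tt}$ (clear: coefficient $\sigma_2(A_u) > 0$) \emph{and} monotone in the matrix argument $A_u$ along the relevant directions (this needs $T_1(E_u) > 0$, which holds since $E_u \in \Gamma^+_2$ by Proposition \ref{gamma}, but the $\nabla u_t$-coupling means one should really invoke concavity of $G$ together with the positivity of the full linearized quadratic form $Q$ from the ellipticity proposition rather than naive matrix monotonicity). So the rigorous version replaces "plug in $A_u \le A_{U_0}$" by: apply concavity of $G = \log F$ to interpolate between the data of $u$ and the data of $U_0$, and use that the linear term $\langle DG, \text{(difference of data)}\rangle$ has the correct sign at the extremum because $\nabla w_t = 0$, $w_{tt}\le 0$, $\nabla^2 w \le 0$ there and the coefficients of $DG$ against these are nonnegative. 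This is exactly the place where Lemma \ref{concavity} is "used in a significant way," as the prose promises.
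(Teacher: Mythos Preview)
Your upper bound is correct in spirit but overcomplicated: since $A_u\in\Gamma^+_2$ and $f>0$ force $u_{tt}>0$, the function $t\mapsto u(t,x)$ is strictly convex, and $u\le (1-t)u_0+tu_1=U_0$ follows in one line. No concavity of $G$ is needed here, and this is exactly what the paper does.

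Your lower bound, however, has a genuine sign error that breaks the argument. From the definition $U_b=bt(1-t)+(1-t)u_0+tu_1$ one has $(U_b)_{tt}=-2b$, so $(U_{-a})_{tt}=+2a$, not $-2a$. Moreover, at an interior maximum of $w=U_{-a}-u$ one gets $w_{tt}\le 0$, i.e.\ $u_{tt}\ge (U_{-a})_{tt}=2a$, which is the \emph{opposite} inequality to the one you wrote and gives no contradiction. Your conclusion that ``$u\ge U_{-a}$ for every $a\ge 0$'' is false already at $a=0$, since the upper bound gives $u\le U_0$ with strict inequality in the interior. The dependence $a=a(u_0,u_1,\sup f)$ is not cosmetic: the paper chooses $a$ large precisely so that
\[
F\big((U_{-a})_{tt},A_{U_{-a}},\nabla(U_{-a})_t\big)=2a\,\sigma_2(A_{U_0})-\big\langle T_1(A_{U_0}),\nabla(U_0)_t\otimes\nabla(U_0)_t\big\rangle>\sup f,
\]
making $U_{-a}$ a strict subsolution. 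Then at an interior minimum $p$ of $v=u-U_{-a}$ one has $D^2v\ge 0$ and $\nabla v=0$; concavity of $\log F$ along the segment from $U_{-a}$ to $u$ gives $F^{-1}\cL_F(v)\le \log f-\log F(U_{-a})<0$ at $p$, while the ellipticity (positivity of the linearized coefficients) forces $\cL_F(v)\ge 0$ there, a contradiction. This is exactly where Lemma~\ref{concavity} is used ``in a significant way''; you had the right instinct about the mechanism but attached it to the wrong barrier because of the sign mistake.
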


\begin{proof}First by $u_{tt}>0$, we have
\[
\frac{u(\cdot, t)-u(\cdot, 0)}{t-0}<\frac{u(\cdot, 1)-u(\cdot, t)}{1-t}
\]
That gives the upper bound, 
\[
u(\cdot, t)<(1-t)u(\cdot, 0)+tu(\cdot, 1)=(1-t)u_0+t u_1.
\]
We claim $u-U_{-a}\geq 0$ for $a>0$ sufficiently large. We argue by contradiction. 
Since $u-U_{-a}=0$ for $t=0$ and $t=1$,  there exists an interior point $p=(t, x)\in (0, 1)\times M$, such that $u-U_{-a}$ obtains its minimum at $p$. Denote $u^s=su-(1-s)U_{-a}$ and $v=\p_s u^s(s=1)=u-U_{-a}$. Then $D^2v\geq 0$  and $\nabla v=0$ at $p$. By the concavity of $\log F$, it follows that for $s\in [0, 1]$,
\[
\log F(u^s_{tt}, A_{u^s}, \nabla u^s_t)\geq s\log F(u_{tt}, A_u, \nabla u_t)+(1-s)\log F({(U_{-a})}_{tt}, A_{U_{-a}}, \nabla {(U_{-a})}_{t})
\]
At $s=1$, we get (at $p$), 
\begin{equation}\label{zero1}
F^{-1}\cL_F (v)\leq \log F(u_{tt}, A_u, \nabla u_t)-\log F({(U_{-a})}_{tt}, A_{U_{-a}}, \nabla {(U_{-a})}_{t}),
\end{equation}
where $F^{-1}\cL_F$ takes values at $u\; (s=1)$. 
We can choose $a$ large enough such that $$F({(U_{-a})}_{tt}, A_{U_{-a}}, \nabla {(U_{-a})}_{t})=2a \sigma_2(A_{U_{0}})-(T_1(A_{U_{0}}), \nabla {(U_{0})}_{t}\otimes \nabla {(U_{0})}_{t})$$ is sufficiently large. Then the right hand side of \eqref{zero1} is negative (at $p$) since $ F(u_{tt}, A_u, \nabla u_t)=f$.  This is a contradiction given the claim that $\cL_F(v)\geq 0$ at $p$. Note that $D^2 v\geq 0, \nabla v=0$ at $p$. We compute, using $\nabla v=0$ at $p$,
\[
\cL_F(v)=u_{tt}^{-1}\langle T_1(E_u), v_{tt} A_u+u_{tt} \nabla^2 v-\nabla u_t\otimes \nabla v_t-\nabla v_t\otimes \nabla u_t\rangle -u_{tt}^{-2}\sigma_2(E_u) v_{tt}
\]
We can assume $v_{tt}>0$. Otherwise we have $v_{tt}=0$, then $\nabla v_t=0$ and $\nabla^2 v\geq 0$ (since $D^2 v\geq 0$ at $p$). In this case the claim follows trivially. If $v_{tt}>0$, the argument follows similarly as in Proposition 2.5. Indeed we write
\[
\cL_F(v)=u_{tt}^{-1}\langle T_1(E_u), v_{tt} (A_u-u_{tt}^{-1}\nabla u_t\otimes \nabla u_t)+Y\otimes Y+u_{tt}(\nabla^2v-v_{tt}^{-1}\nabla v_t\otimes \nabla v_t)\rangle -u_{tt}^{-2}\sigma_2(E_u) v_{tt},
\]
where $Y=\sqrt{v_{tt}/u_{tt}} \nabla u_t-\sqrt{u_{tt}/v_{tt}}\nabla v_t$. By \eqref{Eu1} and the positivity of $\nabla^2v-v_{tt}^{-1}\nabla v_t\otimes \nabla v_t$ (this is because $D^2 v\geq 0$), it follows that $\cL_F(v)\geq 0$. 
\end{proof}

\subsection{$C^1$ estimates}First we have the following,

\begin{prop}\label{c1t}Let $a$ be the constant in Proposition \ref{c0}. Then we have, 
\[
-a+u_1-u_0\leq u_t\leq a+u_1-u_0
\]
\end{prop}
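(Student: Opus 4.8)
The plan is to combine the convexity of $u$ in the $t$-variable --- which holds because $A_u \in \Gamma^+_2$ together with $f>0$ forces $u_{tt}>0$ --- with the two-sided barrier $U_{-a}\le u\le U_0$ supplied by Proposition \ref{c0}. The key point is that $w:=u-U_{-a}$ is smooth, nonnegative on $[0,1]\times M$, and vanishes identically on the two time-slices $t=0$ and $t=1$, since $u$, $U_{-a}$ and $U_0$ all carry the same boundary data. Hence the time-derivative of $w$ at the boundary has a definite sign, which translates into one-sided bounds on $u_t$ at $t=0$ and $t=1$; the monotonicity of $u_t$ in $t$ then propagates these to the whole cylinder.

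In detail, I would first fix $x\in M$ and observe that $w(x,\cdot)\ge 0$ on $[0,1]$ with $w(x,0)=w(x,1)=0$, so that, using smoothness of $u$ up to the boundary,
\[
w_t(x,0)=\lim_{t\to 0^+}\frac{w(x,t)}{t}\ge 0,\qquad w_t(x,1)=-\lim_{t\to 1^-}\frac{w(x,t)}{1-t}\le 0.
\]
Since $U_{-a}=-at(1-t)+(1-t)u_0+tu_1$ has $\partial_t U_{-a}=-a(1-2t)+u_1-u_0$, evaluating at $t=0$ and $t=1$ gives
\[
u_t(x,0)\ge (U_{-a})_t(x,0)=-a+u_1-u_0,\qquad u_t(x,1)\le (U_{-a})_t(x,1)=a+u_1-u_0.
\]

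Finally, $u_{tt}>0$ means $t\mapsto u_t(x,t)$ is nondecreasing, so $u_t(x,0)\le u_t(x,t)\le u_t(x,1)$ for all $t$, and chaining the three inequalities yields $-a+u_1-u_0\le u_t(x,t)\le a+u_1-u_0$ everywhere, as claimed. I do not anticipate a genuine obstacle here: all of the real work has already been absorbed into the choice of $a=a(u_0,u_1,\sup f)$ in Proposition \ref{c0}. The only point requiring a word of care is the boundary-derivative step, i.e.\ using the smoothness of $u$ at $t=0,1$ so that the one-sided difference quotients of $w$ converge and the sign of $w$ transfers to the sign of $w_t$ at the endpoints; with an explicit barrier having matching boundary values this is elementary and replaces any Hopf-lemma-type argument.
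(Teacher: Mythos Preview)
Your proof is correct and follows essentially the same approach as the paper: both use $u_{tt}>0$ to reduce to bounding $u_t$ at the endpoints, and then extract those endpoint bounds from the barrier $U_{-a}\le u$ (with matching boundary values) via one-sided difference quotients. The only cosmetic difference is that you package the argument through the auxiliary function $w=u-U_{-a}$, whereas the paper writes out the difference quotient for $u$ directly.
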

\begin{proof}
Since $u_{tt}>0$, it follows that $u_t(t, x)$ is increasing in $t$. Hence we only need to argue $u_t(0, x)\leq u_t(1, x)$ are both bounded. We compute, using Proposition \ref{c0},
\[
u_t(0, \cdot)=\lim_{t\rightarrow 0}\frac{u(t, \cdot)-u(0, \cdot)}{t}\geq \lim_{t\rightarrow 0} \frac{at(t-1)+t(u_1-u_0)}{t}=-a+u_1-u_0. 
\]
It is evident that $u_t(0, \cdot)\leq u_1-u_0$ by convexity. Similarly we have $u_1-u_0\leq u_t(1, \cdot)\leq a+u_1-u_0$. 

\end{proof}

To derive estimates of $|\nabla u|^2$ and second order derivatives, we need some preparation due to the complicated computations. 
First we need to choose a normalization condition. Note that if $u$ is admissible, then $\tilde u=u-c_1 t-c_2$ is also admissible since $A_u, E_u$ do not change at all. In particular if $u$ is a solution, then $\tilde u=u-c_1t-c_2$ is also a solution since  $\nabla \tilde u=\nabla u, D^2\tilde u=D^2 u$. The corresponding boundary condition is changed by a constant with $\tilde u_0=u_0-c_2, \tilde u_1=u_1-c_1-c_2$ and  $\tilde u_t=u_t-c_1$. Hence we can choose two sufficiently large constants $c_1$ and $c_2$ such that $\tilde u\leq -1$, and $\tilde u_t\leq -1$.  From now on  we choose such a normalization condition on $u_0, u_1$ such that, 
\begin{equation}\label{normalization}
-c_0\leq u\leq -1, -c_0\leq u_t\leq -1,
\end{equation} 
where $c_0$ is the uniform bound we have obtained for $|u|$ and $|u_t|$. \\

Next we compute $\cL_F(v)$ for various barrier functions $v$. The philosophy is well-known in nonlinear elliptic theory, to construct various barrier functions $v$ such that
\[
\cL_F(v)\geq -C+\text{good positive terms}
\]
Such barrier functions serve as the purpose of \emph{subharmonic functions} (or \emph{subsolutions}) with respect to $\cL_F$ and play an essential role in the maximum principle argument. 
The first such function is the $t$-functions,
\begin{prop}
Suppose $v=v(t)$ is a $t$-function, then
\begin{equation}
\begin{split}
\cL_F(v)=&\frac{v_{tt}}{u^2_{tt}}\left(\langle T_1(E_u), u_{tt}A_u\rangle-\sigma_2(E_u)\right)\\
=&\frac{v_{tt}}{u_{tt}^2}\left(\langle T_1(E_u), E_u+\nabla u_t\otimes \nabla u_t\rangle-\sigma_2(E_u)\right)\\
=&\frac{v_{tt}}{u_{tt}^{2}}\left(\sigma_2(E_u)+\langle T_1(E_u), \nabla u_t\otimes \nabla u_t\rangle \right)\\
=&v_{tt}\sigma_2(A_u),
\end{split}
\end{equation} 
where we apply Proposition \ref{gamma2} in the last step above.
In particular, \begin{equation}\label{t}
\cL_F(t^2)=2\sigma_2(A_u)\end{equation}
\end{prop}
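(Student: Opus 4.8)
The proof is a direct computation starting from the explicit formula \eqref{E3} for the linearized operator; the plan is to specialize \eqref{E3} to a function $v$ depending only on $t$ and then simplify the resulting scalar using the algebraic identities already recorded.

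First I would observe that if $v=v(t)$, then $\nabla v\equiv 0$, so the linearization $\cL_{A_u}(v)=\nabla^2 v+\nabla u\otimes\nabla v+\nabla v\otimes\nabla u-\langle\nabla u,\nabla v\rangle g$ vanishes identically; moreover $v_t=v'(t)$ is again a $t$-function, so $\nabla v_t=0$ as well. Substituting into the first expression in \eqref{E3} therefore kills the $\cL_{A_u}(v)$ term and both cross terms $\nabla u_t\otimes\nabla v_t$ and $\nabla v_t\otimes\nabla u_t$, leaving $\cL_F(v)=u_{tt}^{-1}\langle T_1(E_u),v_{tt}A_u\rangle-u_{tt}^{-2}\sigma_2(E_u)v_{tt}$, which is precisely the first displayed line after factoring out $v_{tt}/u_{tt}^2$.

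Next I would massage $\langle T_1(E_u),u_{tt}A_u\rangle-\sigma_2(E_u)$. Writing $u_{tt}A_u=E_u+\nabla u_t\otimes\nabla u_t$ (the defining relation $E_u=u_{tt}A_u-\nabla u_t\otimes\nabla u_t$) gives the second line. Then the identity \eqref{Eu1}, i.e. $\langle T_1(E_u),E_u\rangle=2\sigma_2(E_u)$, turns the $\langle T_1(E_u),E_u\rangle-\sigma_2(E_u)$ part into $\sigma_2(E_u)$, yielding the third line $\sigma_2(E_u)+\langle T_1(E_u),\nabla u_t\otimes\nabla u_t\rangle$. Finally I would apply Proposition \ref{gamma2} with $A=u_{tt}A_u$ and $X=\nabla u_t$: the first identity there (with $k=1$) gives $\langle T_1(E_u),\nabla u_t\otimes\nabla u_t\rangle=\langle T_1(u_{tt}A_u),\nabla u_t\otimes\nabla u_t\rangle$, and the second (with $k=2$) gives $\sigma_2(E_u)=\sigma_2(u_{tt}A_u)-\langle T_1(u_{tt}A_u),\nabla u_t\otimes\nabla u_t\rangle$; adding these, the cross term cancels and we are left with $\sigma_2(u_{tt}A_u)=u_{tt}^2\sigma_2(A_u)$ by homogeneity. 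Multiplying by $v_{tt}/u_{tt}^2$ then gives $\cL_F(v)=v_{tt}\sigma_2(A_u)$, and taking $v=t^2$ (so $v_{tt}=2$) yields \eqref{t}.

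Since every step is an application of identities already in hand, there is no genuine obstacle here; the only thing to watch is the bookkeeping of scaling factors — that $\sigma_2$ is $2$-homogeneous and $T_1$ is $1$-homogeneous under $A_u\mapsto u_{tt}A_u$ — and using the two equivalent forms of $\cL_F$ in \eqref{E3} consistently.
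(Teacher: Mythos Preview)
Your proof is correct and follows the same path the paper intends: specialize \eqref{E3} to a $t$-function to get the first line, rewrite $u_{tt}A_u=E_u+\nabla u_t\otimes\nabla u_t$ and use \eqref{Eu1} for the middle lines, then invoke Proposition~\ref{gamma2} (with $A=u_{tt}A_u$, $X=\nabla u_t$) together with the homogeneity of $\sigma_2$ for the final simplification. You have in fact spelled out the last step more carefully than the paper, which simply cites Proposition~\ref{gamma2}.
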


The second choice is the function $-u$ itself. 
We compute $\cL_F(u)$.
\begin{prop}\label{u}We have,
\begin{equation}\label{u1}
\cL_F(u)=3u_{tt}^{-1}\sigma_2(E_u)+\langle T_1(E_u), -A+\nabla u\otimes \nabla u-\frac{1}{2}|\nabla u|^2 g\rangle
\end{equation}
\end{prop}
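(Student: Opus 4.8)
The plan is to substitute $v=u$ directly into the first form of the linearized operator \eqref{E3} and simplify. The only inputs needed are the explicit dependence of $A_u$ on $u$, the definition $E_u=u_{tt}A_u-\nabla u_t\otimes\nabla u_t$, and the contraction identity $\langle T_1(S),S\rangle=2\sigma_2(S)$ for a symmetric matrix $S$ — this is \eqref{Eu1} rearranged, and it follows from $T_1(S)=\sigma_1(S)I-S$ together with $\sigma_1(S)^2=\mathrm{tr}(S^2)+2\sigma_2(S)$.

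First I would record the specialization at $v=u$: one has $v_{tt}=u_{tt}$, $\nabla v_t=\nabla u_t$, and, from the formula for $\cL_{A_u}$,
\[
\cL_{A_u}(u)=\nabla^2 u+2\nabla u\otimes\nabla u-|\nabla u|^2 g.
\]
The key algebraic observation is that, since $A_u=A+\nabla^2 u+\nabla u\otimes\nabla u-\tfrac12|\nabla u|^2 g$, one can write
\[
\cL_{A_u}(u)=(A_u-A)+\Bigl(\nabla u\otimes\nabla u-\tfrac12|\nabla u|^2 g\Bigr);
\]
that is, the linearization of $A_u$ along $u$ itself is \emph{not} simply $A_u-A$, there is an extra first-order tensor coming from the genuine nonlinearity of $A_u$ in $\nabla u$. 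Carrying this extra term along correctly is the one place where a slip is easy, and it is precisely what produces the shape of the right-hand side of \eqref{u1}.

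Substituting into \eqref{E3}, the powers of $u_{tt}$ cancel and the tensor inside the bracket becomes, after using $2A_u-2u_{tt}^{-1}\nabla u_t\otimes\nabla u_t=2u_{tt}^{-1}E_u$,
\[
A_u+\cL_{A_u}(u)-2u_{tt}^{-1}\nabla u_t\otimes\nabla u_t=2u_{tt}^{-1}E_u-A+\nabla u\otimes\nabla u-\tfrac12|\nabla u|^2 g,
\]
while the last term of \eqref{E3} contributes $-u_{tt}^{-1}\sigma_2(E_u)$. Applying $\langle T_1(E_u),E_u\rangle=2\sigma_2(E_u)$ to get $\langle T_1(E_u),2u_{tt}^{-1}E_u\rangle=4u_{tt}^{-1}\sigma_2(E_u)$ and combining with $-u_{tt}^{-1}\sigma_2(E_u)$ leaves the coefficient $3$ in front of $u_{tt}^{-1}\sigma_2(E_u)$, while the leftover contraction $\langle T_1(E_u),-A+\nabla u\otimes\nabla u-\tfrac12|\nabla u|^2 g\rangle$ is exactly the second term of \eqref{u1}. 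There is no analytic obstacle; the proposition is pure bookkeeping, the only genuine subtlety being the extra first-order term in $\cL_{A_u}(u)$ identified above.
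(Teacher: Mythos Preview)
Your proof is correct and follows essentially the same route as the paper: substitute $v=u$ into the first form of \eqref{E3}, rewrite $\cL_{A_u}(u)$ as $(A_u-A)+(\nabla u\otimes\nabla u-\tfrac12|\nabla u|^2 g)$, recognize $2A_u-2u_{tt}^{-1}\nabla u_t\otimes\nabla u_t=2u_{tt}^{-1}E_u$, and then apply \eqref{Eu1} to produce the coefficient $3$. The paper's version keeps the factor of $u_{tt}$ inside the bracket rather than cancelling it first, but the computation is otherwise identical.
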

\begin{proof}By \eqref{E3}, we compute
\[
\begin{split}
\cL_F(u)=&u_{tt}^{-1}\langle T_1(E_u), u_{tt}A_u+u_{tt}\cL_{A_u}(u)-2\nabla u_t\otimes \nabla u_t\rangle-u_{tt}^{-1}\sigma_2(E_u)\\
=&u_{tt}^{-1}\langle T_1(E_u), 2E_u-u_{tt}A +u_{tt}\nabla u\otimes \nabla u-\frac{u_{tt}}{2}|\nabla u|^2 g \rangle-u_{tt}^{-1}\sigma_2(E_u)\\
=&3u_{tt}^{-1}\sigma_2(E_u)+\langle T_1(E_u), -A +\nabla u\otimes \nabla u-\frac{1}{2}|\nabla u|^2 g \rangle.
\end{split}
\]
where we have used \eqref{Eu1}.
\end{proof}

\begin{rmk}Both the propositions above are derived in \cite{GS2} for general $k$. We include the computations here for completeness. 
\end{rmk}

We use the operator $D=(\p_t, \nabla)$ to denote the gradient on $\R\times M$, where the space derivative $\phi_k$ denotes the covariant derivative $\nabla _k\phi$. We rewrite \eqref{E3} as,
\begin{equation}\label{E4}
\cL_F(\phi)= u_{tt}^{-1}f \phi_{tt}+u_{tt}^{-1}\left\langle T_1(E_u), P_u(D^2\phi)\right \rangle,
\end{equation}
where 
\begin{equation}\label{P1}
P_u(D^2\phi)=u_{tt}\cL_{A_u} \phi-\nabla u_t\otimes \nabla \phi_t-\nabla \phi_t\otimes \nabla u_t+u_{tt}^{-1}\phi_{tt} \nabla u_t\otimes \nabla u_t.
\end{equation}
For simplicity we denote the symmetric tensor product as follows, 
\[
X\boxtimes Y:=X\otimes Y+Y\otimes X
\]
\begin{prop}\label{quadratic}We have the following,
\begin{equation}\label{E5}
\cL_F(\phi\psi)=\phi \cL_F(\psi)+\psi \cL_F(\phi)+Q_u(D\phi, D\psi)+2u_{tt}^{-1}f \phi_t\psi_t,
\end{equation}
where $Q_u$ is a quadratic form on $D\phi, D\psi$ given by
\begin{equation*}
Q_u(D\phi, D\psi)=u_{tt}^{-1}\left\langle T_1(E_u), u_{tt}\nabla \phi\boxtimes \nabla \psi-\phi_t \nabla u_t\boxtimes \nabla \psi-\psi_t \nabla u_t\boxtimes \nabla \phi+2u_{tt}^{-1}\phi_t\psi_t \nabla u_t\otimes \nabla u_t\right \rangle
\end{equation*}
Moreover, we compute
\begin{equation}\label{exponential}\cL_F(e^\phi)=e^\phi \cL_F(\phi)+e^{\phi}\left(\frac{1}{2}Q_u(D\phi, D\phi)+u_{tt}^{-1} f\phi_t^2\right)
\end{equation}
An important feature is that $Q_u$ is positive definite in the sense that
\[
Q_u(D\phi, D\phi)\geq 0. 
\]
\end{prop}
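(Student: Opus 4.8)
The plan is to compute $\cL_F(\phi\psi)$ directly from the formula \eqref{E4}, exploiting the second-order Leibniz rule $D^2(\phi\psi)=\psi D^2\phi+\phi D^2\psi+D\phi\boxtimes D\psi$. Since $\cL_F(\phi)=u_{tt}^{-1}f\phi_{tt}+u_{tt}^{-1}\langle T_1(E_u),P_u(D^2\phi)\rangle$ and $P_u$ is linear in $D^2\phi$, only two things need checking: first, that $P_u(D^2(\phi\psi))=\psi P_u(D^2\phi)+\phi P_u(D^2\psi)+P_u(D\phi\boxtimes D\psi)$, which holds because $P_u$ is an affine-linear functional of the Hessian and gradient entries (inspect \eqref{P1} term by term — $\nabla^2(\phi\psi)$, the $\nabla(\phi\psi)_t$ cross terms, and the $(\phi\psi)_{tt}$ term all split as required, and the $\langle\nabla u,\nabla(\phi\psi)\rangle g$ piece of $\cL_{A_u}$ contributes the mixed gradient term); second, that the $f$-term $u_{tt}^{-1}f(\phi\psi)_{tt}=u_{tt}^{-1}f(\psi\phi_{tt}+\phi\psi_{tt}+2\phi_t\psi_t)$ produces the claimed $2u_{tt}^{-1}f\phi_t\psi_t$ remainder. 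Collecting the pure-$\phi$ and pure-$\psi$ pieces into $\psi\cL_F(\phi)+\phi\cL_F(\psi)$ and defining $Q_u(D\phi,D\psi)$ to be exactly $u_{tt}^{-1}\langle T_1(E_u),P_u(D\phi\boxtimes D\psi)\rangle$ — which, written out, is the displayed bilinear form — gives \eqref{E5}.

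For the exponential identity \eqref{exponential}, I would simply take $\phi\to e^\phi$ in \eqref{E4}: since $\nabla(e^\phi)=e^\phi\nabla\phi$ and $\nabla^2(e^\phi)=e^\phi(\nabla^2\phi+\nabla\phi\otimes\nabla\phi)$ (and likewise in the $t$ and mixed directions), one gets $P_u(D^2 e^\phi)=e^\phi\bigl(P_u(D^2\phi)+P_u(D\phi\boxtimes D\phi)\cdot\tfrac12\bigr)$ after matching the quadratic terms with $\tfrac12 D\phi\boxtimes D\phi$, and $(e^\phi)_{tt}=e^\phi(\phi_{tt}+\phi_t^2)$; the $\phi_t^2$ part yields the $u_{tt}^{-1}f\phi_t^2$ term and the quadratic Hessian part yields $\tfrac12 Q_u(D\phi,D\phi)$. (Equivalently this is the $\phi=\psi$ specialization of \eqref{E5} composed with the chain rule, but the direct substitution is cleaner.)

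The only substantive point — and the part I expect to be the main obstacle — is the positivity claim $Q_u(D\phi,D\phi)\geq 0$. Here I would repeat verbatim the completing-the-square trick used in Proposition 2.5 and in the $C^0$ estimate: setting $\xi=\phi_t$ and $X=\nabla\phi$, the bracket inside $Q_u(D\phi,D\phi)$ is
\[
u_{tt}\nabla\phi\otimes\nabla\phi-\phi_t\,\nabla u_t\boxtimes\nabla\phi+u_{tt}^{-1}\phi_t^2\,\nabla u_t\otimes\nabla u_t=Z\otimes Z,\qquad Z:=\sqrt{u_{tt}}\,\nabla\phi-\frac{\phi_t}{\sqrt{u_{tt}}}\,\nabla u_t,
\]
a perfect square. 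Then $Q_u(D\phi,D\phi)=u_{tt}^{-1}\langle T_1(E_u),Z\otimes Z\rangle\geq 0$ because $E_u\in\Gamma^+_2$ (Proposition 2.8) forces $T_1(E_u)$ to be positive definite (Proposition 2.3(2)), and $u_{tt}>0$. This is exactly the structural reason the barrier-function machinery will work in the $C^2$ estimates, so it is worth isolating. Everything else is bookkeeping with the Leibniz rule; I would carry out the $P_u$-splitting first, then read off $Q_u$, then do the two chain-rule computations, and finish with the one-line square-completion for positivity.
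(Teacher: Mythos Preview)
Your proposal is correct and follows essentially the same route as the paper: the Leibniz/product-rule expansion of $\cL_F$ via the linearity of $P_u$, the chain rule for $e^\phi$, and the completing-the-square argument using $T_1(E_u)>0$ are exactly what the paper does. One small bookkeeping slip: since $\nabla\phi\boxtimes\nabla\phi=2\,\nabla\phi\otimes\nabla\phi$, the bracket in $Q_u(D\phi,D\phi)$ equals $2\,Z\otimes Z$, so $Q_u(D\phi,D\phi)=2\,u_{tt}^{-1}\langle T_1(E_u),Z\otimes Z\rangle$ rather than $u_{tt}^{-1}\langle\cdots\rangle$; this does not affect the positivity conclusion.
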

\begin{proof}
This is a straightforward computation. The main point is that $\cL_F$ and $P_u$ are second order linear differential operator and the product rule would introduce mixed terms on first derivatives, which lead to the terms $Q_u(D\phi, D\psi)+2u_{tt}^{-1}f \phi_t\psi_t$. Similarly this applies to $e^\phi$. Since $E_u\in \Gamma^+_2$, $T_1(E_u)>0$, it follows that 
\[
Q_u(D\phi, D\phi)=2u_{tt}^{-1}\langle T_1(E_u), Y\otimes Y\rangle\geq 0,
\]
where  $Y=(\sqrt{u_{tt}}) \nabla \phi-\phi_t \nabla u_t(\sqrt{u_{tt}})^{-1} .$ Clearly the positivity of $Q$ is simply the consequence of the ellipticity of $F$. 
\end{proof}

\begin{prop}We compute, using  Proposition \ref{u} and Proposition \ref{quadratic},
\begin{equation}\label{g2}
\cL_F(e^{-\l u})=\l e^{-\l u} \cL_F(-u)+\l^2e^{-\l u}\left(\frac{1}{2}Q_u(Du, Du)+u_{tt}^{-1}f u_t^2\right).
\end{equation}
\end{prop}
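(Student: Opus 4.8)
The statement to prove is the identity
\[
\cL_F(e^{-\l u})=\l e^{-\l u} \cL_F(-u)+\l^2e^{-\l u}\left(\tfrac{1}{2}Q_u(Du, Du)+u_{tt}^{-1}f u_t^2\right),
\]
which is an immediate specialization of the exponential product rule \eqref{exponential} to the function $\phi=-\l u$. So the proof is essentially a two-line bookkeeping argument, and the plan is to make that substitution explicit while tracking the linearity of each ingredient in its argument.

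First I would apply Proposition \ref{quadratic}, equation \eqref{exponential}, with $\phi$ replaced by $-\l u$, obtaining
\[
\cL_F(e^{-\l u})=e^{-\l u}\cL_F(-\l u)+e^{-\l u}\left(\tfrac12 Q_u(D(-\l u),D(-\l u))+u_{tt}^{-1}f(-\l u_t)^2\right).
\]
Then I would simplify each of the three terms. The operator $\cL_F$ is linear (it is the linearization of $F$, visibly linear in $\phi$ from \eqref{E3}/\eqref{E4}), so $\cL_F(-\l u)=\l\,\cL_F(-u)$. The quadratic form $Q_u(D\phi,D\psi)$ is bilinear in $(D\phi,D\psi)$ by its explicit formula in Proposition \ref{quadratic}, so $Q_u(D(-\l u),D(-\l u))=\l^2 Q_u(Du,Du)$. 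Finally $(-\l u_t)^2=\l^2 u_t^2$. Collecting these gives exactly the claimed identity, with the common factor $e^{-\l u}$ pulled out and $\l,\l^2$ distributed as stated.

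There is essentially no obstacle here; the only thing to be careful about is the sign and power bookkeeping, namely that $\cL_F(-u)$ (not $\cL_F(u)$) is the natural object — matching the usage of $-u$ as a barrier in Proposition \ref{u} and the surrounding estimates — and that the quadratic and $f u_t^2$ terms come with $\l^2$ rather than $\l$ because they are genuinely quadratic in $\phi$. One could equivalently record this from first principles by noting $\cL_F$ and $P_u$ are second-order linear operators and the product/chain rule for $e^{\phi}$ produces precisely the cross terms assembled into $\tfrac12 Q_u(D\phi,D\phi)+u_{tt}^{-1}f\phi_t^2$, but invoking \eqref{exponential} directly is cleanest. The remark that $Q_u(Du,Du)\ge 0$ (again from Proposition \ref{quadratic}, since $E_u\in\Gamma_2^+$ forces $T_1(E_u)>0$) is what makes this identity useful downstream, but it is not needed for the identity itself.
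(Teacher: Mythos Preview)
Your proposal is correct and matches the paper's approach exactly: the paper states this proposition without proof, merely citing Propositions \ref{u} and \ref{quadratic}, and your argument is precisely the intended one-line specialization of \eqref{exponential} to $\phi=-\l u$ followed by linearity/bilinearity bookkeeping. Your observation that Proposition \ref{u} is not actually needed for the identity itself (only for its downstream use in Lemma \ref{keylemma}) is also accurate.
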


\begin{prop}We compute,
\begin{equation}\label{E9}
\cL_F(u_t^2)=2u_t f_t+2fu_{tt}
\end{equation}
\end{prop}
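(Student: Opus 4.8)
The plan is to differentiate the equation $F(u_{tt},A_u,\nabla u_t)=f$ in the $t$-direction and then feed the result into the product formula \eqref{E5}. The first step is the observation that differentiating the equation in $t$ produces exactly $\cL_F(u_t)$ with no extra terms. Indeed, the background metric $g$ is $t$-independent, so $\p_t$ commutes with the spatial covariant derivative and no curvature contribution arises; the chain rule then gives $\p_t u_{tt}=(u_t)_{tt}$, $\p_t A_u=\cL_{A_u}(u_t)$ (the fixed tensor $A$ drops out, and $\p_t\big(\nabla u\otimes\nabla u-\tfrac12|\nabla u|^2 g\big)=\nabla u_t\boxtimes\nabla u-\langle\nabla u,\nabla u_t\rangle g$, which is the first-order part of $\cL_{A_u}$), and $\p_t(\nabla u_t)=\nabla(u_t)_t$. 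Comparing with the expression \eqref{E3} for $\cL_F$, one reads off $\p_t F=\cL_F(u_t)$, hence $\cL_F(u_t)=f_t$.

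Next I would apply the product rule \eqref{E5} with $\phi=\psi=u_t$. Since $\phi_t=\psi_t=u_{tt}$, the term $2u_{tt}^{-1}f\phi_t\psi_t$ equals $2fu_{tt}$, while $\phi\cL_F(\psi)+\psi\cL_F(\phi)=2u_t\cL_F(u_t)=2u_tf_t$ by the first step. It remains to check that the mixed quadratic form vanishes, i.e.\ $Q_u(Du_t,Du_t)=0$. By the identity established in the proof of Proposition \ref{quadratic}, $Q_u(D\phi,D\phi)=2u_{tt}^{-1}\langle T_1(E_u),Y\otimes Y\rangle$ with $Y=\sqrt{u_{tt}}\,\nabla\phi-\phi_t(\sqrt{u_{tt}})^{-1}\nabla u_t$; for $\phi=u_t$ this becomes $Y=\sqrt{u_{tt}}\,\nabla u_t-u_{tt}(\sqrt{u_{tt}})^{-1}\nabla u_t=0$, so $Q_u(Du_t,Du_t)=0$. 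Collecting the three contributions yields $\cL_F(u_t^2)=2u_tf_t+2fu_{tt}$, which is \eqref{E9}.

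There is no genuine obstacle here; the computation is short. The only point deserving a word of care is the justification that differentiating the fully nonlinear operator in $t$ returns precisely $\cL_F(u_t)$: this is clean only because the sole derivatives being interchanged are $\p_t$ with the $t$-independent spatial connection, so no curvature terms of $(M,g)$ appear — in contrast to what would happen if one differentiated in a spatial direction, where commutator (hence Riemann curvature) terms would enter. The vanishing of $Q_u(Du_t,Du_t)$ is the precise reflection of the fact that $u_t$ is an ``infinitesimal symmetry'' of the equation along the $t$-flow.
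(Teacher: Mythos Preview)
Your proof is correct and follows exactly the same approach as the paper: apply the product formula \eqref{E5} with $\phi=\psi=u_t$, use $\cL_F(u_t)=\p_t F=f_t$, and observe $Q_u(Du_t,Du_t)=0$. You simply supply more detail on the two points the paper leaves terse, namely the justification that $\p_t$ commutes with the spatial connection (so no curvature correction) and the explicit computation $Y=0$ forcing the quadratic term to vanish.
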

\begin{proof}By \eqref{E5}, we have
\[
\cL_F(u_t^2)=2u_t\cL_F(u_t)+Q_u(Du_t, D u_t)+2fu_{tt}
\]
Since taking time derivative has the same effect of taking variation, this gives
\[
\cL_F(u_t)=\p_t F=f_t. 
\]
It is clear that $Q_u(Du_t, Du_t)=0$. This completes the computation. 
\end{proof}

\begin{prop}\label{gradient}We compute
\begin{equation}\label{C1}
\cL_F(|\nabla u|^2)=2\nabla f \nabla u-2\left\langle T_1(E_u), \nabla u\nabla A+Rm(\nabla u, \nabla u)\right\rangle+Q_u(Du_i, Du_i)+2u_{tt}^{-1}f|\nabla u_t|^2,
\end{equation}
where we denote, 
\[
Rm(\nabla u, \nabla u)=R_{ilpk} u_i u_p \p_l\otimes \p_k
\]
\end{prop}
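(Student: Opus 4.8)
The plan is to expand $|\nabla u|^2=\sum_i u_i^2$ (summation over a local orthonormal frame, with $u_i=\nabla_i u$) and apply the product formula \eqref{E5} of Proposition \ref{quadratic} with $\phi=\psi=u_i$, then sum over $i$. Since the background metric is independent of $t$, $\p_t$ commutes with $\nabla_i$, so $(u_i)_t=\nabla_i u_t$ and $\sum_i (u_i)_t^2=|\nabla u_t|^2$; moreover $\sum_i Q_u(Du_i,Du_i)$ is exactly the quantity written $Q_u(Du_i,Du_i)$ in \eqref{C1}. This yields
\[
\cL_F(|\nabla u|^2)=\sum_i 2u_i\,\cL_F(u_i)+Q_u(Du_i,Du_i)+2u_{tt}^{-1}f|\nabla u_t|^2,
\]
so the whole computation is reduced to evaluating $\cL_F(u_i)$.

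The key step is to compute $\cL_F(u_i)$ by differentiating the equation $F(u_{tt},A_u,\nabla u_t)=f$ covariantly in the direction $\p_i$. This is the curved, inhomogeneous counterpart of the identity $\cL_F(u_t)=f_t$ used in the proof of \eqref{E9}: unlike $\p_t$, the covariant derivative $\nabla_i$ does not commute with the Hessian, and in addition $A_u=A+\nabla^2 u+\nabla u\otimes\nabla u-\tfrac12|\nabla u|^2 g$ contains the fixed background Schouten tensor $A$. Using the symmetry of the Hessian of $u$ and of $u_t$, together with the Ricci identity $\nabla_i\nabla_j\nabla_k u=(\nabla^2 u_i)_{jk}-R_{ijk}{}^{l}u_l$ and the elementary identity $\langle\nabla u,\nabla u_i\rangle=\tfrac12\nabla_i|\nabla u|^2$, one checks that
\[
\nabla_i(A_u)_{jk}=\nabla_i A_{jk}-R_{ijk}{}^{l}u_l+\cL_{A_u}(u_i)_{jk}.
\]
Feeding this, together with $\nabla_i u_{tt}=(u_i)_{tt}$ and $\nabla_i(\nabla u_t\otimes\nabla u_t)=\nabla u_t\boxtimes\nabla(u_i)_t$, into the chain rule for $\nabla_i F$ and collecting terms into the linearization \eqref{E3} (noting $\p F/\p(A_u)_{jk}=(T_1(E_u))^{jk}$) gives
\[
\nabla_i f=\cL_F(u_i)+\langle T_1(E_u),\,\nabla_i A-(R_{ijk}{}^{l}u_l)_{jk}\rangle,
\]
that is, $\cL_F(u_i)=\nabla_i f-\langle T_1(E_u),\nabla_i A\rangle+\langle T_1(E_u),(R_{ijk}{}^{l}u_l)_{jk}\rangle$.

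Multiplying by $2u^i$ and summing over $i$, the first term becomes $2\nabla f\cdot\nabla u$, the second becomes $-2\langle T_1(E_u),\nabla u\,\nabla A\rangle$ with $\nabla u\,\nabla A:=u^i\nabla_i A$, and the curvature term, after using the symmetries of the Riemann tensor, becomes $-2\langle T_1(E_u),Rm(\nabla u,\nabla u)\rangle$ with $Rm(\nabla u,\nabla u)=R_{ilpk}u_i u_p\,\p_l\otimes\p_k$ as in the statement. Substituting into the displayed expression for $\cL_F(|\nabla u|^2)$ gives \eqref{C1}. The only delicate point is the bookkeeping of the curvature commutator: one must check both that the non-curvature part of $\nabla_i A_u$ reassembles exactly as $\cL_{A_u}(u_i)$ and that the index placement and sign of the surviving Riemann term are those of $Rm(\nabla u,\nabla u)$ (which depends on the curvature sign convention); the rest is the product rule plus the definitions already recorded in \eqref{E3} and \eqref{E5}.
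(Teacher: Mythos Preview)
Your proposal is correct and follows essentially the same route as the paper: apply the product formula \eqref{E5} to reduce to computing $\cL_F(u_i)$, then obtain $\cL_F(u_i)$ by differentiating the equation in the $i$-th spatial direction and using the Ricci identity to account for the commutator $\nabla_i\nabla^2 u-\nabla^2 u_i$. The only cosmetic difference is that the paper differentiates the equivalent relation $\sigma_2(E_u)=u_{tt}f$ and then cancels the $u_{tt}$ factors, whereas you differentiate $F=f$ directly and recognize the result as $\cL_F(u_i)$ plus the defect $\langle T_1(E_u),\nabla_iA_u-\cL_{A_u}(u_i)\rangle$; your bookkeeping is slightly more economical but the content is identical.
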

\begin{proof}First we compute, applying \eqref{E5} to $\phi=\psi=u_i$, 
\begin{equation}\label{E10}
\cL_F(|\nabla u|^2)=2u_i\cL_F(u_i)+Q_u(Du_i, Du_i)+2u_{tt}^{-1}f|\nabla u_t|^2. 
\end{equation}
Now we compute
\[
u_{tt}\cL_F(u_i)= f u_{itt}+\left\langle T_1(E_u), u_{tt}(\nabla^2 u_i+\nabla u\boxtimes \nabla u_i-(\nabla u, \nabla u_i)g)-\nabla u_t\boxtimes \nabla u_{it}+u_{tt}^{-1} u_{itt}\nabla u_t\otimes \nabla u_t\right\rangle
\]
Taking derivative of $\sigma_2(E_u)=fu_{tt}$, we get
\[
\left\langle T_1(E_u), \nabla_i E_u\right\rangle=f_i u_{tt}+fu_{tti}
\]
We compute
\[
\nabla_i E_u=u_{tti}A_u+u_{tt}\left(\nabla_i A+\nabla_i\nabla^2 u+\nabla_i\nabla u\boxtimes \nabla u-(\nabla_i \nabla u, \nabla u) g\right)-\nabla u_{ti}\boxtimes \nabla u_t
\]
Note that 
\[
\nabla_i \nabla^2 u-\nabla^2 \nabla_i u=u_{tt}R_{ilpk} u_p \p_l\otimes \p_k
\]
It follows that
\begin{equation}
\begin{split}
u_{tt}\cL_F(u_i)=&f u_{itt}+\left\langle T_1(E_u), \nabla_i E_u-u_{tti}A_u-u_{tt}(\nabla_i A+R_{ilpk} u_p \p_l\otimes \p_k)+\frac{u_{itt}}{u_{tt}}\nabla u_t\otimes \nabla u_t\right\rangle\\
=& 2fu_{tti}+f_i u_{tt}-\left\langle T_1(E_u), u_{tti} u_{tt}^{-1} E_u+u_{tt}\nabla_i A+u_{tt}R_{ilpk} u_p \p_l\otimes \p_k\right\rangle\\
=&f_i u_{tt}-u_{tt}\left\langle T_1(E_u), \nabla_i A+R_{ilpk} u_p \p_l\otimes \p_k\right\rangle
\end{split}
\end{equation}
Hence we have
\begin{equation}\label{E6}
\cL_F(u_i)=f_i-\left\langle T_1(E_u), \nabla_i A+R_{ilpk} u_p \p_l\otimes \p_k\right\rangle
\end{equation}
This completes the computation by combining \eqref{E10} and \eqref{E6}. 
\end{proof}

\begin{rmk}The computations above are essentially derived in \cite{GS2} for general $k$. We use the quadratic form $Q_u$ to simplify the notations and computations. Of course the positivity of $Q_u$ is essentially equivalent to the fact that $F$ is an elliptic operator. 
\end{rmk}

Now we prove the estimate for $|\nabla u|^2$.  Since $T_1(A)>0$ then there exists $c_0$ such that $T_1(A)\geq c_0 g$.  In particular for any $E\in \Gamma^+_2$, we assume there exists a uniformly positive constant $c_1$ such that,
\[
\langle T_1(E), A\rangle=\langle E, T_1(A)\rangle \geq c_0 \sigma_1(E)=(n-1)c_1 \sigma_1(E)=c_1 \sigma_1(T_1(E))
\]
When there is no confusion, we also write $\sigma_1(T_1)=\sigma_1(T_1(E)).$
Combining all the computations above, we have the following estimates,
\begin{lemma}\label{keylemma}
For $\l, b\geq 1$ sufficiently large, we have
\begin{equation}
\cL_F(e^{-\l u}+bt^2)\geq -C_4 f+e^\l \sigma_1(T_1) (1+|\nabla u|^2)+e^\l(\sigma_2(A_u)+f u_t^2 u_{tt}^{-1}).
\end{equation}
where $C_4=C_4(\l, |u|_{C^0})$. 
\end{lemma}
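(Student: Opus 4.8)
The plan is to expand $\cL_F(e^{-\l u}+bt^2)$ term by term and rearrange everything into ``favourable terms plus $-C_4 f$''. Using $\cL_F(t^2)=2\sigma_2(A_u)$ from \eqref{t} and the exponential identity \eqref{g2}, one gets
\[
\cL_F(e^{-\l u}+bt^2)=\l e^{-\l u}\cL_F(-u)+\tfrac{\l^2}{2}e^{-\l u}Q_u(Du,Du)+\l^2 e^{-\l u}u_{tt}^{-1}fu_t^2+2b\sigma_2(A_u).
\]
For $\cL_F(-u)$ I would feed the equation $u_{tt}^{-1}\sigma_2(E_u)=f$ into Proposition \ref{u}, obtaining
\[
\cL_F(-u)=-3f+\langle T_1(E_u),A\rangle-T_1(E_u)(\nabla u,\nabla u)+\tfrac12|\nabla u|^2\sigma_1(T_1).
\]
Here $\langle T_1(E_u),A\rangle\ge c_1\sigma_1(T_1)$ by the structural inequality recorded above, $\tfrac12|\nabla u|^2\sigma_1(T_1)$ is already favourable, and $-3f$ is exactly of the allowed form; the only dangerous term is $-T_1(E_u)(\nabla u,\nabla u)$.

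The key step is to kill $-T_1(E_u)(\nabla u,\nabla u)$ using the positive quadratic form $Q_u$. By Proposition \ref{quadratic}, $\tfrac12 Q_u(Du,Du)=u_{tt}^{-1}T_1(E_u)(Y,Y)$ with $Y=\sqrt{u_{tt}}\,\nabla u-u_t u_{tt}^{-1/2}\nabla u_t$, and the elementary bound $T(a-b,a-b)\ge\tfrac12 T(a,a)-T(b,b)$ for positive semidefinite $T$ gives
\[
\tfrac12 Q_u(Du,Du)\ge\tfrac12 T_1(E_u)(\nabla u,\nabla u)-\tfrac{u_t^2}{u_{tt}^2}\langle T_1(E_u),\nabla u_t\otimes\nabla u_t\rangle.
\]
Applying Proposition \ref{gamma2} to $E_u=u_{tt}A_u-\nabla u_t\otimes\nabla u_t$ together with the equation identifies $\langle T_1(E_u),\nabla u_t\otimes\nabla u_t\rangle=u_{tt}^2\sigma_2(A_u)-u_{tt}f$, so the last term equals $-u_t^2\sigma_2(A_u)+u_t^2u_{tt}^{-1}f$, i.e. a bounded multiple of $\sigma_2(A_u)$ plus a harmless positive piece. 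Assembling $\l e^{-\l u}\cL_F(-u)+\tfrac{\l^2}{2}e^{-\l u}Q_u(Du,Du)$, the coefficient of $T_1(E_u)(\nabla u,\nabla u)$ becomes $(\tfrac{\l^2}{2}-\l)e^{-\l u}\ge 0$ once $\l\ge 2$, so this term may simply be discarded.

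It then remains to fix constants. Under the normalization $-c_0\le u\le -1$ one has $e^{\l}\le e^{-\l u}\le e^{\l c_0}$, so the surviving favourable terms $\l c_1 e^{-\l u}\sigma_1(T_1)$, $\tfrac{\l}{2}e^{-\l u}|\nabla u|^2\sigma_1(T_1)$ and $\l^2 e^{-\l u}u_{tt}^{-1}fu_t^2$ dominate $e^{\l}\sigma_1(T_1)$, $e^{\l}|\nabla u|^2\sigma_1(T_1)$ and $e^{\l}fu_t^2u_{tt}^{-1}$ respectively, once $\l$ is chosen large (depending only on $c_1$, and $\ge 2$). What is left is $-3\l e^{-\l u}f\ge -C_4 f$ with $C_4=C_4(\l,|u|_{C^0})$, together with $\bigl(2b-\l^2 c_0^2 e^{\l c_0}\bigr)\sigma_2(A_u)$, which is $\ge e^{\l}\sigma_2(A_u)$ after choosing $b$ large depending on $\l$ and $|u|_{C^0}$. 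The main obstacle is precisely the mixed $\nabla u_t$ terms in $Q_u$: one must route them through Proposition \ref{gamma2} and the equation so that they collapse into a clean $\sigma_2(A_u)$ term which the $bt^2$ barrier can absorb, and one must check that the resulting constant $C_4$ depends only on $\l$ and $|u|_{C^0}$ — in particular not on $\inf f$.
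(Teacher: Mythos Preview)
Your proof is correct and follows essentially the same strategy as the paper: expand via the exponential formula \eqref{g2}, use Proposition~\ref{u} and the structural bound $\langle T_1(E_u),A\rangle\ge c_1\sigma_1(T_1)$, absorb the dangerous term $-T_1(E_u)(\nabla u,\nabla u)$ by $Q_u$, and soak up the resulting negative $\sigma_2(A_u)$ term with the barrier $bt^2$.

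The only difference is in how you extract $T_1(E_u)(\nabla u,\nabla u)$ from $Q_u$. The paper completes the square with $Y=\sqrt{u_{tt}}\,\nabla u-2u_t u_{tt}^{-1/2}\nabla u_t$ (note the factor $2$) to obtain the exact identity $Q_u(Du,Du)=u_{tt}^{-1}T_1(E_u)(Y,Y)+T_1(E_u)(\nabla u,\nabla u)-2u_{tt}^{-2}u_t^2\langle T_1(E_u),\nabla u_t\otimes\nabla u_t\rangle$, and then invokes $\sigma_2(A_u)-u_{tt}^{-2}\langle T_1(E_u),\nabla u_t\otimes\nabla u_t\rangle=fu_{tt}^{-1}\ge 0$; you instead keep the standard $Y$ and use the Cauchy--Schwarz inequality $T(a-b,a-b)\ge\tfrac12 T(a,a)-T(b,b)$, then identify $\langle T_1(E_u),\nabla u_t\otimes\nabla u_t\rangle=u_{tt}^2\sigma_2(A_u)-u_{tt}f$ via Proposition~\ref{gamma2} and the equation. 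The two identities are of course equivalent, and both routes produce the same coefficient $(\tfrac{\l^2}{2}-\l)e^{-\l u}$ in front of $T_1(E_u)(\nabla u,\nabla u)$, so there is no material difference.
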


\begin{proof}
By Proposition \ref{u1}, we get
\[
\begin{split}
\cL_F(-u)=&-3f+\langle T_1(E_u), A-\nabla u\otimes \nabla u+\frac{1}{2} |\nabla u|^2 g\rangle\\
\geq &-3f+c_1 \sigma_1(T_1)+\frac{1}{2} \sigma_1(T_1)|\nabla u|^2-\langle  T_1(E_u), \nabla u\otimes \nabla u\rangle.
\end{split}
\]
We claim that for a constant $C_2\geq 2 u_t^2$, 
\[
Q_u(Du, Du)+C_2\sigma_2(A_u)\geq \langle T_1(E_u), \nabla u\otimes \nabla u\rangle
\]
We estimate,
\[
\begin{split}
Q_u(Du, Du)=&\frac{2}{u_{tt}}\langle T_1(E_u), u_{tt}\nabla u\otimes \nabla u-u_t\nabla u\boxtimes \nabla u_t+u_{tt}^{-1}u_t^2 \nabla u_t\otimes \nabla u_t\rangle\\
= &u_{tt}^{-1}\langle T_1(E_u), u_{tt}\nabla u\otimes \nabla u-2u_t\nabla u\boxtimes \nabla u_t+4u_{tt}^{-1}u_t^2 \nabla u_t\otimes \nabla u_t\rangle\\
&+\langle T_1(E_u), \nabla u\otimes \nabla u\rangle-2u_{tt}^{-2} u_t^2\langle T_1(E_u), \nabla u_t\otimes \nabla u_t\rangle\\
=&u_{tt}^{-1}\langle T_1(E_u), Y\otimes Y\rangle+\langle T_1(E_u), \nabla u\otimes \nabla u\rangle-2u_{tt}^{-2} u_t^2\langle T_1(E_u), \nabla u_t\otimes \nabla u_t\rangle,
\end{split}
\]
where $Y=\sqrt{u_{tt}}\nabla u-2(\sqrt{u_{tt}})^{-1} u_t \nabla u_t$. The claim follows since
\[
\sigma_2(A_u)-u_{tt}^{-2} \langle T_1(E_u), \nabla u_t\otimes \nabla u_t\rangle= f u_{tt}^{-1}\geq 0. 
\]
Choose $b\geq (C_2+1) \l^2 e^{-\l u}$, then we estimate
\begin{equation}
\begin{split}
\cL_F(e^{-\l u}+bt^2)\geq & \l e^{-\l u}\left(-3f+c_1 \sigma_1(T_1)+\frac{1}{2} \sigma_1(T_1)|\nabla u|^2-\langle  T_1(E_u), \nabla u\otimes \nabla u\rangle\right)\\
&+\l ^2 e^{-\l u}\left(\frac{1}{2}Q_u (Du, Du)+fu_t^2 u_{tt}^{-1}\right)+2b\sigma_2(A_u)\\
\geq & -3 \l e^{-\l u} f+\l e^{-\l u}\sigma_1(T_1)\left(c_1+\frac{1}{2}|\nabla u|^2\right)+\l^2 e^{-\l u}(\sigma_2(A_u)+f u_t^2u_{tt}^{-1})\\
&+\left(\frac{\l ^2}{2}-\l\right) e^{-\l u} \langle  T_1(E_u), \nabla u\otimes \nabla u\rangle.
\end{split}
\end{equation}
This completes the proof if $\l$ is sufficiently large. 
\end{proof}

\begin{lemma}There exists a uniform constant $C_3=C_3(\sup f, \sup |\nabla f^{\frac{1}{3}}|, g, |u_0|_{C^1}, |u_1|_{C^1})$ such that
\[
|\nabla u|\leq C_3. 
\]
\end{lemma}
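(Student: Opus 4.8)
The plan is to run a standard maximum-principle argument for the quantity $|\nabla u|^2$ using the barrier function constructed in Lemma \ref{keylemma}. More precisely, I would consider the auxiliary function
\[
W = |\nabla u|^2 + A_0\big(e^{-\l u} + bt^2\big),
\]
for constants $\l, b \geq 1$ as in Lemma \ref{keylemma} and a constant $A_0 \geq 1$ to be chosen. Since $W$ agrees with a controlled quantity on the boundary $t=0, 1$ (where $u = u_i$ is prescribed and $|\nabla u|$ is bounded by $|u_0|_{C^1}, |u_1|_{C^1}$; note $e^{-\l u}$ and $bt^2$ are already bounded by $|u|_{C^0}$), it suffices to show that $W$ cannot attain an interior maximum unless $|\nabla u|$ is already a priori bounded. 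So I assume $W$ attains its maximum at an interior point $p \in (0,1)\times M$ and apply $\cL_F$ there, using that $\cL_F W \leq 0$ at an interior max (recall $\cL_F$ is elliptic with no zeroth-order term, by Proposition 2.5 and its proof).

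The two ingredients are: first, Proposition \ref{gradient} (equation \eqref{C1}) for $\cL_F(|\nabla u|^2)$, and second, Lemma \ref{keylemma} for $\cL_F(e^{-\l u}+bt^2)$. From \eqref{C1} the terms $Q_u(Du_i, Du_i) + 2u_{tt}^{-1}f|\nabla u_t|^2 \geq 0$ are favorable and can be discarded. The term $2\nabla f\nabla u$ is bounded by $C f^{1/3}(\sup|\nabla f^{1/3}|)|\nabla u|$ — here I want to rewrite $\nabla f = 3 f^{2/3}\nabla f^{1/3}$ and use $f^{2/3}|\nabla u| \lesssim \varepsilon f + \varepsilon^{-1} f^{1/3}|\nabla u|^2 (\sup|\nabla f^{1/3}|)^2$, keeping the $f^{1/3}$ factor explicit so that the final constant depends on $\sup|\nabla f^{1/3}|$ rather than $\inf f$. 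The genuinely dangerous term is the curvature/first-order term $-2\langle T_1(E_u), \nabla u\,\nabla A + Rm(\nabla u,\nabla u)\rangle$: the $Rm$ piece is bounded in absolute value by $C_1\sigma_1(T_1(E_u))|\nabla u|^2$ and the $\nabla u\,\nabla A$ piece by $C_1\sigma_1(T_1(E_u))|\nabla u|$. Against this I have, from Lemma \ref{keylemma}, the gain $A_0 e^{\l}\sigma_1(T_1)(1+|\nabla u|^2)$, which beats $C_1\sigma_1(T_1)|\nabla u|^2$ once $A_0 e^\l \geq 2C_1$; the leftover $\tfrac12 A_0 e^\l \sigma_1(T_1)|\nabla u|^2$ then absorbs the lower-order $C_1\sigma_1(T_1)(|\nabla u|+1)$ for $|\nabla u|$ large, and also absorbs the contribution from the term $\cL_F(|\nabla u|^2)$ we could not otherwise sign. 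Thus at $p$ we get $0 \geq \cL_F W \geq -C_4 f - Cf + (\text{strictly positive in }|\nabla u|^2)$ modulo the $f$-terms; but Lemma \ref{keylemma} also hands us $A_0 e^\l(\sigma_2(A_u)+fu_t^2u_{tt}^{-1})$, and crucially $\sigma_2(A_u)$ controls $f$ from below via \eqref{gamma3}/Proposition \ref{gamma} together with the normalization $u_t \leq -1$ (so $u_t^2 \geq 1$), giving $A_0 e^\l \cdot f \cdot(\text{something bounded below}) \geq C_4 f + Cf$ for $A_0$ large. This yields a contradiction at $p$ unless $|\nabla u|(p) \leq C_3$, and hence $W \leq C_3$ everywhere.

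The main obstacle I anticipate is bookkeeping the $f$-dependence so that the final constant $C_3$ depends only on $\sup f$ and $\sup|\nabla f^{1/3}|$ (not on $\inf f$). This forces the careful splitting $\nabla f = 3f^{2/3}\nabla f^{1/3}$ and the use of $\sigma_2(A_u) \gtrsim f\,\sigma_2(A_u)^{-1}\sigma_1(A_u) \gtrsim f$ (which needs a lower bound on $\sigma_1(A_u)/\sigma_2(A_u)^2$ — available since $A_u \in \Gamma_2^+$ is bounded below and $\sigma_2(A_u)$ is bounded above a priori, or more robustly just from \eqref{gamma3} with the normalization $u_t^2 \geq 1$ feeding the $fu_t^2u_{tt}^{-1}$ term, once $u_{tt}$ enters; here one should note $f u_t^2 u_{tt}^{-1} \geq f/u_{tt}$ and combine with $\sigma_2(A_u) \geq f/u_{tt}\cdot(\ldots)$, or simply absorb using that at the max point the discarded good terms include enough positivity). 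A secondary subtlety is that $\sigma_1(T_1(E_u))$ need not be bounded below uniformly, so every bad term must genuinely carry a $\sigma_1(T_1)$ factor to be absorbed — which is exactly why \eqref{C1} is written with $T_1(E_u)$ throughout and why the curvature term's sign is harmless. Once these are tracked, the argument is the routine "maximum principle with a barrier" and closes as above.
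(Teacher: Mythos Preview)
Your barrier and the overall maximum-principle setup are exactly right and match the paper's approach. The gap is in the endgame: your plan to absorb the two ``$f$-terms'' separately does not close. Specifically, after absorbing the curvature terms you arrive (at the interior max $p$) at an inequality of the shape
\[
c\,\sigma_1(T_1)|\nabla u|^2 + c\,f u_t^2 u_{tt}^{-1} \leq 2|\nabla f|\,|\nabla u| + C' f,
\]
and you then try to (i) split $|\nabla f||\nabla u| \lesssim \varepsilon f + \varepsilon^{-1} f^{1/3}|\nabla u|^2$ and feed the $f^{1/3}|\nabla u|^2$ piece into $\sigma_1(T_1)|\nabla u|^2$, and (ii) absorb $C'f$ into $\sigma_2(A_u)+fu_t^2u_{tt}^{-1}$. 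Neither step is justified. For (i), you yourself note that $\sigma_1(T_1(E_u))$ has no uniform lower bound, so there is no reason $\sigma_1(T_1)\gtrsim f^{1/3}$; the term $f^{1/3}|\nabla u|^2$ does not carry a $\sigma_1(T_1)$ factor and cannot be absorbed. For (ii), the equation only gives $\sigma_2(A_u)>f/u_{tt}$ and $fu_t^2u_{tt}^{-1}\geq f/u_{tt}$, and since $u_{tt}$ is not yet bounded above, neither quantity dominates $f$. Your parenthetical ``$\sigma_2(A_u)$ bounded above a priori'' is circular (that is a $C^2$ estimate), and the alternative ``combine with $\sigma_2(A_u)\geq f/u_{tt}\cdot(\ldots)$'' does not produce anything better than $f/u_{tt}$.

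The missing idea is to treat the two good terms \emph{together} rather than separately. The paper applies the three-term AM--GM
\[
\sigma_1(T_1)|\nabla u|^2+\sigma_1(T_1)|\nabla u|^2+f u_t^2 u_{tt}^{-1}\geq 3\bigl(\sigma_1(T_1)^2\,u_{tt}^{-1}\,f\,u_t^2\,|\nabla u|^4\bigr)^{1/3},
\]
and the point is that the $u_{tt}$'s cancel: since $\sigma_1(T_1)^2=(n-1)^2\sigma_1(E_u)^2\geq 2(n-1)^2\sigma_2(E_u)=2(n-1)^2 f u_{tt}$, one gets $\sigma_1(T_1)^2 u_{tt}^{-1}\gtrsim f$, hence the right side is $\gtrsim f^{2/3}|\nabla u|^{4/3}$ (using $u_t^2\geq 1$). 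Now writing $|\nabla f|=3f^{2/3}|\nabla f^{1/3}|$ and dividing through by $f^{2/3}$ yields
\[
|\nabla u|^{4/3}\lesssim |\nabla f^{1/3}|\,|\nabla u|+f^{1/3},
\]
which bounds $|\nabla u|$ with the stated dependence on $\sup f$ and $\sup|\nabla f^{1/3}|$ only. So your framework is correct; what is missing is precisely this product inequality that eliminates $u_{tt}$ and converts the left side into a power of $f$ times a power of $|\nabla u|$.
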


\begin{proof}We take the barrier function
\[
v=|\nabla u|^2+e^{-\l u}+bt^2,
\]
where $\l, b$ are the constants in Lemma \ref{keylemma}. 
We compute
\[
\cL_F(v)=\cL_F(|\nabla u|^2)+\cL_F(e^{-\l u}+bt^2).\]
We have, by Proposition \ref{C1}, that
\[
\cL_F(|\nabla u|^2)\geq 2\nabla f\nabla u-C_1 \sigma_1(T_1)-C_1\sigma_1(T_1)|\nabla u|^2
\]
Hence by Lemma \ref{keylemma}, we have
\[
\cL_F(v)\geq 2\nabla f\nabla u-C_3 f+2\sigma_1(T_1) |\nabla u|^2+f u_t^2 u_{tt}^{-1}.
\]
If $v$ achieves its maximum on the boundary, then we are already done. Otherwise, suppose  $v$ achieves its maximum at $p=(t, x)\in (0, 1)\times M$. Then $\cL_F(v)\leq 0$ at $p$.
Hence it follows that (at $p$)
\[
2\sigma_1(T_1) |\nabla u|^2+f u_t^2 u_{tt}^{-1}\leq 2|\nabla f||\nabla u|+C_3 f
\]
We compute
\[
\sigma_1(T_1)|\nabla u|^2+\sigma_1(T_1) |\nabla u|^2+f u_t^2 u_{tt}^{-1}\geq 3\left(\sigma_1(T_1)^2 u_{tt}^{-1} f u_{t}^2|\nabla u|^4\right)^{\frac{1}{3}}. 
\]
Since $\sigma_1(T_1)^2u_{tt}^{-1}=(n-1)^2 \sigma_1(E_u)^2 u_{tt}^{-1}\geq 2(n-1)^2 \sigma_2(E_u) u_{tt}^{-1}=2(n-1)^2 f$, it follows that (at $p$), 
\[
f^{\frac{2}{3}} |\nabla u|^{\frac{4}{3}}\leq |\nabla f| |\nabla u|+C_3 f. 
\]
This gives the upper bound of $|\nabla u|$ at $p$, and hence the upper bound of $v$. It is not hard to check the dependence of the constants. 
\end{proof}

\begin{rmk}Lemma \ref{keylemma} is essentially proved in \cite{GS2} (for general $k$) and it serves the key to achieve the estimate of $|\nabla u|$. The estimate of $|\nabla u|$ is done in \cite{GS2} (for general $k$) and our argument is a minor modification for $k=2$. 
\end{rmk}

\subsection{$C^2$ estimates}Now we derive the estimates of second order.  Note that $A_u\in \Gamma^+_2$ implies that $\sigma_1(A_u)>0$.  Given the uniform bound on $|\nabla u|$, 
\[
\sigma_1(A_u)=\text{Tr}(A)+\Delta u+\left(1-\frac{n}{2}\right) |\nabla u|^2>0.
\]
This leads to a lower bound of $\Delta u$: there exists a constant $C_2$ such that  $\Delta u+C_2\geq 1. $ 
Moreover, this gives the equivalence of $\sigma_1(A_u)$ and $\Delta u$ in the sense 
\begin{equation}
|\sigma_1(A_u)-\Delta u|\leq C_2. 
\end{equation}
We want to derive  upper bound on $u_{tt}$ and $\Delta u+C_2$ (equivalently, the upper bound of $\sigma_1(A_u)$), which will imply the full hessian bound of $u$ since $A_u\in \Gamma^+_2$, and 
\[
|A_u|^2=\sigma_1(A_u)^2-2\sigma_2(A_u)\leq \sigma_1(A_u)^2.
\] 
The bound on $|\nabla u_t|$ will follow from Proposition \ref{gamma}, in the sense that
\[
u_{tt}\sigma_1(A_u)-|\nabla u_t|^2>0. 
\]
The estimates of second order contain the boundary estimates and the interior estimates. 
The boundary is given by two time slices $\{t=0\}\times M$ and $\{t=1\}\times M$.   The tangential-tangential direction, namely $|\nabla^2u|$ is immediate by the boundary data $|\nabla^2 u_0|, |\nabla^2 u_1|$. While the usual ``harder" part of the normal-normal direction  ($u_{tt}$) follows directly from the equation once the tangential-normal direction ($|\nabla u_t|$) is bounded,
\[
u_{tt}\sigma_2(A_u)=\langle T_1(A_u), \nabla u_t\otimes \nabla u_t\rangle+f.
\]
Note that $\sigma_2(A_u)\geq \delta>0$ at $t=0$ and $t=1$, for some uniform constant $\delta$ depending only on $u_0, u_1$. 
Hence one only needs to bound $|\nabla u_t|$ on the boundary. Such a uniform estimate has been obtained by Gursky-Streets in \cite{GS2} for the equation for all $1\leq k\leq n$, 
\begin{equation}\label{gs}
u_{tt}^{1-k}\sigma_k(E_u)=f
\end{equation}
They stated their results for $E_u^\epsilon=(1+\epsilon)u_{tt}A_u-\nabla u_t\otimes \nabla u_t$ but $\epsilon$ does not play any role in their argument. 
We summarize their results as follows,
\begin{thm}[Gursky-Streets \cite{GS2}]\label{TGS}If $E_u\in \Gamma^+_2$ and $u$ solves \eqref{gs}. There exists a uniform constant $C_3$, such that
\[
\max_{M\times \{0, 1\}}(u_{tt}+|\nabla^2 u|+|\nabla u_t|)\leq C_3.
\]
\end{thm}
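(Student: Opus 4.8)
The plan is to reduce the theorem to one estimate — a bound for the mixed second derivatives $|\nabla u_t|$ on the two time slices $\{t=0\}$ and $\{t=1\}$ — and then to prove that bound by a barrier argument of Caffarelli--Nirenberg--Spruck type, adapted to the cylinder $[0,1]\times M$ whose boundary is exactly the two slices and whose ``normal'' direction $t$ is affine. For the reduction: on $\{t=0\}$ we have $u(0,\cdot)=u_0$ on all of $M$, so the full spatial jet $\nabla u,\nabla^2 u,\Delta u$ there equals that of $u_0$ and is bounded by $C_2$; likewise on $\{t=1\}$. Given a bound $|\nabla u_t|\le C_3$ on the two slices, the equation yields $u_{tt}=\sigma_2(A_{u_i})^{-1}\bigl(f+\langle T_1(A_{u_i}),\nabla u_t\otimes\nabla u_t\rangle\bigr)$ there, which is bounded since $\sigma_2(A_{u_i})\ge\delta>0$. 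Finally $A_u\in\Gamma^+_2$ gives $|A_u|^2=\sigma_1(A_u)^2-2\sigma_2(A_u)\le\sigma_1(A_u)^2$, and with $|\sigma_1(A_u)-\Delta u|\le C_2$ (from the $C^1$ estimate) the bounds on $\Delta u$ and on $u_{tt}$ produce the full Hessian bound. Thus everything comes down to bounding $|\nabla u_t|$ on $M\times\{0,1\}$.

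For the mixed estimate, fix a point $p$ on $\{t=0\}$ and a unit spatial direction $\p_\alpha$; extend $\nabla_\alpha u_0$ to be independent of $t$ and set $v:=u_\alpha-(u_0)_\alpha$, so that $v=0$ on $\{t=0\}$ and $\p_t v(p)=\nabla_\alpha u_t(p)$ is exactly the quantity to control. Differentiating the equation by $\p_\alpha$ and using \eqref{E6} gives $\cL_F(v)=f_\alpha-\langle T_1(E_u),\nabla_\alpha A+R_{\alpha lpk}u_p\,\p_l\otimes\p_k\rangle-\cL_F\bigl((u_0)_\alpha\bigr)$, and by the $C^1$ estimate together with the smoothness of the background Schouten and curvature tensors and of $u_0$ this is bounded by a constant times $1+\sigma_1(T_1(E_u))+|\nabla u_t|^2$; here $\sigma_1(T_1(E_u))+u_{tt}^{-1}f$ is, up to a constant, the trace of the coefficients of the elliptic operator $\cL_F$ in the $(n+1)$-dimensional Hessian variable.

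Next, on a small cylinder $\Omega=\{0\le t\le\rho\}\times B_\rho(p)$, I would build a barrier $\Phi$ out of three pieces. First, a large multiple of $u-U_{-a}$: this is nonnegative and vanishes on $M\times\{0,1\}$ by Proposition \ref{c0}, and since for $a$ large $U_{-a}$ is an admissible strict subsolution ($F(U_{-a})>\sup f$) — indeed a subsolution in a quantitative ($C$-subsolution) sense — the concavity of $G=\log F$ (Lemma \ref{concavity}) furnishes $\cL_F(u-U_{-a})\le-\epsilon\bigl(\sigma_1(T_1(E_u))+u_{tt}^{-1}f\bigr)$, a negative term of the order of the trace of $\cL_F$, which is what allows the barrier to absorb the right-hand side of the equation for $v$. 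Second, the affine function $t$, which satisfies $\cL_F(t)=0$ (a $t$-function with $t_{tt}=0$) and supplies a ramp of bounded slope toward the boundary. Third, a tangential quadratic $|x-p|^2$, needed so that $\Phi$ dominates $|v|$ on the lateral boundary $\{|x-p|=\rho\}$, whose $\cL_F$ is by \eqref{E4} of the order of $\sigma_1(T_1(E_u))$ and is controlled, after choosing $\rho$ small, against the $\epsilon$-term from the first piece. Tuning the coefficients so that $\Phi\ge|v|$ on $\partial\Omega$, $\Phi(p)=0$, $\p_t\Phi(p)\le C_3$, and $\cL_F\Phi\le-|\cL_F v|$, the maximum principle applied to the two supersolutions $\Phi\mp v$ gives $|v|\le\Phi$ on $\Omega$, and differentiating in $t$ at $p$ yields $|\nabla_\alpha u_t(p)|\le\p_t\Phi(p)\le C_3$. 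The slice $\{t=1\}$ is handled symmetrically with $t$ replaced by $1-t$.

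The hard part is the barrier construction in the last step. Inside $\Omega$ the coefficients of $\cL_F$ — equivalently $T_1(E_u)$, $u_{tt}$, $\sigma_1(A_u)$, $|\nabla u_t|$ — are not a priori controlled (controlling them is the whole point), so $\Phi$ must dominate a right-hand side that is itself comparable to the trace of those very coefficients (and to $|\nabla u_t|^2$), while the requirement that $\Phi$ dominate $v$ on the lateral boundary forces a positive tangential quadratic whose $\cL_F$ carries the opposite sign to what is wanted. Reconciling these competing demands requires the strict ($C$-)subsolution $U_{-a}$ together with the concavity of $\log F$ to produce a negative term of the order of the trace, and a careful choice of the magnitudes of $a$, $\rho$, and the barrier coefficients in the correct order; this delicate balancing is precisely the boundary second-order estimate carried out by Gursky--Streets \cite{GS2}. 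Granted that step, the reductions above complete the proof.
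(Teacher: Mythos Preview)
The paper does not prove this theorem; the paragraph immediately preceding the statement records exactly the reduction you give (tangential--tangential $|\nabla^2 u|$ from the boundary data $u_0,u_1$; normal--normal $u_{tt}$ recovered from the equation once $|\nabla u_t|$ is bounded, using $\sigma_2(A_{u_i})\ge\delta>0$ on the slices), and then the remaining $|\nabla u_t|$ boundary bound is simply cited from \cite{GS2}. Your proposal reproduces this reduction correctly and, like the paper, ultimately defers the hard barrier step to \cite{GS2}; the CNS-type sketch you add is reasonable extra context but is not present in, and so cannot be compared against, this paper.
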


Gursky-Streets also obtained interior $C^2$ estimates for \eqref{gs}, depending on the parameter $\epsilon^{-1}$. 
The original computations of $\cL_F(u_{tt})$ and $\cL_F(\Delta u)$ in \cite{GS2} are really involved and impressive. 
Here we offer a variant of such computations and this provides great simplifications. Our treatment should be very standard in nonlinear elliptic theory for concave (convex) operators, in particular over domains of Euclidean spaces.
However, the nonlinear terms of first order in $A_u$ and the curvature of the background metric will bring extra challenge, not only making the computations much more complicated, but also introducing several nonlinear terms which need extra care. 
That is the main difficulty that we overcome to obtain a uniform interior $C^2$ estimates. 

We need some preparations. 
Given a symmetric matrix $R=(r_{ij})$ of $(n+1)\times (n+1)$, we use $r=(r_{ij})$ for the $n\times n$ portion with $ij\neq 0$ and $Y=(r_{01}, \cdots, r_{0n})$.
We write 
\[
F(R)=r_{00}\sigma_2(r)-\langle T_1(r), Y\otimes Y\rangle, \;\text{and}\; G(R)=\log F(R).
\]
We use the standard notation
\[
G^{ij}=\frac{\p G}{\p r_{ij}}=F^{-1} F^{ij}, G^{ij, kl}=\frac{\p^2 G}{\p r_{ij}\p r_{kl}}.
\]
Take the matrix $R$ of the form
\[
R=\begin{pmatrix}u_{tt} & \nabla u_t\\
\nabla u_t& A_u
\end{pmatrix}
\]
Then we write the equation $F(R)=f$ and its equivalent form $G(R)=\log f. $
With this notation, we also record the linearization of $F(R)$. Given a smooth function $\phi$, we have
\begin{equation}\label{second0}
\cL_F(\phi)=F^{ij}\Phi_{ij}, \text{with}\; \Phi= \begin{pmatrix}\phi_{tt} & \nabla \phi_t\\
\nabla \phi_t& \cL_{A_u}\phi
\end{pmatrix}
\end{equation}
We record the derivatives of $F$. 
\begin{prop}\label{F-derivative}We have
\[
G^{ij}=F^{-1} F^{ij}, G^{ij, kl}=F^{-1}F^{ij, kl}-F^{-2}F^{ij}F^{kl}
\]
We compute, for $ij\neq 0$,
\begin{equation}\label{F-derivative2}
\begin{split}
&F^{00}=\sigma_2(r), F^{00, 00}=0,\; F^{00, i0}=0, \;\;F^{00, ij}=T_1(r)^{ij}\\
&F^{i0}=-\langle T_1(r), Y\boxtimes e_i\rangle=F^{0i},\; F^{i0, j0}=-2T_1(r)^{ij}, \;\;F^{i0, kl}=-\langle T_1(e_{kl}), Y\boxtimes e_i\rangle\\
&F^{kl}=\langle T_1(r_{00}r-Y\otimes Y), e_{kl}\rangle, \;\;F^{ij, kl}=r_{00} \langle T_1(e_{ij}), e_{kl}\rangle
\end{split}
\end{equation}
\end{prop}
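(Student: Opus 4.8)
The plan is to establish Proposition~\ref{F-derivative} by a direct differentiation, arranged so that the special algebraic structure of $F$ carries the computation. First I would dispose of the two chain-rule identities. Since $F>0$ on the admissible region and $G=\log F$, one has $G^{ij}=\partial_{r_{ij}}\log F=F^{-1}F^{ij}$; differentiating once more and using the product rule, $G^{ij,kl}=\partial_{r_{kl}}\bigl(F^{-1}F^{ij}\bigr)=F^{-1}F^{ij,kl}-F^{-2}F^{ij}F^{kl}$. These hold formally for any smooth positive $F$ and require nothing about $\sigma_2$.

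The substance is then the list of explicit derivatives of $F(R)=r_{00}\sigma_2(r)-\langle T_1(r),Y\otimes Y\rangle$, and I would extract them from three structural observations. (i) $F$ is \emph{affine} in $r_{00}$, which immediately gives $F^{00}=\sigma_2(r)$, $F^{00,00}=0$ and $F^{00,i0}=0$ (the block $\sigma_2(r)$ does not involve $Y$), while $F^{00,ij}=\partial_{r_{ij}}\sigma_2(r)=T_1(r)^{ij}$ by the classical identity for the first Newton transformation (recall $T_1(S)=\sigma_1(S)I-S$). (ii) $F$ is a \emph{quadratic form} in the variables $Y$, with coefficient matrix $-T_1(r)$ and no $r_{00}$--$Y$ cross term; differentiating $-\langle T_1(r),Y\otimes Y\rangle$ in the $Y$-entries of $R$ and keeping the symmetrization of the off-diagonal entries (which is exactly what the $\boxtimes$ records) yields $F^{i0}=F^{0i}=-\langle T_1(r),Y\boxtimes e_i\rangle$ and $F^{i0,j0}=-\langle T_1(r),e_i\boxtimes e_j\rangle=-2T_1(r)^{ij}$. (iii) $T_1$ is a \emph{linear} operator on symmetric matrices, so $\partial_{r_{kl}}T_1(r)=T_1(e_{kl})$; hence differentiating $F^{i0}$ in the $r$-block gives $F^{i0,kl}=-\langle T_1(e_{kl}),Y\boxtimes e_i\rangle$, and differentiating $r_{00}\sigma_2(r)-\langle T_1(r),Y\otimes Y\rangle$ in $r_{kl}$ gives $F^{kl}=r_{00}T_1(r)^{kl}-\langle T_1(e_{kl}),Y\otimes Y\rangle$. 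To reach the displayed form $F^{kl}=\langle T_1(r_{00}r-Y\otimes Y),e_{kl}\rangle$ I would use that the bilinear pairing $(B,C)\mapsto\langle T_1(B),C\rangle=\operatorname{tr}(B)\operatorname{tr}(C)-\langle B,C\rangle$ is symmetric, so $\langle T_1(e_{kl}),Y\otimes Y\rangle=\langle T_1(Y\otimes Y),e_{kl}\rangle$; and to get $F^{ij,kl}=r_{00}\langle T_1(e_{ij}),e_{kl}\rangle$ I differentiate $F^{ij}$ once more in $r$, the $Y\otimes Y$ term dropping out because it is independent of $r$, and again use linearity and self-adjointness of $T_1$.

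I do not expect a genuine obstacle here: every ingredient --- the identity $\partial\sigma_2/\partial r_{ij}=T_1(r)^{ij}$, the linearity of $T_1$ as $T_1(S)=\sigma_1(S)I-S$, and the self-adjointness of $T_1$ with respect to the trace pairing --- is elementary and already implicit in the preliminary material (Propositions~\ref{gamma1}--\ref{gamma2}). The single point that needs care, and the one I would state explicitly at the outset, is the convention for differentiating with respect to the entries of the symmetric matrix $R$: the off-diagonal derivatives carry the usual factor of $2$, which is precisely why the symmetrized products $Y\boxtimes e_i$ and $e_i\boxtimes e_j$ appear in $F^{i0}$, $F^{i0,j0}$ and $F^{i0,kl}$ rather than the unsymmetrized tensor products. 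Once this convention is fixed so as to be consistent with the linearization formula~\eqref{second0}, all of \eqref{F-derivative2} follows by straightforward substitution.
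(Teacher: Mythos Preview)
Your proposal is correct and is precisely the ``straightforward computation'' the paper invokes; the paper offers no further detail, so your argument is the same approach with the steps written out. The only addition worth noting is your explicit identification of the self-adjointness $\langle T_1(B),C\rangle=\langle T_1(C),B\rangle$ and the symmetric-matrix differentiation convention, both of which are exactly the bookkeeping points needed to recover the displayed forms of $F^{kl}$ and $F^{i0}$.
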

\begin{proof}This is a straightforward computation. 
\end{proof}

Now we are ready to compute $\cL_F(u_{tt})$ and $\cL_F(\Delta u)$.
\begin{prop}\label{tt} We have the following,
\begin{equation}
\cL_F(u_{tt})=f_{tt}-f_t^2 f^{-1}-f G^{ij, kl} \p_t r_{ij} \p_t r_{kl}-\langle T_1(E_u), 2\nabla u_t\otimes \nabla u_t-|\nabla u_t|^2 g\rangle.
\end{equation}
\end{prop}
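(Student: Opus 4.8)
The plan is to use the concave form $G(R)=\log f$ of the equation, where $G=\log F$ and
\[
R=\begin{pmatrix}u_{tt} & \nabla u_t\\ \nabla u_t& A_u\end{pmatrix},
\]
and to differentiate it twice in $t$. Since $\p_t$ commutes with the fixed, $t$-independent connection $\nabla$ (so that no curvature terms are produced), the first $t$-derivative of $G(R)=\log f$ gives $G^{ij}\p_t R_{ij}=f_t/f$, and the second gives
\[
G^{ij,kl}\,\p_t R_{ij}\,\p_t R_{kl}+G^{ij}\,\p_t^2 R_{ij}=\frac{f_{tt}}{f}-\frac{f_t^2}{f^2},
\]
with all indices running over $0,1,\dots,n$ (this is what is written as $\p_t r_{ij}$ in the statement). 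Multiplying by $F=f$ and using $G^{ij}=F^{-1}F^{ij}$ turns this into an identity for $F^{ij}\p_t^2 R_{ij}$ with right-hand side $f_{tt}-f_t^2 f^{-1}-f\,G^{ij,kl}\p_t R_{ij}\p_t R_{kl}$.

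It then remains to express $F^{ij}\p_t^2 R_{ij}$ in terms of $\cL_F(u_{tt})$. By \eqref{second0}, $\cL_F(u_{tt})=F^{ij}\Phi_{ij}$ with $\Phi$ the linearization matrix of $u_{tt}$: its $(00)$-entry is $(u_{tt})_{tt}$, its $(0i)$-entries are $\nabla_i(u_{tt})_t$, and its $(ij)$-block is $\cL_{A_u}(u_{tt})$. The matrix $\p_t^2 R$ agrees with $\Phi$ in the $(00)$- and $(0i)$-entries, while its $(ij)$-block is $\p_t^2 A_u$. From $A_u=A+\nabla^2 u+\nabla u\otimes\nabla u-\tfrac12|\nabla u|^2 g$ one has $\p_t A_u=\cL_{A_u}(u_t)$, and differentiating once more I would check that
\[
\p_t^2 A_u=\cL_{A_u}(u_{tt})+2\,\nabla u_t\otimes\nabla u_t-|\nabla u_t|^2 g,
\]
the extra term coming exactly from the nonlinear first-order part $\nabla u\otimes\nabla u-\tfrac12|\nabla u|^2 g$ of the Schouten tensor. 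Hence $\p_t^2 R$ equals $\Phi$ plus the matrix that vanishes except in the $n\times n$ block, where it equals $2\nabla u_t\otimes\nabla u_t-|\nabla u_t|^2 g$. Since by Proposition \ref{F-derivative} the $n\times n$ block of $(F^{ij})$ is $F^{kl}=\langle T_1(r_{00}r-Y\otimes Y),e_{kl}\rangle=T_1(E_u)^{kl}$, contracting gives
\[
F^{ij}\,\p_t^2 R_{ij}=\cL_F(u_{tt})+\langle T_1(E_u),\,2\nabla u_t\otimes\nabla u_t-|\nabla u_t|^2 g\rangle,
\]
and combining with the first step produces the claimed identity.

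The computation is essentially routine linearization bookkeeping, so there is no deep obstacle; the one point requiring care is the commutator-type term $2\nabla u_t\otimes\nabla u_t-|\nabla u_t|^2 g$ produced in $\p_t^2 A_u$ by the ``nonstandard'' nonlinearity stressed in the introduction, and the fact that it is contracted against $T_1(E_u)$ rather than $T_1(A_u)$. For the $C^2$ estimate to follow I would also note at this stage that the concavity of $G$ (Lemma \ref{concavity}) makes $-f\,G^{ij,kl}\p_t R_{ij}\p_t R_{kl}\ge 0$, so that in the resulting formula for $\cL_F(u_{tt})$ the only term whose sign is not immediately controlled is $-\langle T_1(E_u),2\nabla u_t\otimes\nabla u_t-|\nabla u_t|^2 g\rangle$.
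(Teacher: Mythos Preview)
Your proposal is correct and follows essentially the same approach as the paper: differentiate $G(R)=\log f$ twice in $t$, identify the discrepancy between $\p_t^2 R$ and the linearization matrix $\Phi$ of $u_{tt}$ as the rank-one-type correction $\cR=2\nabla u_t\otimes\nabla u_t-|\nabla u_t|^2 g$ in the spatial block, and contract with $F^{ij}$ using $F^{kl}=T_1(E_u)^{kl}$ for $kl\neq 0$. Your additional observation about the sign of $-fG^{ij,kl}\p_t r_{ij}\p_t r_{kl}$ via concavity is exactly what the paper uses immediately afterward.
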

\begin{proof}
We compute
\[
\p_t G=G^{ij} \p_t r_{ij}=f_t f^{-1}, \p_t^2 (G)=G^{ij, kl} \p_t r_{ij} \p_t r_{kl}+G^{ij}\p^2_t r_{ij}= f_{tt} f^{-1}- (f_tf^{-1})^2. 
\]
That is

\begin{equation}\label{second1}
G^{ij, kl} \p_t r_{ij} \p_t r_{kl}+F^{-1}F^{ij}\p^2_t r_{ij}=f_{tt} f^{-1}- (f_tf^{-1})^2.
\end{equation}
Now we consider 
\[
(\p^2_t r_{ij})=\p^2_t R=\begin{pmatrix}\p^2_tu_{tt} & \p^2_t\nabla u_t\\
\p^2_t\nabla u_t& \p^2_tA_u
\end{pmatrix}
\]
The main point is that $A_u$, hence $R$ is not linear on $D^2u$.  We compute
\[
\begin{split}
\p^2_t A_u=& \nabla^2 u_{tt}+\nabla u_{tt}\boxtimes \nabla u-(\nabla u_{tt}, \nabla u) g+2\nabla u_t\otimes \nabla u_t-|\nabla u_t|^2 g\\
=& \cL_{A_u} u_{tt}+2\nabla u_t\otimes \nabla u_t-|\nabla u_t|^2 g.
\end{split}\]
Denote $\cR=2\nabla u_t\otimes \nabla u_t-|\nabla u_t|^2 g$ and this is the term coming from the nonlinearity of $A_u$. 
Hence we can write, with $\phi=u_{tt}$,
\[
\p^2_t R=\begin{pmatrix} {\phi}_{tt} & \nabla \phi_t\\
\nabla \phi_t& \cL_{A_u}(\phi)+\cR
\end{pmatrix}
\]
By \eqref{second0} and \eqref{second1}, we get that
\[
G^{ij, kl} \p_t r_{ij} \p_t r_{kl}+F^{-1}\cL_F(u_{tt})+F^{-1}F^{ij}\cR_{ij}=f_{tt} f^{-1}-(f_tf^{-1})^2,
\]
where we use the notation $\cR_{i0}=0$, for $i=0, 1, \cdots, n$. We claim that
\[
F^{ij}\cR_{ij}=T_1(E_u)^{ij}\cR_{ij}=\langle T_1(E_u), \cR\rangle.
\]
But this is straightforward since $F=u_{tt}^{-1}\sigma_2(E_u)$, 
\[
F^{ij}=\langle T_1(E_u), e_{ij}\rangle, ij\neq 0
\] This completes the proof. 
\end{proof}

Next we compute $\cL_F(\Delta u)$ in a similar way. The computations are more involved since not only the nonlinearity of $A_u$, but the background geometry will play an important role. 
\begin{prop}\label{delta1}We have the following,
\begin{equation}\label{delta}
\cL_F(\Delta u)=-fG^{ij, kl} \nabla_p r_{ij}\nabla_p r_{kl}+\Delta f-|\nabla f|^2 f^{-1}-F^{ij}\cR_{1, ij},
\end{equation}
where $\cR_1$ is given in \eqref{r1} and \eqref{r2}. We have the following, 
\[
F^{ij}\cR_{1, ij}=-2u_{tt}^{-1}\langle T_1(E_u), Ric(\nabla u_t, \cdot)\boxtimes \nabla u_t\rangle+\langle T_1(E_u), \cS \rangle.
\]
For simplicity of notation, we identify $Ric(\nabla u_t, \cdot)$ with its dual vector.
We can write $\cS$ as
\[\cS=2\sum_p \nabla\nabla_p u\otimes \nabla \nabla_p u-|\nabla^2 u|^2g+Rm*\nabla^2 u+\cS_0
\]
where $\cS_0$ is a uniformly bounded term (matrix) and $Rm*\nabla^2 u$ denotes two terms of contraction of curvature with $\nabla^2u$ (which we do not need precise expression). 
\end{prop}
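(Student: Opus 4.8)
\emph{Plan of proof.} The plan is to differentiate the equation written as $G(R)=\log f$ twice in the spatial directions and then take the trace, in direct analogy with the computation of $\cL_F(u_{tt})$ in Proposition~\ref{tt} where one differentiated in $t$. Writing $\nabla_p$ for the spatial covariant derivative and applying the chain rule twice, summation over $p$ gives
\[
\sum_p G^{ij,kl}\nabla_p r_{ij}\nabla_p r_{kl}+G^{ij}\sum_p\nabla_p\nabla_p r_{ij}=\Delta\log f=\Delta f\,f^{-1}-|\nabla f|^2 f^{-2}.
\]
So the whole point is to identify the symmetric $(n+1)\times(n+1)$ matrix $\sum_p\nabla_p\nabla_p R$. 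I claim $\sum_p\nabla_p\nabla_p R=\Phi+\cR_1$, where
\[
\Phi=\begin{pmatrix}(\Delta u)_{tt}&\nabla(\Delta u)_t\\ \nabla(\Delta u)_t&\cL_{A_u}(\Delta u)\end{pmatrix},
\]
so that $F^{ij}\Phi_{ij}=\cL_F(\Delta u)$ by \eqref{second0}, and $\cR_1$ collects the discrepancy caused by (a) the nonlinear first order terms $\nabla u\otimes\nabla u-\frac12|\nabla u|^2 g$ in $A_u$, which are not linear in $D^2u$, and (b) the commutators of covariant derivatives on the curved background. Granting the claim, contracting the displayed identity with $F^{ij}$, using $G^{ij}=F^{-1}F^{ij}$, and multiplying through by $F=f$ immediately yields \eqref{delta}.

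To prove the claim I compute $\cR_1$ block by block. In the $(0,0)$ slot, $\sum_p\nabla_p\nabla_p u_{tt}=\Delta u_{tt}=(\Delta u)_{tt}$ since $\Delta$ commutes with $\p_t$, so there is no correction. In the $(i,0)$ slot, the Ricci identity applied to the scalar $u_t$ gives $\sum_p\nabla_p\nabla_p\nabla_i u_t=\nabla_i\Delta u_t+Ric(\nabla u_t,\cdot)_i$, so the cross block of $\cR_1$ is $Ric(\nabla u_t,\cdot)$. In the $n\times n$ block, since $A_u=A+\nabla^2 u+\nabla u\otimes\nabla u-\frac12|\nabla u|^2 g$,
\[
\sum_p\nabla_p\nabla_p A_u=\Delta A+\Delta\nabla^2 u+\Delta(\nabla u\otimes\nabla u)-\tfrac12\Delta(|\nabla u|^2)g.
\]
Commuting $\Delta$ past $\nabla^2$ on the scalar $u$ produces $\nabla^2(\Delta u)+Rm*\nabla^2 u+(\text{bounded})$; the Leibniz rule and the $(i,0)$-computation above give $\Delta(\nabla u\otimes\nabla u)=\nabla u\boxtimes\nabla(\Delta u)+2\sum_p\nabla\nabla_p u\otimes\nabla\nabla_p u+Ric(\nabla u,\cdot)\boxtimes\nabla u$; and the Bochner formula gives $-\frac12\Delta(|\nabla u|^2)g=-\langle\nabla u,\nabla(\Delta u)\rangle g-|\nabla^2 u|^2 g-Ric(\nabla u,\nabla u)g$. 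The three terms $\nabla^2(\Delta u)+\nabla u\boxtimes\nabla(\Delta u)-\langle\nabla u,\nabla(\Delta u)\rangle g$ are precisely $\cL_{A_u}(\Delta u)$, and since $|\nabla u|$ is uniformly bounded by the previous subsection, the terms $\Delta A$, $Ric(\nabla u,\cdot)\boxtimes\nabla u$, $Ric(\nabla u,\nabla u)g$ together with the lower order curvature contractions are uniformly bounded; collecting them into $\cS_0$, the $n\times n$ block of $\cR_1$ is exactly $\cS=2\sum_p\nabla\nabla_p u\otimes\nabla\nabla_p u-|\nabla^2 u|^2 g+Rm*\nabla^2 u+\cS_0$.

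It remains to contract $\cR_1$ with $F^{ij}$. By Proposition~\ref{F-derivative}, $F^{kl}=\langle T_1(E_u),e_{kl}\rangle$ for $kl\neq 0$, so the $n\times n$ block of $\cR_1$ contributes $\langle T_1(E_u),\cS\rangle$; and $F^{i0}=F^{0i}=-\langle T_1(A_u),\nabla u_t\boxtimes e_i\rangle$, so the two cross blocks contribute $-2\langle T_1(A_u),Ric(\nabla u_t,\cdot)\boxtimes\nabla u_t\rangle$, while $F^{00}\cR_{1,00}=0$. Finally, writing $T_1(E_u)=u_{tt}T_1(A_u)+\nabla u_t\otimes\nabla u_t-|\nabla u_t|^2 I$ and noting the elementary identity $\langle\nabla u_t\otimes\nabla u_t-|\nabla u_t|^2 I,\,Ric(\nabla u_t,\cdot)\boxtimes\nabla u_t\rangle=0$, the cross contribution equals $-2u_{tt}^{-1}\langle T_1(E_u),Ric(\nabla u_t,\cdot)\boxtimes\nabla u_t\rangle$, which gives the stated formula $F^{ij}\cR_{1,ij}=-2u_{tt}^{-1}\langle T_1(E_u),Ric(\nabla u_t,\cdot)\boxtimes\nabla u_t\rangle+\langle T_1(E_u),\cS\rangle$ and completes the proof.

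The main obstacle is the bookkeeping in the $n\times n$ block: one must commute spatial covariant derivatives through $\nabla^2 u$ carefully, decide which curvature contractions are genuinely of lower order (these go into $\cS_0$) and which must be retained as $Rm*\nabla^2 u$, and, most importantly, verify that the purely quadratic-in-Hessian remainder assembles into precisely $2\sum_p\nabla\nabla_p u\otimes\nabla\nabla_p u-|\nabla^2 u|^2 g$ with this sign. It is this last combination, paired against $T_1(E_u)$, whose control is the crux of the interior $C^2$ estimate and the source of the restriction $n\geq 4$ described in the introduction.
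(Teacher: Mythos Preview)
Your proof is correct and follows essentially the same route as the paper's own argument: differentiate $G(R)=\log f$ twice in space, identify $\Delta R$ as $\Phi+\cR_1$ via the Bochner--Weitzenb\"ock formulas block by block, and contract with $F^{ij}$. Your additional verification that $\langle\nabla u_t\otimes\nabla u_t-|\nabla u_t|^2 I,\,Ric(\nabla u_t,\cdot)\boxtimes\nabla u_t\rangle=0$, which justifies rewriting $-2\langle T_1(A_u),\cdot\rangle$ as $-2u_{tt}^{-1}\langle T_1(E_u),\cdot\rangle$ in the cross contribution, makes explicit a step the paper records only by stating the alternate form $F^{i0}=-u_{tt}^{-1}\langle T_1(E_u),\nabla u_t\boxtimes e_i\rangle$.
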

\begin{proof}
We compute
\begin{equation}\label{r3}
\Delta G(R)=G^{ij, kl} \nabla_p r_{ij}\nabla_p r_{kl}+F^{-1} F^{ij} \Delta r_{ij}=\Delta f f^{-1}-|\nabla f|^2 f^{-2}. 
\end{equation}
Now we compute
\[
\left(\Delta r_{ij}\right)=\Delta R= \begin{pmatrix}\Delta u_{tt} & \Delta \nabla u_t\\
\Delta \nabla u_t& \Delta A_u
\end{pmatrix}
\]
Recall $A_u=A+\nabla^2 u+\nabla u\otimes \nabla u-|\nabla u|^2 g/2$ and now we compute $\Delta A_u$. 
We need several Bochner-Weitzenbock formula as follows,
\[
\begin{split}
&\Delta \nabla u_t=\nabla \Delta u_t+Ric(\nabla u_t, \cdot), \Delta \nabla^2 u=\nabla^2 \Delta u+Rm*\nabla^2 u+\nabla Rm *\nabla u.\\
&\Delta (\nabla u\otimes \nabla u)=\nabla \Delta u\boxtimes \nabla u+Ric(\nabla u, \cdot)\boxtimes \nabla u+2\nabla \nabla_p u\otimes \nabla \nabla_p u\\
&\Delta \left(\frac{1}{2}|\nabla u|^2\right)=|\nabla^2 u|^2+Ric(\nabla u, \nabla u)+\langle \nabla \Delta u, \nabla u\rangle.
\end{split}
\]
We use $Rm*\nabla^2 u+\nabla Rm *\nabla u$ to denote contraction of terms which we do not need precise expression. 
We can then compute
\[
\Delta R= \begin{pmatrix}(\Delta u)_{tt} & \nabla \Delta u_t+Ric(\nabla u_t, \cdot)\\
\nabla \Delta u_t+Ric(\nabla u_t, \cdot) & \cL_{A_u}(\Delta u)+\cS
\end{pmatrix}
\]
where $\cS$ is the remaining matrix of the form
\begin{equation}\label{r2}
\begin{split}
\cS=&Ric(\nabla u, \cdot)\boxtimes \nabla u+2\nabla \nabla_p u\otimes \nabla \nabla_p u-(|\nabla^2 u|^2+Ric(\nabla u, \nabla u))g\\
&+ \Delta A+Rm*\nabla^2 u+\nabla Rm *\nabla u.
\end{split}
\end{equation}
Denote 
\begin{equation}\label{r1}
\cR_1=\begin{pmatrix}0 & Ric(\nabla u_t, \cdot)\\
Ric(\nabla u_t, \cdot) & \cS
\end{pmatrix}
\end{equation}
Then we can write
\[
\Delta R=\cR_1+\begin{pmatrix}(\Delta u)_{tt} & \nabla \Delta u_t\\
\nabla \Delta u_t & \cL_{A_u}(\Delta u)
\end{pmatrix}
\]
It then follows that
\[
F^{ij}\Delta r_{ij}=\cL_F(\Delta u)+F^{ij}\cR_{1, ij}
\]
Together with \eqref{r3} this completes the proof of \eqref{delta}. The computation of $F^{ij}\cR_{1, ij}$ is straightforward, noting that
\[
F^{i0}=-u_{tt}^{-1}\langle T_1(E_u), \nabla u_t\boxtimes e_i\rangle, i\neq 0. 
\]
\end{proof}

We will need the following estimate, which would be used to take care of the terms coming from the first order terms of $A_u$. 
\begin{lemma}\label{positivity}Let $\phi$ be any smooth function. For $n= 4$, 
\begin{equation}\label{g3}
\left\langle T_1(E_u), -\nabla \phi\otimes \nabla \phi+\frac{|\nabla \phi|^2}{2} g\right\rangle\geq 0
 \end{equation}
 For $n\geq 5$, 
 \begin{equation}\label{g31}
\left\langle T_1(E_u), -\nabla \phi\otimes \nabla \phi+\frac{|\nabla \phi|^2}{2} g\right\rangle\geq \frac{2}{5}\sigma_1(E_u)|\nabla\phi|^2.
 \end{equation}
 \end{lemma}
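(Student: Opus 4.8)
The plan is to reduce the inequality to a statement about the eigenvalues of $E_u$ and the components of $\nabla\phi$ in an eigenbasis, then exploit the structure of $T_1$ on $\Gamma^+_2$. First I would diagonalize $E_u$ at the point in question, with eigenvalues $\mu_1 \geq \cdots \geq \mu_n$ and write $\nabla\phi = \sum_i b_i e_i$ in the corresponding orthonormal frame. Since $T_1(E_u)$ is diagonal in the same basis with entries $\sigma_1(E_u) - \mu_i$, the quantity to estimate becomes
\[
\left\langle T_1(E_u), -\nabla\phi\otimes\nabla\phi + \tfrac{1}{2}|\nabla\phi|^2 g\right\rangle = \sum_i (\sigma_1 - \mu_i)\left(\tfrac{1}{2}\sum_j b_j^2 - b_i^2\right) = \sum_i b_i^2\left(\tfrac{1}{2}\sum_j(\sigma_1-\mu_j) - (\sigma_1-\mu_i)\right),
\]
where $\sigma_1 = \sigma_1(E_u)$. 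The inner coefficient of $b_i^2$ simplifies to $\tfrac{n-2}{2}\sigma_1 - \tfrac{1}{2}(\sigma_1 - \mu_i) \cdot 2 + \ldots$ — more carefully, $\tfrac{1}{2}\sum_j(\sigma_1-\mu_j) = \tfrac{1}{2}(n\sigma_1 - \sigma_1) = \tfrac{n-1}{2}\sigma_1$, so the coefficient is $\tfrac{n-1}{2}\sigma_1 - \sigma_1 + \mu_i = \tfrac{n-3}{2}\sigma_1 + \mu_i$.

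So the expression equals $\sum_i b_i^2\left(\tfrac{n-3}{2}\sigma_1 + \mu_i\right)$, and everything hinges on lower bounds for $\tfrac{n-3}{2}\sigma_1 + \mu_i$. The only worry is the smallest eigenvalue $\mu_n$, which could be negative (while $E_u \in \Gamma^+_2$ forces $\sigma_1 > 0$ and $\sigma_2 > 0$ but not positivity of all $\mu_i$). The key auxiliary fact I would establish is the sharp bound on how negative $\mu_n$ can be relative to $\sigma_1$ for a matrix in $\Gamma^+_2$: writing $\sigma_1 = \mu_n + s$ where $s = \sum_{i<n}\mu_i$, the condition $\sigma_2 > 0$ reads $\mu_n s + \sigma_2(\mu_1,\dots,\mu_{n-1}) > 0$, and using $\sigma_2(\mu_1,\dots,\mu_{n-1}) \leq \tfrac{n-2}{2(n-1)}s^2$ one gets $\mu_n > -\tfrac{n-2}{2(n-1)}s \geq -\tfrac{n-2}{2(n-1)}\cdot\tfrac{\sigma_1}{?}$; chasing the algebra yields $\mu_n \geq -\tfrac{1}{n}\sigma_1$ at worst (the extreme case being all equal, which gives $\mu_n = \sigma_1/n > 0$, so in fact for genuinely negative $\mu_n$ the bound is strictly better). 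I expect the clean statement is $\mu_n > -\tfrac{n-2}{2n-2}\,\sigma_1/(n-1)\cdot(\ldots)$ — the precise constant is the routine part; what matters is that it combines with $\tfrac{n-3}{2}\sigma_1$ to give a nonnegative (for $n=4$) or strictly positive (for $n\geq 5$) coefficient.

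For $n \geq 5$: $\tfrac{n-3}{2} \geq 1$, and since $\mu_n \geq -c_n\sigma_1$ with $c_n < 1$ one gets $\tfrac{n-3}{2}\sigma_1 + \mu_i \geq (\tfrac{n-3}{2} - c_n)\sigma_1$; checking that $\tfrac{n-3}{2} - c_n \geq \tfrac{2}{5}$ for all $n\geq 5$ (it is easiest at $n=5$, where $\tfrac{n-3}{2} = 1$ and one needs $c_5 \leq \tfrac{3}{5}$) gives \eqref{g31}. For $n = 4$: $\tfrac{n-3}{2} = \tfrac12$, and one needs $\tfrac12\sigma_1 + \mu_n \geq 0$, i.e. $\mu_n \geq -\tfrac12\sigma_1$; since $\Gamma^+_2$ in dimension $4$ forces $\mu_n > -\tfrac13\sigma_1$ (indeed more), this holds, giving \eqref{g3}. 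The main obstacle is pinning down the sharp constant $c_n$ in the eigenvalue bound $\mu_n \geq -c_n\sigma_1$ for $\Gamma^+_2$ — this is a constrained optimization ($\min \mu_n$ subject to $\sigma_1$ fixed and $\sigma_2 \geq 0$), solved by Lagrange multipliers or by the observation that the minimizer has $\mu_1 = \cdots = \mu_{n-1}$; I would handle it as a short self-contained lemma. The rest is bookkeeping in the eigenbasis, which I would not spell out in full.
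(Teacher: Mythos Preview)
Your approach is essentially the paper's: diagonalize $E_u$, reduce to the pointwise coefficient $\frac{n-3}{2}\sigma_1 + \mu_i$, and bound $\mu_n$ from below via the $\Gamma^+_2$ constraint. The paper writes $\mu_n > -\sigma_2(\mu_1,\dots,\mu_{n-1})/s$ with $s=\sum_{i<n}\mu_i$ and then invokes the elementary inequality $s^2 \geq \frac{2(n-1)}{n-2}\,\sigma_2(\mu_1,\dots,\mu_{n-1})$, which is exactly your Newton bound $\sigma_2 \leq \frac{n-2}{2(n-1)}s^2$ read the other way.

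One correction to your constants, which does matter. Chasing $\mu_n > -\frac{n-2}{2(n-1)}s$ together with $s = \sigma_1 - \mu_n$ gives $c_n = \frac{n-2}{n}$, not $\frac{1}{n}$; for $n=4$ this is $c_4 = \frac{1}{2}$, not $\frac{1}{3}$. The $n=4$ case is therefore \emph{tight}: you need $\mu_4 \geq -\frac{1}{2}\sigma_1$ and the sharp $\Gamma^+_2$ bound is exactly $\mu_4 > -\frac{1}{2}\sigma_1$ (approached by $(1,1,1,-1+\epsilon)$, as the paper notes in a remark). Likewise at $n=5$ one has $c_5 = \frac{3}{5}$, again exactly the threshold you identified. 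Your proposed Lagrange-multiplier lemma (minimizer at $\mu_1 = \cdots = \mu_{n-1}$) does produce these correct constants, so the argument goes through; but there is no slack at $n=4,5$, contrary to what your write-up suggests.
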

 \begin{proof}We compute
 \begin{equation}
 \left\langle T_1(E_u), -\nabla \phi\otimes \nabla \phi+\frac{|\nabla \phi|^2}{2} g\right\rangle=\frac{1}{2}\sigma_1(T_1) |\nabla \phi|^2-\langle T_1(E_u), \nabla \phi\otimes \nabla \phi\rangle
 \end{equation}
 It is clear that $
\sigma_1(T_1)=(n-1) \sigma_1(E_u).$ Let $E_u$ be diagonalized with eigenvalues $\l_1\geq\cdots\geq \l_n$. Then we have,
\[
\langle T_1(E_u), \nabla \phi\otimes \nabla \phi\rangle=\sum_i (\sigma_1(E_u)-\l_i)|\nabla_i\phi|^2.
\]
Hence 
\[
\begin{split}
\left\langle T_1(E_u), -\nabla \phi\otimes \nabla \phi+\frac{|\nabla \phi|^2}{2} g\right\rangle= & \sum_i\left(\frac{n-1}{2}\sigma_1(E_u)-(\sigma_1(E_u)-\l_i)\right)|\nabla_i\phi|^2\\
\geq &\left(\frac{n-3}{2}\sigma_1(E_u)+\l_n\right)|\nabla \phi|^2\end{split}
\]
We assume that $\l_n<0$ (otherwise done). Since $E_u\in \Gamma^+_2$, we know $\l_1+\cdots+\l_{n-1}>0$ and 
\[\sigma_2(E_u)=\l_n(\l_1+\cdots+\l_{n-1})+\sigma_2(\l_1, \cdots, \l_{n-1})>0.\]
When $n=4$, 
it is then sufficient to show that
$\sigma_1(E_u)+2\l_4>0.$
We have 
\[
\sigma_1(E_u)+2\l_4>(\l_1+\l_2+\l_3)-3\sigma_2(\l_1, \l_2, \l_3)(\l_1+\l_2+\l_3)^{-1}\geq 0. 
\]
This follows since we have
\[
\l_1^2+\l_2^2+\l_3^2\geq \l_1\l_2+\l_2\l_3+\l_3\l_1. 
\]
When $n\geq 5$, we want to show that \[
\frac{n-3}{2}\sigma_1(E_u)+\l_n\geq \frac{2}{5}\sigma_1(E_u)
\]
That is 
\[
\left(\frac{n-3}{2}-\frac{2}{5}\right)(\l_1+\cdots+\l_{n-1})+\left(\frac{n-3}{2}+\frac{3}{5}\right)\l_n\geq 0.
\]
Again this follows from an elementary inequality and 
\[
\l_n>-\sigma_2(\l_1, \cdots, \l_{n-1})(\l_1+\cdots+\l_{n-1})^{-1}. 
\]
 \end{proof}
 
\begin{rmk}When $n=4$, one can actually get a more precise inequality, for $E\in \Gamma^+_2$
\[
\sigma_1(E)+2\l_4> \frac{2\sigma_2(E)}{\sigma_1(E)}
\]
And the best constant is $2$ on the right hand side, with the example of $(1, 1, 1, -1+\epsilon)$.  
\end{rmk}

The interior estimate of $u_{tt}$ now becomes immediate ($n\geq 4$), 
\begin{lemma}For $n\geq 4$, there exists a constant $C_3$ such that
\[
u_{tt}\leq C_3. 
\]
\end{lemma}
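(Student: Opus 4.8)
The plan is to apply the maximum principle to the barrier
\[
w=u_{tt}+e^{-\l u}+bt^2,
\]
with $\l,b\ge1$ the large constants produced in Lemma~\ref{keylemma} (alternatively one may take $w=u_{tt}-u$ and use Proposition~\ref{u}). The first step is to observe that the only dangerous term in $\cL_F(u_{tt})$ has a good sign. By Proposition~\ref{tt},
\[
\cL_F(u_{tt})=f_{tt}-f_t^2f^{-1}-fG^{ij,kl}\,\p_t r_{ij}\,\p_t r_{kl}-\langle T_1(E_u),\,2\nabla u_t\otimes\nabla u_t-|\nabla u_t|^2g\rangle .
\]
Since $f>0$ and $G=\log F$ is concave (Lemma~\ref{concavity}), the matrix $(G^{ij,kl})$ is negative semidefinite, so the third term is $\ge0$. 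The fourth term equals $2\langle T_1(E_u),-\nabla u_t\otimes\nabla u_t+\tfrac{1}{2}|\nabla u_t|^2g\rangle$ and is $\ge0$ by Lemma~\ref{positivity} applied with $\phi=u_t$ — this is exactly where $n\ge4$ is used (for $n\ge5$ it even dominates $\tfrac{4}{5}\sigma_1(E_u)|\nabla u_t|^2$). Finally $f_{tt}-f_t^2f^{-1}=f(\log f)_{tt}\ge-C_3f$, the constant depending only on $f^{-1}(|f_t|+|f_{tt}|)$, hence not on $\inf f$. Therefore $\cL_F(u_{tt})\ge-C_3f$.

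Adding Lemma~\ref{keylemma} gives, with $\sigma_1(T_1):=\sigma_1(T_1(E_u))=(n-1)\sigma_1(E_u)$,
\[
\cL_F(w)\ge-C_3f+e^{\l}\sigma_1(T_1)(1+|\nabla u|^2)+e^{\l}(\sigma_2(A_u)+fu_t^2u_{tt}^{-1})\ge-C_3f+e^{\l}\sigma_1(T_1).
\]
If $w$ attains its maximum on the boundary $M\times\{0,1\}$ we are done, since there $u_{tt}$ is bounded by the boundary second-order estimate of Theorem~\ref{TGS} and $|u|$ by Proposition~\ref{c0}, so $u_{tt}\le w\le C_3$ everywhere. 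Otherwise $w$ has an interior maximum at a point $p$; there $Dw=0$ and the Hessian of $w$ is negative semidefinite, the first-order part of $\cL_{A_u}$ drops out, and ellipticity of $F$ gives $\cL_F(w)(p)\le0$. Hence $e^{\l}\sigma_1(T_1)(p)\le C_3f(p)$, i.e. $0<\sigma_1(E_u)(p)\le Ce^{-\l}f(p)$.

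The last step converts this into a bound on $u_{tt}(p)$ using the equation alone. Since $u_{tt}>0$ and $E_u\in\Gamma^+_2$ (Proposition~\ref{gamma}), $\sigma_2(E_u)=u_{tt}f>0$ (recall $F=u_{tt}^{-1}\sigma_2(E_u)=f$), and the elementary inequality $\sigma_1(E_u)^2\ge\tfrac{2n}{n-1}\sigma_2(E_u)$ holds, so
\[
u_{tt}(p)\,f(p)=\sigma_2(E_u)(p)\le\frac{n-1}{2n}\,\sigma_1(E_u)(p)^2\le Ce^{-2\l}f(p)^2 .
\]
One factor of $f(p)$ cancels, leaving $u_{tt}(p)\le Ce^{-2\l}f(p)\le C\sup f=:C_3$; then $u_{tt}\le w\le w(p)\le u_{tt}(p)+C_2\le C_3$ on all of $[0,1]\times M$. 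The cancellation of $f(p)$ in the last display is exactly the mechanism that makes $C_3$ independent of $\inf f$, and in particular uniform under the rescaling $f\mapsto sf$, $s\in(0,1]$.

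I expect no genuine obstacle in the present argument itself — it is a short maximum principle once its two inputs are in hand. The real difficulty is upstream: the concavity of $G=\log F$ (Lemma~\ref{concavity}) and, especially, the positivity Lemma~\ref{positivity} for $n=4$, which is exactly borderline, giving only ``$\ge0$'' with no coercive slack, and whose analogue genuinely fails for $n=3$ (there the corresponding quadratic form carries uncontrollable fourth powers of $\nabla^2u$). The one point inside this proof that must be watched is bookkeeping: every occurrence of $f$ should be routed through either $\sup f$ or the scale-invariant quantity $f^{-1}(|\nabla f|+|f_t|+|f_{tt}|+|\Delta f|)$, so that the constant $C_3$ never sees $\inf f$.
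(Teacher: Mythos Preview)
Your proof is correct, but it takes a genuinely different route from the paper's. The paper uses the simpler barrier $u_{tt}+u_t^2$; since $\cL_F(u_t^2)=2u_tf_t+2fu_{tt}$ by \eqref{E9}, one gets directly
\[
\cL_F(u_{tt}+u_t^2)\ge 2u_tf_t+2fu_{tt}+f_{tt}-f_t^2f^{-1},
\]
and at an interior maximum the coercive term $2fu_{tt}$ bounds $u_{tt}$ in a single step by $|u_t|\cdot|f_t|f^{-1}+|f_{tt}|f^{-1}+f_t^2f^{-2}$. Your approach instead borrows the full strength of Lemma~\ref{keylemma} to force $\sigma_1(E_u)$ small at the maximum, and then converts this into a bound on $u_{tt}$ via the Newton inequality $\sigma_1(E_u)^2\ge\tfrac{2n}{n-1}\sigma_2(E_u)=\tfrac{2n}{n-1}u_{tt}f$. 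Both methods produce constants that are independent of $\inf f$; the paper's is shorter and self-contained (it needs only \eqref{E9}, Proposition~\ref{tt}, concavity, and Lemma~\ref{positivity}), while yours reuses the machinery already built for the gradient estimate and illustrates a more transferable template (bound $\sigma_1(E_u)$, then use structural inequalities of $\Gamma_2^+$). Your remark that $n\ge4$ enters only through Lemma~\ref{positivity}, and your bookkeeping of the $f$-dependence, are both accurate.
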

\begin{proof}By the concavity of $G$, Lemma \ref{positivity} and Proposition \ref{tt}, we have
\[
\cL_F(u_{tt})\geq f_{tt}-f_t^2 f^{-1}.
\]
It then follows that, using \eqref{E9}, 
\[
\cL_F(u_{tt}+u_t^2)\geq 2u_t f_t+2f u_{tt}+f_{tt}-f_t^2 f^{-1}.
\]
If $u_{tt}+u_t^2$ achieves its maximum on the boundary, then by Theorem \ref{TGS} we are done. Otherwise at the maximum point of $u_{tt}+u_t^2$, we have
\[
2u_t f+2f u_{tt}+f_{tt}-f_t^2 f^{-1}\leq 0
\]
This is sufficient to bound $u_{tt}$ by a uniform constant $C_3$, where $C_3$ depends on the boundary estimate of $u_{tt}$ and $-f_{tt}f^{-1}, |f_t|f^{-1}$ in addition. 
\end{proof}
When $n\geq 5$, we can get the interior bound of $\Delta u$ as follows,
\begin{lemma}When $n\geq 5$, there exists a uniform constant $C_3$ such that
\[
\Delta u\leq C_3. 
\]
\end{lemma}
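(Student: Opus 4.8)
The plan is to establish the bound by the maximum principle applied to the auxiliary function $W=\Delta u+N(e^{-\l u}+bt^2)$, where $\l,b$ are the constants of Lemma~\ref{keylemma} and $N\geq 1$ is large, to be chosen last. On the boundary slices $\{t=0,1\}\times M$ one has $\Delta u=\Delta u_0$ resp. $\Delta u_1$, which are bounded by $C_2$, so if $W$ attains its maximum on the boundary then $\Delta u\le W\le C_2+N\sup e^{-\l u}+Nb$ and we are done. Hence assume $W$ attains its maximum at an interior point $p$; there $DW=0$ and $D^2W\le 0$, and since $F^{ij}>0$ by ellipticity we get $\cL_F(W)(p)\le 0$. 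The whole argument reduces to showing that if $\Delta u(p)$ exceeds a controlled constant then in fact $\cL_F(W)(p)>0$, a contradiction.

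First I would produce a lower bound for $\cL_F(\Delta u)$ from Proposition~\ref{delta1}: discard the third-order term $-fG^{ij,kl}\nabla_pr_{ij}\nabla_pr_{kl}\ge0$ by concavity of $G=\log F$ (Lemma~\ref{concavity}), note $\Delta f-|\nabla f|^2f^{-1}=f\Delta\log f\ge -Cf$, and split $\cS$ into its Hessian-quadratic part $2\sum_p\nabla\nabla_p u\otimes\nabla\nabla_p u-|\nabla^2u|^2g$, a term $Rm*\nabla^2u$ linear in $\nabla^2u$, and a bounded matrix $\cS_0$. The crucial step is to apply Lemma~\ref{positivity} with $\phi=\nabla_p u$ at $p$ (in a frame parallel at $p$, so that $\nabla(\nabla_p u)$ is the $p$-th column of $\nabla^2u$) and sum over $p$: for $n\ge5$, inequality~\eqref{g31} yields
\[
-\Bigl\langle T_1(E_u),\;2\textstyle\sum_p\nabla\nabla_p u\otimes\nabla\nabla_p u-|\nabla^2u|^2 g\Bigr\rangle\;\ge\;\frac{4}{5(n-1)}\,\sigma_1(T_1)\,|\nabla^2u|^2,
\]
a \emph{strictly} positive multiple of $\sigma_1(T_1)|\nabla^2u|^2$, whereas \eqref{g3} only gives $\ge0$ for $n=4$. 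The positive eigenvalues of $T_1(E_u)$ sum to $\sigma_1(T_1)$, so $\|T_1(E_u)\|\le\sigma_1(T_1)$, which bounds the contribution of $Rm*\nabla^2u$ and $\cS_0$ by $C\sigma_1(T_1)(1+|\nabla^2u|)$; and, using Cauchy--Schwarz together with $u_{tt}^{-1}|\nabla u_t|^2\le\sigma_1(A_u)$ (from $\sigma_1(E_u)=u_{tt}\sigma_1(A_u)-|\nabla u_t|^2>0$, Proposition~\ref{gamma}) and $\sigma_1(A_u)\le\Delta u+C_2$, the mixed term $2u_{tt}^{-1}\langle T_1(E_u),Ric(\nabla u_t,\cdot)\boxtimes\nabla u_t\rangle$ is bounded by $C\sigma_1(T_1)(\Delta u+1)$. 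Altogether,
\[
\cL_F(\Delta u)\;\ge\;\frac{4}{5(n-1)}\sigma_1(T_1)|\nabla^2u|^2-C\sigma_1(T_1)\bigl(1+|\nabla^2u|+\Delta u\bigr)-Cf.
\]

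Next I would absorb the $-Cf$ using the barrier: Lemma~\ref{keylemma} gives $N\cL_F(e^{-\l u}+bt^2)\ge N\bigl(-C_4f+e^{\l}\sigma_1(T_1)(1+|\nabla u|^2)+e^{\l}(\sigma_2(A_u)+fu_t^2u_{tt}^{-1})\bigr)$, and from $u_{tt}^2\sigma_2(A_u)=\sigma_2(E_u)+u_{tt}\langle T_1(A_u),\nabla u_t\otimes\nabla u_t\rangle\ge fu_{tt}$ together with the interior bound $u_{tt}\le C_3$ (valid for $n\ge4$) one has $\sigma_2(A_u)\ge f/C_3$; pairing $-C_4f$ with part of the $\sigma_2(A_u)$-term (same weight $e^{-\l u}$) and choosing $\l$ large, then $N$ large, makes the total $f$-contribution of $\cL_F(W)$ nonnegative. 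Then $\cL_F(W)\ge\frac{4}{5(n-1)}\sigma_1(T_1)|\nabla^2u|^2-C\sigma_1(T_1)(1+|\nabla^2u|+\Delta u)$. Since $A_u\in\Gamma^+_2$ forces $|A_u|\le\sigma_1(A_u)\le\Delta u+C_2$, hence $|\nabla^2u|\le\Delta u+C$, and since $|\nabla^2u|^2\ge\frac1n(\Delta u)^2$, the right-hand side at $p$ is $\ge\sigma_1(T_1)(p)\,\Delta u(p)\bigl(\frac{4}{5n(n-1)}\Delta u(p)-C'\bigr)$, which is $>0$ once $\Delta u(p)>5n(n-1)C'/4$, because $\sigma_1(T_1)(p)>0$. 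This contradicts $\cL_F(W)(p)\le0$, so $\Delta u(p)$ is bounded, whence $W\le C$ on $[0,1]\times M$ and $\Delta u\le W\le C_3$.

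The main obstacle is exactly the Hessian-quadratic estimate: the whole scheme works only because, for $n\ge5$, Lemma~\ref{positivity} produces a \emph{strictly} positive multiple of $\sigma_1(T_1)|\nabla^2u|^2$ which dominates all the first-order and mixed terms; for $n=4$ the same computation has coefficient $0$, which is why that case is genuinely harder. Two secondary points need care: the potentially singular factor $u_{tt}^{-1}$ in the $Ric(\nabla u_t,\cdot)$ mixed term is controlled only through $u_{tt}^{-1}|\nabla u_t|^2\le\sigma_1(A_u)$, keeping that term linear in $\Delta u$ and hence dominated by the quadratic good term; and in the degenerate regime where $\sigma_1(T_1)$ (equivalently $u_{tt}$) is small one must not ask for a positive lower bound on $\sigma_1(T_1)$ — it suffices that the good quadratic term is \emph{strictly} positive as soon as $\nabla^2u\ne0$, the barrier being needed only to neutralize the additive $-Cf$.
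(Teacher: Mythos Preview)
Your argument is correct and shares the paper's core mechanism: for $n\ge5$, Lemma~\ref{positivity} produces a strictly positive multiple of $\sigma_1(E_u)|\nabla^2u|^2$ which dominates all linear-in-$|\nabla^2u|$ terms in $\cL_F(\Delta u)$. The difference lies in how the residual $f$-terms ($\Delta f-|\nabla f|^2 f^{-1}$) are absorbed. The paper applies the maximum principle to $\Delta u$ \emph{alone}, with no auxiliary barrier: at an interior maximum of $\Delta u$ one obtains $\tfrac15\sigma_1(E_u)|\nabla^2u|^2\le -\Delta f+|\nabla f|^2 f^{-1}$, and then invokes \eqref{gamma3} to replace $\sigma_1(E_u)$ by $f\,\sigma_1(A_u)/\sigma_2(A_u)$, reducing to the algebraic inequality $\sigma_1(A_u)|\nabla^2u|^2/\sigma_2(A_u)\le C$, which (via $\sigma_2(A_u)\le\tfrac12\sigma_1(A_u)^2$) bounds $\Delta u$. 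Your route instead adds the barrier $N(e^{-\l u}+bt^2)$ from Lemma~\ref{keylemma} and uses the already-established bound $u_{tt}\le C_3$ (hence $\sigma_2(A_u)\ge f/C_3$) to kill the $f$-terms outright, so that at the maximum one is left with a pure $\sigma_1(T_1)$-weighted quadratic inequality. The paper's version is shorter and needs no barrier; yours trades the appeal to \eqref{gamma3} for a routine barrier construction, at the mild cost of importing the $u_{tt}$ estimate as input.

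One small caution: when you write ``same weight $e^{-\l u}$'' you are implicitly using the \emph{pointwise} form from the proof of Lemma~\ref{keylemma} (namely $-3\l e^{-\l u}f$ versus $\l^2 e^{-\l u}\sigma_2(A_u)$) rather than its stated inequality, since the constant $C_4=C_4(\l,|u|_{C^0})$ is of order $\l e^{\l c_0}$ and would not be beaten by $e^{\l}\sigma_2(A_u)$ once $c_0>1$. The pointwise pairing works exactly as you intend (take $\l>3C_3$), so this is a presentational rather than a mathematical issue.
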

\begin{proof}
By the concavity of $G$ and Proposition \ref{delta1}, we have
\[
\cL_F(\Delta u)\geq \Delta f-|\nabla f|^2f^{-1}+2u_{tt}^{-1}\langle T_1(E_u), Ric(\nabla u_t, \cdot)\boxtimes \nabla u_t\rangle-\langle T_1(E_u), \cS\rangle. 
\]
 We write $\cS=\cS_0+\cS_1+Rm*\nabla^2 u$, with
 $$\cS_1=2\sum_p \nabla\nabla_p u\otimes \nabla \nabla_p u-|\nabla^2 u|^2g.$$
 By Proposition \eqref{positivity}, \[-\langle T_1(E_u), \cS_1\rangle >\frac{4}{5}\sigma_1(E_u) |\nabla^2 u|^2.\] 
 We also estimate
 \[
 2u_{tt}^{-1}\langle T_1(E_u), Ric(\nabla u_t, \cdot)\boxtimes \nabla u_t\rangle\geq -C_1 u_{tt}^{-1}\sigma_1(T_1) |\nabla u_t|^2>-C_1 \sigma_1(T_1) \sigma_1(A_u).
 \]
 Since $\cS_0$ is a uniformly bounded term, we obtain,
 \[
 \cL_F(\Delta u)\geq  \Delta f-|\nabla f|^2f^{-1}-C_1 \sigma_1(T_1) \sigma_1(A_u)-C_2 |T_1(E_u)| (|\nabla^2 u|+1)+\frac{4}{5}\sigma_1(E_u) |\nabla^2 u|^2. 
 \]
Note that $\sigma_1(T_1)=(n-1)\sigma_1(E_u)>|T_1(E_u)|$, we obtain
\[
\cL_F(\Delta u)\geq \Delta f-|\nabla f|^2 f^{-1}-\tilde C_2 \sigma_1(E_u) (|\nabla^2 u|+1)+\frac{4}{5}\sigma_1(E_u) |\nabla^2 u|^2. 
\]
Suppose $\Delta u$ obtains its maximum at an interior point $p$ (otherwise we are done). 
At the interior maximum of $\Delta u$, we have obtained,
\[
\Delta f-|\nabla f|^2 f^{-1}-\tilde C_2 \sigma_1(E_u) (|\nabla^2 u|+1)+\frac{4}{5}\sigma_1(E_u) |\nabla^2 u|^2\leq 0. 
\]
We can assume $|\nabla ^2 u|\geq 100\tilde C_2+100$ at the maximum of $\Delta u$ (otherwise we are done),  then
\[
\frac{1}{5}\sigma_1(E_u) |\nabla^2 u|^2\leq -\Delta f+|\nabla f|^2 f^{-1}
\]
By Proposition \ref{gamma} (see \eqref{gamma3}), we have (at $p$), 
\[
\sigma_2(A_u)^{-1}\sigma_1(A_u) |\nabla^2 u|^2 \leq -f^{-1}\Delta f+|\nabla f|^2 f^{-2}. 
\]
This is sufficient to get a uniform upper bound of $\Delta u$. 
\end{proof}

The estimates of $\Delta u$ (for $n\geq 5$) is rather straightforward given the strictly lower bound of the quadratic form in Lemma \ref{positivity}. When $n=4$, such a positivity is too weak and 
the interior estimate of $\Delta u$ is rather subtle.

\begin{lemma}When $n=4$, there exists a uniform constant $C_3$ such that
\[
\Delta u\leq C_3. 
\]
\end{lemma}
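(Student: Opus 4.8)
The plan is to bound $\Delta u$ (equivalently $\sigma_1(A_u)$) by a maximum-principle argument applied to an auxiliary function built from $\Delta u$ and the lower-order barriers already used, for instance
\[
W=\log(\Delta u+C_2)-\lambda u+\mu|\nabla u|^2+bt^2,
\]
with $C_2$ the constant making $\Delta u+C_2\geq 1$ and $\lambda,\mu,b\geq 1$ large, to be fixed. If $W$ attains its maximum on $M\times\{0,1\}$ we are done by Theorem \ref{TGS}; otherwise, at an interior maximum $p$ we have $DW=0$ and $\cL_F(W)\leq 0$. I would compute $\cL_F(W)$ from Proposition \ref{delta1} (for $\cL_F(\Delta u)$), Proposition \ref{u} (for $\cL_F(-u)$), Proposition \ref{gradient} (for $\cL_F(|\nabla u|^2)$), the identity $\cL_F(t^2)=2\sigma_2(A_u)$, and the product rules of Proposition \ref{quadratic}. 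The concavity of $G=\log F$ (Lemma \ref{concavity}) contributes $-fG^{ij,kl}\nabla_p r_{ij}\nabla_p r_{kl}\geq 0$; the $bt^2$ term contributes $2b\,\sigma_2(A_u)$; the $-\lambda u$ and $\mu|\nabla u|^2$ terms contribute, via the nonnegative quadratic form $Q_u$, positive terms comparable to $\sigma_1(T_1)(1+|\nabla u|^2)$ and $fu_{tt}^{-1}|\nabla u_t|^2$ with coefficients that grow with $\lambda,\mu$; and the logarithm produces the extra negative term $-F^{-1}F^{ij}(D_i\Delta u)(D_j\Delta u)/(\Delta u+C_2)^2$, which at $p$ is rewritten using $DW=0$ in terms of $Q_u$-expressions in $\nabla u$ and $\nabla\nabla_k u$ and folded into the rest.

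The heart of the matter, and the reason $n=4$ is delicate, is controlling the term
\[
2u_{tt}^{-1}\big\langle T_1(E_u),\,Ric(\nabla u_t,\cdot)\boxtimes\nabla u_t\big\rangle
\]
in $\cL_F(\Delta u)$, which arises from the Bochner formula applied to the nonlinear first-order part $\nabla u\otimes\nabla u-\tfrac12|\nabla u|^2g$ of $A_u$, together with the curvature contributions $Rm*\nabla^2u$ in $\cS$. Crude estimation bounds the first from below only by $-C_1\sigma_1(E_u)\sigma_1(A_u)$, which at $p$ is of the order of $(\Delta u)^2$; and unlike the case $n\geq 5$, where Lemma \ref{positivity} supplies a coercive term $\tfrac45\sigma_1(E_u)|\nabla^2u|^2$ from $-\langle T_1(E_u),\cS_1\rangle$ that dominates it, for $n=4$ Lemma \ref{positivity} only gives $-\langle T_1(E_u),\cS_1\rangle\geq 0$, and even the sharp refinement from the Remark, $-\langle T_1(E_u),\cS_1\rangle\geq \tfrac{2\sigma_2(E_u)}{\sigma_1(E_u)}|\nabla^2u|^2=\tfrac{2fu_{tt}}{\sigma_1(E_u)}|\nabla^2u|^2$, degenerates when $f$ is small. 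The resolution I would pursue combines several ingredients: the identity $\langle T_1(E_u),\nabla u_t\otimes\nabla u_t\rangle=u_{tt}^2\sigma_2(A_u)-u_{tt}f$ together with the Cauchy--Schwarz inequality for the positive form $T_1(E_u)$ to rewrite the bad term as essentially $-C\,u_{tt}^{-1}(\sigma_2(A_u))^{1/2}(\sigma_1(E_u))^{1/2}|\nabla u_t|$; an AM--GM split that throws part of it onto $\varepsilon\,\sigma_2(A_u)$, absorbed by $2b\,\sigma_2(A_u)$, and part onto $\varepsilon^{-1}u_{tt}^{-1}\sigma_1(E_u)|\nabla u_t|^2$; the relation $|\nabla u_t|^2<u_{tt}\sigma_1(A_u)$ of Proposition \ref{gamma} and the sharp bound $\sigma_1(E_u)\geq f\sigma_1(A_u)/\sigma_2(A_u)$ from the same proposition; and, crucially, retaining rather than discarding the full concavity term $-fG^{ij,kl}\nabla_p r_{ij}\nabla_p r_{kl}$ and exploiting it through the once-differentiated equation $G^{ij}\nabla_p r_{ij}=\nabla_p\log f$ to absorb the residual third-order contributions of $\cL_F(\Delta u)$. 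Making the constants in this balancing close simultaneously --- in particular against the quadratic-in-$\nabla^2u$ term $-C_2|T_1(E_u)||\nabla^2u|$ --- is the main obstacle, and is where a case distinction (for instance according to whether $|\nabla u_t|^2$ is or is not comparable to $u_{tt}\sigma_1(A_u)$, so that $\sigma_1(E_u)$ is correspondingly small or large) is likely to be needed.

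Granting that the differential inequality closes, $\cL_F(W)(p)\leq 0$ forces $\Delta u(p)\leq C_3$, whence $W\leq C_3$ on $[0,1]\times M$ and $\Delta u\leq C_3$ everywhere. As recorded just before the lemma, $A_u\in\Gamma^+_2$ gives $|A_u|^2=\sigma_1(A_u)^2-2\sigma_2(A_u)\leq\sigma_1(A_u)^2\leq C_3$, so with $|\nabla u|\leq C_3$ one obtains $|\nabla^2u|\leq C_3$, and Proposition \ref{gamma} then yields $|\nabla u_t|^2<u_{tt}\sigma_1(A_u)\leq C_3$. Together with the previously established bounds on $|u|$, $|u_t|$ and $u_{tt}$, this completes the uniform $C^{1,1}$ estimate in the remaining case $n=4$.
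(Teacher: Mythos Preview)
Your proposal does not close, and the gap is exactly the one you flag yourself: you do not produce a coercive term of the form $c\,\sigma_1(E_u)|\nabla^2u|^2$ to dominate the curvature contribution $-C_1\sigma_1(E_u)|\nabla^2u|$ coming from $Rm*\nabla^2u$ in $\cS$. In dimension four Lemma~\ref{positivity} only gives $-\langle T_1(E_u),\cS_1\rangle\geq 0$, and the paper's own remark after the proof explains that neither the sharper bound $-\langle T_1(E_u),\cS_1\rangle\geq 2\sigma_2(E_u)\sigma_1(E_u)^{-1}|\nabla^2u|^2$, nor the Ricci term (which is actually nonnegative here, and in any case comparable to $\sigma_2(A_u)$), nor the quadratic $\sum_iQ_u(Du_i,Du_i)+2fu_{tt}^{-1}|\nabla u_t|^2$ from $\cL_F(|\nabla u|^2)$, nor taking $\log(\Delta u+C)$, suffices to close the inequality. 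The Cauchy--Schwarz/AM--GM splitting you sketch, and the case distinction on whether $|\nabla u_t|^2$ is comparable to $u_{tt}\sigma_1(A_u)$, run into the same obstruction: the ratio of bad to good is essentially $(\sigma_1(A_u)-u_{tt}^{-1}|\nabla u_t|^2)/\sigma_2(A_u)$, which cannot be controlled a priori (think of $A_u$ with eigenvalues $(\epsilon^{-1},\epsilon,0,0)$).

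The missing idea is a bootstrap using the \emph{unknown} quantity $K=\max_{[0,1]\times M}|\nabla u_t|^2u_{tt}^{-1}$ inside the barrier. The paper takes
\[
v=\Delta u+\tfrac{1}{2}Kt^2+|\nabla u|^2-\lambda u+\lambda t^2,
\]
and the crucial computation is that the extra term $\tfrac12Kt^2$ contributes $K\sigma_2(A_u)\geq u_{tt}^{-2}\langle T_1(E_u),|\nabla u_t|^2\nabla u_t\otimes\nabla u_t\rangle$, which combines with $\sum_iQ_u(Du_i,Du_i)$ and $-\langle T_1(E_u),\cS_1\rangle$ to produce a genuinely coercive term $3\epsilon_0\,\sigma_1(E_u)|\nabla^2u|^2$ (with $\epsilon_0$ depending only on the already-established upper bound for $u_{tt}$). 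The maximum principle then gives $\sup\Delta u\leq C_3+\tfrac12K$, hence $\sup\sigma_1(A_u)\leq C_3+\tfrac12K$; but since $|\nabla u_t|^2u_{tt}^{-1}<\sigma_1(A_u)$ one has $K<\sup\sigma_1(A_u)$, and feeding this back closes the loop: $\sup\sigma_1(A_u)\leq 2C_3$. This self-referential use of $K$ is the point you are missing; without it the differential inequality does not close in dimension four.
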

\begin{proof}First note that  $c_1(T_1)=3\sigma_1(E_u)$, which will be used in the following.
We consider  \[\max_{p\in [0, 1]\times M} |\nabla u_t|^2 u_{tt}^{-1}=K\]
We want to emphasize that we do not have \emph{a priori uniform} bound for $K$.  We construct the barrier function as 
\[
v=\Delta u+\frac{1}{2}K t^2+|\nabla u|^2-\l u+\l t^2. 
\]
The choice of term $K t^2/2$ is essential for us. 
By Proposition \ref{delta} and the concavity of $G$, we have
\[
\cL_F(\Delta u)\geq -C_3 f+2u_{tt}^{-1}\langle T_1(E_u), Ric(\nabla u_t, \cdot)\boxtimes \nabla u_t\rangle -\langle T_1(E_u), \cS\rangle. 
\]
We write $\cS=\cS_0+\cS_1+Rm*\nabla^2 u$ with  $\cS_0$ a bounded term.
Hence
\[
 -\langle T_1(E_u), \cS\geq -C_1\sigma_1(T_1) (|\nabla^2 u|+1)-\langle T_1(E_u), \cS_1\rangle. 
\]
Since when $n=4$, we know $Ric>0$ \cite{GWV} for any metric in $\Gamma^+_2$, hence 
\[
u_{tt}^{-1}\langle T_1(E_u), Ric(\nabla u_t, \cdot)\boxtimes \nabla u_t\rangle>0\]
We should mention that the positivity of Ricci is not essential since we have
\[
\cL_F(\l t^2)=2\l \sigma_2(A_u)
\]  
and we have the control from the equation
\[
\sigma_2(A_u)>\langle T_1(A_u), \nabla u_t\otimes \nabla u_t \rangle =u_{tt}^{-1}\langle T_1(E_u), \nabla u_t\otimes \nabla u_t \rangle.
\]
Hence we have (regardless of positive Ricci), 
\begin{equation}\label{last1-0}
\cL_F(\Delta u+\l t^2)\geq -C_3 f-C_1\sigma_1(E_u) (|\nabla^2 u|+1)-\langle T_1(E_u), \cS_1\rangle,
\end{equation}
where the major bad term is $-C_1 \sigma_1(E_u) |\nabla^2u|$ coming from the curvature of the background metric.
We have \[T_1(A)\geq c_0 g=3c_1g,\] with a uniformly positive lower bound $c_1>0$. Hence we compute, 
\begin{equation}
\begin{split}\label{last1-1}
\cL_F(-\l u)=&-3\l f+\l \langle T_1(E_u), A-\nabla u\otimes \nabla u+\frac{1}{2}|\nabla u|^2 g\rangle\\
\geq &-3\l f+3\l c_1 \sigma_1(E_u)\end{split}
\end{equation}
Next we compute
 \[
 \cL_F\left(\frac{1}{2}Kt^2+|\nabla u|^2\right)\geq K\sigma_2(A_u)+\sum_iQ_u(Du_i, Du_i)+\frac{2f}{u_{tt}}|\nabla u_t|^2-C_3|\nabla f|-C_3\sigma_1(E_u). 
 \]
 We claim there exists a uniformly positive constant $0<\epsilon_0\leq 1/2$ (depending on the lower bound of $u_{tt}^{-1}$) such that
 \begin{equation}\label{last2-1}
 K\sigma_2(A_u)+\sum_iQ_u(Du_i, Du_i)\geq \sum_i \epsilon_0\langle T_1(E_u), \nabla \nabla_i u\otimes \nabla \nabla_i u\rangle
 \end{equation}
 Given the claim at the moment, we observe that
 \begin{equation}\label{last3-1}
 -\langle T_1(E_u), \cS_1\rangle+\sum_i \epsilon_0\langle T_1(E_u), \nabla \nabla_i u\otimes \nabla \nabla_i u\rangle\geq 3\epsilon_0\sigma_1(E_u) |\nabla^2 u|^2. 
 \end{equation}
 Finally we reach at, combining \eqref{last1-0}, \eqref{last1-1}, \eqref{last2-1} and \eqref{last3-1},
 \begin{equation}
 \label{last}
 \cL_F(v)\geq 3\epsilon_0 \sigma_1(E_u) |\nabla^2 u|^2-C_1\sigma_1(E_u)|\nabla^2 u|+(3\l c_1-C_1-C_3)\sigma_1(E_u)-C_3  f. \end{equation}
 If $v=\Delta u+\frac{K}{2} t^2+|\nabla u|^2-\l u+\l t^2$ obtains its maximum on the boundary, then we are done (since $K$ is uniformly bounded on the boundary by Gursky-Streets' boundary estimates). 
 Otherwise $v$ achieves its maximum at an interior point $p$, it follows that (at $p$)
 \begin{equation}\label{last0}
3\epsilon_0 \sigma_1(E_u) |\nabla^2 u|^2-C_1\sigma_1(E_u)|\nabla^2 u|+(3\l c_1-C_1-C_3)\sigma_1(E_u)-C_3 f \leq 0.
 \end{equation}
 We choose $\l$ sufficiently large such that $3\l c_1-C_1-C_3>0$.  We claim that this is sufficient to bound $|\nabla^2 u|$ at $p$, \begin{equation}\label{claim0}
 |\nabla^2 u|(p)\leq C_3. 
 \end{equation}
 We can assume $|\nabla^2 u|(p)$ satisfies  $|\nabla^2 u|(p)\geq C_1 \epsilon_0^{-1}$  (otherwise done) and hence
 \[
 \epsilon_0\sigma_1(E_u) |\nabla^2 u|^2-C_1\sigma_1(E_u)|\nabla^2 u|\geq 0.
 \]
  Then by \eqref{last0}, we have
 \[
\epsilon_0 \sigma_1(E_u) |\nabla^2 u|^2\leq C_3 f.
 \]
 Note that by \eqref{gamma3}, we get that (at $p$)
 \[
 \frac{\sigma_1(A_u)}{\sigma_2(A_u)}|\nabla^2 u|^2\leq C_3 \epsilon_0^{-1}. 
 \]
 This establishes the claim \eqref{claim0}. 
 Clearly we have $\Delta u(p)\leq n |\nabla^2 u|(p)$. Since $v\leq v(p)$, we have obtained 
 \[
 \Delta u\leq v\leq v(p)\leq C_3+\frac{K}{2}. 
 \]
In other words, we have
\[
\sup \Delta u\leq C_3 +\frac{K}{2}. 
\]
 Note that $\Delta u-C_2\leq \sigma_1(A_u)\leq \Delta u+C_2$, we get
 \[
\sup \sigma_1(A_u)\leq C_3+\frac{K}{2}.
 \]
 We observe that
 \[
 \sigma_1(A_u)-\frac{|\nabla u_t|^2}{u_{tt}}=u_{tt}^{-1}\sigma_1(E_u) >0
 \]
 Hence $K<\sup \sigma_1(A_u)$, and we have proved that
 \[
 \sup \sigma_1(A_u)\leq C_3. 
 \]
 This gives the uniformly upper bound of $\Delta u$ given the following proposition. 
\end{proof}
We establish \eqref{last2-1} right now.

\begin{prop}We have the following, 
\[K\sigma_2(A_u)+\sum_iQ_u(Du_i, Du_i)\geq \sum_i \epsilon_0\langle T_1(E_u), \nabla \nabla_i u\otimes \nabla \nabla_i u\rangle\]
\end{prop}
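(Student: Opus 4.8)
The plan is to complete the square in $\sum_i Q_u(Du_i,Du_i)$ so that it becomes an exact sum of $T_1(E_u)$-squares, and then to control the single remaining term — the one depending only on $\nabla u_t$ — using the equation together with the very definition of $K$.

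First I would specialize the square-completion already carried out in the proof of Proposition \ref{quadratic} to $\phi=\psi=u_i$ (so that $\phi_t=(\nabla u_t)_i$ and $\nabla\phi=\nabla\nabla_iu$); this gives $Q_u(Du_i,Du_i)=2\langle T_1(E_u),\widetilde B_i\otimes\widetilde B_i\rangle$ with $\widetilde B_i:=\nabla\nabla_iu-u_{tt}^{-1}(\nabla u_t)_i\,\nabla u_t$. Writing $\nabla\nabla_iu=\widetilde B_i+u_{tt}^{-1}(\nabla u_t)_i\nabla u_t$ and summing over $i$ yields the identity
\[
\sum_i\langle T_1(E_u),\nabla\nabla_iu\otimes\nabla\nabla_iu\rangle=\sum_i\langle T_1(E_u),\widetilde B_i\otimes\widetilde B_i\rangle+\frac{2}{u_{tt}}\sum_i(\nabla u_t)_i\langle T_1(E_u),\widetilde B_i\otimes\nabla u_t\rangle+\frac{|\nabla u_t|^2}{u_{tt}^2}\langle T_1(E_u),\nabla u_t\otimes\nabla u_t\rangle,
\]
so the claimed inequality is equivalent to
\[
K\sigma_2(A_u)+(2-\epsilon_0)\sum_i\langle T_1(E_u),\widetilde B_i\otimes\widetilde B_i\rangle\ \ge\ \frac{2\epsilon_0}{u_{tt}}\sum_i(\nabla u_t)_i\langle T_1(E_u),\widetilde B_i\otimes\nabla u_t\rangle+\frac{\epsilon_0|\nabla u_t|^2}{u_{tt}^2}\langle T_1(E_u),\nabla u_t\otimes\nabla u_t\rangle.
\]

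Next I would estimate the cross term. Since $E_u\in\Gamma^+_2$ forces $T_1(E_u)>0$, Cauchy--Schwarz for the bilinear form $T_1(E_u)$, followed by Cauchy--Schwarz in the index $i$ and Young's inequality $2ab\le a^2+b^2$, gives
\[
\frac{2\epsilon_0}{u_{tt}}\Big|\sum_i(\nabla u_t)_i\langle T_1(E_u),\widetilde B_i\otimes\nabla u_t\rangle\Big|\ \le\ \epsilon_0\sum_i\langle T_1(E_u),\widetilde B_i\otimes\widetilde B_i\rangle+\frac{\epsilon_0|\nabla u_t|^2}{u_{tt}^2}\langle T_1(E_u),\nabla u_t\otimes\nabla u_t\rangle.
\]
Because $\epsilon_0\le\tfrac12$, after this absorption the $\widetilde B_i$-terms survive with the nonnegative coefficient $2-2\epsilon_0$ and may simply be dropped; it then remains only to verify
\[
K\sigma_2(A_u)\ \ge\ \frac{2\epsilon_0|\nabla u_t|^2}{u_{tt}^2}\langle T_1(E_u),\nabla u_t\otimes\nabla u_t\rangle.
\]

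Finally I would close this last inequality from two facts already in hand. From the equation $F(u_{tt},A_u,\nabla u_t)=f$ together with Proposition \ref{gamma2} (and the homogeneity $T_1(u_{tt}A_u)=u_{tt}T_1(A_u)$), one gets $u_{tt}\sigma_2(A_u)=f+u_{tt}^{-1}\langle T_1(E_u),\nabla u_t\otimes\nabla u_t\rangle$, hence $\sigma_2(A_u)\ge u_{tt}^{-2}\langle T_1(E_u),\nabla u_t\otimes\nabla u_t\rangle$; and by definition of $K$ one has $u_{tt}^{-1}|\nabla u_t|^2\le K$ at the point under consideration. Combining, $K\sigma_2(A_u)\ge Ku_{tt}^{-2}\langle T_1(E_u),\nabla u_t\otimes\nabla u_t\rangle$, and the desired bound follows as soon as $2\epsilon_0|\nabla u_t|^2\le K$, i.e. as soon as $2\epsilon_0\le u_{tt}^{-1}$. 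Since the upper bound $u_{tt}\le C_3$ has already been established, it is enough to take $\epsilon_0\le\min\{\tfrac12,\tfrac1{2C_3}\}$, which depends only on the (already known) lower bound of $u_{tt}^{-1}$. I expect the one genuinely delicate point to be precisely this: $K$ is \emph{not} a priori bounded, so the $\nabla u_t$-term cannot be swallowed into a generic constant — it is crucial that $K$ reappears on the favorable side multiplying $\sigma_2(A_u)$, and that the pointwise inequality $u_{tt}^{-1}|\nabla u_t|^2\le K$ together with the prior $u_{tt}$-estimate makes the coefficient comparison go through.
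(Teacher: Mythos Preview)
Your proof is correct and follows essentially the same approach as the paper: both arguments rest on the identity $\sigma_2(A_u)\ge u_{tt}^{-2}\langle T_1(E_u),\nabla u_t\otimes\nabla u_t\rangle$ from the equation, the definition of $K$, and the prior bound $u_{tt}\le C_3$, combined with a square-completion in the $T_1(E_u)$-inner product. The only cosmetic difference is the order of operations --- the paper first uses $K\sigma_2(A_u)$ to boost the coefficient of the $\nabla u_t\otimes\nabla u_t$ term from $1$ to $1+\epsilon_0$ and then completes the square once, whereas you complete the square into $\widetilde B_i$ first, expand the target, and absorb the cross term by Cauchy--Schwarz/Young before invoking $K\sigma_2(A_u)$; the constants and the dependence of $\epsilon_0$ on the lower bound of $u_{tt}^{-1}$ match.
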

\begin{proof}Recall
\[
Q_u(Du_i, Du_i)=\frac{2}{u_{tt}}\left\langle T_1(E_u), u_{tt}\nabla \nabla_i u\otimes \nabla\nabla_i u-\nabla_i u_t \nabla u_t\boxtimes \nabla \nabla_i u+\frac{|\nabla_i u_t|^2}{u_{tt}} \nabla u_t\otimes \nabla u_t\right\rangle.
\]
And we have
\[
\begin{split}
K\sigma_2(A_u)\geq &\frac{|\nabla u_t|^2}{u_{tt}}  u_{tt}^{-1}\langle T_1(A_u), \nabla u_t\otimes \nabla u_t\rangle\\
= & \frac{1}{u_{tt}^{3}} \langle T_1(E_u), |\nabla u_t|^2 \nabla u_t\otimes \nabla u_t\rangle\\
\geq& \frac{2}{u_{tt}} \langle T_1(E_u), 2\epsilon_0 u_{tt}^{-1}|\nabla u_t|^2 \nabla u_t\otimes \nabla u_t\rangle,
\end{split}
\]
for some uniformly positive constant $\epsilon_0\leq 1$ such that $u_{tt}^{-1}\geq 4\epsilon_0$. It then follows that
\[
\begin{split}
K\sigma_2(A_u)+\sum_i Q_u(Du_i, Du_i)\geq& \frac{2}{u_{tt}}\left\langle T_1(E_u), u_{tt}\nabla \nabla_i u\otimes \nabla\nabla_i u-\nabla_i u_t \nabla u_t\boxtimes \nabla \nabla_i u+(1+\epsilon_0)\frac{|\nabla_i u_t|^2}{u_{tt}} \nabla u_t\otimes \nabla u_t\right\rangle\\
\geq &\epsilon_0 \sum_i\langle T_1(E_u), \nabla \nabla_i u\otimes \nabla \nabla_i u\rangle
\end{split}
\]
This completes the proof. 
\end{proof}

\begin{rmk}
Even though we have the positivity of the following,
\[
-\langle T_1(E_u), \cS_1\rangle\geq 2\frac{\sigma_2(E_u)}{\sigma_1(E_u)} |\nabla ^2 u|^2,
\]
this good term solely is not sufficient. 
Compared with the bad term $-\sigma_1(E_u) |\nabla^2 u|$, it is not hard to see that 
\[
\frac{\text{bad term}}{\text{good term}}``="\frac{\sigma_1(A_u)-u_{tt}^{-1} |\nabla u_t|^2}{\sigma_2(A_u)}
\]
If we ignore the term $u_{tt}^{-1}|\nabla u_t|^2$, there is no way to control this ratio directly in view of an example $\text{diag}(\epsilon^{-1}, \epsilon, 0, 0)$. 
The positive Ricci curvature does not play an essential role, since
\[
\langle T_1(E_u), Ric(\nabla u_t, \cdot)\boxtimes \nabla u_t\rangle``="\sigma_2(A_u). 
\]
This term is harmless either way (with positive or negative sign) and it is also helpless, by the same reason. 
The quadratic term coming from $\cL_F(|\nabla u|^2)$ reads
\[
\sum_iQ_u(Du_i, Du_i)+\frac{2f}{u_{tt}}|\nabla u_t|^2
\]
We can also argue that
\[
\sum_iQ_u(Du_i, Du_i)+\frac{2f}{u_{tt}}|\nabla u_t|^2\geq 2f \frac{\langle T_1(E_u), \nabla\nabla_i u\otimes \nabla \nabla_i u\rangle}{\langle T_1(E_u), A_u\rangle}
\]
But this good term is not sufficient to dominate the bad term $-\sigma_1(E_u)|\nabla^2 u|$ in general. The essential inequality for us  is
\[
\sum_iQ_u(Du_i, Du_i)+K\sigma_2(A_u)-\langle T_1(E_u), \cS_1\rangle \geq 3 \epsilon_0 \sigma_1(E_u) |\nabla^2 u|^2. 
\]

\end{rmk}

\subsection{Solve the equation and uniqueness}
In this section we prove Theorem \ref{mainthm1} and Theorem \ref{mainthm2}. With the estimates we derived above, the proof is standard and we keep it brief.
\begin{proof}[Proof of Theorem \ref{mainthm1} and Theorem \ref{mainthm2}]
First we prove the uniqueness when $f>0$. The argument is a standard comparison, using the ellipticity and the concavity (compare $C^0$ estimates). Suppose $\tilde u$ and $u$ both solve the equation 
\[
u_{tt}\sigma_2(A_u)-\langle T_1(A_u), \nabla u_t\otimes \nabla u_t\rangle =f
\]
with the same boundary data. We want to prove that $\tilde u=u$. Suppose otherwise, we can assume at some interior point, $\tilde u>u$. Hence for some small $a>0$, we have at some interior point,
\[
\tilde u+at(t-1)>u
\]
Denote $v=\tilde u+at(t-1)$.
Consider the maximum point $p$ of $v-u$, we have
\[
D(v-u)=0, D^2 (v-u)\leq 0.
\]
On one hand, 
\[
F(v_{tt}, A_v, \nabla v_t)=(\tilde u_{tt}+2a)\sigma_2(A_{\tilde u})-\langle T_1(A_{\tilde u}), \nabla \tilde u_t\otimes \nabla \tilde u_t\rangle>f
\]
On the other hand, we have at $p$ (since $D^2(u-v)\geq 0, D(u-v)=0$)
\[
\log F(u_{tt}, A_u, \nabla u_t)-\log F(v_{tt}, A_v, \nabla v_t)\geq F^{-1} \cL_F (u-v)\geq 0,
\]
where $\cL_{F}$ is the linearized operator of $F$ at $v$; the non-negativity of $\cL_F(u-v)$ at $p$ follows the same argument in Proposition \ref{c0}. 
This is a contradiction. This proves $\tilde u\leq u$. Interchanging $\tilde u$ and $u$ we get $u\leq \tilde u$. Hence we have $\tilde u=u$. This proves the uniqueness.

Given $u_0, u_1$ two \emph{admissible} boundary datum, consider $w=(1-t)u_0+t u_1+at(t-1)$ for sufficiently large $a$. 
We write \[F(w_{tt}, A_w, \nabla v_w)=w_{tt}\sigma_2(A_w)-\langle T_1(A_w), \nabla w_t\otimes \nabla w_t\rangle =f_0.\]
When $a$ is sufficiently large, $f_0>0$ and hence $E_w=w_{tt}A_w-\nabla w_t\otimes \nabla w_t\in \Gamma^+_2$.
We use the continuity method to solve the equation. Let $f_s=sf+(1-s) f_0$. We want to solve the equation for $u^s$, $s\in [0, 1]$, 
\begin{equation}\label{s1}
F(u^s_{tt}, A_{u^s}, \nabla u^s_t\otimes \nabla u^s_t)=f_s
\end{equation}
We choose the normalization condition for the boundary datum $u_0, u_1$ by adding $c_0 t+c_1$ for some constants $c_0, c_1$, such that \eqref{normalization} holds. Note that the change of boundary datum does not change $D^2 w$ (hence does not change $f_0$). When $s=0$, $u^s=w$ solves the equation. The linearized operator $\cL_{F_s}$ is invertible for $s\in [0, 1]$ for $u^s\in \Gamma^+_2$ (see the proof of uniqueness) and hence there exists a unique solution of the linearized equation
\[
\cL_{F_s}(h)=\tilde f
\]
with the zero boundary datum $h(\cdot, 0)=h(\cdot, 1)=0$ for any smooth $\tilde f$. It follows that
the set
$S:=\{s\in [0, 1]: \eqref{s1}\; \text{has a unique solution}\}$ is open in $[0, 1]$. We need to prove the set $S$ is also closed. Suppose $s_i\in [0, 1)$ such that $u^{s_i}$ is the unique solution of \eqref{s1}. Suppose $s_i\rightarrow s_0$. By our a priori estimates, $u^{s_i}$ have uniformly bounded $C^{1, 1}$ norm. Given the concavity of $G$, Evans-Krylov's theory applies and we get uniform $C^{2, \alpha}$ estimates of $u^{s_i}$ for some $\alpha$. The boot-strapping argument then implies the uniform bound $C^{l, \alpha}$  for any $l\geq 2$. Hence by passing to subsequence if necessary, $u^{s_i}$ converges to $u^{s_0}$ smoothly. By convergence we see that $u^{s_0}$ solves the equation \eqref{s1} for $s=s_0$. This proves the existence \eqref{s1} for $s=s_0$, and hence proves the closeness of $S$.  To prove Theorem \ref{mainthm2}, we fix $f>0$ as above and consider the equation
\[
F(u^s_{tt}, A_{u^s}, \nabla u^s_t\otimes \nabla u^s_t)=sf.
\]
By the a prior estimates established above, $u^s$ has uniformly bounded $C^{1, 1}$-norm. Indeed we know $u^s$ is decreasing in $s$ by a comparison principle below.
When $s\rightarrow 0$, $u^s$ converges strongly in $C^{1, \alpha}$ to $u^0$ such that $u^0\in C^{1, 1}$ (such a limit $u^0$ is indeed unique). In particular $u^0$ solves the equation almost everywhere (the strong solution) with the uniform $C^{1, 1}$ bound. 
\end{proof}

We state a comparison principle below, which can be proved similarly as the proof of uniqueness. 

\begin{lemma}Let $u, v$ be two smooth functions on $[0, 1]\times M$. 
Suppose $A_u\in \Gamma^+_2$ and $A_v\in \Gamma^+_2$. 
If 
\[
F(u_{tt}, A_u, \nabla u_t)\geq F(v_{tt}, A_v, \nabla v_t)>0
\]
and $u, v$ have the same boundary datum, then $u\leq v$. 
Moreover, if we have
\[
F(u_{tt}, A_u, \nabla u_t)>F(v_{tt}, A_v, \nabla v_t)>0.
\]
Then $v>u$ for points in $(0, 1)\times M$ (suppose $u, v$ have the same boundary datum).
In general if
\[
F(u_{tt}, A_u, \nabla u_t)= F(v_{tt}, A_v, \nabla v_t)>0
\]
then we have
\[
\max_{[0, 1]\times M} |u-v|=\max_{\{0, 1\}\times M}|u-v|
\]
\end{lemma}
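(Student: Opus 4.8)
The plan is to prove all three assertions by the device used for the uniqueness statement in Section~3.4 and in the proof of Proposition~\ref{c0}: at an interior extremum of $u-v$ one plays the concavity of $G=\log F$ off against the extremum condition, forcing $\cL_F(u-v)$ to have opposite signs at that point.

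First I would isolate two pointwise facts. If $w$ is a smooth admissible function with $F(w_{tt},A_w,\nabla w_t)>0$, then $w_{tt}>0$ (otherwise both terms of $F$ are non-positive), so $\sigma_2(E_w)=w_{tt}F>0$ and $E_w\in\Gamma^+_2$ by Proposition~\ref{gamma}; hence $\cL^{(w)}_F$ (the linearization of $F$ at $w$) is elliptic with $T_1(E_w)>0$. \emph{Fact~1} (the computation ending the proof of Proposition~\ref{c0}): if $\psi\in C^2$ satisfies $\nabla\psi(p)=0$ and $D^2\psi(p)\ge0$, then $\cL^{(w)}_F(\psi)(p)\ge0$; using $\langle T_1(E_w),E_w\rangle=2\sigma_2(E_w)$ from \eqref{Eu1} and completing squares, one writes $\cL^{(w)}_F(\psi)(p)$ as the non-negative term $\psi_{tt}w_{tt}^{-2}\sigma_2(E_w)$ plus $w_{tt}^{-1}\langle T_1(E_w),Y\otimes Y\rangle\ge0$ plus $\langle T_1(E_w),\nabla^2\psi-\psi_{tt}^{-1}\nabla\psi_t\otimes\nabla\psi_t\rangle\ge0$ (the last because the Schur complement of the positive semidefinite matrix $D^2\psi(p)$ is positive semidefinite; the degenerate case $\psi_{tt}(p)=0$ is handled as in Proposition~\ref{c0}), and by linearity $D^2\psi(p)\le0$ gives $\cL^{(w)}_F(\psi)(p)\le0$. \emph{Fact~2}: at a point $p$ with $\nabla(u-v)(p)=0$ the quadratic first-order terms $\nabla u\otimes\nabla u-\tfrac12|\nabla u|^2g$ in $A_u-A_v$ (and the $s^2$-term they generate) drop out, so $R_{\phi^s}(p)$ is affine in $s$ along $\phi^s=(1-s)v+su$; concavity of $G=\log F$ (Lemma~\ref{concavity}), applied to the concave function $s\mapsto G(R_{\phi^s}(p))$ whose derivatives at $s=0$ and $s=1$ are $F(v)(p)^{-1}\cL^{(v)}_F(u-v)(p)$ and $F(u)(p)^{-1}\cL^{(u)}_F(u-v)(p)$, yields
\begin{equation*}
F(u)(p)^{-1}\cL^{(u)}_F(u-v)(p)\ \le\ \log F(u)(p)-\log F(v)(p)\ \le\ F(v)(p)^{-1}\cL^{(v)}_F(u-v)(p).
\end{equation*}

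Then I would deduce the statements. \emph{Strict comparison} (middle statement): if $v\not>u$ somewhere in the interior, then $u-v$, vanishing on $\{0,1\}\times M$, attains a non-negative maximum at an interior $p$, where $\nabla(u-v)(p)=0$ and $D^2(u-v)(p)\le0$; the right inequality above together with $F(u)>F(v)$ forces $\cL^{(v)}_F(u-v)(p)>0$, contradicting Fact~1 (with $w=v$), which gives $\cL^{(v)}_F(u-v)(p)\le0$. \emph{First statement}: if $u\not\le v$, set $u_a=u+at(t-1)$ for small $a>0$; then $A_{u_a}=A_u$, $\nabla(u_a)_t=\nabla u_t$, so $F(u_a)=F(u)+2a\,\sigma_2(A_u)>F(v)$ everywhere, while $u_a=u$ on the boundary and $u_a>v$ wherever $u>v$ once $a$ is small, so the argument of the previous paragraph applied to $u_a$ yields a contradiction; hence $u_a\le v$, and $a\to0$ gives $u\le v$ (the proof in fact uses only $u\le v$ on the boundary, since a positive maximum of $u-v$ must then be interior). \emph{Third statement}: with $m=\max_{\{0,1\}\times M}|u-v|$ one has $u\le v+m$ on the boundary and $F(v+m)=F(v)=F(u)>0$, so $u\le v+m$ on $[0,1]\times M$ by the first statement, and symmetrically $v\le u+m$; thus $\max_{[0,1]\times M}|u-v|\le m$, the reverse inequality being trivial.

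The step I expect to need the most care is the concavity inequality in Fact~2: one must verify that at the critical point $p$ the segment $R_{\phi^s}(p)$ really is affine, that it stays inside the domain of $G$ (its $n\times n$ block stays in $\Gamma^+_2$ by convexity of that cone, and $F$ stays positive along it because $F^{1/3}$, equivalently $\log F$, is concave — Lemma~\ref{concavity}), and that the derivative of $G$ along it equals $F^{-1}\cL_F(u-v)$ at the endpoints. These are purely local identities at a critical point, but they are exactly where a sign could slip; everything else is a transcription of Proposition~\ref{c0} and the uniqueness argument of Section~3.4.
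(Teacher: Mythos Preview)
Your argument is correct and follows exactly the paper's approach (the paper merely writes that the lemma ``can be proved similarly as the proof of uniqueness,'' and you have faithfully expanded that sketch: concavity plus the sign of $\cL_F$ at an interior extremum, with the $at(t-1)$ perturbation to pass from non-strict to strict, and a constant shift for the third statement). One minor circularity to clean up: you justify that the segment $R_{\phi^s}(p)$ stays in the domain of $G$ by invoking the concavity of $\log F$, but concavity on a domain presupposes the domain is convex; instead observe directly that $\{r\in\Gamma^+_2,\ F>0\}=\{r\in\Gamma^+_2,\ r_{00}>H(r,Y)\}$ is the strict epigraph of the convex function $H$ of Theorem~\ref{convexity}, hence convex.
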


The comparison principle above relies on the fact that $f>0$ and $A_u, A_v$ are in interior of the cone $\Gamma^+_2$. We can also have a version of comparison principle if one function  is on the boundary and satisfies the homogeneous equation. More precisely, we have

\begin{lemma}
Suppose $u\in C^2$ is admissible such that
\[
F(u_{tt}, A_u, \nabla u_t)>0.
\]
Suppose a $C^2$ function $v\in [0, 1]\times M \rightarrow \R$ satisfies 
\[
v_{tt}\geq 0, A_v\in \bar \Gamma^+_2, F(v_{tt}, A_v, \nabla v_t)=0.
\]
If $u=v$ on $M\times \{0, 1\}$, then $v>u$ for any point in $(0, 1)\times M$. 
\end{lemma}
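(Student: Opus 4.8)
\emph{Proposed proof.} The plan is to run a comparison argument at an interior extremum of $v-u$, in the spirit of the uniqueness proof, but replacing the concavity of $\log F$ (which is unavailable at $v$, where $F=0$) by a monotonicity argument for $F$ along the segment joining $u$ and $v$. Suppose, for contradiction, that $v-u$ takes a value $\le 0$ at some interior point. Since $v=u$ on $M\times\{0,1\}$, the minimum $m:=\min_{[0,1]\times M}(v-u)\le 0$ is then attained at some $p\in(0,1)\times M$. At $p$ we have $D(v-u)=0$ and $D^2(v-u)\ge 0$; in particular $\nabla u=\nabla v$ and $u_t=v_t$ at $p$, so the first–order terms of $A_u$ and $A_v$ cancel there and $(A_v-A_u)|_p=\nabla^2(v-u)|_p$. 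Writing $R_w=\left(\begin{smallmatrix} w_{tt} & \nabla w_t\\ \nabla w_t & A_w\end{smallmatrix}\right)$ for $w=u,v$, it follows that $R_v-R_u$ evaluated at $p$ equals the full space–time Hessian of $v-u$ at $p$ (using normal coordinates, so that covariant and ordinary second derivatives agree at the critical point), hence $R_v-R_u|_p\ge 0$ as an $(n+1)\times(n+1)$ symmetric matrix.

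Next I would interpolate: set $R_\tau:=(1-\tau)R_u|_p+\tau R_v|_p$ for $\tau\in[0,1]$. Because $\Gamma^+_2$ is an open convex cone (Proposition \ref{gamma1}), with $A_u|_p\in\Gamma^+_2$ and $A_v|_p\in\bar\Gamma^+_2$, the $r$–part of $R_\tau$ lies in $\Gamma^+_2$ for every $\tau\in[0,1)$. Let $T:=\sup\{\,t\in[0,1]: F(R_\tau)>0 \text{ for all }\tau\in[0,t]\,\}$; since $F(R_0)=F(u_{tt},A_u,\nabla u_t)(p)>0$ and $F$ is polynomial in the matrix entries, $T>0$. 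For $\tau\in[0,T)$ the matrix $R_\tau$ has $r$–part in $\Gamma^+_2$ and $F(R_\tau)>0$, so by the strict ellipticity of $F$ the coefficient matrix $(F^{ij}(R_\tau))$ is positive definite; pairing it with $R_v-R_u\ge 0$ gives
\[
\frac{d}{d\tau}F(R_\tau)=\sum_{ij}F^{ij}(R_\tau)\,(R_v-R_u)_{ij}\ge 0 .
\]
Hence $F(R_\tau)\ge F(R_0)>0$ on $[0,T)$, and by continuity $F(R_T)\ge F(R_0)>0$; were $T<1$, $T$ could be enlarged, so $T=1$. Letting $\tau\to 1$ yields $F(v_{tt},A_v,\nabla v_t)(p)=F(R_1)\ge F(R_0)>0$, contradicting $F(v_{tt},A_v,\nabla v_t)\equiv 0$. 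Thus no interior point has $v-u\le 0$, and since $v=u$ on the boundary this means $v>u$ throughout $(0,1)\times M$. One checks afterwards that the same scheme gives the earlier comparison lemmas when both $F$'s are positive (there one may differentiate at either end and the concavity argument of Proposition \ref{c0} applies).

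The only genuine subtlety — the hard part — is that $v$ lives on the boundary $\{A_v\in\bar\Gamma^+_2,\ F=0\}$ of the region where $F$ is genuinely elliptic, so one cannot differentiate $F$ or use $\log F$ at $v$ itself, as in the uniqueness argument. The device above circumvents this: convexity of $\Gamma^+_2$ keeps the connecting segment inside the open cone for all $\tau<1$, and the continuity/bootstrap argument propagates the positivity $F>0$ — together with monotonicity of $F$ in the positive–semidefinite direction, which is exactly ellipticity — all the way to $\tau=1$, where it collides with $F(v)=0$. Everything else is the routine observation that at an interior minimum of $v-u$ the matrix $R_v-R_u$ is the positive semidefinite Hessian of $v-u$, once the first–order terms of $A_u$ and $A_v$ cancel because $\nabla u=\nabla v$ and $u_t=v_t$ at that point.
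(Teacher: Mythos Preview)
Your argument is correct. Both your proof and the paper's begin the same way: assume the conclusion fails, locate an interior extremum $p$ of $v-u$, and observe that at $p$ one has $D(v-u)=0$ and $D^2(v-u)\ge 0$, so that $R_v-R_u$ reduces to the nonnegative space--time Hessian of $v-u$. From this point the two proofs diverge. The paper perturbs $v$ to $w=v+bt(t-1)$ so that $F(w)(p)=2b\,\sigma_2(A_v)(p)$ is strictly positive but smaller than $F(u)(p)$, and then invokes the concavity of $\log F$ (Lemma~\ref{concavity}) together with the computation from Proposition~\ref{c0} showing $\cL_F(w-u)(p)\ge 0$ to reach a contradiction. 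You instead interpolate linearly between $R_u|_p$ and $R_v|_p$ and run a continuation argument: on the set where $F(R_\tau)>0$ the gradient matrix $(F^{ij}(R_\tau))$ is positive definite by ellipticity (Proposition~2.5, noting that $F(R_\tau)>0$ with $r_\tau\in\Gamma_2^+$ forces $r_{00,\tau}>0$), so $\tfrac{d}{d\tau}F(R_\tau)\ge 0$; this propagates $F(R_\tau)\ge F(R_0)>0$ all the way to $\tau=1$, contradicting $F(R_1)=0$. Your route is a bit more elementary in that it uses only ellipticity and the convexity of $\Gamma_2^+$, and in particular does not call on the concavity of $\log F$, which in this paper is a separate (and harder) appendix result; the paper's route, on the other hand, fits seamlessly with the template already established in Proposition~\ref{c0} and the uniqueness proof.
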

\begin{proof}We argue by contradiction. Suppose $v\leq u$.  Then $u-v$ obtains its maximum at an interior point $p$. At $p$, we have
\[
D^2(u-v)\leq 0, D(u-v)=0.
\]
In particular, at $p$,
\[
u_{tt}\leq v_{tt},\; \text{and}\;\; A_u\leq A_v
\]
It follows that $A_v\in \Gamma^+_2$ (at $p$) since
\[
\sigma_2(A_v)-\sigma_2(A_u)=\int_0^1 \langle T_1(sA_v+(1-s)A_u), A_v-A_u\rangle ds\geq 0.
\]
Choose $b>0$ sufficiently small such that at $p$, 
\[
2b \sigma_2(A_v)(p)<F(u_{tt}, A_u, \nabla u_t)(p)
\]
Take $w=v+bt(t-1)$. Then at $p$, we have (by concavity of $\log F$),
\[
\log F(w_{tt}, A_w, \nabla w_t)(p)-\log F(u_{tt}, A_u, \nabla u_t)(p)\geq F^{-1} \cL_F (w-u)(p)
\]
where $F^{-1}\cL_F$ takes value of $u$ at $p$. However, $D^2(w-u)\geq 0$ and $\nabla w=\nabla v$ at $p$. This follows that $\cL_F(w-u)(p)\geq 0$.
This contradicts the choice of $b$. 
\end{proof}

\begin{rmk}We conjecture that the solution $u^0$ constructed is the unique solution of the geodesic equation with fixed boundary datum. However, the comparison principle we derived is not strong enough to prove uniqueness. From now on we choose $f=1$ and consider the equation, for $s\in (0, 1]$,
\[
F(u_{tt}, A_u, \nabla u_t)=s.
\]
We refer this construction $u^s$ as ``the approximating geodesic" and the limit $u=u^0=\lim_{s\rightarrow 0} u^s$ as ``the geodesic", even though we do not prove the uniqueness of the geodesic equation. By comparison principle we can see that $u^0$ is canonical, in the sense that for any smooth $f>0$, the solutions 
\[
F(u_{tt}, A_u, \nabla u_t)=sf
\]
will have the same limit when $s\rightarrow 0$. However this simply asserts the uniqueness of the limit solution regardless of the choice of approximating process, but is not sufficient for the uniqueness of the geodesic equation itself (there might be a solution which is not constructed through the approximating process). On the other hand, the uniqueness does not play an important role for geometric applications. 
\end{rmk}

\section{Appendix}
\subsection{The Donaldson operator and the Gursky-Streets operator}
Denote the matrix $R=(r_{ij})$ for $i, j\in \{0, \cdots, n\}$ and $r=(r_{ij})$ for $ij\neq 0$, $Y=(r_{01}, \cdots, r_{0n})$.
Donaldson has introduced an operator about a decade ago \cite{Donaldson}, 
\[
Q(R)= r_{00}\sigma_1(r)-|Y|^2,
\]
We can write this operator as ($T_0(r)=I$),
\begin{equation}\label{donaldson}
Q(R)=r_{00} \sigma_1(r)-\langle T_0(r), Y\otimes Y\rangle.
\end{equation}
Hence the Donaldson operator is a first operator ($k=0$) in the following family (for $k\leq n$).
\begin{equation}\label{gurskys1}
F_k(R)=r_{00}\sigma_k(r)-\langle T_{k-1}(r), Y\otimes Y\rangle,
\end{equation}
which we call the Gursky-Streets operator. One requires a positivity condition that $r\in \Gamma^+_k$ and $F_k(R)>0$. We introduce $\tilde r=r_{00} r-Y\otimes Y$, Gursky-Streets have the following observation,
\[
r_{00}^{1-k}\sigma_k(\tilde r)=F_k(R)
\]
These operators are not symmetric for $R$ (only symmetric for $r$). Rather it has one special direction (corresponding to $r_{00}$). For a smooth function $u\in \R\times M\rightarrow \R$, take $R$ of the form
\[
R(D^2 u)= \begin{pmatrix}
u_{tt} & \nabla u_t\\
\nabla u_t & \nabla^2 u+S_u,
\end{pmatrix}
\]
where $S_u$ denotes a lower order term, then $F_k(R)=f$ defines  a family of second order differential equations. These equations are constructed as  geodesic equations of interesting infinitely dimensional Riemannian structure, coming from K\"ahler geometry and conformal geometry for example \cite{M2, Donaldson, GS2}. 
In Donaldson's setting, one can take $S_u=g/n$. We should mention that $S_u$ can be taken as any positive definite two tensors one can easily see that there is no any essential difference.
In Gursky-Streets' setting, $A_u$ is the Schouten tensor of a conformal metric $e^{-2u}g$ and
\[
\nabla^2 u +S_u=A_u=\nabla^2 u+A+\nabla u\otimes \nabla u-\frac{1}{2}|\nabla u|^2 g. 
\]

We should emphasize that we can only prove our results ($C^2$ estimates) for $n\geq 4$. The essential reason is that the first order terms $\nabla u\otimes \nabla u-\frac{1}{2}|\nabla u|^2 g$ are nonlinear 
and it would lead to a quadratic form of the type
\[
q=\left\langle T_1(E_u), -\nabla \phi\otimes \nabla \phi+\frac{1}{2}|\nabla \phi|^2 g\right\rangle. 
\]
When $n=3$, this quadratic form can have negative eigenvalues (for $E_u\in \Gamma^+_2$ and we denote its eigenvalues as $\l_1\geq \l_2\geq \l_3$). This would lead to a negative term of the form $(\l_1+\l_2+3\l_3)|\nabla^2_3 u|^2$ in the estimate of $\Delta u$ (similar situation also happens when one considers estimate of $u_{tt}$). 
For example, if $E_u$ has eigenvalues of the form $\l (1, 1, -1+\epsilon)$, then $\l_1+\l_2+3\l_3=-\l (2-3\epsilon)$ is negative and in the same order of $\sigma_1(E_u)$ when $\epsilon$ is sufficiently small. 
 Even one takes $\log (\Delta u+C)$ (leaving aside the additional difficulties by taking logarithm), this term is bad as the order of
$-\sigma_1(E_u)|\nabla^2_3 u|^2$, which exceeds the order of all the good terms (such as $\cL_F(e^{-u}), \cL_F(t^2)$) from the ``subharmonic" functions. 
When $n\geq 5$, the argument is straightforward since the strict positivity of the quadratic form $q$. When $n=4$, we need to explore the positivity of $q$ in a subtle way.

\subsection{The concavity of $G$}
Donaldson \cite{Donaldson} proved that his operator satisfies the following concavity using some elementary Lorentz geometry. Given $R_1, R_2$ satisfying the assumption ($r\in \Gamma^+_1$ and $F_1(R)>0$) such that $Q(R_1)=Q(R_2)$, 
then 
\[
Q((1-s)R_1+sR_2)\geq Q(R_1).
\]
This proves that $Q$ (instead of $\log Q$) is concave on its \emph{level set}. It is straightforward to see that it is equivalent to the fact that $\log Q$ is concave. The concavity of $\log Q$ plays an important role to solve Donaldson's equation. One can also argue the concavity of $\log Q$ by an elementary inequality \cite{Chen-He}.

\begin{lemma}The function
\[
\log (xy-\sum z_i^2)
\]
is concave for $x>0, xy-\sum z_i^2>0$
\end{lemma}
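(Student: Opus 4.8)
The plan is to show directly that the Hessian of $f(x,y,z) := \log \phi$, where $\phi := xy - \sum_i z_i^2$, is negative semidefinite on the domain $\Omega = \{x > 0,\ \phi > 0\}$. The key starting observation is that $\phi$ is itself a quadratic form, so $\mathrm{Hess}\,\phi$ is a constant symmetric matrix and hence
\[
\phi^2\,\mathrm{Hess}\,f = \phi\,\mathrm{Hess}\,\phi - \nabla\phi\otimes\nabla\phi .
\]
Evaluating on a test vector $V = (\xi,\eta,\zeta)\in\R\times\R\times\R^n$ and writing $w := \sum_i z_i\zeta_i$, and using $\nabla\phi = (y,x,-2z)$ together with $(\mathrm{Hess}\,\phi)(V,V) = 2\xi\eta - 2|\zeta|^2$, the assertion $\mathrm{Hess}\,f \le 0$ becomes the claim that
\[
M(V) := (y\xi + x\eta - 2w)^2 + 2\phi\bigl(|\zeta|^2 - \xi\eta\bigr) \ge 0 \qquad\text{on } \Omega .
\]
(Note that on $\Omega$ one automatically has $x,y>0$, since $y = (\phi + |z|^2)/x$.)

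First I would prove $M(V)\ge 0$ by completing the square in $\xi$. The coefficient of $\xi^2$ in $M$ is $y^2 > 0$, and after completing the square the remainder simplifies, using $xy - \phi = |z|^2$, to
\[
R = \phi\left(\frac{xy + |z|^2}{y^2}\,\eta^2 - \frac{4}{y}\,w\,\eta + 2|\zeta|^2\right).
\]
Since $\phi > 0$ and the parenthetical expression is a quadratic in $\eta$ with positive leading coefficient, it suffices to check its discriminant is nonpositive, i.e.\ that $2w^2 \le (xy + |z|^2)|\zeta|^2$. This is exactly where both hypotheses are used: Cauchy--Schwarz gives $w^2 \le |z|^2|\zeta|^2$, and $\phi>0$ gives $|z|^2 \le xy$, so $2w^2 \le 2|z|^2|\zeta|^2 \le (|z|^2 + xy)|\zeta|^2$. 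Hence $R \ge 0$, so $M(V)\ge 0$ for all $V$, and therefore $\mathrm{Hess}\,f \le 0$ on $\Omega$, proving the lemma.

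I do not expect a genuine obstacle here beyond keeping the square-completion bookkeeping clean; the real content is the elementary inequality above. I would, however, also record the conceptual explanation, which is the Lorentzian viewpoint: the substitution $x = s+t$, $y = s-t$ turns $\phi$ into the Minkowski form $s^2 - t^2 - |z|^2$ and $\Omega$ into one nappe of the open light cone, a convex set on which $\sqrt\phi$ is concave --- equivalently the reverse triangle inequality $\sqrt{\phi(P+P')} \ge \sqrt{\phi(P)} + \sqrt{\phi(P')}$ holds for $P,P'$ in the cone --- and since $\sqrt\phi$ is positively $1$-homogeneous and $\log$ is concave and increasing, $\log\phi = 2\log\sqrt\phi$ is concave. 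Either route gives the result, with the first having the advantage of being entirely self-contained.
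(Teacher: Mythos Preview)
Your proof is correct. Both the Hessian computation and the Lorentzian alternative go through as you describe, and the key inequality $2w^2 \le (xy+|z|^2)|\zeta|^2$ is exactly what is needed.

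That said, your main argument differs from the paper's. The paper proves midpoint concavity directly: setting $xy-\sum z_i^2=a$ and $\tilde x\tilde y-\sum\tilde z_i^2=\lambda^2 a$, it solves for $x$ and $\tilde x$ in terms of $y,\tilde y,z,\tilde z,a,\lambda$ and reduces the required inequality $\bar x\bar y-\sum\bar z_i^2\ge\lambda a$ to an elementary estimate. Your approach instead computes the Hessian of $\log\phi$ and shows it is negative semidefinite by completing the square in $\xi$; the reduction to $2w^2\le(xy+|z|^2)|\zeta|^2$ via Cauchy--Schwarz is clean and makes transparent exactly where both hypotheses $x>0$ and $\phi>0$ enter. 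The paper's remark (change of variables to the Minkowski form and Garding's theory) is essentially your second, conceptual route, and in fact yields the slightly stronger conclusion that $\sqrt\phi$ is concave. Each approach has its merits: the paper's midpoint argument avoids any calculus, while your Hessian computation is more systematic and would adapt readily to proving strict concavity or analyzing the equality case.
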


\begin{proof}The function is obviously smooth and we need to argue, 
\[
2\log \left(\frac{x+\tilde x}{2}\cdot \frac{y+\tilde y}{2}-\sum \left(\frac{z_i+\tilde z_i}{2}\right)^2\right)\geq \log (xy-\sum z_i^2)+\log (\tilde x\tilde y-\sum \tilde z_i^2)
\]
Denote, for $\l, a>0$, 
\[
xy-\sum z_i^2=a, \tilde x \tilde y-\sum \tilde z_i^2=\l^2 a.
\]
We need to show that
\[
\frac{x+\tilde x}{2}\cdot \frac{y+\tilde y}{2}-\sum \left(\frac{z_i+\tilde z_i}{2}\right)^2\geq \l a 
\]
Write \[x=\frac{1}{y} (a+\sum z_i^2)\; ; \tilde x=\frac{1}{\tilde y}(\sum z_i^2+\l^2 a)\]
This results in proving the following,
\[
\left(\frac{1}{y} (a+\sum z_i^2)+\frac{1}{\tilde y}(\sum z_i^2+\l^2 a)\right)\left(y+\tilde y\right)-\sum \left(z_i+\tilde z_i\right)^2\geq 4\l a. 
\]
This is elementary.
\end{proof}
\begin{rmk}A quick way to see the concavity is to write the function $xy-\sum z_i^2=u^2-v^2-\sum z_i^2$ with $u=(x+y)/2, v=(x-y)/2$. Then this is a standard example in Garding's theory of hyperbolic polynomials. Hence one can actually get that $(xy-\sum z_i^2)^{1/2}$ is concave. 
\end{rmk}

Now we establish Lemma \ref{concavity}, the concavity of $G=\log F$. 
 \begin{thm} Given $r\in \Gamma^+_2$ and $F=F_2(R)>0$, then $\log F$ is concave. 
 \end{thm}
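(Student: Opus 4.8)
The plan is to peel off the factor $\sigma_2(r)$ and reduce the concavity of $\log F$ to a matrix-convexity statement, which I then reduce — via Marcus's theorem — to the Loewner-concavity of a single matrix-valued map. First, writing $Y^tT_1(r)Y=\sigma_2(r)H(r,Y)$ with $H(r,Y)=Y^tT_1(r)Y/\sigma_2(r)$, on the relevant domain $\{r\in\Gamma^+_2,\ x\sigma_2(r)-Y^tT_1(r)Y>0\}$ — where $\sigma_2(r)>0$ by Proposition \ref{gamma1} — we have $F=\sigma_2(r)(x-H(r,Y))$, hence
\[
\log F=\log\sigma_2(r)+\log\bigl(x-H(r,Y)\bigr).
\]
By Proposition \ref{gamma1}, $\log\sigma_2$ is concave on $\Gamma^+_2$ and depends only on $r$, so it suffices to show $H$ is convex on $\Gamma^+_2\times\R^n$: then $x-H(r,Y)$ is concave in $R=(x,Y,r)$ (affine minus convex) and positive on the domain, and composing with the increasing concave function $\log$ shows $\log(x-H(r,Y))$ is concave, while the domain itself is convex for the same reason. (This is exactly the convexity of $H$ announced in the introduction.)

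Next I would prove $H$ convex using Marcus's theorem. Since $r\in\Gamma^+_2$ forces $T_1(r)\succ 0$ (Proposition \ref{gamma1}), the matrix $A(r):=\sigma_2(r)\,T_1(r)^{-1}$ is positive and $H(r,Y)=Y^tA(r)^{-1}Y$. By the classical result of Marcus \cite{Marcus}, $g(A,Y):=Y^tA^{-1}Y$ is jointly convex on $\{A\succ 0\}\times\R^n$ and, for fixed $Y$, nonincreasing in $A$ for the Loewner order. Consequently, if $r\mapsto A(r)$ is Loewner-concave on $\Gamma^+_2$, then for $\theta\in[0,1]$ one has
\[
H\bigl(\theta r_0+(1-\theta)r_1,\ \theta Y_0+(1-\theta)Y_1\bigr)\le g\bigl(\theta A(r_0)+(1-\theta)A(r_1),\ \theta Y_0+(1-\theta)Y_1\bigr)\le\theta H(r_0,Y_0)+(1-\theta)H(r_1,Y_1),
\]
so $H$ is convex. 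The theorem thus reduces to the single statement that $A(r)=\sigma_2(r)\,T_1(r)^{-1}$ is Loewner-concave on $\Gamma^+_2$.

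To see this I would restrict to a line $r(t)=r+tB$ with $r\in\Gamma^+_2$, $B$ symmetric, $|t|$ small. Since $T_1$ is linear, $\tfrac{d}{dt}T_1(r(t))=T_1(B)$ with vanishing second derivative; since $\sigma_2$ is quadratic, $\sigma_2(r+tB)=\sigma_2(r)+t\langle T_1(r),B\rangle+t^2\sigma_2(B)$. The product and inverse rules then give, with $T:=T_1(r)$,
\[
\frac{d^2}{dt^2}A(r(t))\Big|_{t=0}=2\sigma_2(B)\,T^{-1}-2\langle T_1(r),B\rangle\,T^{-1}T_1(B)T^{-1}+2\sigma_2(r)\,T^{-1}T_1(B)T^{-1}T_1(B)T^{-1},
\]
and testing this quadratic form on vectors of the form $Tw$ (which run over all of $\R^n$ as $w$ does, since $T$ is invertible), Loewner-concavity amounts to the scalar inequality
\[
\sigma_2(r)\,w^tT_1(B)\,T^{-1}\,T_1(B)\,w+\sigma_2(B)\,w^tTw\ \le\ \langle T_1(r),B\rangle\,w^tT_1(B)\,w\qquad(w\in\R^n).
\]

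The hard part is this last inequality. Everything above it is structural, but here one must use the full cone condition $r\in\Gamma^+_2$ rather than merely $T_1(r)\succ 0$ and $\sigma_2(r)>0$, and the sign analysis is delicate because $B$ is an arbitrary symmetric matrix, so $\sigma_2(B)$ and $\langle T_1(r),B\rangle$ may each be of either sign. I would diagonalize $r$ (hence $T$), record the Cauchy–Schwarz inequality $(w^tT_1(B)w)^2\le(w^tTw)(w^tT_1(B)T^{-1}T_1(B)w)$, dispatch the easy cases $\sigma_2(B)\le 0$ and $\langle T_1(r),B\rangle\le 0$ by crude bounds, and in the remaining case $\sigma_2(B)>0$, $\langle T_1(r),B\rangle>0$ combine Cauchy–Schwarz with the quadratic expansion of $\sigma_2(r+tB)$ and the strict inequalities $\sigma_1(r)-\lambda_i>0$ (valid on $\Gamma^+_2$, cf.\ the proof of Proposition \ref{gamma}) to reduce everything to a short list of one-variable estimates. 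With that inequality in hand the theorem follows.
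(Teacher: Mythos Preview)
Your reduction is correct and genuinely different from the paper's. The paper also writes $\log F=\log\sigma_2(r)+\log(x-H(r,Y))$ and reduces to the convexity of $H$, but then proves that convexity by a direct midpoint argument: it expands $2\bar T(\bar Y,\bar Y)$ bilinearly, and to control the cross terms $T(Y,\tilde Y)+\tilde T(Y,\tilde Y)$ it proves the two-vector refinement (Lemma \ref{Q100}, i.e.\ \eqref{key-202}--\eqref{key-303}) of the basic inequality \eqref{Q2}. Your route instead writes $H(r,Y)=Y^tA(r)^{-1}Y$ with $A(r)=\sigma_2(r)T_1(r)^{-1}$ and invokes Marcus to reduce everything to the Loewner concavity of $A$. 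That Loewner concavity is \emph{exactly} the paper's \eqref{Q2}: for a unit vector $V_1$, midpoint concavity of $V_1^tA(r)V_1=\sigma_2(r)/T(V_1,V_1)$ rearranges (using $4\bar\sigma_2=\sigma_2+\tilde\sigma_2+Q$ and $\bar T_{11}=(T_{11}+\tilde T_{11})/2$) to $Q\ge \sigma_2\,\tilde T_{11}/T_{11}+\tilde\sigma_2\,T_{11}/\tilde T_{11}$. So your approach buys a real simplification: by outsourcing the mixed $(r,Y)$--terms to Marcus, you only need \eqref{Q2} and never the harder Lemma \ref{Q100}.

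The gap is that you do not prove the scalar inequality, and your sketched attack is not convincing. First, the inequality is invariant under $B\mapsto -B$ (both sides are quadratic in $B$), so ``$\langle T_1(r),B\rangle\le 0$'' is not a separate case one can ``dispatch''; likewise the Cauchy--Schwarz bound $(w^tT_1(B)w)^2\le (w^tTw)\,(w^tT_1(B)T^{-1}T_1(B)w)$ goes the wrong way to control the first term on the left. What actually works is the paper's identity \eqref{identity2} in Lemma \ref{qua1}: fixing any unit $V_1$ and any orthonormal frame extending it, one has $\sigma_1\tilde\sigma_1=(\sigma_2+\tfrac12(|r|^2-r_{11}^2))\,\tilde T_{11}/T_{11}+(\tilde\sigma_2+\tfrac12(|\tilde r|^2-\tilde r_{11}^2))\,T_{11}/\tilde T_{11}+r_{11}\tilde r_{11}$, and subtracting $(r,\tilde r)$ yields \eqref{Q1} with a manifestly nonnegative remainder $M_1$, hence \eqref{Q2}. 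Either cite that computation, or redo it infinitesimally: set $\tilde r=r+\epsilon B$, expand \eqref{identity2} to second order in $\epsilon$, and read off your second-derivative inequality. With that in hand your argument is complete and, modulo the appeal to Marcus, shorter than the paper's.
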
 
\begin{proof}We need to show that
\[
\log \left(r_{00}\sigma_2(r)-\langle T_1 (r), Y\otimes Y \rangle \right)
\]
is concave for $r\in\Gamma^+_2$ and $r_{00}\sigma_2(r)-\langle T_1 (r), Y\otimes Y \rangle >0.$ In other words, we want to show that, for any $s\in [0, 1]$,
\[
\log F((1-s)R+s\tilde R)\geq (1-s) \log F(R)+s\log F(\tilde R)
\]
Since $\log F$ is smooth on $R$, we only need to prove for $s=1/2$. Denote \[x=r_{00}, \tilde x=\tilde r_{00}, \bar x=\frac{x+\tilde x}{2}\]
We also use $T=T_1(r)$ and $\tilde T=T_1(\tilde r)$
We also use the notation $\bar r, \bar Y, \bar T$ to denote the average for simplicity.  
We need to show
\begin{equation}\label{c10}
2\log \left(\bar x\sigma_2(\bar r)-\bar T(\bar Y, \bar Y)\right)\geq \log \left(x\sigma_2(r)-T(Y, Y)\right)+\log\left(\tilde x\sigma_2(\tilde r)-\tilde T(\tilde Y, \tilde Y)\right)
\end{equation}
Denote, for $\l, a>0$,
\[
x\sigma_2(r)-T(Y, Y)=a\;; \tilde x \sigma_2(\tilde r)-\tilde T(\tilde Y, \tilde Y)=\l^2 a. 
\]
We can write 
\[
x=\sigma_2(r)^{-1}(a+T(Y, Y)), \tilde x=\sigma_2(\tilde r)^{-1}(\l^2a+\tilde T(\tilde Y, \tilde Y))
\]
For simplicity we also use the notations $T=T(Y, Y), \tilde T=\tilde T(\tilde Y, \tilde Y), \sigma_2=\sigma_2(r), \tilde \sigma_2=\sigma_2(\tilde r)$ etc when there is no confusion.
We need to show 
\begin{equation}\label{c11}
\left(\frac{\bar \sigma_2 (a+T)}{2\sigma_2}+\frac{\bar \sigma_2(\l^2a+\tilde T)}{2\tilde \sigma_2}-\bar T\right)\geq \l a. 
\end{equation}
By the concavity of $\sqrt{\sigma_2}$ (or rather the concavity of $\log \sigma_2$), we have
\[
\frac{\bar \sigma_2}{2\sigma_2}+\l^2 \frac{\bar \sigma_2}{2\tilde \sigma_2}\geq \l . 
\]
By \eqref{c10} and \eqref{c11}, this reduces to show
\begin{equation}
\frac{\bar \sigma_2 T}{2\sigma_2}+\frac{\bar \sigma_2\tilde T}{2\tilde \sigma_2}-\bar T\geq 0.
\end{equation}
This is to show that
\[
\frac{1}{2}\left(\frac{ T}{\sigma_2}+\frac{\tilde T}{\tilde \sigma_2}\right)\geq \frac{\bar T}{\bar \sigma_2}
\]
It completes the proof given the convexity of $H(r, Y)$ on $(r, Y)$ for $r\in \Gamma^+_2$, where
\begin{equation}\label{h}
H(r, Y):=\frac{T_1(r)(Y, Y)}{\sigma_2(r)}=\left(\frac{\p \log \sigma_2}{\p r_{ij}}\right) (Y, Y),
\end{equation}
The convexity of $H$ will be proved in the following. 
\end{proof}

\begin{thm}\label{convexity}The function $H(r, Y)$ in \eqref{h} is convex on $(r, Y)$ for $r\in \Gamma^+_2$.
\end{thm}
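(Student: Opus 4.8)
The plan is to verify the (midpoint) convexity of $H$ through the second-order condition, after first eliminating the ``good'' direction. Write $P(r)=\sigma_2(r)>0$ and $Q(r,Y)=T_1(r)(Y,Y)=\sigma_1(r)|Y|^2-Y^trY\ge0$ on $\Gamma^+_2\times\R^n$, so that $H=Q/P$; note $Q$ is \emph{linear} in $r$ and a positive semidefinite quadratic form in $Y$, while $P$ is quadratic in $r$. Since $H$ is smooth, it suffices to show $\frac{d^2}{ds^2}H\ge0$ along every line $s\mapsto(r+s\dot r,\,Y+s\dot Y)$ with $r\in\Gamma^+_2$ and $\dot r$ an arbitrary symmetric matrix. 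Differentiating $H=Q/P$ gives
\[
P^3\,\tfrac{d^2}{ds^2}H\big|_{s=0}= P^2Q''-2PQ'P'-PQP''+2Q(P')^2,
\]
with $P'=\langle T_1(r),\dot r\rangle$, $P''=2\sigma_2(\dot r)$, $Q'=2T_1(r)(Y,\dot Y)+T_1(\dot r)(Y,Y)$, and $Q''=2T_1(r)(\dot Y,\dot Y)+4T_1(\dot r)(Y,\dot Y)$.

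First I would remove the dependence on $\dot Y$. The right-hand side is a quadratic form in $(\dot r,\dot Y)$ whose $\dot Y\otimes\dot Y$ block equals $2\sigma_2(r)^2\,T_1(r)\succ0$ --- this is exactly the ellipticity of $F$. Minimizing over $\dot Y$ by completing the square (a Schur-complement step), the condition $\frac{d^2}{ds^2}H\ge0$ becomes equivalent to the following inequality, to hold for every $r\in\Gamma^+_2$, every symmetric $\dot r$, and every $Y\in\R^n$:
\begin{equation}\label{keyineq}
\langle T_1(r),\dot r\rangle\,T_1(\dot r)(Y,Y)-\sigma_2(\dot r)\,T_1(r)(Y,Y)\ \ge\ \sigma_2(r)\,Y^t\,T_1(\dot r)\,T_1(r)^{-1}\,T_1(\dot r)\,Y .
\end{equation}
For $n=2$ this holds with equality, consistent with $H=Y^tr^{-1}Y$ and Marcus's theorem~\cite{Marcus}, which is a reassuring check. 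Since \eqref{keyineq} is invariant under $r\mapsto O^trO,\ \dot r\mapsto O^t\dot rO,\ Y\mapsto O^tY$ for orthogonal $O$, I may assume $r=\mathrm{diag}(\lambda_1,\dots,\lambda_n)$ with $\lambda\in\Gamma^+_2$, so that $T_1(r)=\mathrm{diag}(\mu_1,\dots,\mu_n)$ with $\mu_i=\sigma_1(\lambda)-\lambda_i>0$ and $T_1(r)^{-1}=\mathrm{diag}(\mu_i^{-1})$; writing $\dot r=(a_i,c_{ij})$ in coordinates then turns \eqref{keyineq} into an explicit polynomial inequality in $(\lambda,a,c,Y)$.

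It then remains to prove this algebraic inequality, which I would split into a ``diagonal part'' (only the $a_i$ enter) and a ``cross part'' (involving the off-diagonal $c_{ij}$). The diagonal part should follow from the elementary estimate $\mu_i\mu_j\ge\sigma_2(\lambda)$ (proved in the spirit of Proposition~\ref{gamma}, since $\mu_i\mu_j-\sigma_2(\lambda)=\sigma_1(\hat\lambda)^2-\sigma_2(\hat\lambda)\ge0$ for the complementary $n-2$ eigenvalues $\hat\lambda$) together with Maclaurin's inequality $\sigma_2(\dot r)\le\frac{n-1}{2n}\sigma_1(\dot r)^2$, and the cross terms should be absorbed by Cauchy--Schwarz into the manifestly nonnegative squares $Y^t\,T_1(\dot r)\,T_1(r)^{-1}\,T_1(\dot r)\,Y=\sum_k\mu_k^{-1}\big(T_1(\dot r)Y\big)_k^2$ on the right and the PSD form $T_1(r)(\dot Y,\dot Y)$ on the left. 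The hard part will be precisely this last step: because $r$ and $\dot r$ cannot be simultaneously diagonalized, the off-diagonal entries of $\dot r$ generate genuinely mixed terms (products such as $\mu_i\,a_j\,c_{ik}^2$), and organizing the completion of squares so that \eqref{keyineq} is termwise nonnegative --- especially pinning down the tight cases, which I expect to occur on the boundary of $\Gamma^+_2$ --- is the heart of the matter. As a dictionary for guessing the right grouping it is useful to record $T_1(r)(Y,Y)=\sum_{|S|=2}Y_S^t\,\mathrm{adj}(r_S)\,Y_S$ and $\sigma_2(r)=\sum_{|S|=2}\det r_S$, which exhibit $H$ as a weighted average $\sum_{|S|=2}w_S(r)\,Y_S^tr_S^{-1}Y_S$ of the $n=2$ functions with $r$-dependent weights $w_S(r)=\det r_S/\sigma_2(r)$ --- so convexity is not automatic, and \eqref{keyineq} is exactly the quantitative control that the varying weights require. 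Equivalently, $\mathrm{epi}(H)=\{(r,Y,s):r\in\Gamma^+_2,\ s\ge0,\ sr-Y\otimes Y\in\overline{\Gamma^+_2}\}$, and \eqref{keyineq} is the infinitesimal form of the convexity of this set.
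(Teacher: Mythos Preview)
Your infinitesimal approach is genuinely different from the paper's midpoint argument, and your reduction is correct: after the Schur complement in $\dot Y$, convexity of $H$ is indeed equivalent to your inequality \eqref{keyineq}. The paper instead works directly with two points $(r,Y)$ and $(\tilde r,\tilde Y)$ and reduces midpoint convexity to
\[
\Bigl(\tfrac{4\bar\sigma_2}{\sigma_2}\,T-T-\tilde T\Bigr)(Y,Y)\;\Bigl(\tfrac{4\bar\sigma_2}{\tilde\sigma_2}\,\tilde T-T-\tilde T\Bigr)(\tilde Y,\tilde Y)\ \ge\ \bigl((T+\tilde T)(Y,\tilde Y)\bigr)^{2},
\]
which is the finite-difference analogue of \eqref{keyineq}; the two formulations are equivalent, and neither is visibly easier.

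The gap is that you do not actually prove \eqref{keyineq}. You correctly isolate the obstruction (the off-diagonal entries of $\dot r$, i.e.\ the failure of $r$ and $\dot r$ to commute), but the tools you propose---$\mu_i\mu_j\ge\sigma_2(\lambda)$, Maclaurin, and generic Cauchy--Schwarz---do not by themselves close the argument, and you say so yourself (``organizing the completion of squares \dots\ is the heart of the matter''). In the paper the corresponding step is a separate, nontrivial lemma: for any unit vectors $V,W$ and $r,\tilde r\in\Gamma^+_2$,
\[
\sigma_1\tilde\sigma_1-\langle r,\tilde r\rangle\ \ge\ \sigma_2\,\frac{\tilde T(V,V)}{T(V,V)}\;+\;\tilde\sigma_2\,\frac{T(V,W)^2}{T(V,V)\,\tilde T(W,W)},
\]
proved via the normalization $T(V,V)=\tilde T(V,V)$, an explicit identity yielding $\tilde T_{11}\tilde T_{22}\ge\tilde\sigma_2+\tilde r_{12}^{\,2}$, and only then a Cauchy--Schwarz step. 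Applied once with $(V,W)=(Y,\tilde Y)$ and once with the roles of $r,\tilde r$ swapped, this gives exactly the two lower bounds whose product dominates the right-hand side. An infinitesimal version of this lemma (letting $\tilde r=r+\varepsilon\dot r$) would give what you need, but it is a genuine new ingredient, not a routine completion of squares. One minor point in addition: Maclaurin in the sharp form $\sigma_2(\dot r)\le\frac{n-1}{2n}\sigma_1(\dot r)^2$ requires nonnegative eigenvalues, which $\dot r$ need not have; only the weaker $\sigma_2(\dot r)\le\tfrac12\sigma_1(\dot r)^2$ is available for arbitrary symmetric $\dot r$.
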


Theorem \ref{convexity} should have its own interest. We conjecture this holds for general $k$. Note that $k=1$ is straightforward, and when $k=n$ it is an old result of Marcus \cite{Marcus}.

\begin{conj}Let $n\geq 4$. Suppose $r$ is a $n\times n$ symmetric matrix such that $r\in \Gamma^+_k$, for $3\leq k\leq n-1$, then 
$H_k(r, Y)$ is a convex function on $r, Y$.
\end{conj}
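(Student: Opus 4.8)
\medskip
\noindent\textbf{Proof proposal.}
The plan is to deduce the convexity of $H_k$ from the concavity of $\log F_k$, where $F_k(R)=r_{00}\sigma_k(r)-Y^tT_{k-1}(r)Y$ is the Gursky--Streets operator of \eqref{gurskys1} on the space of $(n+1)\times(n+1)$ symmetric matrices, exactly as the paper does for $k=2$; the new content is a conceptual proof, via G\aa rding's theory of hyperbolic polynomials, that $F_k$ is hyperbolic, which makes the concavity of $\log F_k$ automatic. The starting point is that just as $\sigma_k(r)=\tfrac{1}{(n-k)!}\,\tfrac{d^{\,n-k}}{dt^{\,n-k}}\big|_{0}\det(r+tI_n)$, the operator $F_k$ is ``the same construction one dimension up'':
\[
F_k(R)=\frac{1}{(n-k)!}\,\frac{d^{\,n-k}}{dt^{\,n-k}}\Big|_{t=0}\det(R+tJ),\qquad J:=\mathrm{diag}(0,I_n).
\]
This is checked by diagonalizing $r$ and reading off the coefficient of $t^{n-k}$ from $\det(R+tJ)=r_{00}\det(r+tI_n)-Y^t\mathrm{adj}(r+tI_n)Y$, using $\mathrm{adj}=T_{n-1}$ and $\tfrac{1}{(n-k)!}\tfrac{d^{n-k}}{dt^{n-k}}\big|_{0}T_{n-1}(r+tI_n)=T_{k-1}(r)$; the general case follows from invariance under conjugation by $\mathrm{diag}(1,O)$, $O\in O(n)$.

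Next I would prove $F_k$ is hyperbolic. The determinant on symmetric $(n+1)\times(n+1)$ matrices is hyperbolic with respect to $I_{n+1}$, with hyperbolicity cone the positive definite cone $\cP$. For $\epsilon>0$ put $J_\epsilon:=\mathrm{diag}(\epsilon,I_n)\succ 0$. Since $J_\epsilon\in\cP$, $\det$ is also hyperbolic with respect to $J_\epsilon$ (every interior point of a hyperbolicity cone is a hyperbolicity direction with the same cone), and iterating the Rolle-type theorem for hyperbolic polynomials (differentiation along a hyperbolicity direction preserves hyperbolicity and enlarges the cone) gives that $D_{J_\epsilon}^{\,n-k}\det$ is hyperbolic with hyperbolicity cone containing $\cP\ni I_{n+1}$, hence hyperbolic with respect to $I_{n+1}$. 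Letting $\epsilon\to0$, the coefficients converge to those of $(n-k)!\,F_k$, which is nonzero of degree exactly $k+1$ (the monomial $r_{00}r_{11}\cdots r_{kk}$ occurs), so by the Hurwitz-type closure of hyperbolicity under coefficient limits, $F_k$ is hyperbolic with respect to $I_{n+1}$. Moreover $\det(R+tJ)=\det R\cdot\prod_{i=1}^{n}(1+t\nu_i)$ with all $\nu_i>0$ whenever $R\succ0$, so $F_k>0$ on $\cP$; hence the hyperbolicity cone $\Lambda_k$ of $F_k$ contains $\cP$. G\aa rding's theorem then yields that $F_k^{1/(k+1)}$, and therefore $\log F_k$, is concave on $\Lambda_k$.

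The reduction to $H_k$ is then the same mechanism used for $k=2$. The region $\mathcal R_k:=\{R:r\in\Gamma_k^+,\ F_k(R)>0\}$ fibers over the convex cone $\Gamma_k^+$ with fibers $\{r_{00}>H_k(r,Y)\}$ (since $T_{k-1}(r)\succ0$ on $\Gamma_k^+$, Proposition~\ref{gamma1}), so $\mathcal R_k$ is connected and contains $I_{n+1}$; thus $\mathcal R_k\subseteq\Lambda_k$, and $\log F_k$ is concave along every segment of the convex set $\Lambda_k$, in particular along segments joining points of $\mathcal R_k$. Writing $F_k(R)=\sigma_k(r)\big(r_{00}-H_k(r,Y)\big)$ and, given $(r_i,Y_i)\in\Gamma_k^+\times\R^n$ and $\epsilon>0$, setting $R_i:=\big(H_k(r_i,Y_i)+\epsilon,\,r_i,\,Y_i\big)\in\mathcal R_k$ for $i=1,2$, midpoint concavity of $\log F_k$ combined with concavity of $\log\sigma_k$ on $\Gamma_k^+$ (Proposition~\ref{gamma1}) gives, after cancellation of the $\log\epsilon$ terms,
\[
\tfrac12\big(H_k(r_1,Y_1)+H_k(r_2,Y_2)\big)-H_k\Big(\tfrac{r_1+r_2}{2},\tfrac{Y_1+Y_2}{2}\Big)\ \ge\ -\epsilon .
\]
Letting $\epsilon\to0^+$ and using continuity of $H_k$ yields the midpoint convexity, hence the convexity, of $H_k$ on $\Gamma_k^+\times\R^n$. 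The same argument simultaneously shows that the Gursky--Streets operator $\log F_k$ is concave for all $3\le k\le n-1$, which is the other half of the conjecture.

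The main obstacle is the hyperbolicity step: one must invoke with care that differentiating in the \emph{interior} direction $J_\epsilon$, rather than in $I_{n+1}$ itself, preserves hyperbolicity and keeps $\cP$ inside the resulting cone; that no degeneration of degree occurs in the limit $\epsilon\to0$; and that the inclusion $\cP\subseteq\Lambda_k$ survives that limit. Should the abstract route prove awkward, the fall-back is a direct verification that $t\mapsto F_k(R+tI_{n+1})$ is real-rooted, via the secular equation of the diagonal-plus-rank-one matrix $\mathrm{diag}\big((r_{00}+t)(\lambda_i+t)\big)-Y\otimes Y$ (recalling $F_k(S)=s_{00}^{1-k}\sigma_k(s_{00}s-Y\otimes Y)$); this is elementary in principle but considerably more computational, and the G\aa rding route is the one I would attempt first.
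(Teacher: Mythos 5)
This statement is left as a \emph{conjecture} in the paper: there is no proof here to compare against. The text only establishes the case $k=2$ (Theorem~\ref{convexity}, by the long elementary computation in the appendix), and a remark explicitly records that the author attempted the G\aa rding route for $k=2$ by checking real-rootedness of $F(R+tJ)$ directly and found the computation too involved to complete. Your proposal is therefore not a variant of the paper's argument but a genuinely new one, and as far as I can check it is correct and settles the conjecture for all $1\le k\le n$ at once. The decisive observation, which the paper's remark misses, is the identity
\[
\det(R+tJ)=\sum_{k=0}^{n}F_k(R)\,t^{\,n-k},\qquad J=\mathrm{diag}(0,I_n),
\]
i.e.\ $F_k=\tfrac{1}{(n-k)!}D_J^{\,n-k}\det$; this follows from $\det(R+tJ)=r_{00}\det(r+tI_n)-Y^t\,\mathrm{adj}(r+tI_n)\,Y$ together with $\mathrm{adj}(r+tI_n)=\sum_{k=1}^{n}T_{k-1}(r)\,t^{\,n-k}$, both of which I verified. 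Once $F_k$ is exhibited as an iterated directional derivative of $\det$ in a direction lying in the closed positive cone, everything else you invoke is standard hyperbolic-polynomial machinery, correctly applied: the perturbation $J_\epsilon=\mathrm{diag}(\epsilon,I_n)$ handles the boundary direction, the degree of $D_{J_\epsilon}^{\,n-k}\det$ is $k+1$ for every $\epsilon\ge 0$ and $F_k(I_{n+1})=\binom{n}{k}\neq 0$ so the Hurwitz limit does not degenerate, and G\aa rding gives concavity of $F_k^{1/(k+1)}$, hence of $\log F_k$, on the hyperbolicity cone $\Lambda_k$, which contains the connected set $\{r\in\Gamma^+_2,\ F_k>0\}$ by the component characterization of $\Lambda_k$. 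Two remarks for the write-up. First, the endgame can be shortened: with $R_i=(H_k(r_i,Y_i)+\epsilon,\,r_i,\,Y_i)\in\Lambda_k$, convexity of $\Lambda_k$ and positivity of $F_k$ on it already give $F_k(\bar R)>0$, i.e.\ $\tfrac12(H_1+H_2)+\epsilon>H_k(\bar r,\bar Y)$, so the bookkeeping with $\log\epsilon$ and $\log\sigma_k$ (which is also correct, since $\log\sigma_k(\bar r)\ge \tfrac12\log\sigma_k(r_1)+\tfrac12\log\sigma_k(r_2)$ pushes the inequality the right way) is not needed. Second, your argument proves more than the conjecture asks: it yields the concavity of $\log F_k$ (indeed of $F_k^{1/(k+1)}$) for every $k$, which is precisely the consequence the paper wants, and for $k=2$ it replaces Lemma~\ref{concavity} and Theorem~\ref{convexity} by a soft argument. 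The steps that deserve the most care in a full write-up are the ones you flag yourself --- the exact statement of the Rolle theorem used for the interior direction $J_\epsilon$, and the non-degeneration of degree and of the cone inclusion as $\epsilon\to 0$ --- but I do not see a gap.
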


First we need the following results, which give a simple proof of the well-known concavity of $\sqrt{\sigma_2}$. 
\begin{lemma}\label{qua1}For $r, \tilde r$ such that $\sigma_1=\sigma_1(r), \tilde \sigma_1=\sigma_1(\tilde r)$ both are positive, then we have the following identity
\begin{equation}\label{identity1}
\sigma_1 \tilde \sigma_1=\sigma_2\frac{\tilde \sigma_1}{\sigma_1}+\tilde \sigma_2\frac{\sigma_1}{\tilde \sigma_1}+\frac{1}{2}\left(|r|^2 \frac{\tilde \sigma_1}{\sigma_1}+|\tilde r|^2\frac{\sigma_1}{\tilde \sigma_1}\right)
\end{equation}
Moreover, if $r, \tilde r\in \Gamma^+_2$ then we have, given any unit vector $V_1\neq 0, |V|=1$,
\begin{equation}\label{identity2}
\sigma_1 \tilde \sigma_1=\left(\sigma_2+ \frac{1}{2}(|r|^2-r_{11}^2)\right)\frac{\tilde T(V_1, V_1)}{T(V_1, V_1)}+\left(\tilde \sigma_2+\frac{1}{2} (|\tilde r|^2-\tilde r_{11}^2)\right) \frac{T(V_1, V_1)}{\tilde T(V_1, V_1)}+r_{11}\tilde r_{11},
\end{equation}
where we use the notations $r_{11}=r(V_1, V_1)$.
\end{lemma}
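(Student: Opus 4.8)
The plan is to recognize that both identities are purely algebraic rearrangements built from the single Newton identity
\[
\sigma_1(r)^2 = |r|^2 + 2\sigma_2(r),
\]
which holds for every symmetric matrix $r$; for \eqref{identity2} one additionally uses a block decomposition adapted to $V_1$.

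First, for \eqref{identity1}, I would rewrite the Newton identity as $\sigma_2(r) + \tfrac12|r|^2 = \tfrac12\sigma_1(r)^2$ and likewise $\tilde\sigma_2 + \tfrac12|\tilde r|^2 = \tfrac12\tilde\sigma_1^2$, then substitute both into the right-hand side of \eqref{identity1}. This collapses it to $\tfrac12\sigma_1^2\cdot(\tilde\sigma_1/\sigma_1) + \tfrac12\tilde\sigma_1^2\cdot(\sigma_1/\tilde\sigma_1) = \sigma_1\tilde\sigma_1$, which is exactly the left-hand side; the positivity of $\sigma_1, \tilde\sigma_1$ is needed only to make the quotients meaningful.

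Next, for \eqref{identity2}, I would fix an orthonormal basis with first vector $V_1$ and write $r$ in block form
\[
r = \begin{pmatrix} r_{11} & b^{t}\\ b & r'\end{pmatrix},
\]
with $r'$ a symmetric $(n-1)\times(n-1)$ matrix and $r_{11} = r(V_1,V_1)$, and similarly for $\tilde r$. Since $T_1(r) = \sigma_1(r)I - r$, I get $T(V_1,V_1) = \sigma_1(r) - r_{11} = \sigma_1(r') =: s$ and $\tilde T(V_1,V_1) = \sigma_1(\tilde r') =: \tilde s$; both are strictly positive because $r, \tilde r\in\Gamma^+_2$ makes $T_1(r), T_1(\tilde r)$ positive definite by Proposition \ref{gamma1}(2), so the fractions in \eqref{identity2} are legitimate. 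A direct block computation gives $\sigma_2(r) = r_{11}\sigma_1(r') + \sigma_2(r') - |b|^2$ and $|r|^2 - r_{11}^2 = 2|b|^2 + |r'|^2$, hence
\[
\sigma_2(r) + \tfrac12\bigl(|r|^2 - r_{11}^2\bigr) = r_{11}s + \sigma_2(r') + \tfrac12|r'|^2 = r_{11}s + \tfrac12 s^2,
\]
where the last equality is the Newton identity applied to $r'$ in dimension $n-1$, so no new ingredient is needed. With the analogous identity for $\tilde r$ and $\tilde s$, I would plug these into the right-hand side of \eqref{identity2}, substitute $s = \sigma_1 - r_{11}$ and $\tilde s = \tilde\sigma_1 - \tilde r_{11}$, and expand; all cross-terms cancel and the sum reduces to $\sigma_1\tilde\sigma_1$.

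I do not expect a genuine obstacle: the computation is elementary, and the only things to watch are the bookkeeping in the block expansions of $\sigma_1$, $\sigma_2$, $|\cdot|^2$ and the recognition that the reduction to dimension $n-1$ is merely the same Newton identity applied to the smaller block. The sole role of the hypothesis $r, \tilde r\in\Gamma^+_2$ is to guarantee that $T(V_1,V_1)$ and $\tilde T(V_1,V_1)$ are positive, so that \eqref{identity2} even makes sense as a statement about quotients.
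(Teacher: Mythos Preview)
Your proposal is correct and follows essentially the same route as the paper: both parts are direct consequences of the Newton identity $\sigma_1^2 = |r|^2 + 2\sigma_2$, and for \eqref{identity2} both you and the paper pass to an orthonormal basis with first vector $V_1$, use $T(V_1,V_1)=\sigma_1-r_{11}$, and reduce to the same quadratic expansion. Your block decomposition with the off-diagonal vector $b$ is slightly more explicit than the paper's presentation (which works only with $r_{11}$ and $\sigma_1-r_{11}$ and never names $b$ or $r'$), but since $|b|^2$ cancels in your computation of $\sigma_2+\tfrac12(|r|^2-r_{11}^2)$, the two arguments are the same algebra in marginally different bookkeeping.
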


\begin{proof}We need the following,
\begin{equation}\label{s2-1}
\sigma_2(r)=\frac{1}{2}\left(\sigma_1^2-|r|^2\right)
\end{equation}
We write
\[
\sigma_1\tilde \sigma_1=\frac{1}{2}\left(\sigma_1^2 \frac{\tilde \sigma_1}{\sigma_1}+\tilde \sigma_1^2\frac{\sigma_1}{\tilde \sigma_1}\right)
\]
Using \eqref{s2-1} this proves \eqref{identity1}. Now we prove \eqref{identity2}.
We choose an orthonormal basis  $\{V_1, V_2, \cdots, V_n\}$ which extends $V_1$. 
We write
\begin{equation*}
\begin{split}
\sigma_1 \tilde \sigma_1=&(\sigma_1- r_{11}+r_{11})(\tilde \sigma_1-\tilde r_{11}+\tilde r_{11})\\
=&(\sigma_1- r_{11})(\tilde \sigma_1-\tilde r_{11})+r_{11}(\tilde \sigma_1-\tilde r_{11})+\tilde r_{11}(\sigma_1- r_{11})+r_{11}\tilde r_{11}\\
=&\left[\frac{(\sigma_1-r_{11})^2}{2}+(\sigma_1-r_{11})r_{11}\right]\frac{\tilde \sigma_1-\tilde r_{11}}{\sigma_{1}-r_{11}}+\left[\frac{(\tilde \sigma_1-\tilde r_{11})^2}{2}+(\tilde \sigma_1-\tilde r_{11})\tilde r_{11}\right]\frac{\sigma_{1}-r_{11}}{\tilde \sigma_1-\tilde r_{11}}+r_{11}\tilde r_{11}
\end{split}
\end{equation*}
Then we compute
\[
\begin{split}
\sigma_2=&\frac{1}{2}\left(\sigma_1^2-|r|^2\right)\\
=& \frac{1}{2}\left[(\sigma_1-r_{11}+r_{11)})^2-|r|^2\right]\\
=&\frac{1}{2}(\sigma_1-r_{11})^2+(\sigma_{1}-r_{11})r_{11}+\frac{r_{11}^2-|r|^2}{2}
\end{split}
\]
The identity \eqref{identity2} follows by combining the above two computations directly. 
\end{proof}

Given $r, \tilde r$, a direct computation gives
\begin{equation}\label{sigma2}
4\bar\sigma_2=\sigma_2+\tilde \sigma_2+\sigma_1\tilde \sigma_1-(r, \tilde r)
\end{equation}
We denote $Q:=\sigma_1\tilde \sigma_1-(r, \tilde r)$, then we have
\[
Q=\sigma_2\frac{\tilde \sigma_1}{\sigma_1}+\tilde \sigma_2\frac{\sigma_1}{\tilde \sigma_1}+\frac{1}{2}\left(|r|^2 \frac{\tilde \sigma_1}{\sigma_1}+|\tilde r|^2\frac{\sigma_1}{\tilde \sigma_1}\right)-(r, \tilde r)
\]
In particular, this proves that, for $r\in \Gamma^+_2$
\[
Q\geq \sigma_2\frac{\tilde \sigma_1}{\sigma_1}+\tilde \sigma_2\frac{\sigma_1}{\tilde \sigma_1}\geq 2\sqrt{\sigma_2\tilde \sigma_2}. 
\]
This implies in particular the well-known  concavity of $\sqrt{\sigma_2}$, 
\[
\begin{split}
4\bar \sigma_2=&\sigma_2+\tilde \sigma_2+Q\\
\geq &\sigma_2+\tilde \sigma_2+2\sqrt{\sigma_2\tilde \sigma_2}\\
=&(\sqrt{\sigma_2}+\sqrt{\tilde \sigma_2})^2
\end{split}
\]
But we will need the full strength of \eqref{identity2}. We rewrite it as the following, for any given unit vector $V_1\in \R^n, |V_1|=1$,
\begin{equation}\label{Q1}
Q=\sigma_2 \frac{\tilde T_{11}}{T_{11}}+\tilde \sigma_{2}\frac{T_{11}}{\tilde T_{11}}+M_1,
\end{equation}
where we use the notation $T(V_1, V_1)=T_{11}$ and 
\[
M_1=\left(\frac{|r|^2}{2}\frac{\tilde T_{11}}{T_{11}}+\frac{|\tilde r|^2}{2}\frac{T_{11}}{\tilde T_{11}}-(r, \tilde r)\right)-\left(\frac{|r_{11}|^2}{2}\frac{\tilde T_{11}}{T_{11}}+\frac{|\tilde r_{11}|^2}{2}\frac{T_{11}}{\tilde T_{11}}-r_{11}\tilde r_{11}\right)
\]
Clearly $M_1\geq 0$ and it leads to
\begin{equation}\label{Q2}
Q\geq \sigma_2 \frac{\tilde T_{11}}{T_{11}}+\tilde \sigma_{2}\frac{T_{11}}{\tilde T_{11}}.
\end{equation}
Indeed, we need the following,
\begin{lemma}\label{Q100}
For any two unit vectors $V_1, W$, we have
\begin{equation}\label{key-202}
Q\geq \sigma_2 \frac{\tilde T(V_1, V_1)}{T(V_1, V_1)}+\tilde \sigma_2 \frac{T(V_1, W)^2}{T(V_1, V_1) \tilde T(W, W)}.
\end{equation}
Similarly we have
\begin{equation}\label{key-303}
Q\geq \tilde \sigma_2  \frac{T(V_1, V_1)}{\tilde T(V_1, V_1)}+ \sigma_2 \frac{\tilde T(V_1, W)^2}{\tilde T(V_1, V_1) T(W, W)}.
\end{equation}
\end{lemma}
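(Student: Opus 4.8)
The two estimates in Lemma~\ref{Q100} are interchanged by the involution $r\leftrightarrow\tilde r$, which swaps $T=T_1(r)\leftrightarrow\tilde T=T_1(\tilde r)$ and $\sigma_2\leftrightarrow\tilde\sigma_2$ while leaving $Q=\sigma_1\tilde\sigma_1-(r,\tilde r)$ fixed; so it suffices to prove \eqref{key-202}, and then \eqref{key-303} follows by applying \eqref{key-202} to the pair $(\tilde r,r)$. The plan is to prove \eqref{key-202} by running the same bookkeeping that produced \eqref{Q1}, but now retaining the part of the nonnegative remainder that sees the vector $W$, rather than discarding all of it as in the derivation of \eqref{Q2}.

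Concretely: extend the unit vector $V_1$ to an orthonormal basis $\{V_1,\dots,V_n\}$ of $\R^n$ chosen so that $W\in\mathrm{span}\{V_1,V_2\}$, say $W=\cos\theta\,V_1+\sin\theta\,V_2$, and set $r_{ij}=r(V_i,V_j)$, $T_{ij}=T(V_i,V_j)$, $\alpha=\tilde T_{11}/T_{11}>0$. Identity \eqref{Q1} then reads
\[
Q=\sigma_2\frac{\tilde T_{11}}{T_{11}}+\tilde\sigma_2\frac{T_{11}}{\tilde T_{11}}+\sum_{(i,j)\neq(1,1)}\Bigl(\tfrac{\alpha}{2}r_{ij}^2+\tfrac1{2\alpha}\tilde r_{ij}^2-r_{ij}\tilde r_{ij}\Bigr),
\]
every summand of the remainder being $\ge0$ by AM--GM. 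The first term is already the first term of \eqref{key-202}. Since $T(V_1,V_j)=-r_{1j}$ and $\tilde T(V_1,V_j)=-\tilde r_{1j}$ for $j\neq1$, and since $T(V_1,W)=\cos\theta\,T_{11}+\sin\theta\,T_{12}$ and $\tilde T(W,W)=\cos^2\theta\,\tilde T_{11}+2\cos\theta\sin\theta\,\tilde T_{12}+\sin^2\theta\,\tilde T_{22}$ involve only the $(1,1)$, $(1,2)$ and $(2,2)$ entries, it remains to show
\[
\tilde\sigma_2\frac{T_{11}}{\tilde T_{11}}+\bigl(\alpha T_{12}^2+\alpha^{-1}\tilde T_{12}^2-2T_{12}\tilde T_{12}\bigr)+\Bigl(\tfrac{\alpha}{2}r_{22}^2+\tfrac1{2\alpha}\tilde r_{22}^2-r_{22}\tilde r_{22}\Bigr)\ \ge\ \tilde\sigma_2\,\frac{T(V_1,W)^2}{T_{11}\,\tilde T(W,W)},
\]
after which the remaining rows of the remainder, all $\ge0$, are simply dropped. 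I would establish this last inequality by clearing the positive denominators $T_{11}$, $\tilde T_{11}$, $\tilde T(W,W)$ (positivity of $\tilde T$ coming from Proposition~\ref{gamma1}), completing the square in $T_{12},\tilde T_{12}$, and invoking the Cauchy--Schwarz bound $\tilde T(V_1,W)^2\le\tilde T_{11}\tilde T(W,W)$ for the positive form $\tilde T$. When $W=V_1$ the inequality degenerates to \eqref{Q2}, so the content is exactly the $W$-dependent correction.

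The step I expect to be the main obstacle is precisely this last displayed inequality. Unlike in the proof of \eqref{Q2} one cannot throw away all of the remainder $M_1$: a definite portion of it, namely the rows coupling $V_1$ to the directions entering $W$, must survive, and one has to check that the weights $\alpha,\alpha^{-1}$ carried by those rows stand in exactly the right proportion to $T_{11}/\tilde T_{11}$ and to the denominator $\tilde T(W,W)$ for the square to close, the leftover being a genuine square. This matching of weights is elementary but delicate; everything else is algebraic rearrangement already present in the proof of \eqref{Q1}.
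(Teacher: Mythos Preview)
Your overall framework coincides with the paper's: the symmetry reduction to \eqref{key-202}, the choice of orthonormal basis with $W\in\mathrm{span}\{V_1,V_2\}$, the use of identity \eqref{Q1} to isolate the nonnegative remainder $M_1$, and the retention of the $(1,2)$ row of $M_1$ are exactly what the paper does. The paper additionally imposes the harmless homogeneity normalization $T_{11}=\tilde T_{11}$ (your $\alpha=1$), which cleans up the algebra, and it discards the $(2,2)$ row you propose to keep; that row turns out to be unnecessary.

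The genuine gap is at precisely the step you flag. Your suggested tool, Cauchy--Schwarz for the positive form $\tilde T$, gives only $\tilde T_{11}\tilde T_{22}\ge \tilde r_{12}^2$, and this is \emph{not} strong enough: under $\alpha=1$, the reduced inequality to be proved is
\[
\bigl[\tilde\sigma_2+(\tilde r_{12}-r_{12})^2\bigr]\bigl(x^2T_{11}^2-2xyT_{11}\tilde r_{12}+y^2\tilde T_{11}\tilde T_{22}\bigr)\ \ge\ \tilde\sigma_2\,(xT_{11}-yr_{12})^2,
\]
and if one only uses $\tilde T_{11}\tilde T_{22}\ge\tilde r_{12}^2$ then at the point $xT_{11}=y\tilde r_{12}$ with $r_{12}\neq\tilde r_{12}$ the left side collapses to $0$ while the right side is $\tilde\sigma_2\,y^2(\tilde r_{12}-r_{12})^2>0$. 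The $(2,2)$ row of $M_1$ you retain does not obviously repair this, since it involves $r_{22},\tilde r_{22}$ rather than $\tilde T_{22}$. What the paper supplies instead is the sharper structural bound
\[
\tilde T_{11}\tilde T_{22}\ \ge\ \tilde\sigma_2+\tilde r_{12}^2,
\]
obtained by direct expansion of $\tilde T_{11}\tilde T_{22}=(\tilde\sigma_1-\tilde r_{11})(\tilde\sigma_1-\tilde r_{22})$ and the identity $2\sigma_2(\tilde r_{11},\dots,\tilde r_{nn})=2\tilde\sigma_2+\sum_{i\neq j}\tilde r_{ij}^2$; this uses $\tilde r\in\Gamma_2^+$ in an essential way. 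With that extra $\tilde\sigma_2$ in hand one has $x^2T_{11}^2-2xyT_{11}\tilde r_{12}+y^2\tilde T_{11}\tilde T_{22}\ge (xT_{11}-y\tilde r_{12})^2+\tilde\sigma_2 y^2$, and then a single Cauchy--Schwarz in $\R^2$ (not for the form $\tilde T$) closes the square exactly. So your plan is correct in outline, but the decisive input you are missing is this strengthened lower bound on $\tilde T_{11}\tilde T_{22}$.
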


\begin{proof}By symmetry we only prove \eqref{key-202}, while \eqref{key-303} follows by switching $T$ and $\tilde T$. 
If $W=V_1$ or $-V_1$, then \eqref{key-202} reduces to \eqref{Q2}. If $V_1$ and $W$ are linearly independent, we write 
\[
W=xV_1+yV_2
\]
for two orthogonal unit vectors $V_1$ and $V_2$. We choose a basis $\{V_1, V_2, \cdots, V_n\}$ as an extension of $\{V_1, V_2\}$ and we write $r=(r_{ij}), \tilde r=(\tilde r_{ij})$ in terms of this basis. By the homogeneity, we can assume (otherwise choose a scaling $r\rightarrow a r$ for some $a>0$)
\begin{equation}
\label{nort}
T_{11}=\tilde T_{11}.
\end{equation} With this normalization \eqref{nort}, we have by \eqref{Q1}
\[
Q=\sigma_2+\tilde \sigma_2+\frac{1}{2}\sum_{(ij)\neq (11)} \left(\tilde r_{ij}-r_{ij}\right)^2
\]
We compute
\[
T(V_1, W)=xT_{11}+yT_{12}, \tilde T(W, W)=x^2\tilde T_{11}+2xy\tilde T_{12}+y^2 \tilde T_{22})
\]
Hence we need to show that, using \eqref{Q1}, 
\begin{equation}\label{k0}
\tilde \sigma_2+\frac{1}{2}\sum_{(ij)\neq (11)} \left(\tilde r_{ij}-r_{ij}\right)^2\geq \frac{ \tilde \sigma_2(xT_{11}+yT_{12})^2}{T_{11} (x^2\tilde T_{11}+2xy\tilde T_{12}+y^2 \tilde T_{22})},
\end{equation}
We have also $T_{12}=-r_{12}, \tilde T_{12}=-\tilde r_{12}$. Next we compute
\begin{equation}\label{k1}
\begin{split}
\tilde T_{11}\tilde T_{22}=&(\tilde \sigma_1-\tilde r_{11})(\tilde \sigma_1-r_{22})\\
=&(\tilde r_{11}+\tilde r_{33}+\cdots+\tilde r_{nn})(\tilde r_{22}+\tilde r_{33}+\cdots \tilde r_{nn})\\
=&\tilde r_{11}\tilde r_{22}+\tilde r_{11}(\tilde r_{33}+\cdots \tilde r_{nn})+\tilde r_{22}(\tilde r_{33}+\cdots+\tilde r_{nn})+(\tilde r_{33}+\cdots \tilde r_{nn})^2\\
=&\sigma_2(\tilde r_{11}, \cdots, \tilde r_{nn})+(\tilde r_{33}^2+\cdots+\tilde r_{nn}^2)+\sigma_2(\tilde r_{33}, \cdots, \tilde r_{nn})
\end{split}
\end{equation}
Clearly we have
\begin{equation}\label{k2}
(\tilde r_{33}^2+\cdots+\tilde r_{nn}^2)+\sigma_2(\tilde r_{33}, \cdots, \tilde r_{nn})=\frac{1}{2}(\tilde r_{33}^2+\cdots+\tilde r_{nn}^2)+\frac{1}{2}\left(\tilde r_{33}+\cdots \tilde r_{nn}\right)^2
\end{equation}
We also have
\begin{equation}\label{k3}
\begin{split}
2\sigma_2(\tilde r_{11}, \cdots, \tilde r_{nn})=&\sigma_1((\tilde r_{11}, \cdots, \tilde r_{nn})- (r_{11}^2+\cdots \tilde r_{nn}^2)\\
=&\tilde \sigma_1^2- (r_{11}^2+\cdots \tilde r_{nn}^2)\\
=&2\tilde \sigma_2+|\tilde r|^2-(r_{11}^2+\cdots \tilde r_{nn}^2)\\
=&2\tilde \sigma_2+\sum_{i\neq j} \tilde r_{ij}^2\\
\geq & 2\tilde \sigma_2+2\tilde r_{12}^2
\end{split}
\end{equation}
Put \eqref{k1}, \eqref{k2} and \eqref{k3} together, we reach the conclusion that
\begin{equation}\label{k4}
\tilde T_{11}\tilde T_{22}\geq \tilde \sigma_2+\tilde r_{12}^2
\end{equation}
Clearly we have
\[
\frac{1}{2}\sum_{(ij)\neq (11)} \left(\tilde r_{ij}-r_{ij}\right)^2\geq (\tilde r_{12}-r_{12})^2.
\]
We claim
\begin{equation}\label{k5}
[\tilde \sigma_2+(\tilde r_{12}-r_{12})^2](x^2 T_{11}^2-2xy T_{11}\tilde r_{12}+y^2 \tilde T_{11}\tilde T_{22})\geq \tilde \sigma_2(xT_{11}-yr_{12})^2
\end{equation}
Given the claim \eqref{k5} and the normalization condition \eqref{nort}, this completes the proof of \eqref{k0}. While the claim \eqref{k5} is a direct consequence of \eqref{k4} and Cauchy-Schwartz inequality as follows. We compute, using \eqref{k4},
\[
x^2 T_{11}^2-2xy T_{11}\tilde r_{12}+y^2 \tilde T_{11}\tilde T_{22}\geq (xT_{11}- y\tilde r_{12})^2+\tilde \sigma_2 y^2,
\]
hence 
\[
\begin{split}
[\tilde \sigma_2+(\tilde r_{12}-r_{12})^2](x^2 T_{11}^2-2xy T_{11}\tilde r_{12}+y^2 \tilde T_{11}\tilde T_{22})\geq& [\tilde \sigma_2+(\tilde r_{12}-r_{12})^2][(xT_{11}-y\tilde r_{12})^2+y^2\tilde \sigma_2]\\
\geq &
\left(\sqrt{\tilde \sigma_2}(xT_{11}-y\tilde r_{12})+(\tilde r_{12}-r_{12}) \sqrt{\tilde\sigma_2} y\right)^2\\
=&\tilde \sigma_2(xT_{11}-yr_{12})^2\end{split}
\]
This completes the proof. 
\end{proof}

Now we are ready to prove the convexity of $H(r, Y)$. 
\begin{proof}This is to show
\[
\frac{\bar \sigma_2 T}{\sigma_2}+\frac{\bar \sigma_2\tilde T}{\tilde \sigma_2}\geq 2\bar T.
\]
First we assume that $r$ and $\tilde r$ commute and hence can be diagonalized simultaneously with eigenvalues $\l_1, \cdots, \l_n$ and $\tilde \l_1, \cdots, \tilde \l_n$.
We do not  order the eigenvalues at this point (since we cannot order the eigenvalues simultaneously). Writing $Y=(y_i), \tilde Y=(\tilde y_i)$, we  need to show that
\[
\frac{\bar \sigma_2 }{\sigma_2} \sum_i (\sigma_1-\l_i) y_i^2+\frac{\bar \sigma_2}{\tilde \sigma_2} \sum_i (\tilde \sigma_1-\tilde \l_i) \tilde y_i^2\geq \sum_i \left(\sigma_1-\l_i+\tilde \sigma_1-\tilde \l_i\right) \left(\frac{y_i+\tilde y_i}{2}\right)^2
\]
It is sufficient to show that, for each fixed $i$, we have
\[
\frac{\bar \sigma_2 }{\sigma_2}  (\sigma_1-\l_i) y_i^2+\frac{\bar \sigma_2}{\tilde \sigma_2} (\tilde \sigma_1-\tilde \l_i) \tilde y_i^2\geq  \left(\sigma_1-\l_i+\tilde \sigma_1-\tilde \l_i\right) \left(\frac{y_i+\tilde y_i}{2}\right)^2
\]
We take $i=1$ and write $y_i=y$ etc for simplicity.  We need to show,
\begin{equation}\label{dia1}
\begin{split}
\left(\frac{4\bar \sigma_2 }{\sigma_2}  (\sigma_1-\l_1)-(\sigma_1-\l_1)-(\tilde \sigma_1-\tilde \l_1)\right) y^2+&\left(\frac{4\bar \sigma_2 }{\tilde \sigma_2}  (\tilde \sigma_1-\tilde \l_1)-(\sigma_1-\l_1)-(\tilde \sigma_1-\tilde \l_1)\right) \tilde y^2\\
\geq & 2(\sigma_1-\l_1+\tilde \sigma_1-\tilde \l_1) y\tilde y.
\end{split}
\end{equation}
Denote for now $A=\sigma_1-\l_1+\tilde \sigma_1-\tilde \l_1$. We claim the following two inequalities,
\begin{equation}\label{simplecase}
\begin{split}
&\frac{4\bar \sigma_2 }{\sigma_2}  (\sigma_1-\l_1)-(\sigma_1-\l_1)-(\tilde \sigma_1-\tilde \l_1)>0\\
&\left(\frac{4\bar \sigma_2 }{\sigma_2}  (\sigma_1-\l_1)-A\right)\left(\frac{4\bar \sigma_2 }{\tilde \sigma_2}  (\tilde \sigma_1-\tilde \l_1)-A)\right)\geq A^2
\end{split}
\end{equation}
Given the claim this completes the proof of \eqref{dia1}. Now we establish \eqref{simplecase}. 
The first inequality in \eqref{simplecase} is a direct consequence of \eqref{sigma2} and \eqref{Q1}.  For the second inequality in \eqref{simplecase}, by a direct computation, we need to show that
\[
4\bar \sigma_2 (\sigma_1-\l_1)(\tilde \sigma_1-\tilde \l_1)\geq A\left(\tilde \sigma_2  (\sigma_1-\l_1)+\sigma_2(\tilde \sigma_1-\tilde \l_1)\right)
\]
That is to show 
\[
(\sigma_2+\tilde \sigma_2+\sigma_1\tilde \sigma_1-\sum\l_i\tilde \l_i)(\sigma_1-\l_1)(\tilde \sigma_1-\tilde \l_1)\geq A\left(\tilde \sigma_2  (\sigma_1-\l_1)+\sigma_2(\tilde \sigma_1-\tilde \l_1)\right)
\]
In other words,
\[
(\sigma_1\tilde \sigma_1-\sum\l_i\tilde \l_i)(\sigma_1-\l_1)(\tilde \sigma_1-\tilde \l_1)\geq \tilde \sigma_2  (\sigma_1-\l_1)^2+\sigma_2(\tilde \sigma_1-\tilde \l_1)^2.\]
This is a direct consequence of \eqref{Q1} and \eqref{Q2}, with $T_{11}=\sigma_1-\l_1$.  This completes the proof when $r$ and $\tilde r$ can be diagonalized simultaneously (when $r\tilde r=\tilde r r$).\\

Next we consider the general case.  We compute, noting that $T_1(r)$ is a linear operator on $r$,
\begin{equation}
\begin{split}
2\bar T=&T(r)(\bar Y, \bar Y)+T(\tilde r)(\bar Y, \bar Y)\\
=&\frac{1}{4} \left(T(Y, Y)+T(\tilde Y, \tilde Y)\right)+\frac{1}{4} \left(\tilde T(Y, Y)+\tilde T(\tilde Y, \tilde Y)\right)+\frac{1}{2}\left(T(Y, \tilde Y)+\tilde T(Y, \tilde Y)\right)\\
=&\frac{1}{4} \left(T(Y, Y)+\tilde T(Y, Y)\right)+\frac{1}{4} \left(T(\tilde Y, \tilde Y)+\tilde T(\tilde Y, \tilde Y)\right)+\frac{1}{2}\left(T(Y, \tilde Y)+\tilde T(Y, \tilde Y)\right)
\end{split}
\end{equation}
Hence we need to show
\begin{equation}\label{hardcase}
\left(\frac{4\bar \sigma_2 T}{\sigma_2}-T-\tilde T\right)(Y, Y)+\left(\frac{4\bar \sigma_2 \tilde T}{\tilde \sigma_2}-T-\tilde T\right)(\tilde Y, \tilde Y)\geq 2\left(T+\tilde T\right)(Y, \tilde Y).
\end{equation}
Note that the following matrices are positive definite, as a direct consequence of \eqref{Q1} and \eqref{Q2},
\begin{equation}\label{hard-positive}
\frac{4\bar \sigma_2 T}{\sigma_2}-T-\tilde T>0,\; \frac{4\bar \sigma_2 \tilde T}{\tilde \sigma_2}-T-\tilde T>0
\end{equation}
We assume $Y, \tilde Y\neq 0$ (otherwise we are done by the positivity \eqref{hard-positive}). 
We want to prove the following,  
\begin{equation}\label{hard-202}
\left(\frac{4\bar \sigma_2}{\sigma_2}T-T-\tilde T\right)(Y, Y)\left(\frac{ 4\bar\sigma_2}{\tilde \sigma_2}\tilde T- T-\tilde T\right)(\tilde Y, \tilde Y)\geq (T(Y, \tilde Y)+\tilde T(Y, \tilde Y))^2. 
\end{equation}
Clearly \eqref{hardcase} is a direct consequence of \eqref{hard-202}. 
By homogeneity, we can require $|Y|=|\tilde Y|=1$. 
Since $4\bar \sigma_2=\sigma_2+\tilde \sigma_2+Q$, we need to show
\[
\left(\frac{\tilde \sigma_2+Q}{\sigma_2}T-\tilde T\right)(Y, Y)\left(\frac{ \sigma_2+Q}{\tilde \sigma_2}\tilde T- T\right)(\tilde Y, \tilde Y)\geq (T(Y, \tilde Y)+\tilde T(Y, \tilde Y))^2. 
\]
First we apply Lemma \ref{Q100}, using \eqref{key-202} with $V_1=Y, W=\tilde Y$, hence we get
\[
Q\geq \sigma_2 \frac{\tilde T(Y, Y)}{T(Y, Y)}+\tilde \sigma_2 \frac{T(Y, \tilde Y)^2}{T(Y, Y)\tilde T(\tilde Y, \tilde Y)}. 
\]
It follows that
\begin{equation}\label{Q200}
\left(\frac{\tilde \sigma_2+Q}{\sigma_2}T-\tilde T\right)(Y, Y)\geq \frac{\tilde \sigma_2}{\sigma_2} \left(T(Y, Y)+\frac{T(Y, \tilde Y)^2}{\tilde T(\tilde Y, \tilde Y)}\right). 
\end{equation}
Then we apply Lemma \ref{Q100}, using \eqref{key-303} with $V_1=\tilde Y, W=Y$ hence we get
\[
Q\geq \tilde \sigma_2 \frac{T(\tilde Y, \tilde Y)}{\tilde T(\tilde Y, \tilde Y)}+\sigma_2 \frac{\tilde T(Y, \tilde Y)^2}{\tilde T(\tilde Y, \tilde Y)T(Y, Y)}
\]
It follows that
\begin{equation}\label{Q300}
\left(\frac{ \sigma_2+Q}{\tilde \sigma_2}\tilde T- T\right)(\tilde Y, \tilde Y)\geq \frac{\sigma_2}{\tilde \sigma_2}\left(\tilde T(\tilde Y, \tilde Y)+\frac{\tilde T(Y, \tilde Y)^2}{T(Y, Y)}\right)
\end{equation}
Put \eqref{Q200} and \eqref{Q300} together, we have
\[
\begin{split}
\left(\frac{\tilde \sigma_2+Q}{\sigma_2}T-\tilde T\right)(Y, Y)&\left(\frac{ \sigma_2+Q}{\tilde \sigma_2}\tilde T- T\right)\geq  \left(T(Y, Y)+\frac{T(Y, \tilde Y)^2}{\tilde T(\tilde Y, \tilde Y)}\right)\left(\tilde T(\tilde Y, \tilde Y)+\frac{\tilde T(Y, \tilde Y)^2}{T(Y, Y)}\right)\\
=&T(Y, Y)\tilde T(\tilde Y, \tilde Y)+T(Y, \tilde Y)^2+\tilde T(Y, \tilde Y)^2+\frac{\tilde T(Y, \tilde Y)^2T(Y, \tilde Y)^2}{T(Y, Y)\tilde T(\tilde Y\tilde Y)}\\
\geq &T(Y, \tilde Y)^2+\tilde T(Y, \tilde Y)^2+2T(Y, \tilde Y)\tilde T(Y, \tilde Y)\\
=& \left(T(Y, \tilde Y)+\tilde T(Y, \tilde Y)\right)^2
\end{split}
\]
This proves \eqref{hard-202} hence it completes the proof. 
\end{proof}

\begin{rmk}It would be attempting to use Garding's theory of hyperbolic polynomials to demonstrate the concavity of $F^{\frac{1}{3}}$, which is slightly stronger than the concavity of $\log F$. This is to show that the following cubic equation has only real roots, for any symmetric matrix $R$. The cubic equation (in $t$)  reads
\[
F(R+tJ)=0,
\]
where the matrix $J$ can be taken as $I_{n+1}$ (or the matrix $I_3$, viewed as a symmetric $(n+1)\times (n+1)$ matrix by an obvious embedding).
Even though it is a standard process to check when a cubic polynomial has real roots and we believe this is correct for our setting. But the computation is quite involved and we are not able to carry out this approach directly. 
\end{rmk}

\subsection{The metric structure and the uniqueness of $\sigma_2$-Yamabe problem when $n=4$}
Given the $C^{1, 1}$ regularity,  the formal metric picture of Gursky-Streets \cite{GS2} can be made strict; moreover the proof of the uniqueness of $\sigma_2$-Yamabe problem can be made much more straightforward. First we summarize some direct consequence of the existence of $C^{1, 1}$ geodesic for Gursky-Streets metric. 

We fix some notations. Consider the approximating geodesic equation, given two fixed boundary datum $u_0, u_1$,
\[
u_{tt}\sigma_2(A_u)-\langle T_1(A_u), \nabla u_t\otimes \nabla u_t\rangle=sf.
\]
We have obtained uniform $C^{1, 1}$ estimates for any smooth $f>0$. We take $f\equiv 1$ in particular to get an approximating geodesic $u^s$ and denote $u$ to be its limit. We refer $u$ as the geodesic connecting $u_0, u_1$. 

 \begin{thm}Let $(M, g)$ be a compact Riemannian manifold of dimension four such that $\cC^+\neq \emptyset$. Then $\cC^+$ is a metric space with Gursky-Streets metric. Given $u_0, u_1\in \cC^+$, the geodesic realizes the distance between $u_0, u_1$. In particular $\cC^+$ has nonpositive curvature in the sense of Alexanderov. 
 \end{thm}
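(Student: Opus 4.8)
The plan is to deduce the three assertions in the order: (i) the approximating geodesics $u^s$ — and hence the limiting $C^{1,1}$ geodesic $u$ of Theorem \ref{mainthm2} — are almost length-minimizing, so the geodesic realizes $d$; (ii) $d$ is nondegenerate, so $(\cC^+,d)$ is a metric space; (iii) the Alexandrov comparison. Here $d(u_0,u_1)=\inf_\gamma L(\gamma)$ over smooth paths $\gamma:[0,1]\to\cC^+$ from $u_0$ to $u_1$, with $L(\gamma)=\int_0^1\mathcal{E}_\gamma(t)^{1/2}\,dt$ and energy $\mathcal{E}_\gamma(t)=\int_M\gamma_t^2\,\sigma_2(g_\gamma^{-1}A_\gamma)\,dV_{g_\gamma}$ coming from the metric \eqref{gsmetric}. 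Symmetry, the triangle inequality (concatenate near-minimizing paths) and nonnegativity of $d$ are immediate. A useful reduction special to $n=4$: since $\sigma_2(g_\gamma^{-1}A_\gamma)\,dV_{g_\gamma}=\sigma_2(A_\gamma)\,dV_g$ is the conformally invariant $\sigma_2$-density of the background, one has simply $\mathcal{E}_\gamma(t)=\int_M\gamma_t^2\,\sigma_2(A_\gamma)\,dV_g$.

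The analytic heart is the first variation of the energy. Differentiating $\mathcal{E}_\gamma$ in $t$, using \eqref{p101} with $k=2$, $n=4$ (so $n-2k=0$) to write $\partial_t[\sigma_2(A_\gamma)dV_g]=\langle T_1(g_\gamma^{-1}A_\gamma),\nabla^2_{g_\gamma}\gamma_t\rangle_{g_\gamma}\,dV_{g_\gamma}$, and integrating this against $\gamma_t^2$ by parts using that $T_1(g_\gamma^{-1}A_\gamma)$ is divergence free (Lemma \ref{free}), one obtains
\[
\frac{d}{dt}\mathcal{E}_\gamma(t)=2\int_M\gamma_t\,\big(\gamma_{tt}\sigma_2(A_\gamma)-\langle T_1(A_\gamma),\nabla\gamma_t\otimes\nabla\gamma_t\rangle\big)\,dV_g=2\int_M\gamma_t\,F(\gamma_{tt},A_\gamma,\nabla\gamma_t)\,dV_g.
\]
Applying this to $u^s$ (taking $f\equiv1$, so $F=s$) gives $\frac{d}{dt}\mathcal{E}_{u^s}(t)=2s\int_M u^s_t\,dV_g$, hence by the uniform $C^1$-bound $|\mathcal{E}_{u^s}(t)-\mathcal{E}_{u^s}(t')|\le C_3\,s$ for all $t,t'$. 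Therefore $L(u^s)^2$ and $E(u^s):=\int_0^1\mathcal{E}_{u^s}$ differ by $O(s)$; moreover $\mathcal{E}_{u^s}(0)=\int_M(u^s_t(0,\cdot))^2\sigma_2(A_{u_0})\,dV_g\to\mathcal{E}:=\int_M(u_t(0,\cdot))^2\sigma_2(A_{u_0})\,dV_g$ by the $C^{1,\alpha}$-convergence $u^s\to u$, so $\mathcal{E}_{u^s}(t)\to\mathcal{E}$ for every $t$ and $L(u^s)\to L(u):=\sqrt{\mathcal{E}}$ (the same limit also equals $\int_M(u_t(1,\cdot))^2\sigma_2(A_{u_1})\,dV_g$).

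It remains to show that $u^s$ is almost length-minimizing: for every smooth $\gamma$ joining $u_0,u_1$, $L(u^s)\le L(\gamma)+C(\gamma)\sqrt{s}$. By Cauchy–Schwarz together with the near-constancy of $\mathcal{E}_{u^s}$ this reduces to the energy inequality $E(u^s)\le E(\gamma)+C\,s$, i.e. to the fact that $u^s$ is an $s$-approximate minimizer of $E$ — indeed $\delta E[v]=-2\int_0^1\int_M v\,F(\gamma)\,dV_g\,dt$, so $u^s$ is the critical point of $\gamma\mapsto E(\gamma)+2s\int_0^1\int_M\gamma\,dV_g\,dt$. \textbf{This infinite-dimensional approximate-minimizing estimate is the main obstacle}: it is the analog of the theorem of X. Chen \cite{Chen} in K\"ahler geometry and is proved the same way, via convexity of the energy and the uniform $C^{1,1}$ bounds, within the Gursky–Streets framework. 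Granting it, $d(u_0,u_1)=L(u)=\sqrt{\mathcal{E}}$ (the reverse inequality $d\le L(u^s)\to L(u)$ being trivial). Nondegeneracy now follows quickly: if $d(u_0,u_1)=0$ then $\mathcal{E}=0$, so $u_t(0,\cdot)\equiv0$ since $\sigma_2(A_{u_0})>0$; as $u$ is a limit of the $t$-convex $u^s$ it is convex in $t$, so $u_t(t,\cdot)\ge0$ and $u(t,x)\ge u_0(x)$, while the barrier $u\le(1-t)u_0+tu_1$ gives $u(t,x)-u_0(x)\le t(u_1-u_0)$; combining, $u_1\ge u_0$, and by the symmetric argument (with $u_t(1,\cdot)\equiv0$) $u_1\le u_0$, so $u_0=u_1$. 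Thus $(\cC^+,d)$ is a metric space and the geodesic realizes $d$.

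For the Alexandrov nonpositive curvature, $(\cC^+,d)$ is a geodesic space (any two points are joined by the $C^{1,1}$ geodesic just constructed) and Gursky–Streets proved the metric \eqref{gsmetric} has nonpositive sectional curvature, so I would run the classical second-variation argument of comparison geometry: given a geodesic $\gamma$ and a point $p$, form the geodesic variation $\sigma_s$ joining $p$ to $\gamma(s)$, compute $\frac{d^2}{ds^2}d(p,\gamma(s))^2$ by the second-variation-of-energy (Jacobi-field) formula, and use the sign of the sectional curvature to obtain the sharp convexity inequality $d(p,\gamma(t))^2\le(1-t)d(p,\gamma(0))^2+t\,d(p,\gamma(1))^2-t(1-t)d(\gamma(0),\gamma(1))^2$, which is precisely nonpositive curvature in the sense of Alexandrov. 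The one technical point is to justify the second-variation computation along the merely $C^{1,1}$ geodesics; as above this is done by carrying out the (classical) computation for the smooth approximants $u^s$ and passing to the limit using the uniform $C^{1,1}$ estimates and $C^{1,\alpha}$-convergence.
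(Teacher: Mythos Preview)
Your outline is essentially the same approach the paper takes: the paper does not give a proof at all, merely remarking that the argument is ``rather standard (but a bit long and tedious)'' and pointing to \cite{Chen} and \cite{Chen-He}, with the key instruction to use the smooth admissible approximants $u^s$ in place of $u$ and pass to the limit using the uniform $C^{1,1}$ bounds. You have made this more explicit than the paper---the first-variation identity $\frac{d}{dt}\mathcal{E}_{u^s}=2s\int_M u^s_t\,dV_g$ and the resulting near-constancy of the energy are exactly the $O(s)$ identities the paper alludes to, and your nondegeneracy argument (from $u_t(0,\cdot)=u_t(1,\cdot)=0$ and $u_{tt}\ge0$ one gets $u_t\equiv0$, hence $u_0=u_1$) is the right one.

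One small caveat: you correctly flag the approximate length-minimizing property of $u^s$ as the main obstacle and defer to Chen's argument, but your phrasing ``via convexity of the energy'' is slightly misleading. Chen's proof does not proceed by abstractly invoking convexity of $E$ on path space; rather, one fixes a comparison path $\gamma$, builds the two-parameter family of $s$-approximate geodesics from a basepoint to $\gamma(\tau)$, and computes the second variation of their energy directly, using the (formal) nonpositive sectional curvature of the Gursky--Streets metric to control the sign. This is also what drives the Alexandrov comparison in your final paragraph, so the two steps share the same core computation. Since the paper itself omits all of this, your proposal is at least as complete as the paper's treatment.
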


The argument is rather standard (but a bit long and tedious), given the formal geometric picture verified by Gursky-Streets [Section 3]\cite{GS2} with smooth geodesics. The main point is to use the approximating geodesic $u^s$ instead of the limit geodesic $u$ since $u^s$ is smooth and is admissible. All the identities hold modulo quantities in the order of $O(s)$ given the uniform $C^{1, 1}$ regularity; the results then follow by taking $s\rightarrow 0$.   (See [Section 5]\cite{Chen} and [Section 5]\cite{Chen-He}  for example). 
We skip the details since we do not really need these results. 
We will only verify the geodesic convexity of the functional $\cF$ of Chang-Yang and give an alternative proof of uniqueness of $\sigma_2$-Yamabe problem.
We will need the following curvature weighted Poincare-inequalities, due to B. Andrews \cite{Andrews}.

\begin{lemma}[Andrews]\label{andrews0}Let $(M^n, g)$ be a compact Riemannian manifold with positive Ricci curvature. Given a Lipschitz function $\phi$ with $\int_M \phi dv=0$, then
\[
\frac{n}{n-1}\int_M \phi^2 dv\leq \int_M (Ric^{-1}) (\nabla \phi, \nabla \phi)dv,
\]
with the equality if and only if $\phi\equiv 0$ or  $(M^n, g)$ is isometric to the round sphere. 
\end{lemma}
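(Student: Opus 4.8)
The plan is to reduce the inequality to the classical Reilly (Bochner) formula combined with a $Ric$-weighted Cauchy--Schwarz pairing, in the spirit of the Lichnerowicz--Obata eigenvalue estimate. We may assume $\phi\not\equiv 0$ and, after rescaling, normalize $\int_M\phi^2\,dv=1$. Since $\int_M\phi\,dv=0$, there is a unique $w$ with $\Delta w=\phi$ and $\int_M w\,dv=0$; elliptic regularity gives $w\in W^{2,p}$ for every $p<\infty$, and the Lipschitz regularity of $\phi$ is more than enough to justify every integration by parts below (one may also first replace $\phi$ by a mean-zero smoothing and pass to the limit). First I would record two standard facts on the closed manifold $M$: the Reilly identity
\[
\int_M(\Delta w)^2\,dv=\int_M|\nabla^2 w|^2\,dv+\int_M Ric(\nabla w,\nabla w)\,dv,
\]
and the pointwise Newton inequality $|\nabla^2 w|^2\ge\tfrac1n(\Delta w)^2$. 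Because $Ric>0$ and $w$ is non-constant ($\Delta w=\phi\not\equiv 0$), the Ricci term is strictly positive, and together with $\int_M(\Delta w)^2\,dv=\int_M\phi^2\,dv=1$ the two facts give
\[
0<\int_M Ric(\nabla w,\nabla w)\,dv=1-\int_M|\nabla^2 w|^2\,dv\le 1-\frac1n=\frac{n-1}{n}.
\]

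Next I would pair $\nabla\phi$ against $\nabla w$ using $Ric$ as a weight. Writing pointwise $\langle\nabla\phi,\nabla w\rangle=Ric^{-1}\!\bigl(\nabla\phi,\,Ric(\nabla w)\bigr)$ and applying Cauchy--Schwarz for the positive definite form $Ric^{-1}$, followed by Cauchy--Schwarz for the $dv$-integral, yields
\[
\Bigl|\int_M\langle\nabla\phi,\nabla w\rangle\,dv\Bigr|\le\Bigl(\int_M Ric^{-1}(\nabla\phi,\nabla\phi)\,dv\Bigr)^{1/2}\Bigl(\int_M Ric(\nabla w,\nabla w)\,dv\Bigr)^{1/2}.
\]
Since $\int_M\langle\nabla\phi,\nabla w\rangle\,dv=-\int_M\phi\,\Delta w\,dv=-\int_M\phi^2\,dv=-1$, squaring and inserting the bound $\int_M Ric(\nabla w,\nabla w)\,dv\le\frac{n-1}{n}$ from the previous step gives $1\le\frac{n-1}{n}\int_M Ric^{-1}(\nabla\phi,\nabla\phi)\,dv$, i.e.\ $\int_M Ric^{-1}(\nabla\phi,\nabla\phi)\,dv\ge\frac{n}{n-1}$; undoing the normalization is exactly the asserted inequality.

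For the equality case, equality forces equality in the Newton inequality, hence $\nabla^2 w=\tfrac1n(\Delta w)\,g=\tfrac{\phi}{n}\,g$. Commuting third covariant derivatives in this identity (or the Bochner identity for the $1$-form $\nabla w$) gives $Ric(\nabla w)=-\tfrac{n-1}{n}\nabla\phi$, so $\nabla\phi$ and $Ric(\nabla w)$ are automatically pointwise parallel with constant ratio — precisely the equality condition for the two Cauchy--Schwarz steps. Thus equality in the Poincar\'e inequality for some $\phi\not\equiv0$ is equivalent to the existence of a non-constant $w$ on the compact manifold $M$ with $\nabla^2 w=\varphi g$ (necessarily $\varphi=\Delta w/n$). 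By Obata's theorem, in Tashiro's form classifying compact manifolds that carry a non-constant concircular scalar field, this forces $(M,g)$ to be isometric to a round sphere. Conversely, on a round sphere $Ric=\tfrac{n-1}{r^2}g$, so $Ric^{-1}=\tfrac{r^2}{n-1}g$ and the inequality reduces to the sharp eigenvalue bound $\lambda_1\ge n/r^2$, an equality for first eigenfunctions; this matches the stated rigidity.

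The Reilly identity and the two Cauchy--Schwarz applications are routine, and the only genuinely nontrivial ingredient is the rigidity statement, which I would obtain by invoking Obata's theorem rather than reproving it. The main obstacle in a fully self-contained account would therefore be establishing that a compact Riemannian manifold admitting a non-constant concircular function must be a round sphere — but this is classical, so the argument above is complete modulo that citation.
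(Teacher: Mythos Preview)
The paper does not actually prove this lemma: it is stated with attribution to B.~Andrews (the bibliography entry is simply ``unpublished'') and then used as a black box to derive the weaker $\sigma_2$-weighted version in the subsequent lemma. So there is no proof in the paper to compare your argument against.

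Your argument is correct and is essentially the standard route to Andrews' inequality: solve $\Delta w=\phi$, apply the integrated Bochner formula together with the trace inequality $|\nabla^2 w|^2\ge\tfrac1n(\Delta w)^2$ to bound $\int Ric(\nabla w,\nabla w)$ from above, and then pair $\nabla\phi$ with $\nabla w$ using the $Ric$-weighted Cauchy--Schwarz. The equality analysis is also clean: equality in the trace inequality forces $\nabla^2 w=\tfrac{\phi}{n}g$, differentiating this yields $Ric(\nabla w)=-\tfrac{n-1}{n}\nabla\phi$ so the Cauchy--Schwarz steps are saturated automatically, and then Obata--Tashiro rigidity applies. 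The one remark I would add is that your sentence ``equality in the Poincar\'e inequality \dots\ is equivalent to the existence of a non-constant $w$ with $\nabla^2 w=\varphi g$'' slightly overstates things in the forward direction --- what you have shown is that equality \emph{implies} such a $w$ exists (the converse you verify separately on the sphere) --- but this is cosmetic and does not affect the logic.
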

Gursky-Streets obtained a weaker form of this inequality for $n=4$, 
\begin{lemma}[Gursky-Streets \cite{GS2}] \label{andrews}Let $(M^4, g)$ be a closed Riemannian manifold such that $A_g\in \Gamma^+_2$. Given a Lipschitz function $\phi$, then
\[
\int_M \frac{1}{\sigma_2(A_g)} T_1(A_g)(\nabla \phi, \nabla \phi) dv\geq 4\int_M \phi^2 dv-\frac{4}{\int_M dv}\left(\int_M \phi dv\right)^2.
\]
The equality holds if and only if $\phi$ is a constant or $(M^4, g)$ is isometric to the round sphere. 
\end{lemma}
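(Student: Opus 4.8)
The plan is to deduce this inequality from Andrews' sharp weighted Poincar\'e inequality (Lemma~\ref{andrews0}) by means of a purely pointwise algebraic comparison of the weight $\sigma_2(A_g)^{-1}T_1(A_g)$ with the inverse Ricci tensor. In dimension four one has $Ric_g=2A_g+\sigma_1(A_g)g$, and since $A_g\in\Gamma^+_2$ the Ricci tensor is positive definite (by the cited theorem of Guan--Wang--Viaclovsky), so $Ric_g^{-1}$ is a well-defined positive definite bilinear form and the right-hand side of Andrews' estimate makes sense.

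The first and essentially only substantive step I would carry out is the pointwise comparison of quadratic forms
\[
\frac{1}{\sigma_2(A_g)}\,T_1(A_g)\ \geq\ 3\,Ric_g^{-1}.
\]
Diagonalizing $A_g$ at a point with eigenvalues $\lambda_1,\dots,\lambda_4$ and writing $\sigma_j=\sigma_j(\lambda_1,\dots,\lambda_4)$, both tensors are simultaneously diagonal, with $i$-th eigenvalues $(\sigma_1-\lambda_i)/\sigma_2$ and $3/(2\lambda_i+\sigma_1)$; so the claim reduces to $(\sigma_1-\lambda_i)(2\lambda_i+\sigma_1)\geq 3\sigma_2$ for each $i$. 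Writing $\mu=\sigma_1-\lambda_i=\sum_{j\neq i}\lambda_j$ and expanding $\sigma_2=\lambda_i\mu+\sigma_2(\lambda_j:j\neq i)$, this becomes $\mu^2\geq 3\,\sigma_2(\lambda_j:j\neq i)$, i.e. the elementary inequality $(a+b+c)^2\geq 3(ab+bc+ca)$ for the three remaining eigenvalues. (Choosing $i$ to be the index of the smallest eigenvalue, the same estimate yields $2\lambda_i+\sigma_1>0$, so this step simultaneously re-derives the positivity of $Ric_g$ and the positivity of $T_1(A_g)$ that it uses.)

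Granting the pointwise comparison, I would first treat $\phi$ with $\int_M\phi\,dv=0$: integrating against $\nabla\phi$ and applying Lemma~\ref{andrews0} with $n=4$ gives
\[
\int_M\frac{T_1(A_g)(\nabla\phi,\nabla\phi)}{\sigma_2(A_g)}\,dv\ \geq\ 3\int_M Ric_g^{-1}(\nabla\phi,\nabla\phi)\,dv\ \geq\ 3\cdot\tfrac43\int_M\phi^2\,dv\ =\ 4\int_M\phi^2\,dv .
\]
For a general Lipschitz $\phi$ one applies this to $\phi$ minus its mean and checks that both sides of the asserted inequality are unchanged when $\phi$ is shifted by a constant (the left side depends only on $\nabla\phi$; on the right the cross terms cancel). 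For the equality statement, translation invariance again lets one assume $\int_M\phi\,dv=0$; if $\phi$ is nonconstant and equality holds, then equality must hold in the middle inequality above, hence in Lemma~\ref{andrews0}, and since such a $\phi$ is not identically zero this forces $(M^4,g)$ to be isometric to the round sphere. Conversely, on the round $S^4$ one has $A_g=\tfrac12 g$, so $\sigma_1(A_g)=2$, $\sigma_2(A_g)=\tfrac32$, $T_1(A_g)=\tfrac32 g$ and hence $\sigma_2(A_g)^{-1}T_1(A_g)=g$; the left side is then $\int_M|\nabla\phi|^2$, and taking $\phi$ to be a first Laplace eigenfunction (eigenvalue $4$, automatically of mean zero) realizes equality.

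I do not expect a serious obstacle: the genuinely deep input is Andrews' inequality, which I would simply quote, and everything else is the elementary symmetric-function inequality of the second step. The one point that deserves a little attention is to notice that the equality analysis does \emph{not} require studying the equality case of the pointwise comparison — equality in the Andrews step by itself already pins down the round sphere — which keeps the rigidity discussion short.
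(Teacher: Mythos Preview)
The paper does not actually supply a proof of this lemma: it is quoted from \cite{GS2} as a corollary of Andrews' inequality (Lemma~\ref{andrews0}). Your argument is correct and is exactly the intended derivation --- the pointwise comparison $\sigma_2(A_g)^{-1}T_1(A_g)\geq 3\,Ric_g^{-1}$ via the elementary inequality $(a+b+c)^2\geq 3(ab+bc+ca)$, followed by Andrews' inequality with $n=4$, together with the straightforward handling of the mean and of the equality case.
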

We can now verify the convexity of the functional $\cF$ along the $C^{1, 1}$ geodesic. Indeed $\cF$ is convex along the smooth approximating geodesic $u^s$ for any $s\in (0, 1]$.

\begin{thm}\label{con707}Given $u_0, u_1\in \cC^+$,  let $u^s$ be the approximating geodesic satisfying 
\[
u_{tt}\sigma_2(A_u)-\langle T_1(A_u), \nabla u_t\otimes \nabla u_t\rangle=s.
\]
Then $\cF$ is convex along the $C^{1, 1}$ geodesic $u$.  
In particular $\cF$ achieves its minimum energy at any smooth critical point. \end{thm}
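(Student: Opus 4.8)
The plan is to work with the smooth approximating geodesics $u^s$, $s\in(0,1]$, of the statement --- which are smooth and admissible by Theorem \ref{mainthm1} together with Evans--Krylov regularity and bootstrapping, and satisfy $u^s\to u$ in $C^{1,\alpha}$ as $s\to0$ --- to prove that $\cF$ is convex along each $u^s$ \emph{up to an error $\epsilon(s)\to0$}, and then to let $s\to0$.

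First I would record Gursky--Streets' formal second variation of $\cF$ [Section 3]\cite{GS2}. Along a smooth path $u(t)$ in $\cC^+$ one has the Riemannian identity
\[
\frac{d^2}{dt^2}\cF(u(t))=\big\langle D_t\dot u,\ \Phi_0(u)\big\rangle_{u}+\mathcal{P}_u(\dot u),
\]
where $\langle\cdot,\cdot\rangle_u$ is the Gursky--Streets metric \eqref{gsmetric}, $D_t\dot u=u_{tt}-\sigma_2(A_u)^{-1}\langle T_1(A_u),\nabla u_t\otimes\nabla u_t\rangle$ is its geodesic operator, $\Phi_0(u)$ is the Gursky--Streets gradient of $\cF$ --- a fixed increasing function of $\sigma_2(g_u^{-1}A_u)$, vanishing exactly at $\sigma_2$-Yamabe metrics, bounded above for $\sigma_2(g_u^{-1}A_u)$ bounded above and bounded below by $-c\big(1+|\log\sigma_2(g_u^{-1}A_u)|\big)$ --- and $\mathcal{P}_u(\dot u)=\mathrm{Hess}_u\,\cF(\dot u,\dot u)$. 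The key input is $\mathcal{P}_u(\dot u)\geq0$, which is precisely where the curvature--weighted Poincar\'e inequality of Lemma \ref{andrews} enters; it applies because $A_{g_u}\in\Gamma^+_2$ forces $Ric(g_u)>0$, and equality forces $\dot u(t,\cdot)$ to be constant on $M$ or $(M,g_u)$ to be the round $S^4$.

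Specializing to $u=u^s$: the equation $F(u^s_{tt},A_{u^s},\nabla u^s_t)=s$ gives $D_t\dot u^s=s\,\sigma_2(A_{u^s})^{-1}$, and since $\sigma_2(g_{u^s}^{-1}A_{u^s})\,dV_{u^s}=e^{4u^s}\sigma_2(A_{u^s})\cdot e^{-4u^s}\,dV=\sigma_2(A_{u^s})\,dV$ in dimension four, the defect term collapses to
\[
\big\langle D_t\dot u^s,\ \Phi_0(u^s)\big\rangle_{u^s}=s\int_M \Phi_0(u^s)\,dV .
\]
By the uniform $C^{1,1}$ estimates of Theorem \ref{mainthm1}, $\sigma_2(A_{u^s})$ is bounded above; and Proposition \ref{gamma} gives $\sigma_1(E_{u^s})\geq s\,\sigma_2(A_{u^s})^{-1}\sigma_1(A_{u^s})$, so --- using the $C^{1,1}$ bound on $\sigma_1(E_{u^s})$ and $\sigma_1(A_{u^s})\geq c>0$ --- one gets $\sigma_2(A_{u^s})\geq c\,s$. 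Hence $\Phi_0(u^s)$ is bounded above by a constant and below by $-C|\log s|$, so $s\int_M\Phi_0(u^s)\,dV\to0$; i.e. $\frac{d^2}{dt^2}\cF(u^s(t))\geq-\epsilon(s)$ with $\epsilon(s)\to0$. Integrating against the (nonnegative) Green function of $-d^2/dt^2$ on $[0,1]$ gives $\cF(u^s(\tau))\leq(1-\tau)\cF(u_0)+\tau\cF(u_1)+\epsilon(s)$ for $\tau\in[0,1]$. Since $u^s\to u$ in $C^{1,\alpha}$ and $\cF$ is continuous under this convergence --- the only second-order term $\int_M\Delta u\,|\nabla u|^2\,dV$ passes to the limit because $\Delta u^s\rightharpoonup\Delta u$ weakly-$*$ in $L^\infty$ while $|\nabla u^s|^2\to|\nabla u|^2$ in $C^0$ --- letting $s\to0$ yields $\cF(u(\tau))\leq(1-\tau)\cF(u_0)+\tau\cF(u_1)$; the same argument applied to restrictions of $u$ to subintervals (which, after rescaling the parameter, are again approximating geodesics with parameter of order $s$) shows $t\mapsto\cF(u(t))$ is convex. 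For the last assertion, if $u_1$ is a smooth critical point of $\cF$ then for every $s$ the endpoint derivative $\frac{d}{dt}\cF(u^s(t))\big|_{t=1}$ is the first variation of $\cF$ at $u_1$ applied to $\partial_t u^s(1,\cdot)$, hence $0$; combined with $\frac{d^2}{dt^2}\cF(u^s)\geq-\epsilon(s)$ this gives $\cF(u_0)=\cF(u^s(0))\geq\cF(u^s(1))-\epsilon(s)=\cF(u_1)-\epsilon(s)$, and $s\to0$ shows $u_1$ minimizes $\cF$ over $\cC^+$.

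I expect the main obstacle to be the third step: proving, uniformly in $s$, that the geodesic--defect term $s\int_M\Phi_0(u^s)\,dV$ tends to $0$. Since there is no uniform interior bound $\sigma_2(A_{u^s})\geq c>0$, the density $\Phi_0(u^s)$ is not bounded in $L^\infty$ independently of $s$, and one must exploit the exact cancellation of $\sigma_2$-factors built into the Gursky--Streets metric (so that $D_t\dot u^s$ enters weighted by $\sigma_2(A_{u^s})$ rather than by $1$) together with the upper $C^{1,1}$ bound and the lower bound $\sigma_2(A_{u^s})\gtrsim s$ of Proposition \ref{gamma}, so that the product is genuinely $o(1)$. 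The two remaining ingredients --- the rigorous validity of Gursky--Streets' formal second-variation identity on the smooth admissible approximants $u^s$, and the continuity of $\cF$ under the $C^{1,\alpha}$ limit --- are routine given Theorem \ref{mainthm1}, Evans--Krylov regularity, and $C^1$-convergence.
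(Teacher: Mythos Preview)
Your overall strategy --- compute $\frac{d^2}{dt^2}\cF(u^s)$ on the smooth approximants, bound it below by $-\epsilon(s)$ using the weighted Poincar\'e inequality of Lemma \ref{andrews}, and pass to the limit via $C^{1,\alpha}$ convergence of $u^s$ and continuity of $\cF$ --- is exactly the paper's. The paper carries out the same computation directly (equations \eqref{last1}--\eqref{last2}) rather than through the abstract Riemannian second-variation identity, but the content is identical, and your treatment of the limit $\cF(u^s)\to\cF(u)$ matches the paper's.

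Your description of the gradient $\Phi_0$, however, is backwards, and this is what creates your ``main obstacle''. From the first variation
\[
\frac{d}{dt}\cF(u)=\int_M u_t\bigl(-\sigma_2(g_u^{-1}A_u)+\bar\sigma\bigr)\,dV_u
\]
and the definition of the Gursky--Streets metric one reads off $\Phi_0(u)=-1+\bar\sigma\,\sigma_2(g_u^{-1}A_u)^{-1}$. This is a \emph{decreasing} function of $\sigma_2$, bounded \emph{below} by $-1$ uniformly (with no appeal to any a priori estimate) and unbounded \emph{above} as $\sigma_2\to0$ --- the opposite of what you wrote. With the correct sign the obstacle evaporates: the defect term satisfies
\[
s\int_M\Phi_0(u^s)\,dV\;\geq\; -s\,\mathrm{Vol}(M,g),
\]
which is precisely the paper's bound $\frac{d^2}{dt^2}\cF(u^s)>-s\int_M dV$, giving convexity of $t\mapsto\cF(u^s)+st^2\,\mathrm{Vol}(M)$. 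No upper bound on the defect is needed, and indeed none is available without a uniform positive lower bound on $\sigma_2(A_{u^s})$. Relatedly, your intermediate claim $\sigma_1(A_{u^s})\geq c>0$ is not established anywhere in the a priori estimates (one only has $\sigma_1(A_{u^s})>0$ pointwise), so your route to $\sigma_2(A_{u^s})\geq cs$ via Proposition \ref{gamma} is unjustified as written; but the entire detour through Proposition \ref{gamma} and the $-C|\log s|$ bound is unnecessary once the sign of $\Phi_0$ is corrected.
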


\begin{proof}Let $u^s$ be the unique smooth solution of the equation,  
\begin{equation}\label{last3}u_{tt}\sigma_2(A_u)-\langle T_1(A_u), \nabla u_t\otimes \nabla u_t\rangle=s.
\end{equation}
Denote $u$ to be ``the geodesic", which is the limit of $u^s$ when $s\rightarrow 0$. Consider the functional $\cF(u)$ and $\cF(u^s)$ for $t\in [0, 1]$. 
By the uniform estimate, we know that $u^s$ converges to $u$ in $C^{1, \alpha}([0, 1]\times M)$ for any $\alpha\in [0, 1)$. Moreover, we compute
\[
\begin{split}
\int_M \Delta u |\nabla u|^2dV-\int_M \Delta u^s |\nabla u^s|^2 dV=&\int_M \Delta u (|\nabla u|^2-|\nabla u^s|^2)dV+\int_M |\nabla u^s|^2 \Delta (u-u^s) dV\\
=&\int_M \Delta u (|\nabla u|^2-|\nabla u^s|^2)dV+\int_M \nabla (|\nabla u^s|^2) \nabla(u-u^s) dV
\end{split}
\]
It follows that, $\int_M \Delta u^s |\nabla u^s|^2dV$ converges to $\int_M \Delta u |\nabla u|^2dV$ (uniformly with respect to $t$) when $s\rightarrow 0$.
Using the formula \eqref{F-functional}, it implies that $\cF(u^s)$ converges to $\cF(u)$ uniformly w.r.t $t$. In particular $\cF(u)$ is continuous w.r.t $t\in [0, 1]$. 
A similar argument shows that 
$\int_M \Delta u |\nabla u|^2 dV$
is Lipschitz in $t$ and hence $\cF(u)$ is Lipschitz in $t$. 
Denote the conformal invariant total $\sigma_2$ curvature as
\[
\sigma=\int_M \sigma_2(g_u^{-1}A_u)dV_u\;\text{and}\; \bar \sigma=\sigma V_u^{-1}
\]
where $V_u$ is the total volume of $g_u$. 
Along the path $u^s$, using the variational structure of $\cF$ \cite{BV} (see the computation as in \cite{GS2}), we have
\[
\frac{d\cF(u^s)}{dt}=\int_M u^s_t(-\sigma_2(g^{-1}_{u^s}A_{u^s})+\bar \sigma ) dV_{u^s}
\]
To compute the second derivative we need to be careful about the conformal factor. 
We compute the second derivative (using \eqref{p101}, Lemma \ref{free} and the equation \eqref{last3}), 
\begin{equation}\label{last1}
\begin{split}
\frac{d^2\cF(u^s)}{dt^2}=&\int_M \left(-u^s_{tt}\sigma_2({g_u^s}^{-1}A_{u^s})-u_t^s\langle T_1({g_{u^s}}^{-1}A_{u^s}), \nabla^2 u^s_t\rangle_{g_{u^s}}\right)dV_{u^s}\\
&+\bar \sigma \int_M\left[ u^s_{tt}-4\left(u^s_t-\underline {u^s_t}\right)^2 \right]dV_{u^s}\\
=&\int_M \left(-u_{tt}\sigma_2(A_{u^s})+\langle T_1(A_{u^s}), \nabla u^s_t\otimes \nabla u^s_t\rangle \right) dV+\bar \sigma \int_M\left[ u^s_{tt}-4\left(u^s_t-\underline {u^s_t}\right)^2 \right]dV_{u^s}\\
=&-s\int_M dV+\bar \sigma \int_M\left[ u^s_{tt}-4\left(u^s_t-\underline {u^s_t}\right)^2 \right]dV_{u^s},
\end{split}
\end{equation}
where we use the notation of average,  
\[
\underline {u^s_t}={V_{u^s}}^{-1}\int_M u^s_t dV_{u^s}.
\]
We compute, using the equation \eqref{last3}, 
\[
\int_M u^s_{tt} dV_{u^s}=\int_M \frac{1}{\sigma_2(g^{-1}_{u^s}A_{u^s})} \langle T_1(g_{u^s}^{-1}A_{u^s}), \nabla u^s_t\otimes \nabla u^s_t\rangle_{g_{u^s}} dV_{u^s}+s\int_M \frac{1}{\sigma_2(g^{-1}_{u^s}A_{u^s})}dV
\]
Hence it follows that
\begin{equation}\label{last2}
\begin{split}
\frac{d^2\cF(u^s)}{dt^2}=& -s\int_M dV+s\bar \sigma \int_M \frac{1}{\sigma_2(g^{-1}_{u^s}A_{u^s})}dV\\&+ \bar \sigma \int_M \left[\frac{1}{\sigma_2(g^{-1}_{u^s}A_{u^s})} \langle T_1(g_{u^s}^{-1}A_{u^s}), \nabla u^s_t\otimes \nabla u^s_t\rangle_{g_{u^s}} -4\left(u^s_t-\underline {u^s_t}\right)^2 \right]dV_{u^s}.
\end{split}
\end{equation}
By  Lemma \ref{andrews} we know that
\[
\frac{d^2\cF(u^s)}{dt^2}> -s \int_M dV.
\]
This shows the convexity of $\cF(u^s)+s t^2 \int_M dV$. Taking $s\rightarrow 0$, this implies the convexity of $\cF$ along the geodesic $u$. 
The second part of the statement follows directly. Note that the second part of the statement was verified by Gursky-Streets [Lemma 6.1]\cite{GS2}. 
\end{proof}

Now we suppose $u_0, u_1\in \cC^+$ are two smooth critical points of $\cF$. Then we have the following, 
\begin{cor}Let $u$ be the $C^{1, 1}$ geodesic connecting $u_0, u_1$. Then either $(M^4, g_{u^i})$ is isometric to the round sphere, or $u_1=u_0+c$ for some constant $c$.\end{cor}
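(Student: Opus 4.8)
The plan is to run the convexity argument for Chang--Yang's functional $\cF$, now available along the $C^{1,1}$ geodesic, and to read off the rigidity from the equality case of the weighted Poincar\'e inequality of Lemma~\ref{andrews}. Throughout let $u^s$ be the smooth approximating geodesic with boundary data $u_0,u_1$ solving $u^s_{tt}\sigma_2(A_{u^s})-\langle T_1(A_{u^s}),\nabla u^s_t\otimes\nabla u^s_t\rangle=s$, and $u=\lim_{s\to0}u^s$. Since $u_1=u_0+c$ for a constant $c$ exactly when $u_t(t,\cdot)$ is constant on $M$ for a.e.\ $t$ (integrate in $t$), and since being the round $S^4$ is conformally invariant (so $(M^4,g_{u(t,\cdot)})$ is round for some $t$ iff $(M^4,g_{u_0})$ is round), it suffices to prove: for a.e.\ $t\in(0,1)$, either $u_t(t,\cdot)$ is constant on $M$ or $(M^4,g_{u(t,\cdot)})$ is isometric to the round sphere.

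First I would note that $\tfrac{d}{dt}\cF(u^s)$ vanishes at $t=0$ and $t=1$: by the first-variation formula $\tfrac{d}{dt}\cF(u^s)=\int_M u^s_t\bigl(\bar\sigma-\sigma_2(g_{u^s}^{-1}A_{u^s})\bigr)\,dV_{u^s}$, and at $t=0$ the slice metric is $g_{u_0}$, for which $\sigma_2(g_{u_0}^{-1}A_{u_0})$ equals the constant $\sigma V_{u_0}^{-1}=\bar\sigma$ because $u_0$ solves the $\sigma_2$-Yamabe equation; similarly at $t=1$. Hence $\int_0^1\tfrac{d^2}{dt^2}\cF(u^s)\,dt=0$. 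Feeding this into the second-variation identity \eqref{last2}, which has the form $\tfrac{d^2}{dt^2}\cF(u^s)=-s\int_M dV+s\bar\sigma\int_M\sigma_2(g_{u^s}^{-1}A_{u^s})^{-1}dV+\bar\sigma(t)\,\mathcal I_s(t)$ with
\[
\mathcal I_s(t):=\int_M\Bigl(\tfrac{1}{\sigma_2(g_{u^s}^{-1}A_{u^s})}\langle T_1(g_{u^s}^{-1}A_{u^s}),\nabla u^s_t\otimes\nabla u^s_t\rangle_{g_{u^s}}-4(u^s_t-\underline{u^s_t})^2\Bigr)dV_{u^s}\ \ge\ 0
\]
by Lemma~\ref{andrews} on the admissible slice $g_{u^s(t,\cdot)}$, and dropping the nonnegative term $s\bar\sigma\int_M\sigma_2^{-1}dV$, I get $0\le\int_0^1\bar\sigma(t)\,\mathcal I_s(t)\,dt\le s\int_M dV$. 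As $u$, hence $u^s$, is uniformly bounded, $V_{u^s}(t)$ is bounded above and below, so $\bar\sigma(t)=\sigma V_{u^s}(t)^{-1}\ge c>0$ uniformly, and therefore $\int_0^1\mathcal I_s(t)\,dt\to0$ as $s\to0$.

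Next I would extract the rigidity by letting $s\to0$. Choosing a subsequence $s_k\to0$ with $\mathcal I_{s_k}(t)\to0$ for a.e.\ $t$, for each such $t$ one has $u^{s_k}_t(t,\cdot)\to u_t(t,\cdot)$ uniformly while $\nabla u^{s_k}_t(t,\cdot)$ is bounded in $L^\infty(M)$; the aim is to conclude $\int_M\tfrac{1}{\sigma_2(g_u^{-1}A_u)}\langle T_1(g_u^{-1}A_u),\nabla u_t\otimes\nabla u_t\rangle_{g_u}\,dV_u\le 4\int_M(u_t-\underline{u_t})^2\,dV_u$ at that $t$, and then Lemma~\ref{andrews} applied to $g_{u(t,\cdot)}$ forces equality, whose rigidity clause gives exactly the desired alternative. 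If the ``$u_t(t,\cdot)$ constant'' branch holds for a.e.\ $t$ then $u_1-u_0=\int_0^1 u_t(t,\cdot)\,dt$ is constant on $M$; if instead the ``round sphere'' branch holds for some $t$ then, by conformal invariance, $(M^4,g_{u_0})$ is round. Either way the stated dichotomy follows.

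The main obstacle is this passage to the limit. Because $A_{u^s}$ depends on $\nabla^2 u^s$, the uniform $C^{1,1}$ estimate yields only \emph{weak-$*$} $L^\infty$ convergence of $A_{u^s}$, $\sigma_2(A_{u^s})$, $T_1(A_{u^s})$ and of $\nabla u^s_t$, so the second-variation quadratic form is a product of two merely weak-$*$ convergent factors and is not automatically lower semicontinuous; moreover the limiting slice $g_{u(t,\cdot)}$ may touch $\partial\Gamma^+_2$, so Lemma~\ref{andrews} cannot be applied to it verbatim. I expect this to be handled either by carrying the equality analysis entirely at the level of the admissible approximants $u^s$ together with a compactness (stability) form of the rigidity in Lemma~\ref{andrews} --- using $\int_0^1\mathcal I_s(t)\,dt\to0$ to force, in the limit, that $u_t(t,\cdot)-\underline{u_t(t,\cdot)}\to0$ in $L^2$ for a.e.\ $t$ unless $(M^4,g)$ is round --- or by establishing enough extra regularity of the geodesic to justify the limit directly. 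Everything else is the routine convexity bookkeeping already present in the proof of Theorem~\ref{con707}.
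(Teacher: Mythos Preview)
Your setup and the identification of the obstacle are accurate, but the proposal has a genuine gap precisely where you flag it: you cannot pass to the limit in $\mathcal I_s(t)$ with only weak-$*$ convergence of $\nabla u^s_t$ and of the coefficients $\sigma_2(A_{u^s})^{-1}T_1(A_{u^s})$, and you offer no concrete mechanism to close it. A ``stability'' version of Lemma~\ref{andrews} along the approximants is not available in the paper, and appealing to ``enough extra regularity'' without saying how to get it is not a proof.

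The paper's route sidesteps this difficulty entirely by a simple but decisive observation you are missing: since $\cF$ is convex along the $C^{1,1}$ geodesic and equals its minimum at both endpoints (Theorem~\ref{con707}), $\cF(u(t,\cdot))$ is \emph{constant} in $t$; hence for every fixed $t$ the function $u(t,\cdot)$ minimizes $\cF$ on $\cC^+$. One then computes the first variation of $\cF$ at the $C^{1,1}$ function $u(t,\cdot)$ (this requires checking that $T_1(g_u^{-1}A_u)$ is divergence free in the weak sense for $u\in C^{1,1}$, which follows by approximation) and concludes that $\sigma_2(g_u^{-1}A_u)=\bar\sigma>0$ a.e.\ on $M$. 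This is the key regularity gain: the slice metric $g_{u(t,\cdot)}$ is strictly in $\Gamma^+_2$, the equation $\sigma_2(A_u)=\sigma V_u^{-1}e^{-4u}$ is uniformly elliptic, and standard elliptic theory makes $u(t,\cdot)$ (and then $u_t(t,\cdot)$) smooth in space. With each slice admissible and smooth, one computes $\frac{d^2}{dt^2}\cF(u(t))=0$ directly on the limit geodesic and invokes the equality case of Lemma~\ref{andrews} on $g_{u(t,\cdot)}$ itself---no weak-limit argument is needed. Your approach, by contrast, tries to extract rigidity from the approximants without ever learning that the limit slices are admissible; that is why it stalls.
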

\begin{proof}Since $\cF$ achieves its minimum at $u_0$ and $u_1$, by the convexity of $\cF$ we know that $\cF$ remains constant along the geodesic $u$. In other words, $u(t)$ minimizes $\cF$ for any $t\in [0, 1]$. 
We claim $u(t): M\rightarrow \R$ is smooth and is in $\cC^+$ for each $t$ and solves the equation $\sigma_2(A_u)=\text{const}.$ 
For simplicity we drop the dependence on $t$ since the argument is the same. Due to the only $C^{1, 1}$ regularity of $u$, the essential point is to prove that $\cF$ achieves a minimum at $u$ in a certain class $\bar\Gamma^+_2$ in the following sense.
 Suppose $u$ has $C^{1, 1}$ bound and  a sequence of $u^s\in \cC^{+}$ with uniform $C^{1, 1}$ bound converges to $u$. Then for any smooth $v\in \cC^+$, and $r\in (0, 1)$ sufficiently small, we have the following variational characteristic description of $u$ with respect to $\cF$-functional, \[\cF(u)\leq \cF(u+rv).\]
 Now we need to compute the first variation of $\cF$ at $u$. We need the following, at $r=0$,
 \begin{equation}\label{variation1}
 \frac{\p \cF(u+rv)}{\p r}=-\int_M v (\sigma_2(g_u^{-1}A_u)-\bar \sigma) dV_u
 \end{equation}
 If $u$ is smooth, then \eqref{variation1} follows directly \cite{BV}. A main point is that \eqref{variation1} holds using the fact  $T_1$ is divergence free (when $n=4$). When $u\in C^{1, 1}$, then $T_1(g_u^{-1}A_u)$ is divergence free in the following sense: for any smooth vector $X=(X^i)$, we have
 \begin{equation}\label{div1}
 \int_M \sum_j T_1(g_u^{-1}A_u) ^{ij} \nabla_j X^i dV_u=0
 \end{equation}
 We can choose a sequence of smooth function $u_n$ such that $u_n$ converges to $u$ in $W^{2, p}$ and $u_n$ has uniform $C^{1, 1}$ bound. A direct approximation argument gives \eqref{div1}. Given \eqref{div1}, \eqref{variation1} follows directly as in \cite{BV}; the point is that the following one-form $\alpha$ is still closed for $u\in C^{1, 1}$ and it gives the first variation of $\cF$, by the computation as in \cite{BV} together with \eqref{div1}, where
 \[
 \alpha (v)=-\int_M v (\sigma_2(g_u^{-1}A_u)-\bar \sigma) dV_u.
 \]
  Since $\cF(u)\leq \cF(u+rv)$, we have at $r=0$, for any $v$,
 \begin{equation}
  \frac{\p \cF(u+rv)}{\p r}=-\int_M v (\sigma_2(g_u^{-1}A_u)-\bar \sigma) dV_u\geq 0
 \end{equation}
 Since we can add any constant to $v$, this implies that $\sigma_2(g_u^{-1}A_u)-\bar \sigma=0$, where $\bar \sigma=\sigma V_u^{-1}$. It follows that $\sigma_2(g_u^{-1}A_u)>0$. Hence $u\in C^{1, 1}$ is a strong solution of the uniform elliptic equation
 \begin{equation}\label{ulast}
 \sigma_2(A_u)=\sigma V_u^{-1} e^{-4u}
 \end{equation}
 and  the standard elliptic estimate then gives the smoothness of $u$ (in space direction). Hence $u(t): M\rightarrow \R$ is smooth for each $t$ and it solves the equation \eqref{ulast}.
 Taking derivative with respect to $t$, the elliptic regularity then implies that $u_t$ is smooth in space direction. Note that we do not assert at the moment that $u$ is smooth in space time, even though we know this holds \emph{a posteriori}.
 Nevertheless we can directly compute, similar as in \eqref{last2},
 \[
 \frac{d^2 \cF(u(t))}{dt^2}=\bar \sigma \int_M \left[\frac{1}{\sigma_2(g^{-1}_{u}A_{u})} \langle T_1(g_{u}^{-1}A_{u}), \nabla u^s_t\otimes \nabla u_t\rangle_{g_{u}} -4\left(u_t-\underline {u_t}\right)^2 \right]dV_{u}=0.
 \]
 This implies that $u_t=\text{const}$ or $(M^4, g_u)$ is isometric to the round sphere $S^4$, by Lemma \ref{andrews}. 
\end{proof}

 This gives a direct proof of the uniqueness of $\sigma_2$-Yamabe problem.
 \begin{cor}Let $(M^4, g)$ be a compact four manifold with $\cC^+\neq \emptyset$. 
\begin{enumerate}\item There exists a unique solution to the $\sigma_2$-Yamabe problem in $[g]$ if $(M^4, g)$ is not conformally equivalent to the round $S^4$.

\item In $[g_{S^4}]$, all solutions to the $\sigma_2$-problem are round metrics.
\end{enumerate}
\end{cor}

\end{document}